\DeclareSymbolFont{bbold}{U}{bbold}{m}{n}
\DeclareSymbolFontAlphabet{\mathbbold}{bbold}
\newtheorem{dummy}{dummy}[section]              
\newtheorem{lemma}[dummy]{Lemma}
\newtheorem{theorem}[dummy]{Theorem}
\newtheorem{corollary}[dummy]{Corollary}
\newtheorem{proposition}[dummy]{Proposition}
\newtheorem{conjecture}[dummy]{Conjecture}
\newtheorem{claim}[dummy]{Claim}
\theoremstyle{definition}                                  
\newtheorem{definition}[dummy]{Definition}
\newtheorem{example}[dummy]{Example}
\newtheorem{remark}[dummy]{Remark}
\newtheorem{defn}[dummy]{Definition}
\newcommand{\Vect}{\mathbf{Vect}}
\DeclareMathOperator{\Spec}{Spec}
\DeclareMathOperator{\Hom}{Hom}
\DeclareMathOperator{\End}{End}
\newcommand{\Mod}{\mathbf{Mod}}
\newcommand{\mhyphen}{{\hbox{-}}}
\newcommand{\module}{\mhyphen\mathrm{mod}}
\newcommand{\rmodule}{\mathrm{mod}\mhyphen}
\newcommand{\modcat}{\mhyphen\mathcal{M}od}
\newcommand{\comod}{\mhyphen\mathrm{comod}}
\newcommand{\bimod}{\mhyphen\mathrm{bimod}}
\DeclareMathOperator{\Sym}{Sym}
\newcommand{\QC}{QC}
\DeclareMathOperator{\Rep}{Rep}
\newcommand{\DGCat}{\mathbf{DGCat}}
\newcommand{\too}{\longrightarrow}
\newcommand{\Id}{Id}
\newcommand{\aff}{\textit{aff}}
\newcommand{\bs}{\backslash}
\newcommand{\ls}[2]{\tensor*[^{#1 }]{{#2}}{}}
\newcommand{\lsub}[2]{\tensor*[_{#1}]{#2}{}}
\newcommand{\GIT}{{/\! /}}
\newcommand{\lGIT}{\bs \! \bs}
\newcommand{\GITquot}[3]{{#1}\lGIT {#2} \GIT {#3}}
\DeclareMathOperator{\Ind}{Ind}
\DeclareMathOperator{\Res}{Res}
\newcommand{\hc}{hc}
\newcommand{\ch}{\mathfrak{ch}}
\newcommand{\ad}{\mathrm{ad}}
\newcommand{\HC}{\mathcal{HC}}
\newcommand{\Loc}{\mathcal Loc}
\newcommand{\reg}{\mathrm{reg}}
\newcommand{\Whit}{Whit}
\newcommand{\Kost}{Kost}
\newcommand{\Char}{Char}
\newcommand{\Wh}{Wh}
\newcommand{\Nil}{Nil}
\newcommand{\QHR}{QHR}
\newcommand{\forg}{forg}
\newcommand{\oN}{\overline{N}}
\newcommand{\fc}{\mathfrak{c}}
\newcommand{\fg}{\mathfrak{g}}
\newcommand{\fgx}{\mathfrak{g}^\star}
\newcommand{\fa}{\mathfrak{a}}
\newcommand{\fh}{\mathfrak{h}}
\newcommand{\ft}{\mathfrak{t}}
\newcommand{\fb}{\mathfrak{b}}
\newcommand{\fM}{\mathfrak{M}}
\newcommand{\fL}{\mathfrak {L}}
\newcommand{\fD}{\mathfrak{D}}
\newcommand{\fX}{\mathfrak{X}}
\newcommand{\fz}{\mathfrak{z}}
\newcommand{\fd}{\mathfrak{d}}
\newcommand{\fE}{\mathfrak{E}}
\newcommand{\fB}{\mathfrak{B}}
\newcommand{\fO}{\mathfrak{O}}
\newcommand{\fWh}{\mathfrak{Wh}}
\newcommand{\cA}{\mathcal A}
\newcommand{\cC}{\mathcal C}
\newcommand{\cD}{\mathcal D}
\newcommand{\cF}{\mathcal F}
\newcommand{\cG}{\mathcal G}
\newcommand{\cH}{\mathcal H}
\newcommand{\cI}{\mathcal I}
\newcommand{\cK}{\mathcal K}
\newcommand{\cM}{\mathcal M}
\newcommand{\cN}{\mathcal N}
\newcommand{\cO}{\mathcal O}
\newcommand{\cP}{\mathcal P}
\newcommand{\cR}{\mathcal R}
\newcommand{\cW}{\mathcal W}
\newcommand{\cX}{\mathcal X}
\newcommand{\cZ}{\mathcal Z}
\newcommand{\cWh}{\mathcal{W}h}
\newcommand{\bG}{\mathbf{G}}
\newcommand{\bB}{\mathbf{B}}
\newcommand{\bW}{\mathbf{W}}
\newcommand{\bP}{\mathbf P}
\newcommand{\bh}{\mathbf{h}}
\newcommand{\A}{\mathbb A}
\newcommand{\C}{\mathbb C}
\newcommand{\F}{\mathbb F}
\newcommand{\G}{\mathbb G}
\newcommand{\Z}{\mathbb Z}
\newcommand{\uG}{{\underline{G}}}
\newcommand{\uP}{\underline{P}}
\newcommand{\uH}{\underline{H}}
\newcommand{\XYX}{X\times_Y X}
\newcommand{\ot}{\otimes}
\newcommand{\oo}{\infty}
\newcommand{\CC}{\mathbb C}
\def\fc{\mathfrak c}
\def\fU{\mathfrak U}
\def\fZ{\mathfrak Z}
\def\fn{\mathfrak n}
\def\fb{\mathfrak b}
\def\ft{\mathfrak t}
\def\fgxr{\mathfrak g^{\ast}_{\reg}}
\def\fhx{\mathfrak h^\ast}
\def\fgx{\mathfrak g^\ast}
\def\fg{\mathfrak g}
\def\fa{\mathfrak a}
\newcommand{\actson}{\circlearrowright}
\newcommand{\onacts}{\circlearrowleft}
\newcommand{\Cx}{{\C^\times}}
\def\Gv{{G^{\vee}}}
\def\Tv{{T^{\vee}}}
\def\cGr{{\cG r^{\vee}}}
\def\cDv{\breve{\cD}}
\def\Dhol{\cDv_{hol}}
\def\LGv{{LG^\vee}}
\def\LGpv{{LG^\vee_+}}
\def\uGr{\underline{\mathcal{G}r}}
\def\uGrv{\underline{\mathcal{G}r}^\vee}
\def\Gm{\G_m}
\newcommand{\mc}{\mathcal}
\newcommand*\leftdash{\rotatebox[origin=c]{-45}{$\dabar@\dabar@\dabar@$}}
\newcommand*\rightdash{\rotatebox[origin=c]{45}{$\dabar@\dabar@\dabar@$}}
\newcommand{\quot}[3]{{#1}\backslash{#2}/{#3}} 
\newcommand{\wqw}[3]{{#1}\leftdash{#2}\rightdash{#3}} 
\newcommand{\wq}[2]{{#1}\leftdash{#2}}
\newcommand{\qw}[2]{{#1}\rightdash{#2}}
\newcommand{\NGNpsi}{N {}_\psi{\backslash} G /_{\psi} N}
\newcommand{\Ngo}{Ng\hat{o}}
\newcommand{\tNgo}{\Ngo^\sim}
\newcommand{\adjquot}{{/_{\hspace{-0.2em}ad}\hspace{0.1em}}}
\title[Symmetries of Categorical Representations and the Quantum Ng\^o Action]{Symmetries of Categorical Representations\\and the Quantum Ng\^o Action}
\author{David Ben-Zvi} \address{Department of Mathematics\\University
  of Texas\\Austin, TX 78712-0257} \email{benzvi@math.utexas.edu}
\author{Sam Gunningham}\address{Department of Mathematics\\University
  of Texas\\Austin, TX 78712-0257} \email{gunningham@math.utexas.edu}
\begin{document}

\begin{abstract}
We observe that all classical Hamiltonian systems coming from the invariant polynomials on a reductive Lie algebra $\fg$ can be integrated in a universal way. This is a consequence of 
Ng\^o's action of the group scheme $J$ of regular centralizers in $G$ on all centralizers: the Hamiltonian flows associated to invariant polynomials integrate to an action of $J$ as commutative symplectic groupoid. 
We quantize the Ng\^o action, providing a universal integration for all quantum Hamiltonian systems coming from the center $\fZ\fg=\fZ(\fU\fg)$ of the enveloping algebra (after a cohomological regrading in the spirit of cyclic homology and supersymmetric gauge theory). Namely
Kostant's Whittaker description of $\fZ\fg$ 
integrates to the action of a commutative quantum groupoid $\fWh$, 
the bi-Whittaker Hamiltonian reduction of $\fD_G$, as do quantum Hamiltonian systems coming from the action of $\fZ\fg$ (Harish-Chandra's higher order Laplacians). These actions come from a braided tensor functor, the {\em quantum Ng\^o map}, from the $\cW$-category $\cWh=\fWh\module$ to adjoint-equivariant $\cD$-modules $\cD(G\adjquot G)$ (the center of the convolution category $(\cD(G),\ast)$), which gives a categorical family of $G$-invariant commuting operators on any strong $G$-category. This action also leads to a notion of Langlands parameters (or refined central character) for categorical representations of $G$ and character sheaves, and a new commutative symmetry of homology of character varieties of surfaces. 

We derive our construction as the Langlands dual form of a simple symmetry principle for groupoids. Namely the symmetric monoidal category of equivariant sheaves, i.e., modules for the convolution algebra $H$, acts centrally on the corresponding convolution category $\cH$, i.e., we have a braided functor $H\module\to \cZ(\cH)$. In particular modules for the nil-Hecke algebra for any Kac-Moody group act centrally on the corresponding Iwahori-Hecke category. We use the renormalized Geometric Satake theorem of Bezrukavnikov-Finkelberg to identify $H\module$ for the equivariant affine Grassmannian for $\Gv$ with the $\cW$-category for $G$, and the corresponding central action gives both the Ng\^o action for $G$ and its quantization.


  \end{abstract}
\maketitle
\tableofcontents

\section{Overview}

The purpose of this paper is to describe a general mechanism for constructing large commuting families of operators in the setting of geometric representation theory. We first sketch the underlying formal mechanism and then describe its primary application.

\subsection{Symmetries of Convolution Categories}\label{convolution categories}

Let $X$ denote a stack and $\cG\actson X$ an ind-proper groupoid acting on $X$. Let $\cR=Shv(X)$ be the symmetric monoidal
category of sheaves on $X$. In all our examples, $\cR=R\module$ is described as modules for a commutative algebra $R=\omega(X)$.
Let $\cK=Shv(X)^\cG$ the symmetric monoidal category of $\cG$-equivariant sheaves. We have an equivalence
$$\cK\simeq H\module$$ where the Hecke algebra $H=(\omega(\cG),\ast)$ is the associated groupoid algebra (concretely a cocommutative Hopf algebroid over $R$). Now consider the Hecke category $\cH=Shv(\cG)$ of sheaves on $\cG$. 
It forms a categorical cocommutative Hopf algebroid over $\cR=Shv(X)$: in addition to the convolution monoidal structure and the diagonal action of $(Shv(X),\otimes)$, it carries a commutative pointwise tensor product operation. $\cH$-modules represent $\cG$-equivariant sheaves of categories on $X$, hence are naturally linear over $\cG$-equivariant sheaves on $X$. 
As a consequence of this structure we find the following:

\begin{theorem}[Informal]\label{main intro}
There is a 
braided monoidal functor $\fz:\cK\to \cZ(\cH)$ from the category of equivariant sheaves
to the Drinfeld center of the convolution category, lifting the diagonal embedding $\fd:\cR\to \cH$ and 
admitting a monoidal left inverse $\fa:\cZ(\cH)\to \cK$.
Thus we have a diagram with commutative square as follows, with morphisms labeled by their level of monoidal structure:

\begin{equation}\label{basic diagram}\xymatrix{\cK\ar[r]_{E_2}\ar[d]_-{E_\infty}& \cZ(\cH) \ar@/_1pc/_-{E_1}[l]  \ar[d]^{E_1}\\
\cR \ar[r]_{E_1} & \cH}\end{equation}
\end{theorem}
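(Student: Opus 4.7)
The plan is to exploit the fact that $\cH = Shv(\cG)$ carries the structure of a (categorical) cocommutative Hopf algebroid over $\cR = Shv(X)$. The groupoid composition and unit provide the convolution monoidal structure $\ast$; the source and target maps $s, t: \cG \to X$ give two $\cR$-module structures; the pointwise tensor product $\otimes$ and the diagonal $\Delta: \cG \to \cG \times_X \cG$ realize the cocommutative coalgebra structure; and inversion gives the antipode. Under this package, the category $\cK$ of equivariant sheaves has two equivalent descriptions: modules over the convolution Hecke algebra $H = \omega(\cG)$, and comodules over the categorical coalgebra obtained from $\cH$. Its braided monoidal ($E_2$) structure is a direct consequence of the cocommutativity of $\cH$.

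I will construct $\fz: \cK \to \cZ(\cH)$ directly from the equivariance data. For $M \in \cK$, equivariance is a coherent isomorphism $\alpha_M : s^*M \isom t^*M$, and I set $\fz(M) := s^*M$. For any $N \in \cH$, both convolutions $\fz(M) \ast N$ and $N \ast \fz(M)$ are computed by push-pull along the composition $\cG \times_X \cG \to \cG$, and the isomorphism $\alpha_M$ translates between the two orders, producing a natural half-braiding. Monoidality of $\fz$ with respect to $\ast$ then follows from the base-change compatibility of $s^*$ with groupoid composition. For the monoidal left inverse $\fa: \cZ(\cH) \to \cK$ I take pullback along the unit section $e: X \to \cG$: the equivariance on $e^*F$ is reconstructed from the half-braiding $\beta$, and $\fa \circ \fz \simeq \id_\cK$ holds because $e^* \circ s^* \simeq \id$. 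The bottom square commutes because forgetting the half-braiding from $\fz(M)$ recovers $s^*M$, which is (by definition) the diagonal embedding $\fd$ applied to the underlying sheaf of $M$.

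That $\fz$ is braided (not merely $E_1$) reflects the cocommutativity of the Hopf algebroid: for $M_1, M_2 \in \cK$, the comparison of the two half-braidings with $\fz(M_2), \fz(M_1)$ reduces to the compatibility of equivariance data with tensor products, itself encoded by $\Delta$. The main obstacle will be establishing coherence at the $(\infty,1)$-categorical level: producing half-braidings object-by-object is straightforward, but assembling them into a functor to $\cZ(\cH)$ and then upgrading that to a braided monoidal functor requires controlling a full tower of higher compatibilities. The cleanest strategy is abstract -- establish once and for all that in any symmetric monoidal $(\infty,2)$-category of presentable categories, a cocommutative Hopf algebroid object admits a canonical braided monoidal functor from its comodules to the Drinfeld center of its algebra-modules, and then specialize to the Hopf algebroid $\cH$ over $\cR$ to deduce Theorem~\ref{main intro}.
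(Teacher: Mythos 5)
Your starting point — that $\cH$ is a cocommutative Hopf algebroid over $\cR$ — is the conceptual skeleton the paper alludes to in its introduction, but it is not how the paper actually proves the theorem. Two issues:

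\textbf{The definition of $\fz$ is wrong.} You set $\fz(M) := s^*M$, the pullback of $M \in \cK$ along the source map $s : \cG \to X$. But the diagonal embedding $\fd : \cR \to \cH$ is induced by $*$-pushforward along the \emph{unit} section $i : X \hookrightarrow \cG$, and the lift $\fz$ must therefore be (up to the equivariance data) $i_*M$, a sheaf supported on the diagonal $i(X) \subset \cG$. Test this against the paper's own finite-group toy example: for $X = pt$, $\cG = G$, $\cK = \Rep(G)$, $\cH = \Vect(G)$ with convolution, and $\cZ(\cH) = \Vect(G/G)$. The source map is the unique map $G \to pt$, so $s^*V$ is the constant sheaf on $G$ with fiber $V$, which is not supported at the identity and does not carry the half-braiding coming from the $G$-action. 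The functor $\fz$ described in the paper sends $V$ to the skyscraper at $e \in G$ with stalk $V$, equipped with the conjugation equivariance — that is, $i_*V$. Moreover, with your definition the half-braiding $s^*M \ast N \simeq N \ast s^*M$ that does exist (by the commutativity of the $!$-tensor in one of the factors) does not use $\alpha_M$ at all, so the cocommutativity you invoke never enters the argument.

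\textbf{The coherence step you defer is exactly what the paper is designed to avoid.} You conclude that the cleanest route is to prove a general structure theorem about cocommutative Hopf algebroid objects in a symmetric monoidal $(\infty,2)$-category. The paper deliberately does not go this route: it notes (Remark~\ref{quantum groupoid remark}) that the $\infty$-categorical theory of Hopf algebroids is not yet documented and so confines Hopf-algebroid language to remarks. Instead, the paper's proof of Theorem~\ref{groupoid center} applies a short looping/delooping lemma (Corollary~\ref{nonsense}, a consequence of Lurie's $\Mod$ and Dunn additivity) to the symmetric monoidal $(\infty,2)$-category $\cA = \cH\modcat \simeq Tot(\Dhol(\cG_\bullet)\modcat)$ of $\cG$-equivariant sheaves of categories on $X$. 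The point is that $\End(1_\cA) = \End_\cH(\cR) \simeq \cK$ is automatically $E_\infty$ (it is the endomorphisms of the unit in a symmetric monoidal $2$-category), $\End(\Id_\cA) = \cZ(\cH)$, and the canonical $E_2$-map $1_\cA \otimes - : \End(1_\cA) \to \End(\Id_\cA)$ together with the $E_1$ left inverse $act_{1_\cA}$ supply $\fz$ and $\fa$ with all the coherences already assembled. This sidesteps the half-braiding-by-hand construction entirely, and it is why the paper first establishes (Proposition~\ref{Hecke equivariance}) that the equivalence $\cR^\cG \simeq \cK$ is \emph{symmetric} monoidal — that is the real content behind the $E_2$ structure of $\fz$, not the cocommutativity of a coalgebroid. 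Your $\fa := e^*$ (pullback along the unit section) does match the paper's $act_{1_\cA}$ once unwound, so that part of your proposal is aligned.
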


Theorem~\ref{main intro} applies to any setting where we have a category of geometric objects (stacks) theory of sheaves $X\mapsto Shv(X)$ admitting ($*$-)pushforward and ($!$-)pullback functors, satisfying base change and $(p_*,p^!)$ adjunction for ind-proper maps $p$. Together such a sheaf theory defines a functor from a correspondence category of stacks to a 2-category of categories. Such sheaf theories are one of the main objects of study of the book~\cite{GR} of Gaitsgory and Rozenblyum, which in particular develops two main examples of sheaf theories:
\begin{itemize}
\item the theory of ind-coherent sheaves $X\mapsto \QC^!(X)$, a ``renormalized" variant of the theory of quasicoherent sheaves
\item the theory of $\cD$-modules $X\mapsto \cD(X) = \QC^!(X_{dR})$.
\end{itemize}

We will mostly be interested in applying the theorem to a mild variant of the theory of $\cD$-modules, the theory of {\em ind-holonomic $\cD$-modules} on a class of ind-algebraic stacks, which we describe in Section~\ref{sheaf theory} using the formalism of~\cite{GR}. In the examples we study (equivariant flag varieties and in particular affine Grassmannians), this theory produces simply (the ind-completed version of) the familiar categories of equivariant constructible complexes. 
The ordinary equivariant $\cD$-module categories (where equivariant holonomic sheaves are not necessarily compact) can be recovered as a completion with respect to the equivariant cohomology of a point. In Section~\ref{KM section} we describe two related applications of this result, in which the category of modules for a nil-Hecke algebra acts centrally on convolution categories built out of flag varieties for the  corresponding Kac-Moody group or the reflection representation of the corresponding Coxeter group.

\subsection{The Quantum Ng\^o Action}

Our motivation stems from the following application. Let $G$ be a complex reductive group, and consider the spherical Hecke category $\cH=\cH_{sph}$ associated to the Langlands dual group $\Gv$: the category of sheaves on the equivariant Grassmannian 
\[
\uGrv = \quot{\LGpv}{\LGv}{\LGpv}
\] which we consider as a groupoid stack 
\[
\cG=\uGrv\; \actson \; X=pt/\LGpv
\]

We may apply Theorem \ref{main intro} in this setting, obtaining a diagram of the form of Diagram~\ref{basic diagram}. Langlands duality, in particular the renormalized geometric Satake theorem of Bezrukavnikov-Finkelberg~\cite{BezFink}, leads to interpretations of the various parts of the diagram in terms of the original group $G$, which naturally appear in a cohomologically graded form.
Bezrukavnikov, Finkelberg and Mirkovic~\cite{BFM} identifed the ring $H=H_\ast(\uGrv)$ with the coordinate ring of the commutative group scheme $J$ of regular centralizers (see also the influential works of Teleman~\cite{teleman} where this construction is applied to categorical representation theory and symplectic topology and Braverman-Finkelberg-Nakajima~\cite{BraFinkNa} where it is generalized to a construction of Coulomb branches of 3d $\cN=4$ supersymmetric gauge theory).  We then identify the symmetric monoidal category $\cK=H\module$ with the category of quasi-coherent sheaves on $J$ under convolution. The functor $\fz: \cK \to \cZ(\cH)$ can be interpreted in terms of the \emph{Ng\^o homomorphism} from regular centralizers to all centralizers, leading to a new conceptual construction (the original construction was via a Hartog's lemma argument). The Ng\^o homomorphism is best known for its central role in the proof of the Fundamental Lemma~\cite{Ngo}. As we explain below, it also gives rise to a canonical integration of all ``$G$-integrable systems": the commuting flows on any Hamiltonian $G$-space $X$ (or any of its Hamiltonian reductions by subgroups of $G$) coming from the $G$-invariant Poisson map $X\to \fc=\Spec\C[\fgx]^G$ integrate to an action of the commutative symplectic groupoid $J\to \fc$. 

The multiplicative group acts on the equivariant Grassmannian by loop rotation; considering a loop rotation equivariant version of the spherical Hecke category leads to a deformation over $H^*(BS^1)=\C[\hbar]$ also described in~\cite{BezFink}. As is familiar from the theory of cyclic homology and the Nekrasov $\Omega$-background~\cite{NW} (in particular the theory of quantized Coulomb branches in 3d $\cN=4$ gauge theories~\cite{BraFinkNa}), the parameter $\hbar$ of the deformation appears in cohomological degree two, and as a result the familiar structures of representation theory appear in their ``cohomologically sheared" (or ``asymptotic"~\cite{BezFink}) avatars as differential graded algebras. Under this deformation, $\cK_\hbar=H_\hbar\module$ gets identified (as a monoidal category) with the Whittaker Hecke category $\cWh_\hbar$ of bi-Whittaker $\cD_\hbar$-modules on $G$ (which we will refer to as the $\cW$-category). The Hecke algebra $H$ itself is identified both with the spherical subalgebra of the nil-Hecke algebra associated to the affine Weyl group $W_{\aff}$ of $\Gv$, and with the bi-Whittaker differential operators $\fWh_\hbar$ on $G$. (The underlying category $\cWh_\hbar=\fWh_\hbar\module$ for $\hbar\neq 0$ has been recently described explicitly in~\cite{lonergan, ginzburg whittaker} as sheaves on the coarse quotient $\fhx//W_{\aff}$ of the Cartan by the affine Weyl group, an identification that is expected to respect the tensor structure.)
In particular, we deduce (from a general conceptual point of view) that the convolution structure on the Whittaker Hecke category is naturally symmetric monoidal, answering a question of Arinkin-Gaitsgory. Moreover the functor $\fz$ becomes a central action on the category of conjugation equivariant $\cD_\hbar$-modules on $G$ which is right inverse to the quantum Kostant section (Whittaker reduction); a quantum version of the Ng\^o homomorphism, conjectured by Nadler.

\begin{theorem}\label{central action intro} 
The $\cW$-category $\cWh_\hbar$ is naturally symmetric monoidal, and equipped with a braided monoidal functor
 $\Ngo_\hbar:\cWh_\hbar \to \cD_\hbar(G\adjquot G)$ lifting the quantum characteristic polynomial map $\Char_\hbar:\cZ_\hbar\to \HC_\hbar$ and 
admitting a monoidal left inverse $\Whit_\hbar:\cD_\hbar(G\adjquot G)\to \cWh_\hbar$.\footnote{The functors $\Whit_\hbar$ and $\Ngo_\hbar$ do not form an adjoint pair in general, although see Remark \ref{remark quantum affine}.}
Thus we have a commutative diagram:
\[
\xymatrix{
\cWh_{\hbar} \ar[r]_{\Ngo_\hbar}\ar[d]_-{}& \ar@/_1pc/_-{\Whit_\hbar}[l] \cD_{\hbar}(G\adjquot G) \ar[d]^{\Gamma}\\
\cZ_{\hbar} \ar[r]_{\Char_\hbar} & \HC_{\hbar}  }
\]
In particular, $\cWh_\hbar$ acts by $G$-endomorphisms on $\cD_\hbar(G)$-module categories.
\end{theorem}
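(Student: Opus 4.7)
The plan is to derive Theorem~\ref{central action intro} as a direct specialization of Theorem~\ref{main intro} to the loop-rotation-equivariant spherical Hecke category on the Langlands dual side, and then translate every object and functor to the $G$-side by means of the renormalized geometric Satake equivalence of Bezrukavnikov-Finkelberg.

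First I would apply Theorem~\ref{main intro} to the ind-proper groupoid $\cG = \uGrv$ acting on $X = pt/\LGpv$, equipped with the loop-rotation-equivariant variant of ind-holonomic $\cD$-modules developed in Section~\ref{sheaf theory}. This produces a $\C[\hbar]$-linear commutative square, a symmetric monoidal category $\cK_\hbar = H_\hbar\module$ of equivariant sheaves, the monoidal spherical Hecke category $\cH_\hbar$, a braided monoidal functor $\fz : \cK_\hbar \to \cZ(\cH_\hbar)$, and a monoidal left inverse $\fa$.

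Second I would transport each corner of this square to the $G$-side. The Bezrukavnikov-Finkelberg-Mirkovic isomorphism, in its loop-rotation-equivariant, cohomologically sheared form, identifies $H_\hbar$ with the algebra $\fWh_\hbar$ of bi-Whittaker differential operators on $G$, giving a monoidal equivalence $\cK_\hbar \simeq \cWh_\hbar$; since $\cK_\hbar$ is symmetric monoidal by Theorem~\ref{main intro}, so is $\cWh_\hbar$, yielding the first assertion of the theorem and answering the Arinkin-Gaitsgory question along the way. Equivariant cohomology of a point together with Kostant's theorem identify $\cR_\hbar$ with $\cZ_\hbar\module$. On the Hecke side, renormalized derived Satake identifies $\cH_\hbar$ with a cohomologically sheared model of Harish-Chandra bimodules $\HC_\hbar$ and its Drinfeld center $\cZ(\cH_\hbar)$ with $\cD_\hbar(G \adjquot G)$, so that the right vertical arrow of the square becomes the global sections functor $\Gamma$.

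Under these identifications the braided monoidal $\fz$ becomes the quantum Ng\^o map $\Ngo_\hbar$, its monoidal left inverse $\fa$ becomes the bi-Whittaker reduction $\Whit_\hbar$, and the bottom row becomes the quantum characteristic polynomial map $\Char_\hbar$; commutativity of the square and the braided and monoidal structures on the horizontal arrows are inherited from Theorem~\ref{main intro}. The last assertion, that $\cWh_\hbar$ acts by $G$-endomorphisms on any strong $\cD_\hbar(G)$-module category, is then automatic, since $\Ngo_\hbar$ factors through the Drinfeld center of the convolution monoidal category $\cD_\hbar(G)$. The principal obstacle is the second step: although the underlying algebras and categories have been identified piecewise in the literature, promoting these to a compatible match of the full diagram, in particular matching the braided monoidal structure on $\cZ(\cH_\hbar)$ produced by Theorem~\ref{main intro} with the standard one on $\cD_\hbar(G \adjquot G)$, requires a careful treatment of renormalized Satake in the loop-rotation-equivariant setting and a verification that the BFM isomorphism intertwines the relevant Hopf algebroid structures.
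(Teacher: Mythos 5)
Your proposal is correct in outline and follows essentially the same strategy as the paper: apply the general groupoid result (Theorem~\ref{groupoid center}, the precise form of Theorem~\ref{main intro}) to the loop-rotation-equivariant spherical groupoid $\uGrv\rightrightarrows pt/\LGpv$, then transport everything to the $G$-side through derived Satake and the Morita equivalence $\cD_\hbar(G)\sim\HC_\hbar$ of Beraldo, which identifies $\cZ(\cH_\hbar)\simeq \cZ(\HC_\hbar)\simeq \cD_\hbar(G\adjquot G)$.

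The one point where your argument diverges slightly from the paper, and which is worth flagging because it affects what needs to be proved: you derive the monoidal equivalence $\cK_\hbar\simeq\cWh_\hbar$ by invoking the BFM isomorphism $H_\hbar\simeq\fWh_\hbar$, upgraded to an isomorphism of bialgebroids. But in the paper the logical arrow runs the other way. The equivalence $\cWh_\hbar=\End_{\HC_\hbar}(\cZ_\hbar)\simeq\End_{\cH_\hbar}(\cR_\hbar)=\cK_\hbar$ is read off directly from the (module-category compatible) Satake equivalence of Theorem~\ref{Bez-Fink}, with no BFM input; the BFM identification of $\fWh_t$ with $H^{\LGpv\rtimes\G_m}_{-\ast}(\cGr)$ is then \emph{deduced} as Corollary~\ref{corollaryBFimpliesBFM}, using the graded lift $\cWh_{t,gr}$ (via the Kazhdan filtration) to argue formality of the bialgebroid. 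This resolves the obstacle you identify at the end: the match of braided/monoidal structures between $\cZ(\cH_\hbar)$ and $\cD_\hbar(G\adjquot G)$, and the compatibility of Hopf algebroid structures, do not have to be checked against an externally supplied BFM isomorphism, because the identification and its monoidal compatibility all come from the single Satake equivalence plus functoriality of $\End$ of the unit module and of the Drinfeld center. If instead you insist on quoting BFM as the input, you would need a version that already matches Hopf algebroid structures and handles the $\hbar$-shearing and formality (as in Ginzburg and Lonergan), which is precisely what the paper sets up its proof to avoid having to assume.
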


In particular, passing to invariants for a subgroup $K\subset G$, the $\cW$-category acts on $(\fU_\hbar\fg,K)\module$ and on $\cD_\hbar(K\backslash M)$ for any $G$-variety $M$.

Thus the quantum Ng\^o functor defines a categorical counterpart to Harish-Chandra's construction of a $G$-invariant commuting family of operators on $G$-spaces, parametrized by sheaves on $\fhx\GIT W^{\aff}$.
It follows that one can consider categories of $\cWh_\hbar$-eigensheaves in any $G$-category---a more refined version of infinitesimal character. In particular this applies (for the conjugation action of $G$ on itself) to give a refined version of the notion of central character of character sheaves.
We expect that this structure will play a key role in better understanding the truncated Hecke and character sheaf categories defined by Lusztig \cite{lusztig cells, lusztig convolution} (see also \cite{BFO}). More generally, the entire character field theory of~\cite{character2} is linear over $\cWh_\hbar$, leading to a spectral decomposition of the homology of character varieties of surfaces over $\fhx\GIT W^{\aff}$.



While the spherical Hecke category corresponds to the cohomological Harish-Chandra bimodule category $\HC_\hbar$, it is desirable to have a version of Theorem \ref{central action intro} which applies to the usual category of Whittaker $\cD$-modules and Harish-Chandra bimodules:

\begin{theorem}\label{unsheared ngo intro}
There is a canonical $E_2$-morphism $\Ngo: \cWh \to \cD(G\adjquot G)$ which fits in to a diagram as in Theorem \ref{central action intro}. Moreover, this functor restricts to an exact functor of braided monoidal abelian categories (appearing as the heart of the natural $t$-structure on the source and target).
\end{theorem}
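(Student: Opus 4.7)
The plan is to deduce Theorem \ref{unsheared ngo intro} from the sheared Theorem \ref{central action intro} by specializing the deformation parameter $\hbar$ to $1$. The sheared categories $\cWh_\hbar$ and $\cD_\hbar(G\adjquot G)$ are built from Rees-type constructions in which $\hbar$ sits in cohomological degree $2$; setting $\hbar = 1$ should recover the unsheared $\cWh$ and $\cD(G\adjquot G)$, and since the entire structure in Theorem \ref{central action intro} is defined over $\C[\hbar]$, specialization produces a diagram of the required form.

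First I would set up the specialization functor at $\hbar = 1$. For any reasonable stack $X$, viewing $\cD_\hbar(X)$ as the Rees construction for the order filtration on $\cD$, setting $\hbar = 1$ yields ordinary $\cD(X)$, and this specialization is symmetric monoidal with respect to all the structures in play: pointwise tensor, convolution, pullback, pushforward. Bi-Whittaker Hamiltonian reduction commutes with specialization because the moment map and Whittaker conditions specialize compatibly, so the fiber at $\hbar = 1$ of $\cWh_\hbar$ is identified with $\cWh$. Since specialization is symmetric monoidal (hence $E_2$), it carries the braided monoidal $\Ngo_\hbar$ to a braided monoidal (i.e.\ $E_2$) functor $\Ngo : \cWh \to \cD(G\adjquot G)$, and likewise produces $\Whit$ from $\Whit_\hbar$ and the bottom row of the diagram; commutativity is inherited verbatim.

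The remaining assertion---that $\Ngo$ restricts to an exact functor between abelian braided monoidal hearts---is the substantive part. Both sides carry natural $t$-structures (the perverse/regular holonomic $t$-structure on $\cD(G\adjquot G)$ and its bi-Whittaker reduction on $\cWh$). For exactness I would factor $\Ngo$ geometrically into two operations that each preserve the heart: a Kostant-section pushforward (a closed embedding at the classical level, remaining $t$-exact after quantization) followed by averaging along the regular centralizer group scheme (a pro-unipotent affine fibration, also $t$-exact). The braided monoidal structure on the heart is then automatic, since the heart of $\cWh$ is a commutative abelian tensor category mirroring commutativity of $\fWh$, and $\Ngo$ respects the tensor structure by virtue of being $E_2$.

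The principal obstacle is the exactness claim. Whereas the derived-level $E_2$-functoriality is essentially automatic from specialization, preserving hearts requires genuine geometric input: one must verify that Whittaker averaging composed with the Kostant-section pushforward sends perverse objects to perverse objects, which ultimately rests on the classical fact that the Kostant slice is a smooth transversal to the regular orbits and that the centralizer fibration is affine. A secondary, softer obstacle is ensuring that the sheared bi-Whittaker reduction specializes cleanly to the ordinary one without completion or convergence issues introduced by the cohomological regrading; this should follow from the filtered/Rees formalism but requires careful bookkeeping to identify the $\hbar = 1$ fiber with the usual abelian category of bi-Whittaker $\cD$-modules.
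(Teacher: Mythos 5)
Your overall plan rests on a move---``specialize at $\hbar = 1$''---that does not exist in the setting of the paper, and this is precisely the issue the paper's proof has to work hard to circumvent. In $\cWh_\hbar$ and $\cD_\hbar(G\adjquot G)$ the parameter $\hbar$ lives in \emph{cohomological} degree $2$ (it is $H^\ast(BS^1)$), not in an auxiliary weight grading. Setting $\hbar = 1$ is not a map of $\C[\hbar]$-dg-algebras and does not give a functor $\cC_\hbar \to \cC$; there is simply no arrow in that direction. The diagram the paper actually has is $\cC \leftarrow \cC_{t,gr} \rightarrow \cC_\hbar$: the abelian/filtered category $\cC$ is recovered from the \emph{graded lift} $\cC_{t,gr}$ (where $t$ is in cohomological degree $0$ and external weight $2$) by setting $t=1$, while $\cC_\hbar$ is obtained from $\cC_{t,gr}$ by shearing. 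So you must produce a graded lift $\Ngo_{t,gr}$ of $\Ngo_\hbar$ before you can specialize anything. Constructing that lift is the real content, and it is not automatic: one has to show that the $\fD_{\hbar,G}$--$\fWh_\hbar$ bimodule $\fB_\hbar$ representing the underlying functor of $\Ngo_\hbar$ is \emph{formal}, so that it admits a pure external grading. The paper's Lemma~\ref{lemma formal} does this by exhibiting pure gradings on $\fU_\hbar\fg$, $\fZ_\hbar\fg$, $\fWh_\hbar$ and invoking Kostant's theorem that $\fU\fg$ is free over $\fZ\fg$. Your secondary concern (``careful bookkeeping to identify the $\hbar=1$ fiber'') is in fact the central difficulty, and it is a formality theorem, not bookkeeping.

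Your exactness argument is also not how the paper proceeds, and as stated it is not clearly sufficient. You propose to factor $\Ngo$ at the level of abelian hearts through a Kostant-section pushforward followed by a centralizer averaging; but the whole point of the Ng\^o construction is that the centralizer group scheme $\chi^\ast J$ over $\fg^\ast/G$ is \emph{not} a pro-unipotent affine fibration away from the regular locus (it degenerates badly on the irregular locus, which is why the Ng\^o map originally required a Hartogs argument). In the paper, $t$-exactness of $\Ngo_{t,gr}$ comes for free once the graded lift exists, because $\Ngo_{t,gr}$ lifts $\Char_{t,gr}$, and $\Char_{t,gr}$ is $t$-exact by flatness of $\fU_t\fg$ over $\fZ_t\fg$. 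You should also be aware that the $E_2$-structure on $\Ngo_{t,gr}$ is not obtained by merely transporting the $E_2$-structure on $\Ngo_\hbar$ through an equivalence: the paper restricts to the subcategory of pure/free objects $\cWh_\hbar^{fr} \to \cZ(\HC_\hbar^{fr})$, identifies their homotopy categories with the weight-heart $\cWh_{t,gr}^{proj,\heartsuit}$ via Lemma~\ref{lemma homotopy category}, and then re-extends by forming categories of complexes, following the construction of mixed categories. None of this appears in your proposal, so the braided monoidal structure on the graded lift (and hence on $\Ngo$ after setting $t=1$) is not established.

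To salvage your approach you would need to (i) replace ``$\hbar=1$ specialization'' with the construction of the graded lift, (ii) prove a formality statement for the representing bimodule, and (iii) supply an argument (either the paper's flatness argument or a corrected geometric factorization that is valid on the irregular locus) for $t$-exactness of the graded lift.
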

In Section \ref{ss graded lift} we sketch a proof of Theorem \ref{unsheared ngo intro}, by constructing a graded lift of the Ng\^o functor, i.e. a lift to the category consisting of objects with a compatible external grading. Geometrically, such a graded lift corresponds to a mixed version of the Satake category, in the sense of Beilinson-Ginzburg-Soergel \cite{BGS} (see also \cite{riche}).



Less categorically, the quantum Ng\^o action may be interpreted in terms of a {\em quantum integration} of all $G$-quantum Hamiltonian systems: $\fWh_\hbar$ forms a commutative quantum groupoid (cocommutative Hopf algebroid) quantizing the commutative symplectic groupoid $J$,
which acts on $\hbar$-differential operators $\fD_{\hbar,M}$ on any $G$-space $M$ (or any of its quantum Hamiltonian reductions) extending the Harish-Chandra higher Laplacians $\fZ_\hbar\fg\to \fD_{\hbar,M}$ (in particular $\fD_{\hbar,M}$ is naturally a $\fWh_\hbar$-comodule). This structure is closely related to the 
theory of shift maps for quantum integrable systems. (Examples include quantized Coulomb branches of 3d $\cN=4$ gauge theories, and more generally arbitrary supersymmetric reductions of 4d $\cN=4$ super-Yang Mills on an interval, see Section~\ref{SUSY}.)
Since the theory of Hopf algebroids in the $\oo$-categorical setting is not currently documented in the literature, we confine ourselves to remarks (see Remarks~\ref{quantum groupoid remark} and~\ref{abelian vs derived}) and defer a more detailed discussion to a future paper.

\subsection{Outline of paper} In the rest of the introduction, we review the idea behind Theorem~\ref{main intro} and some of its instances, the classical Ng\^o construction, its quantization and their applications. In Section~\ref{sheaf theory} we develop some basic sheaf theory functoriality in the setting relevant for the renormalized geometric Satake theorem of~\cite{BezFink}, i.e., equivariant sheaves on the affine Grassmannian. In Section~\ref{Hecke section} we prove Theorem~\ref{main intro}. In Section~\ref{filtered D-mod section} we review some aspects of categorical representations and filtered $\cD$-modules. Finally in 
Section~\ref{quantum section} we describe how specializing the theorem in the setting of the affine Grassmannian produces the quantum Ng\^o action. 

\medskip 

{\bf Setting:} Throughout the paper we work in the setting of derived algebraic geometry over a field $k$ of characteristic zero, following~\cite{HA, GR}. Thus ``category" indicates an $\infty$-category, commutative or symmetric monoidal indicate $E_\infty$, schemes are derived $k$-schemes, and so forth, unless explicitly noted otherwise.

\section{Introduction} 

We begin by describing the classical Ng\^o action and some of its applications (Section~\ref{classical Ngo}), followed by its sheaf-theoretic reinterpretation (Section~\ref{monoidal interp}) and its quantization (Sections~\ref{quantum Kostant section} and~\ref{quantum Ngo section}). We also explain some of the applications of the quantum Ng\^o action, in particular to quantum integrability, in Section~\ref{integrable section}.  In Section~\ref{parabolic induction section} we explain a perspective on our quantum Ng\^o map that is closer in spirit to the classical theory of character sheaves. In Section~\ref{KM section} we mention some other applications of Theorem~\ref{main intro} (to Kac-Moody groups and Coxeter systems) and a couple of toy examples. Finally in Section~\ref{further section} we outline some further directions and perspectives, including geometric Langlands, character varieties and supersymmetric gauge theory.


%
%

\subsection{The classical Ng\^o action}\label{classical Ngo}
In the following sections we provide some background for the primary applications of our results: the classical and quantum Ng\^o actions.

Fix a complex reductive group $G$ with Lie algebra $\fg$. We also fix a Borel subgroup $B$ with unipotent radical $N$, and write $H=B/N$ for the universal torus (with Lie algebras $\fb$, $\fn$ and $\fh$ respectively). Let $$\fc:=\Spec \left( \CC[\fgx]^G\right) \simeq \fhx\GIT W$$ denote the adjoint quotient scheme.
Recall the characteristic polynomial map and Kostant section 
\[
\xymatrix{
\fgx/G \ar[r]_{\chi} & \ar@/_1pc/[l]_{\kappa} \fc }
\]
The Kostant section $\kappa:\fc\to \fgx$ lands in the open substack $\fgxr/G\subset \fgx/G$ of regular elements - the locus of $x\in \fgx$ whose stabilizer $G_x$ has the minimal dimension $l=rk(\fg)$. It can be described in terms of Hamiltonian reduction: fix $\psi\in \fn^\ast\simeq \fg/\fb$ a non degenerate character of $\fn$.  Then the composite map $$\xymatrix{\fgx\GIT _{\psi} N \ar[r]& \fgx/G \ar[r]^-{\chi}& \fc}$$ is an isomorphism, and the Kostant section is its inverse.

We denote by $$I\simeq T^\ast(G/G)\longrightarrow \fgx/G$$ the inertia stack (or derived loop space) of the adjoint quotient: informally,
\[
I = \left\{(g,x) \in G\times \fg^\ast \mid coAd_g(x)=x \right\}/G.
\]
It can be identified as the cotangent stack to the stack of conjugacy classes. We can restrict $I$ over the Kostant section, resulting in the {\em group scheme of regular centralizers}
\[
J = \kappa^\ast I \longrightarrow \fc.
\] 
The group scheme $J$ also has a description as a Hamiltonian reduction of $T^\ast G$: 
$$J\simeq N{}_\psi \backslash \!\backslash T^\ast G \GIT _\psi N.$$
It has the natural structure of commutative {\em symplectic groupoid} over $\fc$ -- in particular $Lie(J)\simeq T^\ast\fc$ as commutative symplectic Lie algebroids. 

Note that there is an equivalence of group schemes over the regular locus
\[
\chi^\ast J|_{\fg^\ast_{\reg}/G} \simeq I|_{\fg^\ast_{\reg}/G} 
\]
In fact the Kostant section defines an equivalence 
\[
\fgxr/G\simeq B_\fc J\longrightarrow \fc
\]
of the regular adjoint quotient with the classifying stack of $J\to \fc$.

Ng\^o made the crucial observation that regular centralizers act canonically on all centralizers:

\begin{lemma}[Ng\^o]
	The equivalence above extends to a morphism of group schemes over $\fg^\ast/G$:
	\[
	\chi^\ast J \to I
	\]
\end{lemma}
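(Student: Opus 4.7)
The plan is to extend the isomorphism $\chi^\ast J|_{\fg^\ast_{\reg}/G} \simeq I|_{\fg^\ast_{\reg}/G}$ across the irregular locus by a Hartogs-type argument on an equivariant cover. By $G$-equivariant descent along $\fg^\ast \twoheadrightarrow \fg^\ast/G$, it suffices to produce a $G$-equivariant morphism of group schemes over $\fg^\ast$ from
\[
\widetilde J := J\times_\fc \fg^\ast \longrightarrow \widetilde I := \{(g,x)\in G\times \fg^\ast \mid \mathrm{coAd}_g(x)=x\},
\]
the ``un-quotiented'' versions of $\chi^\ast J$ and $I$.

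On the regular locus $\fg^\ast_{\reg}\subset \fg^\ast$ such a map is already available: given $x\in \fg^\ast_{\reg}$, the Kostant point $\kappa(\chi(x))\in \fg^\ast_{\reg}$ is $G$-conjugate to $x$, and by definition $J_{\chi(x)}=G_{\kappa(\chi(x))}$. Since centralizers of regular elements are abelian and connected, conjugation by any element of $G$ taking $\kappa(\chi(x))$ to $x$ yields a canonical isomorphism $J_{\chi(x)}\isom G_x$, independent of the conjugating element; this assembles to the desired map $\widetilde J|_{\fg^\ast_{\reg}}\isom \widetilde I|_{\fg^\ast_{\reg}}$.

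Now I would apply Hartogs. The source $\widetilde J$ is smooth over $k$: $J\to \fc$ is smooth (it is the open piece of the Hitchin-type group scheme of centralizers), so $\widetilde J\to \fg^\ast$ is smooth by base change, and $\fg^\ast$ is smooth. The target $\widetilde I$ is affine over $\fg^\ast$, being cut out by the closed condition $\mathrm{coAd}_g(x)=x$ inside the affine scheme $G\times \fg^\ast$. Finally, the irregular locus $\fg^\ast\setminus \fg^\ast_{\reg}$ is of codimension $\geq 3$ in $\fg^\ast$ (for reductive $\fg$), so its preimage in $\widetilde J$ has codimension $\geq 3$ in the smooth, hence normal, scheme $\widetilde J$. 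Sections of the structure sheaf of $\widetilde J$ on the complement of a codimension $\geq 2$ closed subscheme extend uniquely, so the morphism $\widetilde J|_{\fg^\ast_{\reg}}\to \widetilde I$ extends uniquely to a morphism $\widetilde J\to \widetilde I$ over $\fg^\ast$.

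The remaining checks (compatibility with the multiplication, the identity section, and the $G$-action) are formal. Each is an equality of two maps into the separated affine $\fg^\ast$-scheme $\widetilde I$ (respectively $\widetilde I\times_{\fg^\ast}\widetilde I$, etc.) out of a reduced scheme that is generically the regular locus where the equality holds by construction; density then forces it globally. Descending the resulting $G$-equivariant group scheme map down to $\fg^\ast/G$ produces the desired morphism $\chi^\ast J\to I$. The main subtle point is the codimension/normality input needed to run Hartogs: one must genuinely exploit smoothness of $J\to \fc$ (the nontrivial content of Kostant's theory) and the codimension $\geq 2$ of the non-regular locus, without which the extension has no reason to exist; once these are in hand, the rest of the argument is formal.
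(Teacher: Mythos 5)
Your proof is correct and follows the same route the paper indicates: the paper gives only a one-line sketch ("a simple consequence of the Hartogs principle: the open substack $\fg^\ast_{\reg}/G \subseteq \fg^\ast/G$ has complement of codimension at least three"), and you have supplied exactly the standard Hartogs argument behind that sketch — reduce by descent to the $G$-equivariant picture on $\fg^\ast$, build the isomorphism over the regular locus using that regular centralizers are abelian, and extend across the irregular locus using smoothness (hence normality) of $\widetilde J$, affineness of $\widetilde I$, and codimension $\geq 2$ of the bad locus; the group and equivariance axioms then propagate by density into a separated target. One harmless overstatement: you invoke connectedness of regular centralizers, but only abelianness is needed (and for non–simply-connected $G$ regular centralizers need not be connected).
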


The Lemma is a simple consequence of the Hartogs principle: the open substack $\fg^\ast_{\reg}/G \subseteq \fg^\ast/G$ has complement of codimension at least three.  For $GL_n$, this map can be described as the natural action of invertible functions on the spectrum of a matrix $M$ via operators commuting with $M$. In general, no direct description of the Ng\^o map was available.  



Ng\^o introduced his map as a universal ``mold" from which many more concrete actions are formed. Ng\^o applied it (extending the Donagi-Gaitsgory spectral theory for Higgs bundles~\cite{DG}) to give a new abelian symmetry group of the cohomology of Hitchin fibers, which plays a crucial role in his study of endoscopy and proof of the Fundamental Lemma.
Namely, given any variety $C$, the Ng\^o action (in its equivalent ``delooped" form, an action of the abelian group stack $BJ\to \fc$ of $\fgx/G$), gives an action of the commutative group-stack $Map(C,BJ)\to Map(C,\fc)$
on the stack $Map(C,\fgx/G)$ of $G$-Higgs bundles\footnote{By keeping track of $\Gm$-equivariant version of the above constructions, Ng\^o obtains also a more general version twisted by a line bundle on $C$, see Section~\ref{char poly section}.} on $C$.

We observe that the Ng\^o map has another concrete manifestation (which does not appear to have been discussed in the literature).
Given any Hamiltonian $G$-space $X$ with equivariant moment map 
$$\xymatrix{X\ar[r]^-{\mu}& \fgx \ar[r]^-{\chi}& \fc}$$ the induced map to $\fc$ defines a collection of 
Poisson-commuting Hamiltonians on $X$ or (thanks to $G$-invariance) on any Hamiltonian reduction $Y=X\GIT _{\mathbb O}K$ of $X$ by a subgroup of $G$ - a mechanism that was used to describe and solve Toda, Calogero-Moser and many other integrable systems (see e.g.~\cite{KKS,Kostant Toda} and~\cite{etingof}).
The Hamiltonian flows coming from the Poisson map $\chi\circ\mu:X\to \fc$ may be interpreted as defining an action of the trivial commutative Lie algebroid $$T^\ast\fc\simeq Lie(J)\longrightarrow \fc$$ on $X$. 
A simple consequence of the Ng\^o construction is the following:

\begin{proposition}\label{classical integration} The Hamiltonian flows (action of $Lie(J)$) on any Hamiltonian reduction $Y\to\fc$ of a Hamiltonian $G$-space $X$ integrate canonically to an action of the symplectic groupoid $J\to \fc$.
\end{proposition}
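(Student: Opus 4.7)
The plan is to upgrade the Ng\^o homomorphism to a morphism of symplectic groupoids and then transport through it the canonical groupoid action present on any Hamiltonian $G$-space. The key observation is that the inertia stack $I = T^\ast(G/G) \to \fgx/G$, equipped with its natural symplectic structure, is the symplectic groupoid integrating the Poisson stack $\fgx/G$; moreover a Hamiltonian $G$-space $X$ with equivariant moment map $\mu\colon X \to \fgx$ is exactly the data of a Hamiltonian $I$-space over $\fgx/G$ (i.e.\ a symplectic $X/G$ with Poisson map to $\fgx/G$ together with a compatible $I$-action). Under this dictionary the $G$-action on $X$ is the groupoid action of $I$ on $X/G$, and its infinitesimal part is the canonical action of the Lie algebroid $\Lie(I) = T^\ast(\fgx/G)$.

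Given this framework, the next step is to verify that the Ng\^o homomorphism $\chi^\ast J \to I$ is in fact a morphism of symplectic groupoids over $\fgx/G$. This is built into the construction: over the open regular locus $\fgxr/G$ the map is the Kostant-section isomorphism $\chi^\ast J \simeq I$, which is manifestly a symplectic groupoid isomorphism; the Hartogs extension across the codimension-three complement preserves the symplectic structure automatically. Pulling back the $I$-Hamiltonian action on $X/G$ along Ng\^o and changing base via $\chi\circ\mu\colon X/G \to \fc$ then produces an action of the commutative symplectic groupoid $J\rightrightarrows\fc$ on $X/G$, lying over $\fc$.

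To pass to a Hamiltonian reduction $Y = X \GIT _{\mathbb O} K$ by a subgroup $K \subseteq G$, one notes that the $J$-action is pulled back from the $G$-invariant map $\chi\circ\mu$, so it commutes with the $K$-action and preserves the $K$-moment map level, descending cleanly to a $J$-action on $Y$ over $\fc$. Differentiating recovers precisely the Lie algebroid action of $\Lie(J) = T^\ast\fc$ given by Hamiltonians pulled back from $\fc$: this is clear over the regular locus via the Kostant identification, and agreement extends everywhere by density. The main obstacle I anticipate is the careful setup of the first paragraph---matching Hamiltonian $G$-spaces with $I$-Hamiltonian spaces and ensuring the Ng\^o pullback is functorial in the relevant derived/stacky sense; once that symplectic groupoid dictionary is in hand, the entire proposition is a formal consequence of the existence and groupoid-morphism property of Ng\^o's map, with all of the real content absorbed into the Ng\^o homomorphism itself.
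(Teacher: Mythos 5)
Your proof is correct and uses the same underlying idea as the paper's: extract from the Hamiltonian $G$-action the canonical action of the isotropy/inertia object, then transport it through the Ng\^o homomorphism and observe $G$-invariance to descend to reductions. The paper's proof is shorter and stays entirely at the unshifted level: it encodes the Hamiltonian $G$-action on $X$ as an action of the ordinary symplectic groupoid $T^*G \rightrightarrows \fgx$, restricts this to the isotropy group scheme $I$ over $\fgx$ (the fiberwise stabilizers), and then writes down the composite
$J\times_\fc X = (J\times_\fc\fgx)\times_\fgx X \to I\times_\fgx X \to X$.
Your version instead phrases the starting datum as a Hamiltonian space for the inertia stack $I = T^*(G/G)$ over the quotient $\fgx/G$, which is conceptually the same thing after quotienting by $G$ but imports the shifted-symplectic/stacky subtleties you flag as the anticipated obstacle; the paper's route sidesteps those entirely. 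Two things you do that the paper does not spell out: you check explicitly that Ng\^o respects the symplectic groupoid structure (the paper treats this as implicit in the Hartogs construction), and you explicitly carry out the descent to Hamiltonian reductions $Y = X\GIT_{\mathbb O}K$ and the identification of the infinitesimal action with $\Lie(J)=T^*\fc$, both of which the paper's one-line proof leaves to the reader even though the proposition statement requires them.
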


\begin{proof} 
A Hamiltonian $G$-action on $X$ is equivalent to an action of the symplectic groupoid $T^\ast G$ over $\fgx$. We may restrict this to an action of the inertia groupscheme $I$, and then use the Ng\^o map to induce an action of $J$ -- concretely, the action map is given as follows:
$$J \times_\mathfrak{c} X= (J \times_\mathfrak{c} \mathfrak{g}^\ast) \times_{\mathfrak{g}^\ast} X \to I \times_{\mathfrak{g}^\ast} X\to X$$
\end{proof}

\subsection{Monoidal interpretation}\label{monoidal interp}
In order to describe our construction of the Ng\^o action and its quantization, we first pass from spaces to tensor categories of sheaves. 


The category $\QC( I)=\cZ(\QC(\fgx/G))$ of sheaves on the inertia stack $ I\simeq T^\ast(G/G)$ of $\fgx/G$ (the Drinfeld center of $\QC(\fgx/G)$) is naturally braided under the convolution product.
Using the Ng\^o homomorphism, we may also define a braided\footnote{The braided structure can be seen by delooping the functor to an action of $(\QC(B_\fc J),\ast)$ on $\QC(\fgx/G)$.} monoidal functor (with respect to the convolution structures on both sides)
\[
\Ngo_0: \QC(J) \to \QC( I)
\]
given by the correspondence.
\begin{equation}\label{Ngo correspondence}
\xymatrix{
J &\ar[l] \chi^\ast(J)\ar[r] &  I
}
\end{equation}

Note that there is another monoidal functor $$\Whit_0:\QC( I)\to \QC(J)$$
given by the correspondence
\begin{equation}\label{Kostant correspondence}
\xymatrix{
J&\ar[l]_-{\sim} \kappa^\ast( I) \ar[r]&  I
}
\end{equation}
provided by the Kostant section. Moreover $\Whit_0$ is a left inverse to $\Ngo_0$, $\Whit_0\circ \Ngo_0\simeq \Id$. Thus we have a commutative diagram:

$$\xymatrix{\QC(J)\ar[r]_{\Ngo_0}\ar[d]_-{}& \QC( I) \ar@/_1pc/_-{\Whit_0}[l]  \ar[d]^{}\\
\QC(\fc) \ar[r]_{\Char_0} & \QC(\fgx/G)}$$
where $\Char_0 = \chi^\ast$.

One can check that the Ng\^o action is identified, via the renormalized Satake theorem of~\cite{BezFink}, with the construction of 
Theorem~\ref{main intro} applied to $\LGpv$-equivariant sheaves on the affine Grassmannian $\LGv/\LGpv$ for the Langlands dual group $\Gv$.\footnote{More precisely, the Satake Theorem of \cite{BezFink} gives a differential graded form of $\fg^\ast/G$ and the Kostant slice. To recover the statement above, one must consider some form of mixed sheaves on the affine Grassmannian as in~\cite{riche}, see Remark~\ref{mixed remark}.}

We can also describe Hamiltonian $G$-actions monoidally. Given a Hamiltonian $G$-space $X$, the action of the symplectic groupoid $T^\ast G$
endows $\QC(X)$ with the structure of module category over the the convolution category $\QC(T^\ast G)$. Equivalently, the equivariant moment map
$X/G\to \fgx/G$ makes $\QC(X/G)$ into a module category over $\QC(\fgx/G)$. This equivalence comes from a Morita equivalence
$$(\QC(\fgx/G),\otimes)\modcat\simeq (\QC(T^\ast G),\ast)\modcat,$$ an instance of Gaitsgory's 1-affineness theorem~\cite{1affine},
which in particular identifies the Drinfeld centers of the two categories
$$\cZ(\QC(\fgx/G),\otimes)\simeq \cZ(\QC(T^\ast G),\ast)\simeq \QC( I).$$
Thus the Ng\^o action gives rise to an action of $\QC(J)$ on $\QC(X)$ commuting with the $G$-action and moment map, and hence descending to any Hamiltonian reduction. 

\subsection{Quantum Kostant slice and geometric Satake}\label{quantum Kostant section}
The quantization of $\fgx$ is the algebra $\fU\fg$ or equivalently the (pointed or $E_0$) category $\fU\fg\module$. 
Recall that by the Harish-Chandra isomorphism the adjoint quotient scheme $\fc$ is identified with the spectrum of the center of the enveloping algebra, $$\fc\simeq \Spec \fZ\fg \simeq \fhx\GIT W.$$ 
The quantization of the Kostant section is given by the {\em Whittaker Hecke algebra}, the quantum Hamiltonian reduction $\fU\fg\GIT _\psi \fU\fn:$ the algebra which acts on the space of Whittaker vectors ($\fn$-eigenvectors with eigenvalue $\psi$) universally in any $\fU\fg$-module---in other words, the (principal) finite $\cW$-algebra associated to $\fg$. Kostant~\cite{Kostant Whittaker} then proved that the canonical map $\fZ\fg\to \fU\fg\GIT_\psi N$ is an isomorphism, in particular that the $\cW$-algebra $\fU\fg\GIT _\psi N$ is commutative.

The quantization of $\fgx/G$ is the monoidal (or $E_1$) category $\HC$ of {\em Harish-Chandra bimodules} 
 $\fU\fg$-bimodules integrable for the diagonal action of $G$ (or weakly $G$-equivariant $\fU\fg$-modules). It receives a monoidal functor 
$$\xymatrix{\cZ:=\fZ\fg\module \ar[rr]^-{\Char}&& \HC}$$ 
quantizing the characteristic polynomial map.
Its Drinfeld center $$\cZ(\HC)\simeq \cD(G\adjquot G)$$ is identified with the category of conjugation-equivariant $\cD$-modules on $G$, quantizing sheaves on the inertia stack $\QC( I)$. 

Thanks to the (derived, renormalized, loop rotation equivariant) Geometric Satake theorem of Bezrukavnikov-Finkelberg~\cite{BezFink}, Harish-Chandra bimodules and Whittaker reduction appear out of the equivariant geometry of the affine Grassmannian for the Langlands dual group. In this setup, the category of Harish-Chandra bimodules and its relatives appear with a cohomological degree shift (as is familiar from cyclic homology theory, see in particular the closely related~\cite{conns}, or from the Nekrasov $\Omega$-background in supersymmetric gauge theory~\cite{NW}). In particular, the quantization parameter $\hbar$ appears with cohomological degree two as the equivariant parameter $\C[\hbar]=H^\ast(BS^1)$ for loop rotation; the dual Lie algebra $\fgx$ is replaced by the 2-shifted Poisson variety $\fgx[2]$, which deforms over the $\C[\hbar]$ to the Rees dg-algebra $\fU_\hbar\fg$; and the 1-shifted symplectic stack $\fgx/G$ is replaced by the 3-shifted symplectic stack $\fgx[2]/G$, which deforms to the monoidal category $\HC_\hbar$ of $\fU_\hbar\fg$-Harish-Chandra bimodules.

Geometric Satake gives an equivalence of monoidal categories between $\HC_\hbar$ and the spherical Hecke category $\cH_\hbar=\Dhol(\uGr)$ of $\LGpv  \rtimes \G_m$-equivariant $D$-modules on the affine Grassmannian $\cGr =\LGv/\LGpv$. Moreover, this equivalence intertwines the Kostant-Whittaker action of $\HC_\hbar$ on $\cZ_\hbar = \fZ_\hbar\fg\module$ with the action of $\cH_\hbar$ on $\cR_\hbar := \Dhol(B\LGpv) \simeq H_{\Gv}(pt)\module$. We denote by $$\cK_\hbar=\End_{\cH_\hbar}(\cR_\hbar)$$ the monoidal category of Hecke-linear endomorphisms (compare our general notation of Section~\ref{convolution categories}, where the equivariant Grassmannian is playing the role of the groupoid $\cG$).


\subsection{The $\cW$-category and the quantum Ng\^o action}\label{quantum Ngo section}
Now we explain how the construction of Theorem~\ref{main intro} also gives rise to a quantization of the Ng\^o action (in its cohomologically sheared $\hbar$-form). 

The quantum analog of $J$ is given by the {\em $\cW$-category}, or Whittaker Hecke category of $G$
$$\cWh_\hbar:=End_{\HC_{\hbar}}(\fZ_\hbar \fg\module) \simeq \cD_\hbar(\NGNpsi),$$ given by the $\HC_\hbar$-endomorphisms of the category $\cZ_\hbar=\fZ_\hbar\fg\module$ of Whittaker modules; equivalently, it is the category of $\cD$-modules on $G$ equivariant with respect to the left and right action of $(N,\psi)$\footnote{The Whittaker equation defining $\psi$-twisted equivariance is not homogeneous with respect to the usual filtration on differential operators; one must use the \emph{Kazhdan filtration} to make sense of the Rees algebra constructions--see Remark \ref{remark Kazhdan}.}. 

According to geometric Satake, we have an equivalence of monoidal categories
\[
\cWh_\hbar = \End_{\HC_\hbar}(\cZ_\hbar) \simeq \End_{\cH_\hbar}(\cR_\hbar) = \cK_\hbar
\]
More concretely, $\cWh_\hbar$ is given by modules for the ring of bi-Whittaker differential operators $\fWh_\hbar$, obtained from $\fD_G$ by two-sided Hamiltonian reduction by $N$ at $\psi$, whereas $\cK_\hbar$ is given by modules for $H_\hbar =  H_\ast^{\LGpv\rtimes \CC^\times}(\cGr)$, the equivariant convolution homology ring appearing in \cite{BFM}. The equivalence of monoidal categories above may be interpreted as an isomorphism of bialgebroids $$\fWh_\hbar \simeq H_\hbar.$$


The $\cW$-category provides a deformation quantization of sheaves on the groupscheme $J,$ the bi-Whittaker reduction of $T^\ast G$.
As with all Hecke categories, the $\cW$-category is naturally monoidal. However it is surprising that it is in fact naturally symmetric monoidal (even on the derived level), and that the Ng\^o action quantizes: the construction of Theorem~\ref{main intro} applied to $\LGpv\rtimes \Gm$-equivariant sheaves on the affine Grassmannian $\LGv/\LGpv$ gives rise, in conjunction with the renormalized Satake theorem of~\cite{BezFink}, to Theorem \ref{central action intro} --- a central action of the $\cW$-category on Harish-Chandra bimodules, the {\em quantum Ng\^o action}.

\begin{remark}
The Drinfeld center $\cZ(\cC)$ of any monoidal category $\cC$ is nontrivially braided, so that the analog of Kostant's proof of his theorem fails: the canonical Kostant functor $\cZ(\cD(G))\to \cWh$ is far from an equivalence. In fact $\cWh_\hbar$ is closer to being a ``Lagrangian" in $\cZ(\cD_\hbar(G))$ - a maximal subcategory on which the braiding vanishes.
\end{remark}

\subsection{Spectral decomposition and quantum integrability}\label{integrable section}

One of the fundamental problems in harmonic analysis is spectral decomposition of functions on a symmetric space under
Harish-Chandra's commutative algebra of invariant differential operators, a collection of higher analogs of the Laplace operator for which we seek joint eigenfunctions. We now describe some immediate consequences of Theorem~\ref{central action intro} in this setting.

By a result of~\cite{1affine, dario}, the monoidal category $\HC$ of Harish-Chandra bimodules is Morita equivalent (as a monoidal category) to $\cD$-modules on $G$ with convolution, the ``de Rham group algebra" $(\cD(G),\ast)$.
The Morita equivalence relates a $\cD(G)$-category with its weak $G$-equivariants, and an $\HC$-category with its de-equivariantization:
$$\cD(G)\actson \cM \longleftrightarrow \HC\actson \cM^G, \hskip.3in \HC\actson \cN \longleftrightarrow \cD(G)\actson \left(\cN\otimes_{Rep(G)} Vect\right)$$
 It follows that module categories for $\HC$ are identified with $\cD(G)$-modules, also known as {\em de Rham} or {\em strong} $G$-categories. The theory of de Rham $G$-categories, or the equivalent theory of $\HC$-modules, is a natural realization of the notion of quantum Hamiltonian $G$-space (an algebraic variant of an idea of~\cite{teleman}). Examples include $A\module$ for algebras $A$ acted on by $G$, for which the Lie algebra action is made internal by means of a homomorphism $\mu^*:\fU\fg\to A$, e.g., $A=\fU\fg$ itself or $A=\fD_M$ for a $G$-space $M$. More abstractly the category $\cD(M)$ for any $G$-space $M$ is a de Rham $G$-category. 

For a $G$-space $M$, the composite map $$\xymatrix{\fZ\fg\ar[r]^-{\chi^*} &\fU\fg \ar[r]^-{\mu^*} & \fD_M}$$ provides a family of commuting $G$-invariant differential operators (similarly for any Hamiltonian $G$-algebra $(A,\mu^*)$ as above). These generalize the commuting $G$-invariant differential operators on symmetric spaces introduced by Harish-Chandra, and thanks to $G$-invariance descend to give commuting operators on any quantum Hamiltonian reduction (e.g., on locally symmetric spaces). This provides a source of many quantum integrable systems~\cite{etingof}.
In particular given $\lambda\in \fc\simeq \fhx\GIT W$ we can define the $\lambda$-eigensystem for the Harish-Chandra Laplacians in this setting, the quantum analog of the fibers of the classical Hamiltonians $\chi\circ\mu$. 

However, unlike in the classical setting, quantum Hamiltonian $G$-spaces $\cM$ do not ``live" over $\fc=\Spec(\fZ\fg)$: $\cM$ is {\em not} naturally a module category for $\cZ=\fZ\fg\module$. Thus unlike with spaces of functions, there is no spectral decomposition of $\cM$ over $\fc$: e.g., it does not make sense to ask for a category which is the ``quantum fiber" of $\cM$ over $\lambda\in \fc$. This is a manifestation of the well-known phenomena of shift maps and translation functors: the Harish Chandra systems associated to different $\lambda$ 
can be isomorphic.

\begin{example}
$\bullet$ The eigensystem $M_\lambda$ for the operator $z \frac{d}{dz}$ on $\Cx$ depends on $\lambda$ only up to translation. Indeed the category of $\cD$-modules on $\Cx$ is equivalent (by the Mellin transform) to the category of equivariant sheaves $\cWh_\Cx=\QC(\CC)^{\Z}$, which then acts on $\cM$ for any quantum Hamiltonian $\Cx$-space $\cM$. 

$\bullet$ The $G$-category $\fU\fg_\lambda\module$ of $\fg$-modules with a fixed central character depends on $\lambda$ only up to the action of translation functors. The corresponding $W_{\aff}$-orbit $[\lambda]\in \fhx\GIT W_{\aff}$ is an invariant of this $G$-category, but not $\lambda$ itself.
\end{example}

Thus we might instead hope to spectrally decompose quantum Hamiltonian $G$-spaces over $\fhx\GIT W^{\aff}$, and indeed our main result gives such a decomposition:

\begin{corollary}[Theorem~\ref{central action intro}]\label{quantum ham corollary} For any $\cD_\hbar(G)$-module $\cM$, there is an action of the tensor category $\cWh_\hbar\actson \cM$ commuting with the $\cD_\hbar(G)$ action (and hence descending to any quantum Hamiltonian reduction such as $\cD_\hbar(K\backslash G/ H)$ and $(\fg,K)\module_\hbar$). 
\end{corollary}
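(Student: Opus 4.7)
The plan is to derive this corollary as a formal consequence of Theorem~\ref{central action intro}, combined with the universal central action of a Drinfeld center on any module category and the Morita equivalence between $(\cD_\hbar(G),\ast)$ and $\HC_\hbar$ (discussed in Section~\ref{integrable section}). The theorem does all the real work; the corollary is its translation from statements about Harish-Chandra bimodules to statements about arbitrary strong $G$-categories.

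First, I would invoke 1-affineness~\cite{1affine, dario} to identify $(\cD_\hbar(G),\ast)\modcat\simeq \HC_\hbar\modcat$, under which a strong $G$-category $\cM$ corresponds to its weak $G$-equivariants $\cM^G$ with the natural $\HC_\hbar$-action. Passing to Drinfeld centers of the two monoidal categories identifies
\[
\cZ(\cD_\hbar(G),\ast) \simeq \cZ(\HC_\hbar) \simeq \cD_\hbar(G\adjquot G),
\]
and by the general principle that the Drinfeld center of any monoidal category $\cC$ acts on every $\cC$-module category by endomorphisms of the identity functor, one obtains a canonical braided action of $\cD_\hbar(G\adjquot G)$ on $\cM$ which commutes with the $\cD_\hbar(G)$-action.

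Second, I would precompose with the braided monoidal functor $\Ngo_\hbar:\cWh_\hbar\to \cD_\hbar(G\adjquot G)$ supplied by Theorem~\ref{central action intro}. Since $\cWh_\hbar$ is symmetric monoidal and $\Ngo_\hbar$ is $E_2$, this composite defines a tensor action of $\cWh_\hbar$ on $\cM$, commuting with the strong $G$-action, as required for the first assertion. The descent to quantum Hamiltonian reductions is then automatic: any reduction of the form $\cD_\hbar(K\backslash M)$ or $(\fg,K)\module_\hbar$ is constructed by applying weak- or strong-equivariant operations for a subgroup $K\subset G$ to $\cM$, and because the $\cWh_\hbar$-action commutes with all of $\cD_\hbar(G)$ it commutes in particular with the $\cD_\hbar(K)$-action used to form the reduction, hence descends through any (co)invariants and Hamiltonian reduction step.

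There is essentially no obstacle at the level of the corollary itself, since the statement is purely a ``change of module category'' along a braided functor into a Drinfeld center. The main subtlety, which has already been handled in Theorem~\ref{central action intro} and Section~\ref{filtered D-mod section}, is to carry out the entire construction in the filtered / Rees / cohomologically sheared setting so that the $\hbar$-parameter (in cohomological degree two, as in the $\Omega$-background) is tracked coherently; once this has been arranged for $\Ngo_\hbar$ and for the identification of the center, the corollary is an immediate formal consequence.
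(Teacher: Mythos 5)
Your proposal is correct and matches the paper's (implicit) argument: the corollary is simply the observation that $\Ngo_\hbar$ lands in $\cD_\hbar(G\adjquot G)\simeq\cZ(\cD_\hbar(G))\simeq\cZ(\HC_\hbar)$, and the Drinfeld center of a monoidal category acts canonically on every module category, with the Morita equivalence of Theorem~\ref{thm-morita}/\ref{Morita equivalence filtered} justifying the passage between strong $G$-categories and $\HC_\hbar$-modules. The paper does not spell out a separate proof (the final sentence of Theorem~\ref{central action intro} already records the conclusion), and your unpacking, including the remark that the action descends to Hamiltonian reductions because it commutes with all of $\cD_\hbar(G)$ and hence with any $\cD_\hbar(K)$, is exactly what is intended.
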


In other words, quantum Hamiltonian $G$-spaces may be spectrally decomposed under the ``categorical Harish-Chandra operators", i.e., the action of the commuting operators provided by the quantum Ng\^o map $\cWh_\hbar\simeq \QC^!(\fhx\GIT W^{\aff})\to \cD_\hbar(G/G)=\cZ(\cD_\hbar(G))$.
In other words, quantum Hamiltonian $G$-spaces may be spectrally decomposed under the ``categorical Harish-Chandra operators", i.e., the action of the commuting operators provided by the quantum Ng\^o map $\cWh_\hbar\simeq \QC(\fhx\GIT W^{\aff})\to \cD_\hbar(G/G)=\cZ(\cD_\hbar(G))$.

\begin{remark}[Integrating quantum Hamiltonian systems by a quantum groupoid]\label{quantum groupoid remark}
This categorical statement has a more concerete ``function-level" interpretation as follows in the spirit of Proposition~\ref{classical integration}. For a $G$-space $M$, the algebra $\fD_{\hbar,M}$ is a $\fD_{\hbar,G}$-comodule in $\fU\fg$-modules. Concretely, $\fD_{\hbar,M}$ carries an action of $G$ and an action of $\fU_\hbar\fg$ (from the moment map), making it a Harish-Chandra bimodule.
The function level quantization of the action of Proposition~\ref{classical integration} endows $\fD_{\hbar,M}$ with the structure of $\fWh_\hbar$-comodule--concretely the coaction map is given by
$$\fD_{\hbar,M}\rightarrow   \fD_{\hbar,G} \otimes_{\fU_\hbar\fg} \fD_{\hbar,M}  \rightarrow (\fWh_\hbar \otimes_{\fZ_\hbar\fg} \fU_\hbar\fg) \otimes_{\fU_\hbar\fg} \fD_{\hbar,M} =\fWh_\hbar \otimes_{\fZ_\hbar\fg} \fD_{\hbar,M} $$
Here we use the map $\fD_{\hbar,G}\rightarrow \fWh_\hbar\otimes_{\fZ\fg} \fU_\hbar \fg $ which arises from the action of $\fD_{\hbar,G}$ on $\fWh_\hbar\otimes_{\fZ\fg} \fU\fg$ defined by the Ng\^o action, and the Harish-Chandra bimodule structure map $\fD_{\hbar,M} \rightarrow \fD_{\hbar,M} \otimes_{\fU\fg} \fD_{\hbar,G}$ above.
This comodule structure on $\fD_{\hbar,M}$ underlies a structure of algebra in $\fWh_\hbar$-comodules, i.e., an action of $\fWh_\hbar$ on $\fD_{\hbar,M}$ as a {\em commutative quantum groupoid} over $\fZ_\hbar\fg$ (cocommutative Hopf algebroid, see~\cite{lu},~\cite{Bohm} and references therein), which is the natural quantum analog of the integration of Hamiltonian flows provided by Proposition~\ref{classical integration}. We postpone a discussion of Hopf algebroids in the $\oo$-categorical setting (and thus a precise formulation of this claim) to a future paper, though see Remark~\ref{abelian vs derived}.
\end{remark}

\begin{remark}[Conjectural Picture: Fukaya Quantization of the Ng\^o correspondence]
The Ng\^o correspondence~\ref{Ngo correspondence} has a Lagrangian structure, and defines a central action of the commutative symplectic groupoid $J$ on $T^\ast G$. This suggests a natural setting for quantization of the Ng\^o action: given a ``deformation quantization theory", a (lax) symmetric monoidal functor $\cF$ from the Lagrangian correspondence category of symplectic manifolds to dg categories, we obtain a symmetric monoidal category $\cF(J)$ together with a central action on the monoidal category $\cF(T^\ast G)$ associated to the symplectic groupoid $T^\ast G$ integrating $\fgx$. Informally speaking one expects suitable versions of the Fukaya category to define such a functor (as we learned from Teleman, Gualtieri and Pascaleff). In particular $\cF(T^\ast G)\actson\cF(M)$ for a Hamiltonian $G$-space $M$ (as explained in~\cite[Conjecture 2.9]{teleman}). Thus one would expect the Ng\^o action to define an action $$\cF(T^\ast G)\actson \cF(M) \onacts \cF(J)$$ of the Fukaya category of $J$, with the symmetric monoidal structure coming from convolution, by $G$-automorphisms of the Fukaya category of any Hamiltonian $G$-space. Moreover mirror symmetry should identify $\cF(J)$ in terms of the B-model on $H^\vee\GIT W$, providing a notion of spectral decomposition of $G$-categories. It would be very interesting to understand the relation of this picture to the remarkable comprehensive character theory for $G$-A-models developed by Teleman~\cite{teleman}. Note that Teleman's theory prominently features the (unquantized) groupscheme $J$ for the {\em Langlands dual} group, as the target for a spectral decomposition of a smarter ``decompleted" form of $\cF(T^\ast G)$-modules.
\end{remark}

\subsection{The quantum Ng\^o action}\label{parabolic induction section}
In this section, we give a number of conjectural interpretations of the functor $\Ngo: \cWh \to \cD(G)^G$ in terms of more familiar constructions arising in the theory of character sheaves. 

\subsubsection{The horocycle transform and parabolic induction/restriction}
First, let us consider the commutative diagram
\[
\xymatrix{
G\adjquot G  &\ar[l]_a G\adjquot B \ar[r]^b& Hor \\
G\adjquot G  \ar@{=}[u] &\ar[l]^r B\adjquot B \ar@{^{(}->}[u] \ar[r]_{s}& H\adjquot B \ar@{^{(}->}[u]
}
\]
where 
\[
Hor = (\quot NGN)\adjquot H = \quot{G}{(G/N\times G/N)}{H}
\]
is the horocycle stack.
This diagram gives rise to two pairs of adjoint functors, both of which have been studied extensively in the context of character sheaves (see e.g. \cite{lusztig character, Ginzburg admissible, Ginzburg parabolic}): we have the horocycle and character functors 
\[
\xymatrix{
\hc = b_\ast a^! :  \cD(G)^G \ar[r] & \ar[l] \cD(Hor) : a_\ast b^! = ch
}
\]
and the parabolic restriction and induction functors
\[
\xymatrix{
\Res = s_\ast r^![\dim N]: \cD(G)^G \ar[r] & \ar[l] \cD(H)^B \simeq \cD(H)^H : r_\ast s^![-\dim N] = \Ind
}
\]
These functors are closely related, but have different features. For example:
\begin{itemize}
\item The composite of $\hc$ followed by restricting to the diagonal $H\adjquot B$ in the Horocycle space is equivalent to $\Res$ (up to a shift). 
\item
The category $\cD(Y)$ carries a monoidal structure coming from convolution, and the functor $\hc$ is naturally monoidal. On the other hand, $\Res$ does not intertwine the convolution structures in general. 
\item The functor $\hc$ is easily seen to be conservative by an argument of Mirkovic and Vilonen \cite{MV} (the composite $ch \circ \hc$ is given by convolution with the Springer sheaf; in particular, the identity functor is a direct summand). On the other hand, the functor $\Res$ is only conservative in the case where no Levi subgroup of $G$ carries a cuspidal local system in the sense of Lusztig \cite{lusztig_intersection_1984} (this is the case for $G=GL_n$, for example, but not for $G=SL_2$). 
\item The functors $\Ind$ and $\Res$ restrict to exact functors on the level of abelian categories, but $\hc$ and $ch$ do not, in general.
\end{itemize}

\subsubsection{Springer theory and quantum Hamiltonian reduction}
 In \cite{Gun1, Gun2}, the category $\cD(\fg)^G$ is studied, along with the analogous functors to $\Res$ and $\Ind$ in the Lie algebra setting (which we continue to denote $\Res$ and $\Ind$). The category $\cD(\fg)^G$ is shown to decompose in to blocks indexed by cuspidal data. One such block is the Springer block; this can be described as the subcategory of $\cD(\fg)^G$ generated by the essential image of the functor $\Ind$. It is shown that the functor $\Res$ upgrades to an exact equivalence of abelian categories
\[
\ls{W}{\Res} : \cM(\fg)^G_{Spr} \xrightarrow{\sim} \cM(\fh)^W: \ls{W}{\Ind}
\]
on the Springer block. The inverse functor $\ls{W}{\Ind}$ to $\ls{W}{\Res}$ takes a $W$-equivariant object $\fM$ of $\cM(\fh)$ to the $W$-invariants of $\Ind(\fM)$. (This result can be thought of as an extension of the Springer correspondence, which identifies a block of the category of equivariant $\cD$-modules with support on the nilpotent cone with representations of $W$.) 

To state the conjectures below, we will assume the analogous results to \cite{Gun1, Gun2} in the setting of equivariant $\cD$-modules on $G$ (which the second named author intends to address in future work). In particular, we will assume we have an equivalence:
\[
\xymatrix{
\ls{W}{\Res} : \cM(G)^G_{Spr} \ar@{<->}[r]^\sim & \cM(H)^W: \ls{W}{\Ind}
}
\]
In particular, there is an extension of this equivalence to a functor (no longer fully faithful) on the level of dg-categories\footnote{Note that the source category $\cD(H)^W$ is the dg-derived category of its heart; though this is not the case for the target category, there is still a canonical functor from the dg-derived category of $\cM(G)^G$ to $\cD(G)^G$.}
\[
\xymatrix{
\ls{W}{\Ind}: \cD(H)^W \ar[r] & \cD(G)^G
}
\]

Now let us recall the functor of quantum Hamiltonian reduction and the Harish-Chandra homorphism. Consider the object
\[
\fD_{G\adjquot G} = \fD_G/\fD_G\ad(\fg) 
\]
which represents the functor of quantum Hamiltonian reduction. There is an exact functor of abelian categories
\[
\QHR:\cM(G)^G \to (\fD_{G\adjquot G})^G\module^\heartsuit
\]
which takes a strongly equivariant $\fD_G$-module to its $G$-invariants; it has a fully faithful left adjoint $\QHR^L$, which we extend to a functor on derived categories. By results of Levasseur and Stafford \cite{LS1, LS2} (or rather, the natural analogue in the group setting), the Harish-Chandra homomorphism defines an isomorphism of rings 
\[
rad:(\fD_{G\adjquot G})^G \simeq (\fD_H)^W.
\]
Note also there is a Morita equivalence between $\fD_H \# W$ and its spherical subalgebra $(\fD_H)^W$ which takes a $\fD_H \# W$-module to its $W$-invariants. These functors are compatible in the sense that there is a commutative diagram:
\[
\xymatrix{
\cD(H)^W \ar[d]^\wr_{(-)^W} \ar@/^1pc/[rrd]^{\ls{W}{\Ind}} && \\
(\fD_H)^W\module  & & \cD(G)^G \\
 (\fD_{G\adjquot G})^G\module \ar[u]_\wr^{rad} \ar@/_1pc/[rru]_{\QHR^L} && 
}
\]

\begin{remark}
In the case $G=GL_n$, there are no non-trivial cuspidal data (equivalently, $\QHR$ is conservative), and thus we have an equivalence of abelian categories
\[
\cM(G)^G \simeq \cM(H)^W
\]
However, note that this equivalence does not respect monoidal structures in general.
\end{remark}

\subsubsection{The nil-DAHA and sheaves on the coarse quotient} 
Let $W^\aff \simeq \Lambda_H \rtimes W$ denote the extended affine Weyl group, which acts on $\fh^\ast$ with $\Lambda_H$ acting by translation. The (degenerate) \emph{double affine nil-Hecke algebra} (nil-DAHA) is defined to be the subring
\[
\Nil_{W^\aff} \subseteq \left(\Sym(\fh)[\alpha^{-1} \mid \alpha \in \Phi]\right) \rtimes W^\aff
\]
generated by $\Sym(\fh)$ and the Demazure operators $\alpha^{-1}(1-s_\alpha)$ associated to affine simple roots $\alpha$ (see \cite[Chapter 4.3]{schubert book} for further details). The ring $\Sym(\fh) \rtimes W^\aff$ sits as a subring of $\Nil_{W^\aff}$; in particular there is a fully faithful functor
\[
\forg: \Nil_{W^\aff} \module \hookrightarrow \fD_H \rtimes W\module \simeq \cD(H)^W
\]
given by forgetting the action of the nil-Hecke algebra to the subring (the fully faithful property follows from the fact that both rings sit inside a common localization). Similarly, the spherical subalgebra $\Nil_{W^\aff}^{sph}$ is Morita equivalent to $\Nil_{W^\aff}$ and contains a copy of $(\fD_H)^W$.

More geometrically, the nil-DAHA represents the descent data for an object of $\QC(\fh^\ast)$ to the coarse quotient $\fh^\ast \GIT W^\aff$, whereas $\Sym(\fh)\rtimes W^\aff$ represents descent data to the stack quotient $\fh^\ast/W^\aff$ (see \cite{lonergan}).

The results of Ginzburg \cite{ginzburg whittaker} and Lonergan \cite{lonergan} identify the spherical nil-DAHA $\Nil_{W^\aff}^{sph}$ with bi-Whittaker differential operators $\fWh$, or alternatively, the loop rotation equivariant homology convolution algebra of the Langlands dual affine Grassmannian (with $\hbar$ formally set to 1). In fact, one can check that this is an isomorphism of bialgebroids, and thus there is an equivalence of monoidal categories
\[
\Nil_{W^\aff}\module \simeq \Nil^{sph}_{W^\aff} \module \simeq \fWh\module \simeq \cWh
\]
In particular, there is a copy of $Nil\module$ sitting inside $\cD(H)^W$, which we denote by $\cD(H)^W_{Nil}$. The following conjecture states that the Ng\^o functor is compatible with the functors given by Springer theory and the Harish-Chandra homomorphism.

\begin{conjecture}\label{quantum ngo induction}
There is a commutative diagram:
\[
\xymatrix{
\QC(\fh^\ast \GIT W^\aff) \ar@{<->}[d]^\wr \ar[r]^{\pi^\ast} & \QC(\fh^\ast)^{W^\aff} \ar@{<->}[d]^\wr \\
\Nil_{W^\aff}\module \ar[r]  \ar@{<->}[d]^\wr &  \ar@{<->}[d]^\wr \fD_H \rtimes W\module \ar[d] \ar@/^3pc/[dd]^{\ls{W}{\Ind}} \\
\Nil^{sph}_{W^\aff}\module \ar[r]  \ar@{<->}[d]^\wr & (\fD_H)^W\module \ar[d]_{\QHR^L} \\
\cWh \ar[r]^{\Ngo} & \cD(G)^G 
}
\]
\end{conjecture}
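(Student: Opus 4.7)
The plan is to establish the commutativity in three stages. First, the upper part of the diagram encodes sheaf-theoretic identifications: by Lonergan \cite{lonergan}, $\Nil_{W^\aff}\module \simeq \QC(\fh^\ast \GIT W^\aff)$; by the classical Mellin/Fourier equivalence, $\fD_H\module \simeq \QC(\fh^\ast)^{\Lambda_H}$ and therefore $\fD_H \rtimes W\module \simeq \QC(\fh^\ast)^{W^\aff}$. Under these equivalences the pullback $\pi^\ast$ along the quotient $\fh^\ast/W^\aff \to \fh^\ast \GIT W^\aff$ becomes the forgetful functor along the ring inclusion $\Sym(\fh) \rtimes W^\aff \hookrightarrow \Nil_{W^\aff}$, giving commutativity of the top square. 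The middle square is then a consequence of the fact that the $W$-invariants functor, implementing Morita equivalence on both sides, intertwines the forgetful map with its spherical counterpart.

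The essential content is the bottom square: showing that the Ng\^o functor, transported via the Ginzburg--Lonergan equivalence $\cWh \simeq \Nil^{sph}_{W^\aff}\module$ and the Harish-Chandra isomorphism $rad: (\fD_{G\adjquot G})^G \simeq (\fD_H)^W$, agrees with $\QHR^L$. The approach is to exhibit both composite functors $\cWh \to \cD(G)^G$ as $\cWh$-linear and identify them by checking they agree on the monoidal unit. The $\cWh$-linearity of $\Ngo$ is built into Theorem~\ref{central action intro}; the $\cWh$-linearity of $\QHR^L \circ rad^{-1} \circ (-)^W$ follows since $\QHR^L$ is left adjoint to the $(\fD_{G\adjquot G})^G$-linear functor $(-)^G$. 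Two $\cWh$-linear functors agree if and only if they agree on the monoidal unit, reducing the problem to identifying $\Ngo(\mathbf{1}_\cWh) \simeq \QHR^L(\fD_{G\adjquot G})$, i.e.\ showing that the image of the unit under the $E_2$-section $\Ngo$ coincides with the ``universal'' object $\fD_G/\fD_G \ad(\fg)$ representing quantum Hamiltonian reduction. Equivalently, via the outer loop through $\ls{W}{\Ind}$, one identifies both with parabolic induction of the constant $\cD$-module on $H$, which by Springer theory generates the Springer block.

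The hard part is the comparison of the two bialgebroid structures on $\fWh$: the one provided by the geometric construction of Ng\^o via the BFM description $\fWh \simeq H_\ast^{\LGpv\rtimes \CC^\times}(\uGrv)$ and the central action of Theorem~\ref{main intro}, versus the one coming from Ginzburg--Lonergan's identification $\fWh \simeq \Nil^{sph}_{W^\aff}$ and its natural inclusion of $(\fD_H)^W$. Matching these requires a careful analysis of how loop rotation, equivariant homology, and the Kostant-Whittaker reduction interact, and is the technical heart of the conjecture. Once this compatibility is established, verifying agreement on the unit becomes a direct computation using Kostant's theorem $\fZ\fg \simeq \fU\fg \GIT_\psi \fU\fn$ and the fact that the parabolic induction of the trivial $\cD$-module on $H$ carries the Springer $W$-action realizing the nil-DAHA structure.
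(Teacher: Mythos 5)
This statement is labelled \texttt{Conjecture} in the paper and the authors give no proof of it, so there is no argument in the paper against which to compare yours. What you have written is a plausible proof \emph{plan}, and you yourself flag the missing step: you write that matching the bialgebroid structure on $\fWh$ coming from the BFM/renormalized Satake description with the Ginzburg--Lonergan presentation as $\Nil^{sph}_{W^{\aff}}$ ``is the technical heart of the conjecture.'' That concession is accurate, and since that step is not carried out, your proposal is not a proof.

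Two further observations. First, the very setup of the conjecture rests on an unproven input: the paper explicitly assumes the group analogue of the results of \cite{Gun1,Gun2} (the equivalence $\cM(G)^G_{Spr}\simeq\cM(H)^W$) as something ``the second named author intends to address in future work,'' so $\ls{W}{\Ind}$ is not yet constructed in the required generality. Second, the closest the paper comes to evidence for the bottom square is the remark at the end of the section constructing $\Ngo_{t,gr}$: the quantum Hamiltonian reduction of the kernel bimodule $\fB$ is isomorphic to $\fWh$ as a right $\fWh$-module, the induced ring map $(\fD_H)^W\to\fWh$ is the natural inclusion, and therefore ``the Ng\^o functor, at the level of quantum Hamiltonian reduction, is given by the forgetful functor.'' In your language this says $\QHR\circ\Ngo$ agrees with the forgetful functor, i.e.\ the bottom square commutes \emph{after} applying $\QHR$. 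Upgrading this to commutativity of the square with $\QHR^L$, which is what the conjecture asks, requires knowing that $\Ngo$ lands in the essential image of the fully faithful $\QHR^L$ (equivalently, in the Springer block); this is a genuine claim, tied to the non-conservativity of $\QHR$ in the presence of cuspidal data, and is part of what is left open. Your reduction to the monoidal unit via $\cWh$-linearity is also slightly circular as written: establishing that $\QHR^L\circ rad^{-1}\circ(-)^W$ is $\cWh$-linear (rather than merely $(\fD_H)^W$-linear) already presupposes the compatibility of the two $\fWh$-module structures you are trying to prove.
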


\begin{remark}
A remarkable feature of this diagram is that, while the functor 
\[
\ls{W}{\Ind}:\cD(H)^W \to \cD(G)^G
\]
relates two braided monoidal categories, it does not carry a monoidal structure; however, according to the conjecture, $\ls{W}{\Ind}$ is braided monoidal upon restriction to the full subcategory given by modules for the nil-Hecke algebra.
\end{remark}

\begin{remark}\label{remark quantum affine}
Recall that the Ng\^o functor arises from the Lagrangian correspondence (read from left to right)
\[
\xymatrix{
J & \ar[l] \chi^\ast J \ar[r] & I= T^\ast(G\adjquot G)
}
\]
whereas the functor $\Whit$ arises from the Lagrangian correspondence (read from right to left)
\[
\xymatrix{
J & \ar[l] \kappa^\ast(I) \ar[r] & I = T^\ast(G\adjquot G)
}
\]
While these diagrams are manifestly different in general in particular (in particular, the classical Ng\^o functor is not adjoint to the Whittaker functor), both diagrams have isomorphic affinizations:
\begin{equation}\label{affine diagram}
\xymatrix{J & \ar[l] J \simeq (\chi^\ast J)^{aff} \ar[r] & I^{aff} \simeq (T^\ast H) \GIT W}
\end{equation}
Diagram \ref{affine diagram} corresponds to an inclusion of rings
\[\xymatrix{
\C[J] & \ar[l] \C[T^\ast H]^W
}\]
which quantizes to
\[
\xymatrix{
\fWh \simeq \Nil_{W^\aff}^{sph} & \ar[l] (\fD_H)^W
}
\]
Thus, on the level of affinization, the functors $\Ngo$ and $\Whit$ correspond to the forgetful functor and the base change functor associated to the above inclusion of rings (in particular, they form an adjoint pair). It is remarkable that, while the stack $T^\ast(G\adjquot G)$ is far from affine, it's quantization is almost affine: the abelian category of $(\fD_H)^W$-modules (the quantum affinization) sits as a full subcategory (in fact, a direct summand) of $\cM(G\adjquot G)$ (and in the case $G=GL_n$, the two categories are equivalent, i.e. $T^\ast(G\adjquot G)$ is quantum affine). This explains the simpler form of the Ng\^o and Whittaker functors appearing in Conjecture \ref{quantum ngo induction}.
\end{remark}

\subsubsection{Very central $\cD$-modules}
The following definition was given in the PhD thesis of the second named author.
\begin{definition}\label{very central}
We say that an object $\fM \in \cM(G)^G$ is \emph{very central} if $\hc(\fM)$ is supported on the diagonal substack $H\adjquot B \subseteq Hor$.
\end{definition}
 Note that if $\fM$ is very central, then $\hc(\fM)$ is identified with the parabolic restriction $\Res(\fM)$. In particular, restricting $\ls{W}{\Res}$ to $\cM(G)^G_{vc}$ defines a fully faithful monoidal functor to the symmetric monoidal abelian category $\cM(H)^W$.

\begin{remark}
At the level of abelian categories, the functor $\hc$ takes an equivariant $\fD_G$-module $\fM$ to its $N$-average $(G\to G/N)_\ast \fM$. The very central property means that this $N$-average is supported on $H=B/N \subseteq G/N$. This property has been studied in \cite{Chen}.
\end{remark} 

\begin{conjecture}
\begin{enumerate}
\item The Ng\^o functor defines a fully faithful braided monoidal functor on abelian categories, whose essential image is given by $\cM(G)^G_{vc}$.
\item The essential image of $\ls{W}{\Res}$ restricted to $\cM(G)^G_{vc}$ is given by $\cM(H)^W_{Nil}$.
\end{enumerate}
\end{conjecture}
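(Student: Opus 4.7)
The strategy is to combine the monoidal left inverse $\Whit$ from Theorem~\ref{central action intro} with Conjecture~\ref{quantum ngo induction}, which identifies the quantum Ng\^o functor with the composite
\[
\cWh \simeq \Nil_{W^\aff}\module \xrightarrow{\forg} \cD(H)^W \xrightarrow{\ls{W}{\Ind}} \cD(G)^G.
\]
The plan has three steps: (i) show $\Ngo$ is fully faithful on abelian hearts; (ii) show its essential image is contained in $\cM(G)^G_{vc}$; and (iii) identify the image of $\ls{W}{\Res}\circ\Ngo$ with $\cM(H)^W_{Nil}$ and use this to pin down the essential image inside $\cM(G)^G_{vc}$ itself. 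Part (2) of the conjecture will then drop out of the fully faithfulness of $\ls{W}{\Res}|_{\cM(G)^G_{vc}}$ observed after Definition~\ref{very central}.

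For step (i), the identity $\Whit\circ\Ngo\simeq\Id$ of Theorem~\ref{central action intro} shows $\Ngo$ is fully faithful as a dg functor; combined with the t-exactness of $\Ngo$ from Theorem~\ref{unsheared ngo intro}, fully faithfulness descends automatically to the abelian hearts: for $\fM,\fN\in\cWh^\heartsuit$ one has $\Hom_{\cM(G)^G}(\Ngo\fM,\Ngo\fN)=H^0\Map_{\cD(G)^G}(\Ngo\fM,\Ngo\fN)\simeq H^0\Map_{\cWh}(\fM,\fN)=\Hom_{\cWh^\heartsuit}(\fM,\fN)$.

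For step (ii), I would compute the horocycle transform of an object in the image of $\Ngo$. Using Conjecture~\ref{quantum ngo induction} to write $\Ngo(\fM)\simeq\ls{W}{\Ind}(\forg(\fM))$, the question reduces to showing that $\hc\circ\ls{W}{\Ind}$ lands in $\cD$-modules on $Hor$ supported on the diagonal $H\adjquot B$ whenever the input carries a nil-DAHA structure. The nil-DAHA relations encode descent from $\fh^\ast/W^\aff$ to the coarse quotient $\fh^\ast\GIT W^\aff$; under $\ls{W}{\Ind}$ this descent should translate, via the Grothendieck--Springer resolution underlying parabolic induction, into the vanishing of the contributions from the non-diagonal $B$-orbits on $G/N\times G/N$, since the Demazure operators precisely interpolate the sheaf data across the root hyperplanes separating these orbits. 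Making this geometric vanishing precise is the main obstacle of the proof and appears to require the conjectural extension of \cite{Gun1, Gun2} from the Lie algebra to the group setting that already underlies Conjecture~\ref{quantum ngo induction}.

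For step (iii), Conjecture~\ref{quantum ngo induction} together with the Morita equivalence between spherical and full nil-DAHA gives a commutative square identifying $\ls{W}{\Res}\circ\Ngo$ with the forgetful functor $\forg:\Nil_{W^\aff}\module\to\cD(H)^W$; hence the essential image of this composite is exactly $\cM(H)^W_{Nil}$. Combined with (i)--(ii), the essential image of $\Ngo$ lies inside $\cM(G)^G_{vc}$ and is identified by $\ls{W}{\Res}$ with $\cM(H)^W_{Nil}$. For essential surjectivity onto $\cM(G)^G_{vc}$, the remark after Definition~\ref{very central} asserts that $\ls{W}{\Res}|_{\cM(G)^G_{vc}}$ is fully faithful, so it suffices to show that for any very central $\fM$, the $W$-equivariant $\fD_H$-module $\ls{W}{\Res}(\fM)$ carries the additional Demazure operators of the nil-DAHA. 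This should follow from the monodromy symmetries along the affine Weyl group built into the loop rotation equivariant (cohomologically sheared) setup of Section~\ref{quantum Ngo section}: the $\hbar$-equivariance forces $\ls{W}{\Res}(\fM)$ to factor through the nil-DAHA descent from $\fh^\ast/W^\aff$ to $\fh^\ast\GIT W^\aff$. Granting this, parts (1) and (2) of the conjecture follow simultaneously.
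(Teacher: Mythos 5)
This statement is labeled a \emph{conjecture} in the paper; the paper offers no proof of it. The only content the paper supplies is the one-line remark immediately following the statement: ``Note that the two statements are mutually equivalent given Conjecture \ref{quantum ngo induction}.'' So a comparison with ``the paper's own proof'' reduces to comparing your sketch with that equivalence observation, which you do recover (your step (iii) is essentially the paper's remark that, modulo Conjecture \ref{quantum ngo induction} and the fully faithfulness of $\ls{W}{\Res}$ on $\cM(G)^G_{vc}$, parts (1) and (2) determine each other).

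However, there is a genuine error in your step (i). You claim that the relation $\Whit\circ\Ngo\simeq\Id$ from Theorem~\ref{central action intro} ``shows $\Ngo$ is fully faithful as a dg functor.'' This does not follow: a functor with a left inverse is faithful and conservative, but need not be full. (The map $\Hom(\fM,\fN)\to\Hom(\Ngo\fM,\Ngo\fN)$ is a split monomorphism via $\Whit$, but there is no reason for the splitting to be an isomorphism --- the standard counterexample is the diagonal $\cC\to\cC\times\cC$ with either projection as a left inverse.) Indeed, if fully faithfulness were a formal consequence of having a left inverse, the paper would have stated (1) as a theorem rather than a conjecture, at least at the derived level. Fully faithfulness of $\Ngo$ is genuinely part of what is being conjectured, and your argument does not establish it.

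The remainder of your steps (ii)--(iii) are heuristic: you appeal to Conjecture~\ref{quantum ngo induction} (itself unproved in the paper), to a conjectural group-version of the results of \cite{Gun1, Gun2}, and to a qualitative assertion that ``Demazure operators precisely interpolate the sheaf data across the root hyperplanes,'' the precise geometric content of which you yourself flag as the main obstacle. These are reasonable lines of speculation, and step (iii) correctly isolates the equivalence of parts (1) and (2) modulo Conjecture~\ref{quantum ngo induction}, matching the paper's remark. But the proposal does not constitute a proof, and the appeal to the left inverse in step (i) is the one place where a definite mathematical claim is made and it is incorrect as stated.
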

Note that the two statements are mutually equivalent given Conjecture \ref{quantum ngo induction}.

\subsubsection{Twisted Harish-Chandra systems and almost idempotent character sheaves}
Recall that there is an equivalence $\cWh \simeq \QC(\fh^\ast\GIT W^\aff)$; thus the Ng\^o functor defines a collection of orthogonal almost-idempotent objects of $\cD(G)^G$, given by the image of skyscraper sheaves of points $\cO_{[\theta]}$, where $[\theta]$ denotes a point of $\fh^\ast \GIT W^\aff$ corresponding to $\theta \in \fh^\ast\GIT W$. These objects are expected to be certain twisted forms of the Harish-Chandra system associated to $\theta$. 

To explain this more precisely, let $\cWh_{[\theta]}$ denote the category of\emph{admissible $\fWh$-modules with central character $[\theta]$}, i.e. the full monoidal subcategory of $\cWh$ consisting of objects whose (set-theoretic) support with respect to $\fZ\fg \simeq \C[\fh^\ast]^W$ is contained in $[\theta]$. As the fibers of $\fh^\ast \GIT W \to \fh^\ast \GIT W^\aff$ are discrete, there is a symmetric monoidal equivalence with sheaves on $\fc$ set-theoretically supported at $\theta$ (for any choice of lift $\theta$ of $[\theta]$):
\[
\cWh_{[\theta]} \simeq \fZ\fg\module_{\theta} \simeq \QC(\fc)_{\theta}
\]

Abstractly this category is symmetric monoidally equivalent to $\QC(\A^r)_{(0)}$, the subcategory of modules for a symmetric algebra generated by the augmentation.\footnote{In particular, the categories $\cWh_{[\theta]}$ are equivalent for all values of $\theta$. This result is not immediately apparent from the definition, and somewhat surprising given how the category of character sheaves $\cD(G)^G_{[\theta]}$ varies with the central character $[\theta]$.} By Koszul duality, this in turn is isomorphic to $\QC(\A^r[-1]) \simeq L\module$, where $L = \Sym(\C^r[1])$. It follows that the objects $\cO_{[\theta]}$ are orthogonal and almost idempotent with respect to the monoidal structure on $\cWh$ (i.e. idempotent up to a ``scalar'' given by the dg-vector space $L$). There is also an (actual) derived idempotent $\widecheck{\cO}_{[\theta]} \in \QC(\fh^\ast\GIT W^\aff)_{[\theta]}$ which corresponds to the augmentation module in $L\module$, or the $\cD$-module of delta functions in $\A^r$, considered as an object of $\QC(\A^r)_{(0)}$.

It follows that the image of  $\cO_{[\theta]}$ (respectively $\widecheck{\cO}_{[\theta]}$) are almost idempotent (respectively idempotent) objects in the monoidal category $\cD(G)^G$. We denote these objects by $\fE_{[\theta]}$ (respectively $\widecheck{\fE}_{[\theta]}$. 

Recall \cite{Ginzburg admissible} that the category of \emph{character sheaves} (or \emph{admissible modules}) with central character $[\theta]$ is the subcategory of $\cD(G)^G$ consisting of $\fD_G$-modules whose $\fZ\fg$-support is contained in $[\theta]$.\footnote{In this paper, we do not require character sheaves to be semisimple, or even coherent as $D$-modules.} It follows directly that the objects $\fE_{[\theta]}$ and $\widecheck{\fE}_{[\theta]}$ are examples of character sheaves with central character $[\theta]$; in fact, $\widecheck{\fE}_{[\theta]}$ is the unit object in the category of character sheaves.

These objects may be described more explicitly, assuming Conjecture \ref{quantum ngo induction}. Recall that we have a sequence of functors
\[
\cWh \simeq \QC(\fh^\ast\GIT W^\aff) \to \QC(\fh^\ast)^{W^\aff} \simeq \cD(H)^W
\]
Given a skyscraper sheaf $\cO_{[\theta]}$ in $\QC(\fh^\ast \GIT W^\aff)$, let $\fL_{[\theta]}$ denote the corresponding object of $\cD(H)^W$. Uniwinding the definitions, we see that $\fL_{[\theta]}$ is a certain $W$-equivariant flat connection of rank $W$ on $H$; for example, $\fL_{[0]}$ is an indecomposible unipotent flat connection on $H$, where the invariant differential operators $\Sym(\fh)$ act on a frame of sections as the module of coinvariants $\Sym(\fh)/\Sym(\fh)^W_+$. Similarly, the object $\widecheck{\fL}_{[\lambda]}$ is a certain infinite rank flat connection; for example, $\widecheck{\fL}_{[0]}$ is an ind-unipotent flat connection, which has a frame isomorphic to $\Sym(\fh^\ast) = \C[\fh]$, where the $\Sym(\fh)$ action is via constant coefficient differential operators. Thus we obtain the following:

\begin{proposition}
Assume Conjecture \ref{quantum ngo induction}; then we have almost idempotent objects
\[
\fE_{[ \theta]} \simeq \ls{W}{\Ind}(\fL_{[\theta]})
\]
and idemptotent objects
\[
\widecheck{\fE}_{[\theta]} \simeq \ls{W}{\Ind}(\widecheck{\fL}_{[\theta]})
\]
of $\cD(G)^G$, for each $[\theta] \in \fhx\GIT W^\aff$.
\end{proposition}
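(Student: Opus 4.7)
The plan is to derive this proposition as a direct consequence of Conjecture \ref{quantum ngo induction} by unwinding the definitions. By construction, $\fE_{[\theta]}$ and $\widecheck{\fE}_{[\theta]}$ are the images under $\Ngo$ of the objects $\cO_{[\theta]}$ and $\widecheck{\cO}_{[\theta]}$ in $\cWh \simeq \QC(\fh^\ast\GIT W^\aff)$, while $\fL_{[\theta]}$ and $\widecheck{\fL}_{[\theta]}$ are, by the discussion just before the proposition, the images of these same objects under the horizontal composite along the top of the diagram in Conjecture \ref{quantum ngo induction}, namely
\[
\cWh \simeq \QC(\fh^\ast \GIT W^\aff) \xrightarrow{\pi^\ast} \QC(\fh^\ast)^{W^\aff} \simeq \fD_H\rtimes W\module \simeq \cD(H)^W.
\]
So the content of the proposition is precisely the commutativity of the outer rectangle of that diagram, evaluated at these two objects.

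The key step is therefore to read Conjecture \ref{quantum ngo induction} as providing a natural equivalence of functors
\[
\Ngo \;\simeq\; \ls{W}{\Ind} \circ \bigl(\cWh \xrightarrow{\pi^\ast} \cD(H)^W\bigr),
\]
where the right hand factor is the composite displayed above and $\ls{W}{\Ind}:\cD(H)^W \to \cD(G)^G$ is the derived extension of the parabolic induction functor (here, composed with the Morita/Harish--Chandra identifications $\fD_H\rtimes W\module \simeq (\fD_H)^W\module \simeq (\fD_{G\adjquot G})^G\module$ and the functor $\QHR^L$, as in the lower triangle of the conjecture). Evaluating this natural equivalence on $\cO_{[\theta]}$ yields $\fE_{[\theta]} = \Ngo(\cO_{[\theta]}) \simeq \ls{W}{\Ind}(\fL_{[\theta]})$ and analogously for the checked objects. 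The almost-idempotent (respectively idempotent) property on the left hand side then follows automatically, since $\Ngo$ is monoidal and $\cO_{[\theta]}$, $\widecheck{\cO}_{[\theta]}$ are almost idempotent (resp.\ idempotent) in $\cWh$.

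The main obstacle is not in the proposition itself---which, granted the conjecture, is a formal diagram chase---but rather lies in establishing Conjecture \ref{quantum ngo induction}. The delicate point there is to match the bialgebroid structure on $\Nil^{sph}_{W^\aff}$ coming from the Ginzburg--Lonergan identification $\fWh \simeq \Nil^{sph}_{W^\aff}$ with the one induced from the Ng\^o factorization, and to verify that the fully faithful embedding $\Nil_{W^\aff}\module \hookrightarrow \cD(H)^W$ is compatible with $\ls{W}{\Ind}$ at the derived level. In particular, the construction of $\ls{W}{\Ind}$ on dg-categories relies on the extension of the Springer-block equivalence $\ls{W}{\Res}$ from the Lie algebra to the group setting promised in forthcoming work, and any unconditional proof of the present proposition would need to route around this, e.g.\ by giving a direct geometric construction of $\fE_{[\theta]}$ on the affine Grassmannian side via the renormalized Satake equivalence.
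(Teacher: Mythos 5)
Your proposal is correct and follows the paper's (implicit) argument exactly: the paper states the proposition as an immediate consequence ("Thus we obtain\ldots") of (a) the earlier observation that $\fE_{[\theta]}:=\Ngo(\cO_{[\theta]})$ and $\widecheck{\fE}_{[\theta]}:=\Ngo(\widecheck{\cO}_{[\theta]})$ are (almost) idempotent because $\Ngo$ is monoidal, and (b) the commutativity of the conjectural diagram applied to the skyscraper sheaves, reading $\Ngo$ as $\ls{W}{\Ind}$ precomposed with the top row. Your unwinding of which identifications realize the right-hand column, and your closing remark that the only real content lies in the conjecture itself, are consistent with the paper.
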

\begin{remark}
The objects $\widecheck{\fE}_{[\theta]}$ were studied recently by Chen; see, for example, Theorem 3.8 in \cite{Chen}, where it was shown that the $\fE_{[\theta]}$ were very central in the sense of Definition \ref{very central}.
\end{remark}

Finally, recall the \emph{Harish-Chandra system}
\[
\fM_{0} := \fD_{G}/\fD_G\left(\ad(\fg) + \fZ\fg_+\right)
\]
The fundmental results of Hotta and Kashiwara~\cite{HK} identify the Harish-Chandra system with the (Grothendieck)-Springer sheaf 
\[
\Ind(\fO_{0})\simeq \ls{W}{\Ind}(\C[W] \otimes \fO_0)
\]
where $\fO_{0}$ is the trivial rank one flat connection on $H$. Similarly, one can define $\fM_\theta$ for any $\theta \in \Spec(\fZ\fg)$, and there is an analogous description in terms of parabolic induction. Note that $\C[W] \otimes \fO_0$ is precisley the semisimplification of the $W$-equivariant flat connection $\fL_{[0]}$ (and there is an analogous statement for any $\theta$). Thus the Harish-Chandra system $\fM_{\theta}$ is the semisimplification of the almost idempotent object $\fE_{[\theta]}$. This justifies the name twisted Harish-Chandra system.

\subsection{Kac-Moody Groups and Coxeter Systems}\label{KM section}
Now let us explore some other examples of our construction of central actions on convolution categories from Theorem \ref{main intro}. We will have two closely related classes of examples: one topological, arising from Kac-Moody groups, and another combinatorial, associated to Coxeter systems. These examples are related to the motivating example, by taking the affine Kac-Moody group associated to $\Gv$.

\subsubsection{Toy examples}
Before discussing further, we give two examples to illustrate the basic principle of Theorem \ref{main intro}: for a groupoid $\cG$ acting on a space $X$ with quotient $Y$, $\cG$-equivariant sheaves on $X$, i.e., sheaves on $Y$, act centrally on modules for the convolution category of sheaves on $\cG$, i.e., sheaves of categories on $Y$. 

\begin{example} 
Let $\pi:X\to Y$ denote a map of finite sets, and $\cG=\XYX$. In this case the convolution algebra $(H=k[\cG],\ast)$ is the algebra of $|X|$ by $|X|$ block-diagonal matrices (with blocks labeled by $Y$), which is Morita equivalent to the commutative algebra $k[Y]$. We also consider the convolution category $(\cH=Vect(\XYX),\ast)$. In this case the inclusion of block-scalar matrices $Vect(Y)\hookrightarrow Vect(\XYX)$ identifies 
	$$\xymatrix{H\module\simeq Vect(Y)\ar[rr]^-{\sim}&& \cZ(Vect(\XYX))}$$ with the Drinfeld center of $(Vect(\XYX),\ast)$, categorifying the familiar identification of block-scalar matrices $k[Y]$ as the center of block-diagonal matrices $k[\XYX]$. 
\end{example}	
\begin{example} Let $G$ denote a finite group, and $X=pt\to Y=BG$, so that $G\simeq \XYX$. In this case the convolution algebra $H=(\CC[G],\ast)$ is the group algebra, and $H\module=Rep(G)$ is the symmetric monoidal category of representations. The Drinfeld center of the monoidal category $(Vect(G),\ast)$ is now the braided tensor category $Vect(G/G)$, which contains $Rep(G)\simeq Vect(pt/G)$ as the tensor subcategory of equivariant vector bundles supported on the identity. The latter is in fact a Lagrangian subcategory of $Vect(G/G)$ in the sense of~\cite{DGNO}. We expect our general construction provides (derived analogues of) Lagrangian subcategories as well. The action of $Vect(G)$ on a $Vect(pt)=Vect$ induces an action of its center
	$$\xymatrix{\cZ(Vect(G))=Vect(G/G)\ar[rr]&&End_{Vect(G)}(Vect)\simeq Rep(G)}$$
	which provides the desired left inverse.
\end{example}

\subsubsection{Kac-Moody groups}
Let $\bG$ denote a simply-connected Kac-Moody group, with Borel subgroup $\bB$ (or more generally parabolic subgroup $\bf P$).
The flag variety $\bG/\bB$ is an ind-projective ind-scheme of ind-finite type~\cite{Mathieu, Kumar}. We let $\cG_{\bG,\bB}=\bB\backslash \bG/\bB$ denote the corresponding
``Hecke" groupoid acting on $X_{\bG,\bB} = pt/\bB$.
In this setting, the convolution algebra $H_{\bG,\bB}$ is given by the 
equivariant homology ring $H_\ast(\quot{\bB}{\bG}{\bB})$ (considered as a dg-ring). 
The Kostant category $\cK_{\bG,\bB}=H_{\bG,\bB}\module$ has a symmetric monoidal structure arising from the ``cup coproduct'' on $H_{\bG,\bB}$. The convolution category $\cH_{\bG,\bB}$ is the (renormalized) Iwahori-Hecke category 
$\Dhol(\bB\backslash \bG/\bB)$ of equivariant ind-holonomic $\cD$-modules (or ind-constructible sheaves) on the affine flag variety. 

Theorem \ref{main intro} applies in this setting, giving the following:

\begin{theorem}\label{Kac-Moody intro}
There is a natural $E_2$ functor from the symmetric monoidal Kostant category $\cK_{\bG,\bB}$ to the center $\cZ(\cH_{\bG,\bB})$ of the Iwahori-Hecke category, together with a monoidal right inverse (and likewise for any parabolic $\bf P$ of $\bG$). Thus we have an instance of Diagram~\ref{basic diagram}:

\[
\xymatrix{
H_\ast(\quot \bB \bG \bB)\module\ar[r]_{E_2}\ar[d]_-{E_\infty}& \cZ(\Dhol(\quot \bB\bG\bB)) \ar@/_1pc/_-{E_1}[l]  \ar[d]^{E_1}\\
	H^\ast(pt/\bB)\module \ar[r]_{E_1} & \Dhol(\quot \bB\bG\bB)
}
\]

\end{theorem}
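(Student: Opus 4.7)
The plan is to derive Theorem~\ref{Kac-Moody intro} as a direct specialization of Theorem~\ref{main intro} to the groupoid $\cG = \cG_{\bG,\bB} = \bB\backslash\bG/\bB$ acting on the stack $X = X_{\bG,\bB} = pt/\bB$, using the renormalized ind-holonomic $\cD$-module theory developed in Section~\ref{sheaf theory}. The parabolic case $\mathbf{P} \supset \bB$ is handled identically throughout.

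First I would verify the hypotheses of Theorem~\ref{main intro}. By the results of Mathieu and Kumar cited in the excerpt, the Kac-Moody flag variety $\bG/\bB$ is an ind-projective ind-scheme of ind-finite type, stratified by finite-dimensional Schubert cells. Consequently the source and target maps of the Hecke groupoid $\cG$ over $X$ are ind-proper, since on each Schubert stratum they restrict to projective morphisms. This ind-properness is precisely what is needed to apply the sheaf-theoretic formalism of Gaitsgory-Rozenblyum, and in particular to invoke the base change and $(p_*, p^!)$-adjunction identities underlying Theorem~\ref{main intro}.

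Next I would identify the four vertices of Diagram~\ref{basic diagram} with the objects appearing in Theorem~\ref{Kac-Moody intro}. The sheaf category $\cR = \Dhol(pt/\bB)$ is modules for $\omega(pt/\bB) \simeq H^*(pt/\bB)$, which is the equivariant cohomology ring $H^*(pt/\bB)\module$ appearing in the bottom-left of the diagram. The convolution category $\cH = \Dhol(\cG)$ is by definition the (renormalized) Iwahori-Hecke category $\Dhol(\bB\backslash\bG/\bB)$. The Hecke algebra $H = (\omega(\cG), \ast)$ is identified with the equivariant Borel-Moore homology algebra $H_*(\bB\backslash\bG/\bB)$ under convolution (the affine nil-Hecke algebra of $\bG$ when $\bB$ is a Borel), so $\cK = H\module$ matches the symmetric monoidal Kostant category, its $E_\infty$-structure arising from the commutative pointwise tensor product on $\cH$ (the ``cup coproduct'' dual to the convolution). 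Applying Theorem~\ref{main intro} then directly delivers the braided monoidal functor $\fz: \cK \to \cZ(\cH)$ together with its monoidal one-sided inverse $\fa$, with the stated levels of monoidal structure on each arrow in Diagram~\ref{basic diagram}.

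The main obstacle I expect is purely foundational rather than structural: verifying that the renormalized ind-holonomic $\cD$-module formalism of Section~\ref{sheaf theory}, which is written with the affine Grassmannian in mind, extends cleanly to double coset stacks $\bB\backslash\bG/\bB$ for an arbitrary Kac-Moody group $\bG$, so that the Gaitsgory-Rozenblyum correspondence machinery produces the monoidal and central structures invoked in Theorem~\ref{main intro}. In the affine case relevant to the Langlands dual of a finite-dimensional reductive $G$, this is settled by the setup of~\cite{BezFink}; in the general Kac-Moody case one needs the analogous statements for the Schubert stratification of $\bG/\bB$. The identification of $\omega(\cG)$ with equivariant Borel-Moore homology and the construction of the ``cup coproduct'' symmetric monoidal structure on $H$ likewise deserve careful treatment, but both fall out of the same functoriality package once the sheaf theory is in place.
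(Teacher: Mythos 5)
Your proposal is correct and takes essentially the same approach as the paper: Theorem~\ref{Kac-Moody intro} is obtained simply by applying Theorem~\ref{main intro} (via its formal incarnation Theorem~\ref{groupoid center}) to the ind-proper groupoid $\cG_{\bG,\bB}=\bB\backslash\bG/\bB \actson pt/\bB$ and identifying the four vertices of Diagram~\ref{basic diagram} exactly as you do. Your foundational worry about extending the ind-holonomic formalism from the affine Grassmannian to general Kac-Moody double cosets is already addressed by Section~\ref{sheaf theory}, which develops $\Dhol$ for arbitrary ind-nearly-finite-type stacks rather than for the affine Grassmannian specifically.
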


The objects appearing in the theorem carry combinatorial realizations in terms of the Coxeter system $(\mathbf{h},\mathbf{W})$ associated to $\bG$. Namely, the homology convolution algebra $H_{\bG,\bB}$ is isomorphic to the Kostant-Kumar nil-Hecke algebra of the Coxeter system  (see~\cite{KK,Kumar, Arabia,schubert book,ginzburg hecke}). Analogously, the Iwahori-Hecke category $\cH_{\bG,\bB}$ can be interpreted in terms of Soergel bimodules for $(\mathbf{h},\mathbf{W})$.

\begin{example}[Finite case]
	Consider the case where $\bG$  is a reductive algebraic group, so $(\bh,\bW)$ is a finite Coxeter system. In this case $H_{\bG,\bB}$ is the finite nil-Hecke algebra, acting on $\C[\bh]\module$ by Demazure operators. The category of $H_{\bG,\bB}$-modules is identified with $\C[\bh]^{\bW}\module$, or in other words sheaves the coarse quotient $\bh\GIT\bW$ of $\bh$ by $\bW$. The geometric setting is a differential-graded version of the combinatorial; forgetting about grading, we have $\C[\bh] = H^\ast_{\bB}(pt)$ and $\C[\bh]^\bW = H^\ast_{\bG}(pt)$. The result of Theorem \ref{main intro} is simply the linearity of the finite Hecke category $\cH=\Dhol(B\backslash G/B)$ over the $G$-equivariant cohomology ring.
\end{example}  

\begin{example}[$\cD$-modules on a reductive group]
Our main application, the quantum Ng\^o action, constructs a central action on the monoidal category $\cD(G)$ (or its Morita equivalent realization, $\HC$) via its Langlands dual realization on the loop Grassmannian. We can also apply the construction verbatim to $\cD(G)$, taking $\cG=G$ as a groupoid acting on $X=pt$ (a variant of the previous example with the equivariant flag variety as a groupoid on $pt/B$). In this case we find a central action of the Kostant category $\cK=H_\ast(G)\module$ on $\cD(G)$, which again is a Koszul dual form of linearity over the $G$-equivariant cohomology ring. This form of the Kostant category is manifestly different from (and less interesting than) the Ng\^o action of Whittaker $\cD$-modules; this example clearly demonstrates that our central actions depend on the presentation as a convolution category (rather than being intrinsic invariants of the monoidal category).
\end{example}

\subsubsection{Coxeter groups}
More generally Theorem~\ref{main intro} has a realization in the setting of a Coxeter group $\bW$ with reflection representation $\bh$ (for example, $\bW$ could be the Weyl group of $\bG$ and $\bh$ the Cartan). For $w\in \bW$ we let $\Gamma_w\subset \mathbf{h}\times \mathbf{h}$ denote the graph of the corresponding reflection. Let $$\Gamma_\bW=\coprod_{w\in \bW} \Gamma_w.$$ Then $\Gamma_\bW$ is an ind-proper groupoid acting on the scheme $\mathbf{h}$. This is the equivalence relation underlying the action of $\bW$ on $\mathbf{h}$ -- i.e., $\Gamma_\bW$ is the {\em adjacency groupoid} of $\bW\actson \mathbf{h}$ in the language of~\cite{lonergan}. We may still consider the (non-representable) quotient $\mathbf{h}/\Gamma_\bW$, i.e. the coarse (set-theoretic rather than stack theoretic) quotient, which we still denote $\mathbf{h}\GIT\bW$. 
Let $\omega(\Gamma_\bW)$ denote the convolution algebra of distributions, i.e. global sections of the Serre-dualizing complex on the singular ind-variety $\Gamma_{\mathbf{W}}$. On the other hand, 
it follows from the results of Lonergan~\cite{lonergan,lonergan2} that the algebra $\omega(\Gamma_\bW)$ is isomorphic to the nil-Hecke algebra
$$\omega(\Gamma_\bW)\simeq H_{\bh,\bW}$$
It follows from ind-proper descent~\cite{GR} that the category $\cK_{\mathbf{h},\mathbf{W}} = \omega(\Gamma_\bW)\module$ is equivalent to ind-coherent sheaves on $\mathbf{h}\GIT\mathbf{W}$, so that we have an equivalence
$$H_{\bh,\bW}\module\simeq \QC^!(\bh\GIT\bW).$$

For the Hecke category $\cH_{\mathbf{h},\bW}$ we may take ind-coherent sheaves $\QC^!(\Gamma_\bW)$ on the adjecency groupoid under convolution. Once again, Theorem \ref{main intro} applies in this setting, giving a diagram of the form Diagram~\ref{basic diagram}.

\begin{theorem}\label{Coxeter intro}
	There is a natural symmetric monoidal structure on modules $\cK_{\bh,\bW}=H_{\bh,\bW}\module$ for the nil-Hecke algebra compatible with the forgetful functor to $\CC[\mathbf{h}]$, and the action $\CC[\mathbf{h}]\to \cH_{\bh,\bW}$ on the Coxeter Hecke category lifts to a central action $\fz:H_W\module\to \cZ(\cH_W)$ with a monoidal right inverse $\fa$. Thus we have an instance of Diagram~\ref{basic diagram}:

\[
\xymatrix{
	H_{\bh,\bW}\module\ar[r]_{E_2}\ar[d]_-{E_\infty}& \cZ(\QC^!(\Gamma_\bW)) \ar@/_1pc/_-{E_1}[l]  \ar[d]^{E_1}\\
	\C[\bh]\module \ar[r]_{E_1} & \QC^!(\Gamma_\bW)
}
\]

\end{theorem}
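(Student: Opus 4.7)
The plan is to apply Theorem~\ref{main intro} directly to the ind-proper groupoid $\cG = \Gamma_\bW$ acting on $X = \mathbf{h}$, working within the ind-coherent sheaf theory $X \mapsto \QC^!(X)$ of \cite{GR}. The task then reduces to verifying the geometric hypotheses and identifying the resulting data in the combinatorial language of the Coxeter system $(\bh, \bW)$.

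First I would verify that $\Gamma_\bW = \coprod_{w \in \bW} \Gamma_w$ is an ind-proper groupoid acting on $\mathbf{h}$: each $\Gamma_w \hookrightarrow \mathbf{h} \times \mathbf{h}$ is the graph of the linear automorphism $w$, so both its source and target projections to $\mathbf{h}$ are isomorphisms; the composition law $\Gamma_{w_1} \times_\mathbf{h} \Gamma_{w_2} \simeq \Gamma_{w_1 w_2}$ is the usual composition of graphs. Filtering by finite subsets of $\bW$ presents $\Gamma_\bW$ as an ind-scheme with ind-proper structure maps. The $\QC^!$-formalism supplies the six-functor machinery (pushforward, pullback, base change, and the $(p_*, p^!)$ adjunction for ind-proper $p$) needed to apply Theorem \ref{main intro}. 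Plugging in produces the diagram of the theorem, with $\cR = \QC^!(\mathbf{h}) \simeq \C[\mathbf{h}]\module$ (since $\mathbf{h}$ is smooth affine, the dualizing complex is a shifted trivialization), $\cH_{\bh,\bW} = \QC^!(\Gamma_\bW)$ under convolution, $H = \omega(\Gamma_\bW)$, and $\cK_{\bh,\bW} = H\module$ symmetric monoidal with a braided functor $\fz : \cK_{\bh,\bW} \to \cZ(\cH_{\bh,\bW})$ admitting a monoidal one-sided inverse $\fa$.

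The final step is to identify the convolution algebra $\omega(\Gamma_\bW)$ with the nil-Hecke algebra $H_{\bh,\bW}$, which is the content of Lonergan's results \cite{lonergan, lonergan2}: each summand $\omega(\Gamma_w)$ appears as a copy of $\C[\mathbf{h}]$ (twisted on one side by the $w$-action), and the convolution product assembles into the standard Bruhat-type relations in the nil-Hecke algebra, built from the Demazure operators $\alpha^{-1}(1-s_\alpha)$. Ind-proper descent in $\QC^!$ from \cite{GR} then gives the identification $\cK_{\bh,\bW} \simeq \QC^!(\mathbf{h}\GIT\bW)$ of the theorem. The main obstacle is verifying the nil-Hecke identification at the level of the full Hopf-algebroid rather than merely as an algebra: while Lonergan supplies the algebra isomorphism, matching the cup-coproduct on $\omega(\Gamma_\bW)$ (and hence the symmetric monoidal structure on $\cK_{\bh,\bW}$) with the expected coproduct on $H_{\bh,\bW}$ requires tracking naturality of $(p_*, p^!)$ and of the diagonal on dualizing sheaves of $\Gamma_\bW$. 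This is the technical input needed beyond a direct invocation of Theorem~\ref{main intro}.
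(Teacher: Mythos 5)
Your proposal matches the paper's approach: verify that $\Gamma_\bW$ is an ind-proper groupoid acting on $\bh$, apply Theorem~\ref{main intro} in the $\QC^!$ sheaf theory, invoke Lonergan's algebra isomorphism $\omega(\Gamma_\bW)\simeq H_{\bh,\bW}$, and use ind-proper descent to identify $\cK_{\bh,\bW}\simeq\QC^!(\bh\GIT\bW)$. The ``main obstacle'' you flag (matching the cup-coproduct on $\omega(\Gamma_\bW)$ with a pre-existing coproduct on $H_{\bh,\bW}$) is not actually required for the theorem as stated, which only asserts the \emph{existence} of a symmetric monoidal structure on $H_{\bh,\bW}\module$ compatible with the forgetful functor to $\C[\bh]\module$ --- this is supplied directly by the groupoid formalism once Lonergan's isomorphism of $\C[\bh]$-algebras is in hand, without needing to identify the resulting coalgebra structure with an independently defined combinatorial one.
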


\begin{remark}\label{dg vs graded remark}
	Making the connection between the Kac-Moody story and the Coxeter story more precise requires a number of modifications. The first issue arises from the fact that the convolution algebra $H_{\bG,\bB}$ in the Kac-Moody set-up is really a dg-algebra (and the corresponding $\cK_{\bG,\bB}$ is given by dg-modules), whereas in the Coxeter set-up, the nil-Hecke algebra $H_{\bh,\bW}$ is considered to be an ordinary algebra (and $\cK_{\bh,\bW}$ is its derived category of modules). This issue is fixed by considering an external grading on the convolution algebra and (dg-)modules with a compatible external grading. The external grading allows for a ``shearing'' equivalence between the two $\cK$-categories. On the level of convolution categories, adding the external grading corresponds to considering a mixed version of the Iwahori-Hecke category; this mixed category is equivalent to chain complexes of graded Soergel bimodules for $(\bh,\bW)$, which can be thought of as a certain modification of the combinatorial Hecke category $\cH_{\bW,\bh}$. Unfortunately, this mixed set-up does not seem to fall so neatly in to the set-up of Theorem \ref{main intro}. 
	\end{remark}

\begin{example}[Spherical affine case]

Let us explain how to connect the examples discussed in this section with our main motivation. We take for $\bG$ the affine Kac-Moody group associated to a reductive group $\Gv$, i.e., the extended form of the loop group $\LGv$, and for the parabolic $\bP$  the maximal parabolic corresponding to the complement of the ``extra'' node in the extended Dynkin diagram, i.e., the extended form of $\LGpv$. In particular $\bW=W^{\aff}$ is the affine Weyl group. In this way, the Iwahori-Hecke category $\cH_{\bG,\bP}$ is replaced by the spherical Hecke category (the renormalized Satake category). Similarly, the convolution algebra $H_{\bG,\bB}$ is replaced by its spherical subalgebra. In fact, there is a Morita equivalence between the nil-Hecke algebra and its spherical subalgebra~\cite{webster} so the $\cK$ categories are the same in the Iwahori and spherical settings  (see also~\cite{ginzburg whittaker} which constructs the equivalence from the Whittaker $\cD$-module perspective). 
The work of Lonergan and Ginzburg~\cite{lonergan,ginzburg whittaker} identifies the Kostant category $\cK$ with the full subcategory of $W_{\aff}$-equivariant quasicoherent sheaves on $\fhx$, on which the derived inertia action is trivial, or equivalently descend to the categorical quotient $\fhx\GIT W$ by the finite Weyl group (or by every finite parabolic subgroup of $W_{\aff}$).
\end{example}

\begin{remark}
The Morita equivalence between the spherical and full nil-DAHA can be understood from the Whittaker perspective as follows. Recall that there is a categorical Morita equivalence between the monoidal category $\HC$ and the (universal, monodromic) Hecke category 
\[
\widehat{\cH}_G = \cD(\quot{\oN}{G}{\oN})^{H\times H, wk}
\]
consisting of weakly $\overline{B}$, strongly $\overline{N}$ bi-equivariant $\cD$-modules (where $\overline{B}$ is the opposite Borel to $B$).
Under this Morita equivalence, the action of $\HC$ on the category $\cZ$ via Whittaker modules corresponds to the action of $\widehat{\cH}_G$ on 
\[
\cD( \quot{\oN}{G}{_\psi N})^{H,wk} \simeq \Sym(\fh)\module = \QC(\fh^\ast)
\]
In particular, there is a monoidal, monadic forgetful functor
\[
\cWh \simeq \End_{\HC}(\cZ) \simeq \End_{\widehat{\cH}_G}(\Sym(\fh)\module) \to \Sym(\fh)\module
\]
The $\Sym(\fh)$-ring corresponding to the monad is precisely the nil-DAHA (after unwinding the definitions, this is computed in \cite{ginzburg whittaker}). 
\end{remark}

\subsection{Further directions}\label{further section}
In this section we briefly mention some applications of the quantum Ng\^o action that we intend to pursue in future work.

\subsubsection{Langlands parameters}
The action of $\cWh\simeq \QC(\fhx\GIT W_{\aff})$ provides a notion of Langlands parameters for categorical representations of $G$. Indeed, we may identify the quotient complex analytically $$\fhx\GIT W_{\aff} \sim H^\vee\GIT W$$ with the affinization of the (Betti) stack $$\Gv/\Gv=\Loc_\Gv(D^\times)$$ of $\Gv$-local systems on the punctured disc. This identification should be closely related to the (de Rham) local geometric Langlands program~\cite{frenkel dennis, quantum langlands}.
Indeed it is expected (see~\cite[Example 1.23.1]{raskin W}) that 
$$\cWh(LG)\simeq \QC(Conn_\Gv(D^\times)):$$ the Hecke category of bi-Whittaker $\cD$-modules on the loop group $LG$ (i.e. the ``affine $\cW$-category") is symmetric monoidal, and equivalent to quasicoherent sheaves on the stack of $\Gv$-flat connections on the punctured disc. Thus
passing to ``Whittaker vectors" on categorical representations of $LG$ produces quasi-coherent sheaves of categories on the stack of $\Gv$-connections on $D^\times$ -- the geometric version of local Langlands parameters~\cite{quantum langlands}. This conjecture is a categorical analog of the Feigin-Frenkel description~\cite{FF} of the affine $\cW$-algebra, as our result is a categorical analog of the Kostant description of the finite $\cW$-algebra. Our proof of commutativity of $\cWh$ does not readily generalize to the affine setting, but we hope a deeper and cleaner understanding of $\cWh$ will prove useful in this regard. 

\subsubsection{Eigencategories for $\cWh$ and refined central character}
The Harish-Chandra system on $G/G$ (as in~\cite{HK}) is a reductive group ancestor of Beilinson-Drinfeld's quantized Hitchin system on the stack $Bun_G$ of $G$-bundles on an algebraic curve, and Lusztig's character sheaves~\cite{lusztig character, laumon character} are likewise the ancestors of automorphic sheaves in the geometric Langlands correspondence. Arinkin~\cite{arinkin thesis, arinkin paper} explained that the Hecke functors on $\cD(Bun_G)$ in the geometric Langlands correspondence appear naturally as an aspect of the quantized Hitchin system -- in Arinkin's paradigm, a quantization of completely integrable systems entails a deformation of symmetric monoidal categories, in this case deforming the translation symmetries of the classical system to the action of Hecke functors. We expect the action of $\cWh$ on $\cD(G/G)$, deforming the Ng\^o integration of the Hamiltonian flows, plays an analogous role for the Harish-Chandra system as the Hecke functors for the quantized Hitchin system. In particular character sheaves appear as $\cWh$-eigensheaves just as automorphic sheaves appear as Hecke eigensheaves. In particular, the action of $\cWh$ on $\cD(G/G)$ provides a refinement of the theory of central characters of character sheaves, as explained below.

Recall that the symmetric monoidal category $\cWh = \QC(\fh^\ast \GIT W^\aff)$ acts centrally on any $G$-category. Given a point $[\lambda] \in \fh^\ast/W^\aff$, we have an corresponding symmetric monoidal functor $\cWh \to \Vect$, i.e. a $\cWh$-module category $\Vect_{[\lambda]}$. For any $\cD(G)$-module category or $\HC$-module category $\cC$, we regard $\cC$ as a $\cWh$-module category via the Ng\^o functor and consider the categorical (co)invariants
\[
\cC^{\cWh,[\lambda]} = \Hom_{\cWh}(\Vect_{[\lambda]}, \cC) \simeq \cC \otimes_{\cWh} \Vect_{[\lambda]}
\]
For example, we have a braided monoidal category $\cD(G\adjquot G)^{\cWh, [\lambda]}$, a refined (or strict) version of the category of character sheaves with central character $[\lambda]$ (the usual category $\cD(G\adjquot G)^{\widehat{[\lambda]}}$ of character sheaves with a fixed central character corresponds to taking the completion at $[\lambda] \in \fh^\ast\GIT W^\aff$ rather than the fiber). One expects that the category $\cD(G\adjquot G)^{\cWh,[\lambda]}$ is ``more semisimple'' than $\cD(G\adjquot G)^{\widehat{[\lambda]}}$. We hope that these constructions will shed some light on the truncated Hecke and character sheaf categories defined by Lusztig \cite{lusztig cells, lusztig convolution} (see also \cite{BFO}).

\subsubsection{Character field theory and cohomology of character varieties}\label{TFT section}
We showed in~\cite{character2} (extending~\cite{character}) that the monoidal category $\HC$ controls the Borel-Moore homology of character varieties of surfaces, via the mechanism of a 3d topological field theory $\cX_G$, the {\em character field theory}.
Recall that given a topological surface $S$ the
character variety (or Betti space) $\Loc_G(S)$ is the derived stack of $G$-local systems on
$S$,  $$\Loc_G(S)\sim \{\pi_1(S)\to G\}/G.$$ 
The character theory is defined by prescribing that quantum Hamiltonian $G$-spaces ($\HC$-modules) define boundary conditions for $\cX_G$, and ``integrates" them on surfaces to obtain the homology of character varieties:

\begin{theorem}\cite{character2} The assignment $$\cX_G(pt)=\HC\modcat\simeq \cD(G)\modcat$$ satisfies the conditions of the Cobordism Hypothesis~\cite{jacob TFT} to define an oriented topological field theory, attaching a dg vector space to a closed surface. Moreover we have a canonical equivalence $$\cX_G(S)\simeq H_*^{BM}(\Loc_G(S))$$ with the Borel-Moore homology on the character variety, for $S$ an oriented closed surface.
\end{theorem}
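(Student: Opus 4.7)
The plan is to invoke the Cobordism Hypothesis of \cite{jacob TFT}, which requires three ingredients: a symmetric monoidal target $(\infty,3)$-category, full dualizability of $\HC\modcat \simeq \cD(G)\modcat$ inside it, and an $SO(3)$-homotopy fixed point structure to promote the resulting framed TFT to an oriented one. First I would fix the target to be $\DGCat_2$, the symmetric monoidal $(\infty,3)$-category of presentable dg $2$-categories in the framework of \cite{GR}. Gaitsgory's $1$-affineness theorem (invoked in Section \ref{integrable section}) presents $\cD(G)\modcat$ as $(\cD(G),\ast)\modcat$ and identifies it Morita-theoretically with $\HC\modcat$, so dualizability of this $2$-category reduces to dualizability of the presenting algebra $(\cD(G),\ast) \in \DGCat$. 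The latter should follow from $G$ being smooth affine: smoothness yields the requisite smoothness/properness of the algebra, while inversion on $G$ furnishes a canonical Morita self-duality $\cD(G) \simeq \cD(G)^{op}$ supplying adjoints at higher categorical levels.

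Next, for the orientation I would produce an $SO(3)$-homotopy fixed point structure, equivalent to a $3$-dimensional Calabi--Yau structure on $\cD(G)$ as an $E_1$-algebra in $\DGCat$. This should reduce to a nondegenerate trace on the categorical Hochschild homology $HH_\ast(\cD(G)\modcat) \simeq \cD(G \adjquot G) \simeq \cD(\Loc_G(S^1))$ (as identified in \cite{character}), furnished by the $2$-shifted symplectic structure on $BG$ coming from the Killing form on $\fg$ together with its AKSZ transgression to the free loop stack $\Loc_G(S^1)$. Granting this, the Cobordism Hypothesis produces an oriented $3$-dimensional TFT $\cX_G$ valued in $\DGCat_2$, whose value on a closed $2$-manifold automatically lies in the $1$-categorical stratum, i.e.\ is a dg vector space.

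For the identification $\cX_G(S) \simeq H_\ast^{BM}(\Loc_G(S))$ I would apply the general factorization mechanism for TFTs built from $E_1$-algebras in $\DGCat$: iterated Hochschild/gluing arguments, using a pants decomposition of $S$ together with cap gluings of the form $\Sigma_{g,1} \cup_{S^1} D^2 \simeq S$, produce for any closed oriented $k$-manifold $M$ with $k \le 3$ an equivalence $\cX_G(M) \simeq \cD(\Loc_G(M))$ in the $(3-k)$-categorical stratum. Specialized to $k = 2$, the output is the global $\omega$-sections of the dualizing complex on $\Loc_G(S)$ in the holonomic $\cD$-module sheaf theory of Section \ref{sheaf theory}, which by Verdier duality computes exactly $H_\ast^{BM}(\Loc_G(S))$.

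The hardest step will be the construction and coherent-homotopy verification of the $3$-Calabi--Yau / $SO(3)$-homotopy fixed point datum on $\cD(G)$: this sits three categorical levels below vector spaces, so producing it and checking its compatibility with the full dualizability structure is substantially more delicate than the analogous $1$-categorical Frobenius-algebra calculation, and is the main technical content of \cite{character2}.
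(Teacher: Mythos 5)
The central problem is a dimensional overshoot. The theorem only asserts a $2$-dimensional oriented TFT valued in dg $2$-categories, so what the Cobordism Hypothesis requires is $2$-dualizability of $\HC\modcat$ together with an $SO(2)$-homotopy fixed point structure, not the full ($3$-)dualizability and $SO(3)$/$3$-Calabi--Yau datum you propose. Your stronger hypotheses are not merely harder to check; they are false. Full dualizability in an $(\infty,3)$-category would force $\cX_G(S)$ to be a dualizable object of $\Vect$, i.e.\ a perfect complex, for every closed oriented surface $S$; but $\cX_G(S)\simeq H_*^{BM}(\Loc_G(S))$ is infinite-dimensional whenever $S$ has positive genus (character varieties are non-compact), so the theory does not extend to numerical invariants of closed $3$-manifolds and there is no $SO(3)$-structure to find. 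What \cite{character2} actually establishes is $2$-dualizability of $\HC\modcat$, coming from rigidity of the monoidal category $\HC$, together with an $SO(2)$-fixed point structure trivializing the Serre automorphism; in particular this orientation datum is not sourced from a $2$-shifted symplectic form on $BG$, which is de Rham algebro-geometric data and would not naturally produce a trace on $\cD(G\adjquot G)$, a purely topological invariant of $G$ via Riemann--Hilbert.

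A secondary issue is your interpolating formula ``$\cX_G(M)\simeq\cD(\Loc_G(M))$ in the $(3-k)$-categorical stratum'': at $k=2$ the left side is a dg vector space while the right side is a dg category, so this cannot hold as stated. You correct this in the following sentence by passing to global $\omega$-sections, and the Verdier-duality identification with Borel--Moore homology is then right; but arriving at that formula requires the explicit trace and gluing computations carried out in \cite{character2}, not a general ``$E_1$-algebra factorization'' principle, and those computations are where the real content of the cited proof lives.
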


We also prove a ``Hodge filtered" version of the theorem, which in particular defines a family $\cX_{\hbar,G}$ of topological field theories out of the $\hbar$-family of monoidal categories $\HC_\hbar$.

The quantum Ng\^o action of $\cWh_\hbar$ on $\HC_\hbar$ makes the entire character theory $\cX_{\hbar,G}$ linear over $\cWh_\hbar$, i.e. (for $\hbar\neq 0$) a family of topological field theories over $\fhx\GIT W_{\aff}$. In particular the Borel-Moore homology of $\Loc_G(S)$ sheafifies over $\fhx\GIT W_{\aff}$, with fibers defining new invariants, the {\em eigenhomology} of the character variety. We expect eigenhomologies of character varieties to be more accessible to combinatorial description.

The work of Hausel, Rodriguez-Villegas and Letellier \cite{H,HRV,HLRV}
has uncovered remarkable combinatorial patterns in the
cohomology of the character varieties, leading to a series of striking
conjectures. A central technique is counting points over finite fields, i.e., points of character varieties $\Loc_{G_q}(S)$ of the finite Lie groups
$G_q=G(\F_q)$. These counts are captured by the values of a 2d TFT ($G_q$ Yang-Mills theory), which assigns to a point the category $Rep(G_q)$ of representations of the finite group. Lusztig's Jordan decomposition of characters breaks up this category, and hence the counts on any surface, into blocks labeled by semisimple conjugacy classes in the dual group (i.e., informally speaking, over $H^\vee\GIT W$). The 3d character theory accesses the homology of character varieties (as opposed to the point count or $E$-polynomial) directly, and the decomposition over $\cWh$ (i.e. over $\fhx\GIT W^{\aff}\sim H^\vee\GIT W$) plays the role of the Jordan decomposition. This decomposition, which we will explore in a future paper, provided the original motivation for this work.

\subsection{Supersymmetric gauge theory}\label{SUSY}
We briefly indicate the interpretation of our constructions in the context of supersymmetric gauge theory, following discussions with Andy Neitzke, Tudor Dimofte and Justin Hilburn. See~\cite{BZ talk} for a slightly more leisurely discussion. Details will appear elsewhere.

To any 3d $\cN=4$ theory $\cZ$ is associated a holomorphic symplectic variety $\fM_\cZ$, its {\em Coulomb branch}, together with a deformation quantization $\CC_\hbar[\fM_\cZ]$ of its ring of functions, obtained as the algebra of supersymmetric local operators in the theory in $\Omega$-background~\cite{NW} (with quantization parameter $\hbar=\epsilon\in H^\ast(BS^1)$). See e.g.~\cite{BDG} and references therein. If $\cZ$ is a 3d {\em gauge} theory, one can define (using a Lagrangian description of $\cZ$) an integrable system $$\fM_\cZ\to \fc$$ with base the adjoint quotient of the Langlands dual of the gauge group.
The identification by~\cite{BFM} of the groupscheme $J$ of regular centralizers in terms of the equivariant homology of the Langlands dual Grassmannian is now understood (thanks to~\cite{teleman, BraFinkNa}) as describing the Coulomb branch of pure 3d $\cN=4$ gauge theory, while its quantization using $\Cx$-equivariant homology is an instance of the quantization in $\Omega$-background. 

However the {\em abelian group} structure of $J$ (and symmetric monoidal structure of its quantization), as well as the classical and quantum Ng\^o actions, are
best understood using 4d gauge theory -- specifically, in the spirit of Kapustin-Witten~\cite{KW}, as aspects of 
4d $\cN=4$ super-Yang-Mills (in the GL twist at $\Psi=\infty$). Indeed the base $\fc$ arises as the Coulomb branch of 4d SYM, while the characteristic polynomial map (in fact, a shifted integrable system)
$$\fgx/G\to\fc$$ arises from identifying the residual gauge symmetry of the theory on its Coulomb branch. 
The category $\QC(\fgx/G)$ is the monoidal (naturally $E_3$) category of (Wilson) line operators in the theory, and its deformation $\HC_\hbar$ is the monoidal category of line operators in the 4d $\Omega$-background (with $\epsilon_1=\hbar,\epsilon_2=0$).
The derived geometric Satake theorem of~\cite{BezFink} is thus interpreted as implementing S-duality for line operators, identifying the Wilson lines with 't Hooft lines (Hecke modifications). 
  
The Ng\^o map and its quantization are most naturally interpreted as providing an integration of the shifted integrable system $\fgx/G\to \fc$ (Ng\^o's ``mold") and its quantization. Rather than spell this structure out, we mention one of its consequences in terms of the familiar geometry of 3d Coulomb branches. The Ng\^o action provides symmetries of arbitrary BPS  boundary conditions for the 4d $\mc N=4$ theory and of their Coulomb branches (which produce holomorphic hamiltonian $G$-spaces). In particular one can pair two such boundary conditions, reducing the 4d theory on an interval to produce a 3d $\cN=4$ theory:

\begin{claim}
Let $\cZ$ denote any 3d $\cN=4$ theory obtained by reduction of 4d $\cN=4$ on an interval. Then the Coulomb branch $\fM_\cZ$ carries an integrable system $$\fM_\cZ\to\fc$$ which integrates to an action of the symplectic groupoid $J\to \fc$. Likewise the $\Omega$-deformed algebra $\CC_\hbar[\fM_\cZ]$ carries a quantum integrable system $\fZ_\hbar\fg\to \CC_\hbar[\fM_\cZ]$ which integrates to an action of $\cWh_\hbar$. In particular the category of modules for the quantized Coulomb branch sheafifies over $\fhx\GIT W^{\aff}$.
\end{claim}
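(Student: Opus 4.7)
The plan is to reduce the claim to a structural statement about boundary conditions in 4d $\cN=4$ super-Yang-Mills and then apply Proposition~\ref{classical integration} and Theorem~\ref{central action intro} (together with its corollary for $\cD_\hbar(G)$-module categories). Concretely, I would first formalize the input: a BPS boundary condition $\cB$ for 4d $\cN=4$ SYM (in the twist at hand) should produce a holomorphic Hamiltonian $G$-space $X_\cB$ -- equivalently an object of $\HC\modcat\simeq\cD(G)\modcat$ -- whose Coulomb branch fiber over a generic point $x\in\fgx/G$ recovers the 3d Coulomb branch of $\cB$ coupled to 3d pure gauge theory. The physical slogan that $\cB$ defines an $\HC$-module category, while not proven in the paper, is a guiding principle of the setup in Section~\ref{SUSY} and the character field theory perspective of Section~\ref{TFT section}.

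Next I would interpret interval reduction. Given two boundary conditions $\cB_1,\cB_2$ for 4d SYM on $\R^3\times[0,1]$, the resulting 3d theory $\cZ$ has Coulomb branch obtained as a Hamiltonian reduction of the product $X_{\cB_1}\times X_{\cB_2}^{op}$ by the residual gauge $G$-action, with moment map recorded by the 4d Coulomb branch value on the interval. In particular the induced integrable system $\fM_\cZ\to\fc$ factors through $\fM_\cZ\to\fgx/G\to\fc$, displaying $\fM_\cZ\to \fc$ as a Hamiltonian reduction of a Hamiltonian $G$-space. Proposition~\ref{classical integration} then applies verbatim and integrates the flows to an action of the commutative symplectic groupoid $J\to\fc$, proving the first assertion.

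For the quantum assertion, I would run the same argument one categorical level higher. In the $\Omega$-background $(\epsilon_1=\hbar,\epsilon_2=0)$, the category of line operators is $\HC_\hbar$, and each boundary condition $\cB_i$ determines an $\HC_\hbar$-module category $\cM_i$; the quantized Coulomb branch $\CC_\hbar[\fM_\cZ]$ arises as the endomorphism algebra of the distinguished object (vacuum line) in the Hom category $\Hom_{\HC_\hbar}(\cM_1,\cM_2)$. By Theorem~\ref{central action intro}, the $\cW$-category $\cWh_\hbar$ acts centrally on $\HC_\hbar$, and by Corollary~\ref{quantum ham corollary} this central action descends to any $\HC_\hbar$-module and any internal Hom between such modules. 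Tracking this through the construction of the quantized Coulomb branch, the $\cWh_\hbar$-action commutes with the $\fZ_\hbar\fg$-action coming from the classical integrable system and extends the quantum characteristic polynomial map $\fZ_\hbar\fg\to\CC_\hbar[\fM_\cZ]$ -- that is, $\cWh_\hbar$ acts on the quantized Coulomb branch as a commutative quantum groupoid in the sense of Remark~\ref{quantum groupoid remark}, integrating the quantum Harish-Chandra Laplacians. The sheafification of the module category over $\fhx\GIT W^\aff$ is then immediate from the symmetric monoidal equivalence $\cWh_\hbar\simeq \QC(\fhx\GIT W^\aff)$ (following Lonergan and Ginzburg) discussed in Section~\ref{quantum Ngo section}.

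The main obstacle is not the formal deduction, which is purely structural once the inputs are in place, but rather the translation dictionary between physics and mathematics: making precise that (i) every BPS boundary condition of 4d $\cN=4$ SYM in the relevant twist produces an $\HC$- (resp.\ $\HC_\hbar$-)module, (ii) the interval reduction computes the internal Hom as indicated, and (iii) the resulting construction of $\CC_\hbar[\fM_\cZ]$ agrees with the Nakajima/Braverman-Finkelberg-Nakajima definition of the quantized Coulomb branch in the cases where both are defined. Step (iii) can be checked in examples (pure gauge theory, theories with matter, and boundary conditions corresponding to Dirichlet/Neumann and Nahm-pole types), where the $\HC_\hbar$-module in question is $\fU_\hbar\fg\module$, $(\fg,K)\module_\hbar$, or the Whittaker category, and the Ng\^o-type action recovers the known $\cWh_\hbar$-action on the corresponding quantized Coulomb branch (e.g.\ on the finite $\cW$-algebra side via~\cite{BFM}). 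A fully general proof would require a precise axiomatization of BPS boundary conditions, which we defer to future work.
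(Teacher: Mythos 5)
Your proposal coincides with the paper's own brief treatment: the paper states that the Claim is ``the physical counterpart to Proposition~\ref{classical integration} and Corollary~\ref{quantum ham corollary},'' with 4d-on-interval reductions corresponding to (classical and quantum) Hamiltonian reductions of $G$-spaces, and explicitly defers the physics-to-mathematics dictionary (``Details will appear elsewhere''). Your fleshing out of BPS boundary conditions as $\HC_\hbar$-module categories and of interval reduction as an internal Hom is consistent with the paper's intent, and your closing caveat about axiomatizing boundary conditions mirrors the paper's own deferral.
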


The claim is the physical counterpart to Proposition~\ref{classical integration} and Corollary~\ref{quantum ham corollary} --- (classical and quantum) 
Hamiltonian reductions of $G$-spaces correspond to reductions of 4d $\cN=4$ along different pairs of boundary conditions. In particular {\em Whittaker} reduction, or restriction to the Kostant slice, corresponds to pairing with a Neumann boundary condition, i.e., {\em gauging} 3d $\cN=4$ theories with global symmetry (with gauge group the compact form of 
the dual group $\Gv$) - see~\cite{BDGH} for a closely related discussion. Thus the class of 3d $\cN=4$ theories obtained this way includes in particular all 3d $\cN=4$ gauge theories. However since the Kostant slice is contained in the regular locus, such theories don't probe the irregular locus of $\fgx$, and one doesn't need the Ng\^o construction to see the action of $J$, which follows immediately from the structure of hamiltonian $G$-space (or Langlands dually from the Braverman-Finkelberg-Nakajima construction of the Coulomb branch~\cite{BraFinkNa}).

\subsection{Acknowledgments} 
This project grew out of a joint project with David Nadler (parts of which appeared as~\cite{character2} and~\cite{hendrik}), and we would like to express our deep gratitude for his essential contributions. In particular the idea to quantize the commutative group-scheme $J$ of regular centralizers and the Ng\^o correspondence to a central action of a symmetric monoidal category is due to him. 

We are greatly indebted to Dario Beraldo and Sam Raskin for their help with the formalism of renormalized D-modules.
We would also like to thank Constantin Teleman for generously sharing his understanding of the relations between categorical representation theory, gauge theory and $J$, Simon Riche for discussion of mixed geometric Satake, Geoffroy Horel for his assistance with formality of Hopf algebras, Marco Gualtieri and James Pascaleff for sharing their ideas on Fukaya categories on symplectic groupoids, and Dima Arinkin, Dennis Gaitsgory, Victor Ginzburg, Gus Lonergan, and Ben Webster for their interest and useful discussions. 
DBZ would like to acknowledge the National Science Foundation for its support through individual grants DMS-1103525 and DMS-1705110. We would also like to acknowledge that part of the work was carried out at MSRI as part of the program on Geometric Representation Theory.


\section{Sheaf Theory: Ind-holonomic $D$-modules}\label{sheaf theory}

\subsection{DG categories}\label{dgcat}
We refer the reader to~\cite[I.1.5-8]{GR} as well as~\cite{BFN,BGT} for summaries of the basic properties of stable $\oo$-categories following~\cite{HA}. 
We now summarize the main points we will need.

Recall~\cite{HTT,HA} that $\Pr^L$ denotes the symmetric monoidal $\oo$-category of presentable $\oo$-categories with continuous (colimit preserving) functors, i.e., (by the adjoint functor theorem) functors which are left adjoints. Further $St\subset \Pr^L$ denotes the symmetric monoidal $\oo$-category of stable presentable $\oo$-categories. 

We will denote by $\DGCat_k$ the symmetric monoidal $\oo$-category of {\em cocomplete dg categories over $k$}, i.e., stable presentable $k$-linear $\oo$-categories. In other words $\DGCat_k$ consists of module categories for $k\module$ in $St$. 
We are mostly interested in the subcategory $\DGCat_k^c$ of compactly-generated dg categories with proper functors, i.e., continuous functors preserving compact objects, or equivalently functors that admit continuous right adjoints. The functors of taking compact objects and passing to Ind-categories define inverse symmetric monoidal equivalences of $\DGCat_k$
with the symmetric monoidal $\oo$-category $\DGCat_k^{sm}$ of small, idempotent-complete dg categories with exact functors.  By~\cite[Corollary 4.25]{BGT} $\DGCat_k^c$ (or equivalently $\DGCat^{sm}$ is presentable.

\subsection{Sheaf Theory Formalism.}
We will study monoidal properties of categories of sheaves on stacks. The geometric spaces that appear are ind-algebraic stacks and groupoids (Section~\ref{context}). We require a theory of sheaves that attaches to a stack $X$ a presentable DG category $Shv(X)$ with continuous pull-back and pushforward functors $p_*,p^!$ for maps $p:X\to Y$ of ind-finite type, satisfying base change and an adjunction $(p_*,p^!)$ in the case that $p$ is ind-proper.

Two important examples of such a theory of sheaves, developed in~\cite{GR}, are the theory of ind-coherent sheaves $IndCoh(X)$ and the theory of $\cD$-modules $\cD(X)$. Their properties are summarized in the following:

\medskip

\begin{theorem}~\cite[Theorem III.3.5.4.3, III.3.6.3]{GR} \label{GR sheaf theory}
There is a uniquely defined right-lax symmetric monoidal functor $IndCoh$ from the $(\infty,2)$-category whose objects are {\em laft} prestacks, morphisms are correspondences with vertical arrow ind-inf-schematic, and 2-morphisms are ind-proper and ind-inf-schematic, to the $(\infty,2)$ category of DG categories with continuous morphisms.
\end{theorem}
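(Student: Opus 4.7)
The plan is to follow the strategy developed in~\cite{GR}: first construct $IndCoh$ as a functor out of a 1-category of schemes with just one kind of arrow, then extend to correspondences via an abstract extension principle, and finally Kan-extend from schemes to prestacks.

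First I would define the $*$-pushforward formalism. On affine schemes almost of finite type, $IndCoh(S)$ is constructed as the ind-completion of $\Coh(S)$, and any morphism $f:S\to T$ induces a continuous $f^{IndCoh}_*$ by ind-extension of proper pushforward on coherent sheaves. Verifying this assembles into a symmetric monoidal functor $Sch^{aff,aft}\to \DGCat_k$ is essentially a bookkeeping exercise in descent and the Beck--Chevalley conditions, using that on perfect stacks $IndCoh$ agrees with $\QC$ up to the action of the dualizing complex. I would then obtain $IndCoh$ on all schemes almost of finite type by right Kan extension along the inclusion of affines (note that, on non-affine schemes, this is a nontrivial ``renormalization'', because $*$-pushforward from an open need not preserve coherence). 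Next, on ind-inf-schemes I would extend by left Kan extension along closed nil-isomorphic embeddings, invoking the identification $IndCoh(X_{red})\simeq IndCoh(X)$ for infinitesimal thickenings.

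The second main step is passage to correspondences. Here I would invoke the abstract extension principle~\cite[V.1]{GR}: given a symmetric monoidal functor $F:C\to D$ from a 2-category $C$ equipped with a distinguished class of ``proper'' morphisms, and assuming $F$ sends proper morphisms to morphisms admitting a continuous right adjoint in $D$ and the Beck--Chevalley base-change holds, one obtains a uniquely determined right-lax symmetric monoidal extension $F^{Corr}:Corr(C)^{prop}\to D$. In our situation this amounts to: (i) $*$-pushforward along an ind-proper ind-inf-schematic map admits a continuous right adjoint, which I would define to be $f^!$; (ii) the natural base change transformation $g^*f_*\simeq f'_* g'^*$ for a Cartesian square with $f$ ind-proper ind-inf-schematic is an equivalence; (iii) the assignment of the $!$-pullback functor to non-proper morphisms is independent of the factorization chosen. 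Step (i) follows from the compact generation of $IndCoh$ and the fact that ind-proper pushforward preserves compactness on ind-inf-schemes, step (ii) reduces by Zariski descent to the affine case where it is standard, and step (iii) is the content of Nagata-type compactifications in the ind-inf-schematic setting, which is the main delicate issue.

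Finally, I would extend the resulting correspondence-level functor from ind-inf-schemes to $laft$ prestacks by right Kan extension along the Yoneda embedding restricted to affine schemes of finite type, using that a $laft$ prestack is by definition determined by its values on such affines. The lax symmetric monoidal structure propagates through each of these steps because right Kan extensions of (lax) symmetric monoidal functors along symmetric monoidal functors inherit (lax) symmetric monoidal structures, and the correspondence extension theorem is itself symmetric monoidal when the input is.

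The main obstacle will be step (iii) of the correspondence extension, i.e.\ the compatibility of the $!$-pullback with composition in the ind-inf-schematic setting. The subtlety is that the natural candidate for $f^!$ on a non-proper morphism must be shown to be independent of a choice of ind-proper compactification, which in the ordinary scheme setting rests on Nagata's theorem; for ind-inf-schemes one has to replace this with the inf-scheme deformation-theoretic version, where $f^!$ is built out of $!$-pullback along the formal completion together with the action of the dualizing complex on infinitesimal fibers. Granting the body of \cite[Chapters III--V]{GR}, the remaining content of the theorem is essentially formal.
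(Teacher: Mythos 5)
This statement is not proved in the paper at all: it is imported verbatim from Gaitsgory--Rozenblyum (the citation \cite[Theorem III.3.5.4.3, III.3.6.3]{GR} is the entire content of the entry, and no proof is supplied). So there is no ``paper's own proof'' to compare against; your sketch should instead be measured against the actual construction in~\cite{GR}.

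Measured that way, your outline captures the broad shape of the argument (ind-completion of $\Coh$ on affines, Kan extension, the abstract correspondence-extension theorem, further Kan extension to laft prestacks), but two of your moves don't match the mechanism that actually makes the theorem work, and one of them is a genuine gap. First, the order of operations: you Kan-extend to ind-inf-schemes \emph{before} invoking the correspondence-extension principle. In \cite{GR} the input to the 2-categorical extension theorem is the contravariant functor $\mathrm{IndCoh}^!$ on \emph{all} morphisms, not the covariant one on proper morphisms; that $!$-functor is first built on schemes, passed through correspondences, and only then propagated to inf-schemes (by left Kan extension along a class of maps that is not the closed nil-isomorphisms you name -- it is the maps from affine schemes, exploiting the deformation-theoretic description of an inf-scheme), and then to ind-inf-schemes and laft prestacks. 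Your ordering would require you to already possess a well-behaved $*$-pushforward theory on the non-proper morphisms of ind-inf-schemes, which is precisely what one does not have a priori.

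Second, and this is the substantive gap: your step (iii) rests the whole construction on a ``Nagata-type compactification in the ind-inf-schematic setting,'' with $f^!$ then assembled from the compactification and a dualizing-complex correction on infinitesimal fibers. This is the wrong mechanism, and not merely a detail: Nagata-style compactification is precisely what is unavailable in the derived and inf-schematic setting, and avoiding it is one of the principal motivations for the framework of~\cite{GR}. In their construction, once $\mathrm{IndCoh}^!$ is supplied on all morphisms together with proper adjunction and Beck--Chevalley base change, the correspondence-category functor -- and in particular the well-definedness of $f^!$ on non-proper maps, independent of any factorization -- is an output of Theorem~V.1.3.2.2, not an input to be established by compactification. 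The point is exactly that you should not need to compactify. For ordinary (non-derived) schemes a Nagata argument does appear when first constructing $\mathrm{IndCoh}^!$, but that is a one-time bootstrap on classical schemes of finite type and does not resurface at the inf-schematic or ind-inf-schematic stage, where it would in any case fail. As written, your proof would stall at this step: there is no deformation-theoretic analogue of Nagata that supplies the compactifications you are asking for.
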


\medskip

The theorem encodes a tremendous amount of structure. Let us highlight some salient features useful in practice.
The theorem assigns a symmetric monoidal dg category $IndCoh(X)$ to any reasonable (locally almost of finite type) stack. The symmetric monoidal structure, the $!$-tensor product, is induced by $!$-pullback along diagonal maps. For an arbitrary morphism $p:X\to Y$ there is a continuous symmetric monoidal pullback functor $p^!:IndCoh(Y)\to IndCoh(X)$, while for $p$ schematic or ind-schematic there is a continuous pushforward $p_*:IndCoh(X)\to IndCoh(Y)$, which satisfies base change with respect to $!$-pullbacks. Moreover for $p$ ind-proper, $(p_*,p^!)$ form an adjoint pair. Furthermore, the formalism of {\em inf-schemes} greatly extends the validity of the construction. In particular the same formal properties holds for the theory of $\cD$-modules, defined by the assignment $X\mapsto \cD(X)=IndCoh(X_{dR})$, ind-coherent sheaves on the de Rham space of X.


For our applications we require a minor variation, the theory of ind-holonomic $\cD$-modules $\Dhol(X)$, the main instance of which is the renormalized Satake category $\Dhol(\uGrv)$ studied in~\cite{AG} (and, implicitly,~\cite{BezFink}). We will explain the appropriate modifications of the formalism of~\cite{GR} needed to establish the minimal functoriality of ind-holonomic $\cD$-modules we will require.

\subsubsection{Geometric context}\label{context}
We adopt the following geometric conventions: all schemes will be of almost finite type, and all algebraic stacks will be {\em laft} QCA stacks, as studied in particular in~\cite{finiteness}. In other words, an algebraic stack $X$ is a prestack whose diagonal is affine and which admits a smooth and surjective map from an affine scheme of almost finite type. 

By an {\em ind-algebraic stack} we refer to a prestack $X$ which is equivalent to a filtered colimit $X=\lim_{\rightarrow} X_i$ of algebraic stacks under closed embeddings.

In our applications $X$ will be realized as the quotient of an ind-scheme of ind-finite type by an affine algebraic group. The main example of interest is the equivariant affine Grassmannian $$X=\uGrv=G(\cO)\backslash G(\cK)/G(\cO)$$ of a reductive group $G$.

\subsection{Motivating Ind-Holonomic $\cD$-modules}\label{d-modules}

First recall (see e.g.~\cite{finiteness}) that for a scheme of finite type we have an equivalence $\cD(X)\simeq \Ind \cD_{coh}(X)$, and that we have a full stable subcategory $\cD_{coh,hol}\subset \cD_{coh}(X)$. Thus we have a fully faithful embedding $$\Dhol(X):=\Ind \cD_{coh,hol}(X)\subset \cD(X)$$ of ind-holonomic $\cD$-modules into all $\cD$-modules. Holonomic $\cD$-modules are preserved by $!$-pullback and $*$-pushforward for finite type morphisms, and carry a symmetric monoidal structure through $!$-tensor product for which $!$-pullback is naturally symmetric monoidal.

This picture persists for $X$ an ind-scheme of ind-finite type $X=\lim_{\rightarrow} X_i$, for example the affine Grassmannian $Gr=G(\cK)/G(\cO)$. The $(i_*,i^!)$ adjunction for a closed embeddings provides the alternative descriptions $$\cD(X)\simeq \lim_{\leftarrow,(-)^!} \cD(X_i)\simeq \lim_{\rightarrow,(-)_*} \cD(X_i).$$ As a result (by a general lemma of~\cite{DrG2}) $\cD(X)$ is compactly generated by coherent $\cD$-modules, which by definition are the pushforwards of coherent $\cD$-modules on the finite type closed subschemes $X_i$, and include the similarly defined holonomic $\cD$-modules. Note that with this definition the pullback of a holonomic $\cD$-module by an ind-finite type morphism (for example, the dualizing complex of an ind-scheme) is ind-holonomic but not necessarily holonomic (i.e. compact). 

For $X$ an algebraic stack, the situation (as studied in detail in~\cite{finiteness}) changes: coherent (and in particular holonomic) $\cD$-modules, defined by descent using a smooth atlas, are no longer compact in general. The category $\cD(X)$ is compactly generated by {\em safe} objects, which are coherent objects satisfying a restriction on the action of stabilizers (in the case of quotient stacks). One can thus measure the lack of safety of $X$ by the difference between $\cD(X)$ and the category $\cDv(X):=\Ind \cD_{coh}(X)$ of {\em ind-coherent} or {\em renormalized} $\cD$-modules. This is analogous to the difference between quasicoherent and ind-coherent sheaves on a derived stack measuring its singularities, with safe (respectively, coherent) $\cD$-modules taking on the role of perfect (respectively, coherent) complexes of $\cO$-modules.

\begin{example}\label{example-Dhol on classifying} 
Suppose $X=pt/G$ is the classifying stack of a reductive group. Let $\Lambda=C_*G\simeq \CC[\fg^*[-1]]^G$ and $S=C^*X\simeq \CC[\fg[2]]^G$ be the corresponding Koszul dual exterior and symmetric algebras. Then $$\cD(X)\simeq \Lambda\module\simeq QC(\fg[2]\GIT G)_0$$ is the completion of sheaves on the graded version of the adjoint quotient $\fg\GIT G\simeq \fh\GIT W$ at the origin. On the other hand we have $$\Dhol(X)=\Dhol(X)\simeq \Ind(Coh \Lambda)\simeq S\module\simeq QC(\fg[2]\GIT G)$$ is the ``anticompleted" version of the same category. 

This can also be described in terms of the corresponding homotopy type $X_{top}$ (as a constant prestack) and $\fX=\Spec C^*(X)$ the corresponding coaffine stack. We then have equivalences $$\cD(X)\simeq QC(X_{top})\simeq QC(\fX).$$ 
On the other hand we have the following description of renormalized sheaves:
$$\Dhol(X) \simeq C^*(X)\module.$$
In particular $\cD(X)$ is the completion of $\Dhol(X)$. 
\end{example}

We will be interested in a combined setting of ind-algebraic stacks. In this setting the category $\Dhol(X)$ (defined formally in the next section) is identified with the Ind-category of (coherent) holonomic $\cD$-modules, which are pushforwards of holonomic $\cD$-modules on algebraic substacks. Thus ind-holonomic $\cD$-modules form a full subcategory of {\em ind-coherent} (or renormalized) $\cD$-modules 
$\Dhol(X) = \Ind \cD_{coh}(X).$

\begin{example}
Our main motivating example is the equivariant affine Grassmannian $X=\uGrv$. The {\em renormalized Satake category} $\Dhol(\uGrv)$ of~\cite{AG} is a variant of the usual Satake category $\cD(\uGrv)$
which appears (implicitly) in the derived Satake correspondence of~\cite{BezFink}. It can be defined as the 
ind-category $\Ind(Shv_{lc}(\uGrv))$ of the category of {\em locally compact} sheaves on $\uGrv$, i.e., equivariant sheaves on the affine Grassmannian for which the underlying sheaves are constructible (hence compact).
In the language of $\cD$-modules, it is the Ind-category of the category of holonomic $\cD$-modules on $\uGrv$ - note that (as in the previous example) all coherent $\cD$-modules on $\uGrv$ are holonomic, in fact regular holonomic, hence identified with constructible sheaves. 
The renormalized Satake theorem~\cite{BezFink,AG} is an equivalence of monoidal categories
$$\Dhol(\uGrv)=\Dhol(\uGrv)\simeq IndCoh(\fg^{\vee}[2]/\Gv).$$ Dropping the renormalization of $\cD$-modules corresponds to imposing finiteness conditions on the right hand side.
\end{example}


\begin{remark}[Ind-constructible sheaves]
The notion of ind-holonomic $\cD$-modules has a natural analog in the setting of l-adic sheaves or constructible sheaves in the analytic topology. Namely on a scheme $X$ the compact objects in $Shv(X)$ are the constructible sheaves, but this is no longer the case on a stack. A {\em  locally compact} sheaf on a stack $X$ is a sheaf whose stalks are perfect complexes -- i.e., whose pullback under any map $pt\to X$ is compact. We denote $Shv(X)_{lc}\subset Shv(X)$ the full subcategory of locally compact sheaves, and define the category $\breve{Shv}(X)$ of renormalized, or ind-constructible, sheaves as $\Ind Shv(X)_{lc}$. It has $Shv(X)$ as a colocalization:
$$\xymatrix{\Xi: Shv(X)\ar[r]<.5ex> &\ar[l]<.5ex> \breve{Shv}(X):\Psi}$$ 
For example for $X=Y/G$ a quotient stack, $\breve{Shv}(X)$ can be identified with the Ind category of $G$-equivariant constructible complexes on $Y$ in the sense of Bernstein--Lunts \cite{bernsteinlunts}. 

The $!$-tensor structure on $Shv(X)$ respects locally compact objects, hence extends by continuity to define a symmetric monoidal structure on $\breve{Shv}(X)$, for which the functors $\Xi,\Psi$ upgrade to symmetric monoidal functors.

When $X$ is a finite orbit stack (for example, a quotient stack $Y/G$ where $G$ acts on $Y$ with finitely many orbits) or an ind-finite orbit stack such as $\uGrv$, every coherent complex on $X$ is regular holonomic. Thus, via the Riemann-Hilbert correspondence, $\Dhol(X)=\Dhol(X)\simeq \breve{Shv}(X)$. 
\end{remark}

\subsection{Formalism of ind-holonomic $\cD$-modules}

Recall~\cite{GRcrystals,GR,dario,raskin} the construction of the contravariant functor of $\cD$-modules $\cD^!$ on ind-schemes. Namely we start with the functor $$\cD^!:AffSch^{f.t.,op}\to \DGCat$$ of $\cD$-modules with $!$-pullback 
on schemes of finite type as constructed e.g. in~\cite{GRcrystals,GR}. We then right Kan extend to ind-schemes of ind-finite type (or more generally to {\em laft} prestacks). 

\begin{defn} The right-lax symmetric monoidal functor $\Dhol^!:QCA^{op}\to \DGCat$ is defined as the (symmetric monoidal) 
ind-construction 
$$\xymatrix{QCA^{op} \ar[rr]^-{\cD_{coh,hol}^!} && \DGCat^{sm} \ar[rr]^-{Ind} && \DGCat}$$ applied to the subfunctor of $\cD^!$ defined by coherent holonomic $\cD$-modules.
\end{defn}

Note that by construction $\Dhol(X)$ for a QCA stack is compactly generated by $\cD_{coh,hol}(X)$.

\begin{lemma}\label{QCA functoriality} For $p:X\to Y$ a finite type morphism of QCA stacks, we have continuous pullback and pushforward functors 
$$\xymatrix{p_*:\Dhol(X)\ar[r]<.5ex>& \ar[l]<.5ex> \Dhol(Y): p^!}$$ satisfying base change. Moreover for $p:X\to Y$ a proper morphism, $(p_*,p^!)$ form an adjoint pair. 
\end{lemma}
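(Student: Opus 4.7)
Since $\Dhol(X) = \Ind\,\cD_{coh,hol}(X)$ is a compactly generated dg category, a continuous functor out of $\Dhol(X)$ is uniquely determined by (and may be freely prescribed as) an exact functor on the small subcategory $\cD_{coh,hol}(X)$ of compact generators, via the symmetric monoidal ind-extension $\Ind:\DGCat^{sm}\to \DGCat$ reviewed in Section~\ref{dgcat}. Thus the strategy is to construct $p^!$ and $p_*$ first as exact functors between the small subcategories of coherent holonomic $\cD$-modules, verify base change and the proper adjunction there, and then ind-extend; continuity of the resulting functors on $\Dhol$ is automatic.

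The candidate functors are simply the restrictions of the corresponding $p^!$ and $p_*$ in the ambient $\cD$-module theory of Gaitsgory--Rozenblyum (Theorem~\ref{GR sheaf theory}). The essential input is the preservation statement: for a finite-type morphism $p:X\to Y$ of QCA stacks, both $p^!$ and $p_*$ take $\cD_{coh,hol}(-)$ into $\cD_{coh,hol}(-)$. For $p^!$ this is the classical preservation of holonomicity under $!$-pullback along a finite-type morphism; for $p_*$ it is Bernstein's theorem that direct image along a finite-type morphism preserves holonomicity. In both cases one then uses that, in our setting, a holonomic object is automatically coherent (as $\cD_{coh,hol}(X)$ is defined by smooth descent from an atlas, where this holds tautologically). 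In the stacky setting, both preservation assertions reduce to their scheme-theoretic analogues by choosing a smooth atlas, using the QCA hypothesis from~\cite{finiteness} to ensure descent behaves well.

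Base change and proper adjunction are then inherited formally. The Cartesian base change isomorphism $p_* q^! \simeq (q')^! (p')_*$ of~\cite{GR} holds on all $\cD$-modules, in particular on $\cD_{coh,hol}$, and continuous ind-extension carries this natural isomorphism to one on $\Dhol$. Likewise, for proper $p$ the adjunction $(p_*,p^!)$ on the full $\cD$-module category restricts to an adjunction between exact functors on the stable subcategories $\cD_{coh,hol}(X)\rightleftarrows \cD_{coh,hol}(Y)$, and the ind-extension of an adjoint pair of exact functors between small stable $\infty$-categories is an adjoint pair of continuous functors between the associated ind-categories.

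The principal technical point---and the only one beyond bookkeeping---is the verification that the coherent holonomic subcategory is preserved under $p_*$ for non-proper finite-type morphisms of QCA stacks. This requires combining Bernstein's preservation of holonomicity on atlases with a descent check that the resulting objects glue to a coherent holonomic $\cD$-module on the quotient; the QCA hypothesis is what makes this work. Once this is established, every remaining assertion of the lemma is a formal consequence of ind-extension.
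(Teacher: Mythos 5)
Your proof follows essentially the same approach as the paper's: restrict $p^!$ and $p_*$ to the small categories $\cD_{coh,hol}$ (using preservation of coherent holonomicity under finite-type morphisms), ind-extend by continuity, and note that base change and the proper adjunction can be checked on compact objects. The paper's version is terser — it simply asserts preservation of holonomicity and lets the rest follow — while you spell out the reduction to an atlas via Bernstein's theorem and the QCA/descent bookkeeping, but there is no substantive difference in strategy or content.
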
 

\begin{proof}
Pullback and pushforward of holonomic $\cD$-modules on stacks under finite type morphisms remain holonomic. Hence the functors
$$\xymatrix{p_*:\cD_{coh,hol}(X)\ar[r]<.5ex>& \ar[l]<.5ex> \cD_{coh,hol}(Y): p^!}$$
extend by continuity to the ind-categories. The property of base-change can likewise be checked on the compact objects. 
\end{proof}

If we need to consider schemes beyond finite type, we first perform a left Kan extension to extend from affine schemes to all affines and then right Kan extend extend $\cD^!$ to all ind-schemes~\cite{raskin}. Another formulation~\cite{dario} is to consider schemes of pro-finite type or simply {\em pro-schemes}, schemes that can be written as filtered limits of schemes of finite type along affine smooth surjective maps. Again $\cD^!$ is extended from finite type schemes to pro-schemes as a left Kan extension, and then to ind-pro-schemes by a right Kan extension.

We are interested in objects such as the equivariant affine Grassmannian $\uGrv$, which is nearly but not quite an ind-finite type algebraic stack. Namely $\uGrv$ is the inductive limit (under closed embeddings) of stacks of the form $X/K$ where $X$ is a scheme of finite type and $K$ ($\LGpv$ in our setting) is an algebraic group acting on $X$ through a finite type quotient $K_f=K/K^u$ with pro-unipotent kernel $K^u$. Thus $X/K$ is a projective limit of finite type algebraic stacks under morphisms which are gerbes for unipotent group schemes. However the category of $\cD$-modules is insensitive to unipotent gerbes, so in particular the category of $\cD$-modules on $X/K$ is equivalent to that of the finite type quotient
$X/K_f$.

 Thus we make the following more modest variant of the constructions in~\cite{dario, raskin}:

\begin{defn} \begin{enumerate}
\item By a stack nearly of finite type we refer to an algebraic stack expressible as a projective limit of QCA stacks under morphisms which are gerbes for unipotent group schemes.
\item By an ind-nearly finite type stack, or simply {\em ind-stack}, we denote a prestack equivalent to an inductive limit of stacks nearly of finite type under closed embeddings. The symmetric monoidal category of ind-stacks is denoted $IndSt$.
\end{enumerate}
\end{defn}

\begin{defn}
The functor $\Dhol^!:IndSt^{op}\to \DGCat$ on ind-stacks is defined by first left Kan extending $\Dhol^!$ from QCA stacks to stacks nearly of finite type, and then right Kan extending to ind-nearly finite type stacks.
\end{defn}

\begin{proposition} The functor $\Dhol^!$ admits a right-lax symmetric monoidal structure extending that previously defined on QCA stacks.
\end{proposition}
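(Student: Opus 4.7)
The strategy is to propagate the existing right-lax symmetric monoidal structure through the two Kan extensions defining $\Dhol^!$ on $IndSt$. The right-lax structure on QCA stacks comes from the ind-construction applied to $\cD^!_{coh,hol}: QCA^{op} \to \DGCat^{sm}$: external product preserves coherent holonomic $\cD$-modules, producing the right-lax external product $\Dhol^!(X) \otimes \Dhol^!(Y) \to \Dhol^!(X \times Y)$ via pullback along the diagonal. The only formal input required to propagate this structure along Kan extensions is that both inclusions of source categories are compatible with finite cartesian products, which is what I would verify in each of the two steps.

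For the left Kan extension to stacks nearly of finite type, the essential observation is that for $X = \lim_k Z_k$ a limit of QCA stacks along unipotent gerbes, $!$-pullback along any transition map is an equivalence on $\Dhol^!$ (since $\cD$-modules do not detect unipotent gerbes). Hence the colimit defining the left Kan extension at $X$ stabilizes at $\Dhol^!(Z_k)$ for any single index $k$. Because products of QCA stacks are QCA, products of unipotent gerbes are unipotent gerbes, and limits commute with products, the product $X \times Y$ of two near-f.t.\ stacks is again near-f.t., with $X \times Y = \lim_{(k,l)} Z_k \times W_l$. The right-lax external product on QCA thus descends directly to the required external product on near-f.t.\ stacks, with all symmetry and associativity coherences inherited.

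For the right Kan extension to ind-stacks, let $X = \text{colim}_i U_i$ and $Y = \text{colim}_j V_j$ with closed-embedding transitions. Then $X \times Y = \text{colim}_{(i,j)} U_i \times V_j$, so $\Dhol^!(X \times Y) = \lim_{(i,j)} \Dhol^!(U_i \times V_j)$ by definition of the right Kan extension (with transition maps by $!$-pullback). Composing the right-lax external products from the previous step with the canonical comparison
$$\textstyle\bigl(\lim_i A_i\bigr) \otimes \bigl(\lim_j B_j\bigr) \to \lim_{(i,j)} (A_i \otimes B_j)$$
in $\DGCat$ yields the desired right-lax external product; the failure of this comparison to be an equivalence is precisely why the extended structure remains only right-lax. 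The main obstacle will be organizing all the higher coherences of the right-lax symmetric monoidal structure in the $\infty$-categorical setting, rather than producing the underlying external product maps. I would dispatch this by invoking the general operadic Kan extension formalism developed in~\cite{GR} (in the spirit of~\cite[\S 3.1]{HA}), which guarantees that a right-lax symmetric monoidal $\infty$-functor Kan-extends to a right-lax symmetric monoidal $\infty$-functor whenever the source inclusion preserves finite products and the target admits the relevant limits and colimits—conditions verified in each step above.
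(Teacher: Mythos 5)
The paper states this proposition without a proof (there is no \verb|proof| environment following it; the next item is a lemma about $\Dhol$ along inverse limits of unipotent gerbes), so you are filling a genuine gap rather than matching a written argument. Your two-step strategy---propagating the lax structure first through the left Kan extension to near-finite-type stacks, then through the right Kan extension to ind-stacks---is the natural one given the definition, and the core observations are correct: the left Kan extension is pointwise an equivalence onto $\Dhol(Z_k)$ for any single index $k$ (exactly the content of the paper's next lemma), so that step is a transport of structure, and the right Kan extension produces the lax structure maps via the always-available comparison
$\bigl(\lim_i A_i\bigr) \otimes \bigl(\lim_j B_j\bigr) \to \lim_{(i,j)}(A_i \otimes B_j)$,
which exists in any symmetric monoidal $\infty$-category without hypotheses.

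The one place where your argument is imprecise is the final appeal to a ``general operadic Kan extension formalism developed in [GR].'' Gaitsgory--Rozenblyum do not develop such a formalism; their extension machinery concerns correspondence categories and the 2-categorical theorem quoted later as Remark~\ref{remark-bivariant}, not operadic Kan extensions in the sense you want. The correct framework is Lurie's theory of operadic left Kan extensions in \cite[\S 3.1.2--3.1.3]{HA}, and the criterion there is not ``the source inclusion preserves finite products and the target admits the relevant (co)limits,'' but rather a compatibility between the tensor product in the target and the colimits computing the Kan extension. For the left Kan extension step this is moot, since the Kan extension is pointwise an equivalence. For the right Kan extension step, the operadic theory does not apply directly (it is a left Kan extension theory), but here one does not need it: the target is the full subcategory inclusion $NFT^{op} \hookrightarrow IndSt^{op}$ whose right Kan extension is computed by cofiltered limits over closed substacks, and a limit of a diagram of lax symmetric monoidal functors assembles into a lax symmetric monoidal functor by the universal property of limits in the $\infty$-category of commutative algebra objects of $\Fun(-,\DGCat)$. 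Tightening the citation and replacing the appeal to an ``operadic Kan extension'' with this more elementary limit argument would make the proof rigorous; as written, the invocation would not survive scrutiny.
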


\begin{lemma} \begin{enumerate}
\item For $\cX=\lim_\leftarrow X_n$ an inverse limit of stacks of finite type under unipotent gerbes, the functor $$\lim_{\leftarrow} \Dhol(X_n)\to \Dhol(X_i)$$ is an equivalence for any $i$. 
\item The assertions of Lemma~\ref{QCA functoriality} extend to morphisms of nearly finite type stacks.
\end{enumerate}
\end{lemma}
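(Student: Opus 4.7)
The plan is to first establish the key geometric input for part (1): $!$-pullback of $\Dhol$ along a unipotent gerbe is an equivalence. Recall that for a connected unipotent algebraic group $U$, the pullback $\cD(BU)\to \Vect$ along $pt\to BU$ is an equivalence: the de Rham cohomology of $BU$ is trivial, so the bar complex computing $\cD(BU)$ as $U$-invariant $\cD$-modules degenerates. This globalizes to show that for any $U$-gerbe $p:X\to Y$ between QCA stacks with $U$ a unipotent group scheme, the functor $p^!:\cD(Y)\to \cD(X)$ is an equivalence. Moreover $p^!$ sends holonomic objects to holonomic ones (and its inverse, which coincides with $p_*$ up to a shift, also preserves holonomicity), so this equivalence restricts to $\Dhol(Y)\isom \Dhol(X)$.

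Granted this, part (1) is immediate: in the inverse system $\cX=\lim_\leftarrow X_n$ the transition maps $X_{n+1}\to X_n$ are unipotent gerbes, so each $!$-pullback $\Dhol(X_n)\to \Dhol(X_{n+1})$ is an equivalence. Hence the limit $\lim_\leftarrow \Dhol(X_n)$ computed along the $!$-pullback functors agrees with every $\Dhol(X_i)$; by construction of $\Dhol^!$ via right Kan extension, this limit is the value of $\Dhol^!$ on $\cX$. For part (2) one uses this to reduce to the QCA case already handled in Lemma~\ref{QCA functoriality}. Given a finite type morphism $f:\cX\to \cY$ between stacks nearly of finite type, one chooses compatible presentations $\cX=\lim X_n$, $\cY=\lim Y_n$ in such a way that $f$ is induced by a tower of finite type morphisms $f_n:X_n\to Y_n$ of QCA stacks (this uses the standard cofinality arguments for pro-systems, together with the fact that the unipotent kernels may always be enlarged to match); properness of $f$ descends to properness of each $f_n$. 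The pullback $f^!$ and pushforward $f_*$ are then defined on any model by $f_n^!$ and $(f_n)_*$ under the equivalences from (1), independently of the choice of model. Base change along a Cartesian square of such morphisms follows from base change at each finite level, and the $(f_*,f^!)$-adjunction for proper $f$ likewise descends from each $f_n$.

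The main obstacle is the bookkeeping required to make compatible choices of QCA models for a morphism between pro-systems and to verify that the transferred pushforward, pullback, base change, and adjunction data are coherent (as opposed to merely defined): different choices of model need to be linked by equivalences coming from (1) that intertwine the structure. This is a standard but tedious exercise with pro-categories that we prefer to handle by setting up $\Dhol^!$ directly as the right Kan extension along the fully faithful inclusion of QCA stacks into nearly finite type stacks. In this formulation, the pullback functoriality is automatic and the pushforward / base change / adjunction assertions reduce to checking them pointwise on the QCA approximations, where they are provided by Lemma~\ref{QCA functoriality}.
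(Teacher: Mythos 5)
Your argument captures the correct mechanism, and it is the one the paper's own framework points to: the key input---that $\cD$-modules, and hence $\Dhol$, are insensitive to unipotent gerbes---is exactly what is highlighted in the discussion immediately preceding the definition of nearly-finite-type stacks, and your reduction of both parts to the QCA case via that insensitivity is the intended argument. Two small slips are worth fixing. First, it is the de Rham cohomology of $U$ (not of $BU$) that is trivial and makes strong $U$-equivariance a property rather than structure: $H^*_{dR}(BU)$ is a polynomial ring for $U$ nontrivial, so the bar/totalization collapses because $H^*_{dR}(U)\simeq k$, not because $H^*_{dR}(BU)$ vanishes. Second, the paper defines $\Dhol^!$ on nearly-finite-type stacks by a \emph{left} Kan extension, not a right one: for $\cX = \lim_\leftarrow X_n$ the relevant (co)slice category is indexed by the maps $\cX \to X_n$, so the left Kan extension colimits over $\Dhol(X_n)$ along $!$-pullbacks, whereas a right Kan extension would be a limit over maps $X\to\cX$ from QCA stacks, which is the wrong direction. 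Both slips are immaterial to the conclusion, since every transition pullback $\Dhol(X_n)\to\Dhol(X_{n+1})$ is an equivalence and therefore the limit, the colimit, and any single $\Dhol(X_i)$ all agree. Your treatment of part (2)---choose compatible QCA models for a finite-type morphism, transfer $f_*$, base change, and the $(f_*,f^!)$ adjunction from Lemma~\ref{QCA functoriality}, and observe independence of choices because the identifications from (1) intertwine these structures---is the right shape of argument, and you correctly identify that the coherence bookkeeping is the only substantive content; stating that one should reduce to checking on QCA approximants, where the assertions are supplied by Lemma~\ref{QCA functoriality}, is the appropriate level of precision here.
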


To calculate the abstractly defined functor $\Dhol$ on ind-stacks, we follow the strategy of~\cite{GR} (see also~\cite[Section 2]{GRindschemes}): 

\begin{proposition}\label{inductive ind-holonomic}
For $X$ an ind-stack, expressed as a filtered colimit of closed embeddings $i_n:X_n\hookrightarrow X$ with $X_n$ nearly of finite type, we have identifications $$\Dhol(X)\simeq \lim_{\leftarrow, i_{n}^!} \Dhol(X_n)\simeq \lim_{\rightarrow, i_{n,*}} \Dhol(X_n).$$
In particular $\Dhol(X)$ is compactly generated by pushforwards of coherent holonomic $\cD$-modules on the $X_n$.
\end{proposition}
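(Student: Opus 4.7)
The plan is to establish the two displayed equivalences in turn---the first tautological from the construction of $\Dhol$ on ind-stacks, the second by an adjoint-swap---and then to deduce compact generation from the resulting colimit presentation.

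\textbf{Step 1: The limit description.} By definition, $\Dhol$ on $IndSt^{op}$ is obtained as the right Kan extension of its restriction to nearly-finite-type stacks. Writing $X = \colim_n X_n$ as an $\N$-indexed colimit of closed embeddings, any map $Z \to X$ from a nearly-finite-type stack $Z$ factors through some $X_n$, so the filtered diagram $\{X_n\}$ is cofinal in the under-category of nearly-finite-type stacks over $X$ computed in $IndSt^{op}$. The pointwise formula for right Kan extension therefore yields
\[
\Dhol(X) \;\simeq\; \lim_{n,\, i_n^!} \Dhol(X_n).
\]

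\textbf{Step 2: Adjoint-swap to the colimit description.} For $m \leq n$ the closed embedding $j_{mn}\colon X_m \hookrightarrow X_n$ is proper, hence by (the extension to nearly-finite-type stacks of) Lemma~\ref{QCA functoriality}, $j_{mn}^!$ admits a continuous left adjoint $j_{mn,*}$, and these $\ast$-pushforwards assemble into a diagram $\N \to \DGCat$. The standard principle from~\cite{HTT,HA}---that a limit of presentable dg-categories along continuous right adjoints is canonically equivalent to the colimit along the corresponding left adjoints---then gives
\[
\lim_{n,\, i_n^!} \Dhol(X_n) \;\simeq\; \colim_{n,\, i_{n,*}} \Dhol(X_n),
\]
the identification being compatible with the natural maps $i_{n,*}\colon \Dhol(X_n) \to \Dhol(X)$ on both sides.

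\textbf{Step 3: Compact generation.} Each $\Dhol(X_n)$ is compactly generated by coherent holonomic $\cD$-modules by construction. Because $j_{mn,*}$ admits the continuous right adjoint $j_{mn}^!$, it preserves compact objects, and so the filtered colimit presentation from Step 2 is a colimit in $\DGCat^c$. It follows that $\Dhol(X)$ is compactly generated by the $i_{n,*}$-images of coherent holonomic $\cD$-modules on the $X_n$.

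The most delicate point is the extension of Lemma~\ref{QCA functoriality}---in particular the adjunction $(j_{mn,*}, j_{mn}^!)$ together with continuity of both functors and base change---from QCA to nearly-finite-type stacks. This is where the preceding lemma is used: it identifies $\Dhol$ of an inverse limit of QCA stacks along unipotent gerbes with the value at any single stage, so pushforward, pullback, and the adjunction all descend tautologically from the QCA setting. Once this functoriality is in hand, both steps above are essentially formal consequences of right Kan extension and adjunction swapping.
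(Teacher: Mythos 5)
Your proof is correct and takes essentially the same route as the paper: identify $\Dhol(X)$ as the limit over $!$-pullbacks via the right-Kan-extension definition, then use the $(i_{n,*},i_n^!)$ adjunction for proper (closed) embeddings to swap to the colimit presentation. You additionally spell out the cofinality point in Step 1 and supply the compact-generation argument (Step 3), both of which the paper treats as immediate; these elaborations are accurate and consistent with the surrounding lemmas.
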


\begin{proof}
The functor $\Dhol$ takes colimits in $IndSt$ to limits in $\DGCat$. Hence for an ind-stack $X=\lim_{\leftarrow, i_n} X_n$ written as a colimit of nearly finite type stacks under closed embeddings, 
we have an identification $\Dhol(X)\simeq \lim_{\rightarrow,i_n^!} \Dhol(X_n)$. Since the $X_n$ are nearly finite type stacks and $i_n$ are proper morphisms, we may apply proper adjunction to further identify the limit over the pullbacks with the colimit over their left adjoints,
$\Dhol(X)\simeq \lim_{\leftarrow, i_{n*}} \Dhol(X_n)$ as desired. 
\end{proof}

\begin{proposition} \label{IndHol adjunction}
 For $p:X\to Y$ an ind-finite type morphism in $IndSch$, we have continuous pushforward and pullback functors 
$$\xymatrix{p_*:\Dhol(X)\ar[r]<.5ex>& \ar[l]<.5ex> \Dhol(Y): p^!}$$ satisfying base change. 
For $p:X\to Y$ ind-proper, $(p_*,p^!)$ form an adjoint pair. 
\end{proposition}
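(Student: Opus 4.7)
The plan is to leverage the double presentation of $\Dhol(X)$ afforded by Proposition~\ref{inductive ind-holonomic}: for an ind-stack $X = \colim_n X_n$, $\Dhol(X)$ is equivalent both to the $!$-pullback limit $\lim_{\leftarrow, i_n^!} \Dhol(X_n)$ and to the $*$-pushforward colimit $\colim_{\rightarrow, i_{n,*}} \Dhol(X_n)$ along the closed embeddings $i_n : X_n \hookrightarrow X$. These dual presentations are what let us define $p^!$ and $p_*$ compatibly and eventually check their adjunction.

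First I would fix filtered colimit presentations $X = \colim_n X_n$ and $Y = \colim_m Y_m$ by nearly finite type substacks. Since $p$ is ind-finite type, after passing to a cofinal subsystem we can arrange that $p$ is presented by a compatible family of finite type morphisms $p_n : X_n \to Y_{m(n)}$ fitting into commuting squares with the closed embeddings. The pullback $p^!$ is already built into the definition of $\Dhol$ as a functor on $IndSt^{op}$, and using the $!$-limit presentation of $\Dhol(Y)$ it is assembled from the continuous $!$-pullbacks $p_n^!$ supplied by Lemma~\ref{QCA functoriality} (and its extension to nearly finite type stacks); in particular it is continuous.

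To construct $p_*$, I would use the $*$-colimit presentation $\Dhol(X) = \colim_{n, i_{n,*}} \Dhol(X_n)$ and define $p_*$ as the colimit of the continuous functors $j_{m(n),*} \circ p_{n,*}: \Dhol(X_n) \to \Dhol(Y)$, each of which exists by Lemma~\ref{QCA functoriality}. Being a colimit in $\DGCat$ of continuous functors, $p_*$ is continuous, and it sends the compact generator $i_{n,*}\cF$ for $\cF$ coherent holonomic on $X_n$ to $j_{m(n),*}\, p_{n,*}\cF$, which is compact in $\Dhol(Y)$ since $p_{n,*}\cF$ is coherent holonomic. Base change is then verified on the compact generators: the relevant square reduces, after applying the two presentations, to the base change isomorphism of Lemma~\ref{QCA functoriality} for finite type morphisms of nearly finite type stacks.

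For $p$ ind-proper, each $p_n$ can be taken proper, and Lemma~\ref{QCA functoriality} gives adjunctions $(p_{n,*}, p_n^!)$; the closed embeddings $i_n$ and $j_m$ are ind-proper and come with their own $(i_{n,*}, i_n^!)$ and $(j_{m,*}, j_m^!)$ adjunctions built in to the construction of $\Dhol$. The adjunction unit and counit for $(p_*, p^!)$ are then obtained as the colimit, over $n$, of the composite unit/counit maps, using the universal properties of both presentations. I expect this final bookkeeping step to be the principal obstacle: one must show that $p_*$ defined via the $*$-pushforward presentation of $\Dhol(X)$ and $p^!$ defined via the $!$-pullback presentation of $\Dhol(Y)$ actually form a genuine adjoint pair, i.e.\ that the two presentations are intertwined in the appropriate coherent way. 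The key input making this work is precisely the $(i_{n,*}, i_n^!)$ adjunction for closed embeddings, which is what identified the two presentations in Proposition~\ref{inductive ind-holonomic} in the first place; the verification amounts to writing out this identification functorially in $p$.
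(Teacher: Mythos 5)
Your construction tracks the paper's quite closely, with two small organizational differences. First, the paper fixes a single filtered presentation $Y = \colim Y_n$ and then sets $X_n = X\times_Y Y_n$ (further decomposed into finite-type pieces $X_{m,n}$), which avoids the cofinal reindexing you do with independent systems for $X$ and $Y$; it is slightly cleaner bookkeeping but not a substantive change. Second, the paper builds $p^!$ via the $*$-colimit presentation of $\Dhol(Y)$: it assembles the functors $s_{n,*}\circ p_n^!\colon \Dhol(Y_n)\to\Dhol(X)$ and uses finite-type base change $s_{n,*}p_n^!\simeq p^!t_{n,*}$ to descend along $\Dhol(Y)\simeq\colim_{t_{n,*}}\Dhol(Y_n)$, whereas you use the $!$-limit presentation directly. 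Both routes work; the paper's choice makes the verification of base change fall out of the construction. For $p_*$, what you do is equivalent to the paper's (which simply observes that $*$-pushforward preserves coherent holonomics, hence restricts to compacts and Ind-extends).

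On the ind-proper adjunction: you are right to flag this as the point needing care, but your worry about ``intertwining the two presentations coherently'' overstates the difficulty, and the sketch via a colimit of unit/counit maps is not the cleanest route. Once you know that $p_*$ is continuous and preserves compacts (which you do) and that $p^!$ is continuous (which you do), the adjunction can be checked on compact objects only: for $\cF$ compact and $\cG$ arbitrary, write $\cG=\colim\cG_j$ with $\cG_j$ compact and use compactness of $\cF$ and $p_*\cF$ together with continuity of $p^!$ to reduce both $\Map(p_*\cF,\cG)$ and $\Map(\cF,p^!\cG)$ to colimits over compacts. Then for $\cF, \cG$ both compact, represent $\cF = s_{n,*}i_{m,n,*}\cF_0$ and $\cG = t_{k,*}\cG_0$ with $k\geq n$, and the claim reduces (using full faithfulness of closed pushforwards and the proper adjunction on nearly finite type stacks) to the finite-type proper adjunction for $p_{m,n}$. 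So the two presentations of $\Dhol$ play no role at this stage beyond supplying the compact generators. For what it's worth, the paper's own proof constructs $p_*$, $p^!$ and verifies base change but does not spell out the adjunction at all, so your proposal is at least as explicit on this point.
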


\begin{proof}
Let us write $Y$ as the filtered colimit of closed embeddings of nearly finite type substacks $t_n:Y_n\hookrightarrow Y$, and $s_n:X_n=X\times_Y Y_n \hookrightarrow X$. Then by hypothesis we can further decompose $X_n$ as the colimit of substacks $i_{m,n}:X_{m,n}\hookrightarrow X_n$ with $p_{m,n}:X_{m,n}\to Y_n$ finite type.

A holonomic $\cD$-modules $\cF$ on $X$ can be represented as the pushforward of a holonomic $\cD$-module $\cF_{m,n}$ on some $X_{m,n}$. Hence $p_*\cF=p_{m,n*}\cF_{m,n}$ is holonomic. Thus pushforward on all $\cD$-modules restricts to a functor 
$$p_*:\cD_{coh,hol}(X)\to \cD_{coh,hol}(Y)$$ which thus extends by continuity to the ind-categories $\Dhol$. 

Pullback defines a functor $$p_{m,n}^!:\cD_{coh,hol}(Y_n)\to \cD_{coh,hol}(X_{m,n}),$$ and thus passing to ind-categories by continuity
$$p_{m,n}^!:\Dhol(Y_n)\to \Dhol(X_{m,n}).$$ By Proposition~\ref{inductive ind-holonomic}, these functors assemble to a continuous functor to the inverse limit category and on to the target, 
$$\xymatrix{\Dhol(Y_n)\ar[r]^-{p_n^!}& \Dhol(X_n)\ar[r]^-{s_{n,*}}& \Dhol(X)}$$ Finally by (finite type) base change the functors $s_{n,*}p_n^!\simeq p^!t_{n,*}$ assemble to a functor from the direct limit category $$\lim_{\rightarrow}\Dhol(Y_n)=\Dhol(Y)$$ to $\Dhol(X)$. The resulting functors inherit the base change property from their finite type constituents. 

\end{proof}

\begin{remark}[Bivariant functoriality]\label{remark-bivariant} The key 2-categorical extension theorem of Gaitsgory-Rozenblyum, ~\cite[Theorem V.1.3.2.2]{GR}, allows one to define functors out of correspondence 2-categories given 1-categorical data, namely a functor (in our case $\Dhol^!$) satisfying an adjunction and base change property for a particular class of morphisms (in our case ind-proper morphisms). Thus we find that the functor $\Dhol^!:IndSt^{op}\to \DGCat$ extends to a functor of $(\infty,2)$-categories
$$\Dhol:Corr_{ind-f.t,ind-prop}^{ind-prop}(IndSt)\to \DGCat^{(\infty,2)}.$$ 
\end{remark}

\section{Hecke algebras and Hecke categories} \label{Hecke section}

In this section we describe a general formalism for constructing symmetric monoidal categories acting centrally on convolution categories. We work in the setting of ind-holonomic $\cD$-modules on ind-stacks described above, since our main example is the renormalized Satake category $\Dhol(\uGrv)$ and more generally Hecke categories for Kac-Moody groups $\Dhol(\uP\backslash \uG/\uP)$. However the discussion of this section works identically when applied to the sheaf theories of ind-coherent sheaves $\QC^!$ or $\cD$-modules $\cD$ when restricted to {\em laft} prestacks, as in~\cite{GR}.

\subsection{Looping and delooping monoidal categories}

We recall the following fundamental feature of algebras and their module categories, due to Lurie (combining aspects of Theorems 6.3.5.5, 6.3.5.10 and 6.3.5.14 in~\cite{HA} -- Lurie also proves functoriality in $\cP$ which we omit). See~\cite[Section E.2]{AG} for a related discussion in the stable setting of dg categories.

Let us fix a presentable symmetric monoidal category $\cP\in \Pr^L$, and let $Cat_\cP:=\cP\modcat$ denote the symmetric monoidal category of $\cP$-module categories in $\Pr^L$. Thus for $\cP=k\module$ ($k$ a ring of characteristic zero) we have $Cat_\cP=\DGCat_k$, the symmetric monoidal category of presentable $k$-linear dg categories. We will be interested in applying the result for $\cP=\DGCat_k^c$ (see Section~\ref{dgcat}), so that an algebra $A$ in $\cP$ is a small monoidal dg category, or equivalently a compactly generated presentable dg category with proper monoidal structure, and $A\module$ is the $\oo$-category of $A$-module categories. 

\medskip

\begin{theorem}~\cite[Section 6.3.5]{HA}
There is a symmetric monoidal functor $$\Mod:Alg_{E_1}(\cP)\longrightarrow (Cat_\cP)_{\cP/}$$
from $E_1$-algebras in $\cP$ to $\cP$-categories under $\cP$ (i.e., $E_0$-$\cP$-categories), sending $A$ to the $\cP$-category $\rmodule{\cA}$ of right $A$-modules in $\cP$, pointed by $A$ itself. 
This functor admits a right adjoint $\Omega$, sending a pointed category $p:\cP\to\cM$ to $$\Omega_p(\cM,p)=End_{\cM}(p(1_\cP))\in Alg_{E_1}(\cP).$$ 
\end{theorem}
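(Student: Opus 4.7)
The plan is to first establish the underlying $\infty$-categorical adjunction $\Mod \dashv \Omega$ and then upgrade $\Mod$ to a symmetric monoidal functor.

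I would begin by constructing $\Mod$ directly. On an $E_1$-algebra $A \in \Alg_{E_1}(\cP)$, set $\Mod(A) = \rmodule{A}$, the presentable $\infty$-category of right $A$-module objects in $\cP$. It is $\cP$-linear via the unit map $1_\cP \to A$ and is canonically pointed by $A$ itself regarded as the free right $A$-module on $1_\cP$. A morphism $f : A \to B$ of $E_1$-algebras yields the extension-of-scalars functor $(-)\otimes_A B$, which is continuous, $\cP$-linear, and sends the basepoint $A$ to $B$. This assembles into $\Mod : \Alg_{E_1}(\cP) \to (\Cat_\cP)_{\cP/}$.

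Next I would construct the candidate right adjoint $\Omega$. For a pointed $\cP$-linear category $p : \cP \to \cM$, set $x := p(1_\cP)$. Because $\cM$ is a $\cP$-module category, the action functor $(-) \otimes x : \cP \to \cM$ is a continuous left adjoint, so its right adjoint produces an internal endomorphism object $\End_\cM(x) \in \cP$, and composition endows it with a canonical $E_1$-algebra structure. To verify the adjunction, I would exhibit a natural equivalence
$$\Map_{(\Cat_\cP)_{\cP/}}\bigl(\rmodule{A},\, (\cM, p)\bigr) \;\simeq\; \Map_{\Alg_{E_1}(\cP)}\bigl(A,\, \End_\cM(x)\bigr).$$
The key input is the universal property of $\rmodule{A}$: a pointed $\cP$-linear functor $F : \rmodule{A} \to \cM$ is determined by $F(A)$ together with the residual right $A$-action on that image, and pointedness forces $F(A) = x$; the remaining data is then precisely an $E_1$-algebra map $A \to \End_\cM(x)$. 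Cleanly, this follows from the Barr--Beck--Lurie theorem applied to the monadic free/forgetful adjunction $\cP \rightleftarrows \rmodule{A}$, identifying $\rmodule{A}$ with modules in $\cP$ for the $\cP$-linear monad $(-)\otimes A$, and hence as the $\cP$-category freely generated by an object equipped with a right $A$-action.

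Finally, the symmetric monoidal enhancement of $\Mod$ rests on the equivalence
$$\rmodule{A \otimes B} \;\simeq\; \rmodule{A} \otimes_\cP \rmodule{B}$$
of pointed $\cP$-linear categories, with both sides pointed by (the image of) $A \otimes B$. This follows from the universal property of the relative tensor product in $\Cat_\cP$: pointed $\cP$-linear functors out of either side classify pointed $\cP$-linear categories equipped with commuting right $A$- and right $B$-actions on the basepoint, which is exactly the datum of a right $A \otimes B$-action. The main obstacle will be upgrading this pointwise equivalence to an honest symmetric monoidal functor rather than a mere natural isomorphism on homotopy categories: one must produce coherent equivalences for all $n$-ary tensor products $A_1 \otimes \cdots \otimes A_n$ and verify the $E_\infty$-operadic coherences (associativity, unitality, and symmetry) in $(\Cat_\cP)_{\cP/}^\otimes$. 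This is where Lurie's machinery for algebras in $\Pr^L$ and for relative tensor products of module categories (HA Sections 4.5 and 4.8) is indispensable, and any direct construction has to grapple with operadic bookkeeping rather than just a fiberwise property.
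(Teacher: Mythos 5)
The paper does not prove this statement --- it is quoted directly from Lurie's \emph{Higher Algebra}, combining Theorems 6.3.5.5, 6.3.5.10 and 6.3.5.14, as noted in the sentence preceding it --- so there is no paper proof to compare against. Evaluated on its own terms, your sketch identifies the right ingredients: the construction of $\Mod$, the pointing by the free module, the passage to $\End_\cM(p(1_\cP))$ via the internal-hom right adjoint, and the equivalence $\rmodule{A\otimes B}\simeq\rmodule{A}\otimes_\cP\rmodule{B}$ as the crux of the symmetric monoidal structure are all as in Lurie.

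Two inaccuracies are worth flagging. First, the appeal to Barr--Beck--Lurie is misplaced: monadicity of the forgetful functor $\rmodule{A}\to\cP$ characterizes $\rmodule{A}$ via functors \emph{into} it, whereas the universal property you actually need concerns continuous $\cP$-linear functors \emph{out of} $\rmodule{A}$ --- that they classify $A$-modules in the target, with the pointed ones having underlying object $x$. That is the Eilenberg--Watts-type result of HA \S4.8.4--4.8.5 (relative tensor products of presentable module categories and the resulting identification of $\cP$-linear functor categories), not a corollary of Barr--Beck. Second, there is a handedness slip: a pointed functor $F$ transports $\End_{\rmodule A}(A)\simeq A$ (acting by \emph{left} multiplication on the free right module $A$) to an $E_1$-map $A\to\End_\cM(x)$, so the ``residual action'' makes $x$ a \emph{left} $A$-module in $\cM$, not a right one. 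This is exactly what makes your inverse construction $M\mapsto M\otimes_A x$ (right module against left module) well-formed; the handedness should be fixed before that step can be made precise.
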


By iteratively applying Lurie's Dunn additivity theorem~\cite[Theorem 5.1.2.2]{HA} we may likewise loop and deloop between $E_n$-algebras in $\cP$ and $E_{n-1}$-monoidal $\cP$-categories. We spell out the case we will use:

\begin{corollary} \label{nonsense} 
\begin{enumerate}
\item Taking endomorphisms of unit objects defines a functor 
$$\Omega: Alg_{E_1}(Cat_\cP)\longrightarrow Alg_{E_2}(\cP)$$
from monoidal $\cP$-categories to $E_2$-algebras in $\cP$.
\item For $\cA\in Alg_{E_1}(Cat_\cP)$ a monoidal $\cP$-category, the $E_2$-morphism
$$1_\cA\ot -: End(1_\cA)\to End(Id_\cA)$$ given by applying $\Omega$ to the action of $\cA$ on itself admits a left inverse as an $E_1$-morphism 
$$act_{1_\cA}:End(Id_\cA)\to End(1_\cA), $$ given by the action of $End(Id_\cA)$ on the object $1_\cA$.
\end{enumerate}
\end{corollary}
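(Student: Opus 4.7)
Both assertions are formal consequences of the cited adjunction $\Mod\dashv\Omega$ combined with Dunn's additivity theorem $Alg_{E_1}(Alg_{E_1}(\cP))\simeq Alg_{E_2}(\cP)$.

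For part (1), the plan is to promote $\Omega$ to a functor on $E_1$-algebra objects. Since $\Mod:Alg_{E_1}(\cP)\to(Cat_\cP)_{\cP/}$ is symmetric monoidal, the right adjoint $\Omega$ is canonically lax symmetric monoidal, and hence induces a functor $Alg_{E_1}((Cat_\cP)_{\cP/})\to Alg_{E_1}(Alg_{E_1}(\cP))$. The source of this functor is canonically equivalent to $Alg_{E_1}(Cat_\cP)$: any $E_1$-algebra in $Cat_\cP$ is pointed by the unit of its $E_1$-structure, and conversely the $E_1$-unit of a pointed $E_1$-algebra must agree with the pointing, so the forgetful and unit functors furnish an equivalence. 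Combining with Dunn additivity yields the desired functor $\Omega_{E_1}:Alg_{E_1}(Cat_\cP)\to Alg_{E_2}(\cP)$, sending $\cA$ to $\Omega(\cA,1_\cA)=End_\cA(1_\cA)$.

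For part (2), consider the action of $\cA$ on itself, i.e.\ the monoidal functor $\mu:\cA\to End_\cP(\cA)$, $a\mapsto a\otimes(-)$, which satisfies $\mu(1_\cA)\simeq Id_\cA$. This is a morphism in $Alg_{E_1}((Cat_\cP)_{\cP/})$, and applying the functor $\Omega_{E_1}$ from part (1) produces the desired $E_2$-morphism
\[
1_\cA\otimes(-):=\Omega_{E_1}(\mu):End_\cA(1_\cA)\longrightarrow End(Id_\cA).
\]
The key observation for the left inverse is that evaluation at the unit, $ev_{1_\cA}:End_\cP(\cA)\to\cA$, $F\mapsto F(1_\cA)$, \emph{is} a morphism in $(Cat_\cP)_{\cP/}$ (it preserves the chosen points) but is merely lax monoidal, not monoidal, so it does not lift to $Alg_{E_1}((Cat_\cP)_{\cP/})$. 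Applying the right adjoint $\Omega$ at the pointed level produces only an $E_1$-morphism of $E_1$-algebras
\[
act_{1_\cA}:=\Omega(ev_{1_\cA}):End(Id_\cA)\longrightarrow End_\cA(1_\cA),
\]
where we forget the additional $E_1$-structure on the source coming from Dunn additivity.

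The left-inverse identity is established before applying $\Omega$: the composite $ev_{1_\cA}\circ\mu:\cA\to End_\cP(\cA)\to\cA$ sends $a\mapsto a\otimes 1_\cA\simeq a$ by the unit isomorphism, and is therefore canonically equivalent to the identity functor of $\cA$ in $(Cat_\cP)_{\cP/}$. Functoriality of $\Omega$ then yields $act_{1_\cA}\circ(1_\cA\otimes(-))\simeq \id$ as an $E_1$-morphism. The main subtlety throughout is coherence bookkeeping at the $\infty$-categorical level: verifying the equivalence $Alg_{E_1}((Cat_\cP)_{\cP/})\simeq Alg_{E_1}(Cat_\cP)$ together with Dunn additivity as functors of $\infty$-categories, and confirming precisely that $\mu$ lifts to $Alg_{E_1}((Cat_\cP)_{\cP/})$ while $ev_{1_\cA}$ does not. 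These are formal but must be invoked carefully; the core computation itself (the composite being the identity functor) reduces to the unit axiom for the monoidal structure on $\cA$.
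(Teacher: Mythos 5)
Your proof follows the same route as the paper: upgrading $\Omega$ via lax symmetric monoidality and Dunn additivity for part (1), then applying $\Omega$ to the left regular action $\otimes:\cA\to\End(\cA)$ and observing that evaluation at $1_\cA$ is a pointed (but not monoidal) left inverse, with the unit isomorphism giving the identity composite. One minor inaccuracy: your parenthetical claim that $ev_{1_\cA}$ is lax monoidal is not needed and is doubtful in general — the relevant and correct point, which you also make, is simply that it is a morphism of pointed $\cP$-categories but not of monoidal $\cP$-categories, so $\Omega$ yields only an $E_1$-morphism.
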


\begin{proof}
The functor $\Omega$, by virtue of being the right adjoint to a symmetric monoidal functor, is itself right-lax symmetric monoidal.
Thus we can upgrade $\Omega$ to a functor 
$$\Omega: Alg_{E_1}(Cat_\cP)\longrightarrow Alg_{E_1}(Alg_{E_1}(\cP))\simeq Alg_{E_2}(\cP)$$
on monoidal $\cP$-categories, pointed by their units, to $E_2$-algebras in $\cP$.

We now apply this construction to the monoidal functor (morphism of $E_1$-algebras in $Cat_\cP$) given by the action of a monoidal category on itself, $$\otimes: \cA\to End(\cA),$$ obtaining an $E_2$-morphism $End(1_\cA)\to End(Id_\cA)$. 

The functor $\otimes$, considered as a morphism only of pointed $\cP$-categories, admits a left inverse
$$act_{1_\cA}: End(\cA)\longrightarrow \cA$$ obtained from acting on the unit of $\cA$ by endofunctors of $\cA$: the composite $$\xymatrix{\cA\ar[r]^-{\otimes}.  \ar@/^2pc/_-{1_\cA\otimes -}[rr] & End(\cA) \ar[r]^-{act_{1_\cA}} & \cA}$$ is identified with the identity functor of $\cA$ since $1_\cA$ is the monoidal unit.
Applying $\Omega$ to this morphism we obtain the desired left inverse morphism of $E_1$-algebras in $\cP$
$$act_{1_\cA}:End(Id_\cA)\to End(1_\cA).$$
\end{proof}

\subsection{Groupoids}

\begin{definition} By an {\em ind-proper groupoid} we refer to a groupoid object $\cG\actson X$ in ind-stacks, with ind-proper source and target maps $\pi_1,\pi_2:\cG\to X$.
\end{definition}

More precisely, the groupoid object is given by a simplicial object $\cG_\bullet$ satisfying a Segal condition resulting in an identification of the simplices with iterated fiber products:
 \begin{equation}\label{groupoid}
 \xymatrix{\cdots \ar[r]<1ex> \ar[r]<.5ex> \ar[r] \ar[r]<-.5ex> \ar[r]<-1ex> &
 \cG\times_X \cG\times_X \cG \ar[r]<.75ex> \ar[r]<.25ex> \ar[r]<-.25ex> \ar[r]<-.75ex> &
 \cG\times_X \cG \ar[r]<.5ex> \ar[r] \ar[r]<-.5ex> &
 \cG \ar[r]<.25ex> \ar[r]<-.25ex>&
 X}
 \end{equation}
See~\cite[Sections II.2.5.1, III.3.6.3]{GR} for a discussion of ind-proper groupoid objects.
We denote $p=(\pi_1,\pi_2):\cG\to X\times X$.

It will be convenient (but technically irrelevant) to think in terms of the (potentially very poorly behaved) quotient prestack $Y=|\cG_\bullet|=X/\cG$, so that $\cG_\bullet$ is identified with the \v{C}ech simplicial object $\{X\times_Y X\times_Y\cdots\times_Y X\}$.

\begin{remark}[Monoid/Segal objects] Our constructions apply equally well to monoid objects (also known as Segal or category objects) in stacks, rather than groupoids (the setting of the constructions in~\cite[Sections II.2.5.1, III.3.6.3]{GR}) - in other we will make no use of invertibility of morphisms. We use the language of groupoids for psychological reasons, for example to think of $\cG$ as the \v{C}ech construction on a mythical quotient stack $X\to Y$. 
\end{remark}
 
Our main example of an ind-proper groupoid will be the equivariant Grassmannian $\cG=\uGrv$ acting on $X=pt/\LGpv$, i.e., the \v{C}ech construction for the ind-proper, ind-schematic morphism $X=pt/LG^{\vee}_+\to Y=pt/L\Gv$ or its loop rotated version (see Section \ref{subsection-derived satake}).

For the remainder of this section we will fix an ind-proper groupoid $\cG \rightrightarrows X$.

\subsection{Hecke categories}\label{Hecke categories}

\begin{definition} The Hecke category attached to the ind-proper groupoid $\cG\actson X$ is $\cH:=\Dhol(\cG)$. 
\end{definition}

The construction of the monoidal structure, the convolution product, following the general mechanism discussed in~\cite[II.2.5.1, V.3.4]{GR} --- it is inherited on applying $\Dhol$ to the structure on $\cG$ of algebra object in correspondences. Since the pushforward under a proper map is a proper functor (it has a continuous right adjoint), the convolution product is proper, hence the Hecke algebra defines an algebra in $\DGCat_k^c$. 

Explicitly, given objects $A,B \in \cH$, their convolution is given by $A\ast B = p_{13\ast} p_{12}^!(A) \otimes^! p_{23}^! (B)$, where
\[
\xymatrix{
p_{12},p_{13},p_{23} : \cG\times_X \cG  \ar[r]<.5ex> \ar[r] \ar[r]<-.5ex> & \cG
}
\]
are the three projection maps. The diagonal embedding (unit map) $i:X\to \cG$ induces a monoidal functor $$\fd:\cR\to \cH$$ making the monoidal category $\cH$ into a {\em $\cR$-ring}, i.e., algebra object in $\cR$-bimodules. 

\subsection{Hecke algebras}
%
The groupoid $\cG$ defines a monad acting on $\cR=\Dhol(X)$ following the general mechanism discussed in~\cite[II.2.5.1, V.3.4]{GR} which we call the Hecke algebra $\uH$. The Hecke algebra is an algebra object structure on the functor $\pi_{2,*}\pi_1^!\simeq p^!p_*\in End(\cR)$. 

\begin{definition} The Kostant category associated to the groupoid $\cG$ is the category $\cK=\uH\module$ of $\uH$-modules in $\cR=\Dhol(X)$.
\end{definition}

 Alternatively, one can think of $\cK$ as the category of $\cG$-equivariant objects of $\Dhol(X)$. More precisely, since Diagram~\ref{groupoid} is a diagram of ind-stacks and ind-finite type maps, we can pass to $\Dhol$ and $!$-pullbacks to find the cosimplicial symmetric monoidal category $\Dhol(\cG_\bullet)$:
 $$\xymatrix{\cdots &\ar[l]<1ex> \ar[l]<.5ex> \ar[l] \ar[l]<-.5ex> \ar[l]<-1ex> 
 \Dhol(\cG\times_X \cG\times_X \cG)& \ar[l]<.75ex> \ar[l]<.25ex> \ar[l]<-.25ex> \ar[l]<-.75ex> 
 \Dhol(\cG\times_X \cG)& \ar[l]<.5ex> \ar[l] \ar[l]<-.5ex> 
 \Dhol(\cG) &\ar[l]<.25ex> \ar[l]<-.25ex>
 \Dhol(X)}$$

\begin{definition} The symmetric monoidal category $\Dhol(X)^{\cG}$ of $\cG$-equivariant sheaves on $X$ is the totalization $Tot(\Dhol(\cG_\bullet))$. 
\end{definition}

To identify $\Dhol(X)^\cG$ with $\uH$-modules in $\Dhol(X)$, we require the theory of monadic descent, in this setting due to Lurie \cite[Theorem 6.2.4.2]{HA} (see also \cite{1affine}, Appendix C). In general, if a cosimplicial category $\cC^\bullet$ satisfies the monadic Beck-Chevalley conditions, then we can identify the totalization of $\cC^\bullet$ with modules for a monad acting on $\cC^0$, whose underlying functor may be identified with the composite of one face map with the left adjoint of the other. In the case $\cC^\bullet = \Dhol(\cG_\bullet)$, these conditions are equivalent to the base change property for ind-proper morphisms in $\Dhol$ (see Remark \ref{remark-bivariant}), and the corresponding monad is precisely $\uH$. Thus we obtain the following result:
\begin{proposition} We have an identification $\cK\simeq \Dhol(X)^{\cG}$, and hence a symmetric monoidal structure on the Kostant category for which the forgetful functor $\cK\to\cR$ is symmetric monoidal.
\end{proposition}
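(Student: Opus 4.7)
My plan is to establish the equivalence $\cK \simeq \Dhol(X)^{\cG}$ by an application of the monadic form of Barr--Beck--Lurie descent, and then transport the symmetric monoidal structure along this equivalence. The underlying input for both is the bivariant functoriality of $\Dhol$ recorded in Proposition~\ref{IndHol adjunction} and Remark~\ref{remark-bivariant}.

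First, I would rewrite the cosimplicial diagram $\Dhol(\cG_\bullet)$ in a form adapted to Beck--Chevalley. The augmentation $\pi: \cG \to X$ (either face map $\pi_1$ or $\pi_2$; say $\pi_1$) is ind-proper and ind-schematic, so the adjunction $(\pi_{1,*}, \pi_1^!)$ of Proposition~\ref{IndHol adjunction} exists and is continuous on both sides. I would identify the cosimplicial object $\Dhol(\cG_\bullet)$, viewed as the \v{C}ech conerve of $\pi_1^!$, with the cobar construction of the endofunctor $T := \pi_{1,*} \circ \pi_1^!$ equipped with its canonical comonad structure; equivalently, passing to adjoints one obtains the bar construction for the monad $\uH = T^\vee$ on $\Dhol(X)$. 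Here the Segal identifications $\cG_n \simeq \cG \times_X \cdots \times_X \cG$ together with base change for ind-proper morphisms in $\Dhol$ yield the required identifications of the face maps with the iterates of the (co)monad.

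Next I would verify the Beck--Chevalley / monadic conditions for $\pi_1^!$. The continuous right adjoint $\pi_{1,*}$ exists by ind-properness; conservativity of $\pi_1^!$ (equivalently, of its left adjoint on objects sufficient to test zero) reduces, after writing $X$ and $\cG$ as the colimits of the nearly-finite-type pieces from Section~\ref{sheaf theory}, to the case of finite type morphisms of QCA stacks, where it is standard. The Beck--Chevalley base change in the cosimplicial diagram is precisely the content of base change for ind-proper maps in $\Dhol$. Invoking \cite[Theorem 6.2.4.2]{HA} (in the form used in \cite{1affine}, Appendix C) then gives an equivalence
\[
\Dhol(X)^{\cG} = \mathrm{Tot}\bigl(\Dhol(\cG_\bullet)\bigr) \isom \uH\module = \cK
\]
under which the forgetful functor $\Dhol(X)^{\cG} \to \Dhol(X)$ corresponds to the forgetful functor from $\uH$-modules.

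Finally, I would upgrade this to a symmetric monoidal equivalence. The cosimplicial object $\Dhol(\cG_\bullet)$ lives in symmetric monoidal dg categories (with $\otimes^!$) and $!$-pullback face maps, which are symmetric monoidal by the right-lax symmetric monoidal structure on $\Dhol^!$; its totalization in symmetric monoidal categories therefore carries a canonical symmetric monoidal structure for which the augmentation $\Dhol(X)^{\cG} \to \Dhol(X)$ is symmetric monoidal. Transporting across the monadic equivalence above endows $\cK$ with a symmetric monoidal structure compatible with the forgetful functor to $\cR$, as asserted. The main obstacle is simply the bookkeeping of checking Beck--Chevalley in the renormalized ind-holonomic setting of Section~\ref{sheaf theory}; once that is in place, both the equivalence and the monoidal refinement are formal consequences of the machinery of \cite{HA}.
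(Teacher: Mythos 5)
Your overall route --- monadic descent via \cite[Theorem 6.2.4.2]{HA}, with the Beck--Chevalley hypotheses supplied by base change for ind-proper morphisms (Proposition~\ref{IndHol adjunction}), followed by transporting the symmetric monoidal structure along the resulting equivalence --- is exactly the one the paper takes. However, your setup of the cosimplicial diagram and of the monad contains a real error. The simplicial object $\cG_\bullet$ of diagram~\ref{groupoid} is the \v{C}ech nerve of the projection $X \to X/\cG$, \emph{not} of $\pi_1 : \cG \to X$: its $n$-simplices are the mixed fiber products $\cG \times_{\pi_2,X,\pi_1} \cG \times_{\pi_2,X,\pi_1}\cdots$, whereas the \v{C}ech nerve of $\pi_1$ would use $\pi_1$ on both legs of each fiber product, giving a different ind-stack already in degree $2$. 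Correspondingly, the monad that Beck--Chevalley descent produces on $\cC^0 = \Dhol(X)$ is the composite of one face map with the left adjoint of the other, namely $\uH = \pi_{2,*}\pi_1^!$ (equivalently $\pi_{1,*}\pi_2^!$, up to the flip of the groupoid), not the endofunctor $T = \pi_{1,*}\pi_1^!$ that you write down. The latter is a comonad on $\Dhol(X)$ (left adjoint followed by its right adjoint), it is not isomorphic to $\uH$ in general (already the pair groupoid $\cG = X\times X$ gives different answers), and it has no adjoint monad ``$T^\vee$'' in the sense your argument invokes. If one actually carried out the cobar/bar identification with your formula for $T$, one would not recover $\cK = \uH\module$. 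Separately, the conservativity of $\pi_1^!$ that you set out to verify is not one of the Beck--Chevalley hypotheses and is not an input to the theorem here: what matters is the (automatic) conservativity of the evaluation functor $\mathrm{Tot}(\Dhol(\cG_\bullet)) \to \Dhol(X)$, or equivalently of the forgetful functor $\uH\module \to \cR$. The only geometric content of the verification is exactly the ind-proper adjunction and base change of Proposition~\ref{IndHol adjunction}, as the paper indicates.
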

 
\subsection{Symmetric monoidal structure vs. cocommutative bimonad}
One can view the (symmetric) monoidal structure on $\cK$ in terms of a (cocommutative) bimonad structure on $\uH$ in the sense of Moerdijk and Brugui\`eres-Virelizier, see~\cite{Bohm} (in fact it's naturally a Hopf monad). 

More precisely, the symmetric monoidal structure on $!$-pullback and oplax symmetric monoidal structure on ind-proper $*$-pushforward endow  $\uH=\pi_{2,*}\pi_1^!\simeq \pi^!\pi_*$ with a canonical oplax symmetric monoidal structure. In this way the endofunctor $\uH$ naturally upgrades to a cocommutative bimonad, i.e. an algebra object in the category of oplax symmetric monoidal endofunctors. In particular, $\uH(1_\cR)$ is naturally a cocommutative coalgebra object in $\cR$.

The monoidal structure on $\uH$-module is equivalent to the bimonad structure enhancing the monad $\uH$. Explicitly, given $\uH$-modules $M,N$ with structure maps $\uH M\to M$, $\uH N\to N$, we give $M\ot N$ a $\uH$-module structure with structure map $$\uH (M\ot N)\to \uH M \ot \uH N \to M\ot N$$
where the first morphism uses the oplax monoidal structure on $\uH$. 
Similarly, we have a natural transformation
$$\uH(\omega_X)=\pi_{2,*}\omega_{\cG}\simeq \pi_{2,*}\pi_2^!\omega_X\longrightarrow \omega_X.$$


\subsection{(Bi)monads vs. (bi)algebroids.}\label{monad vs algebra}
In the generality we're working, the Hecke algebra $\uH$ is only a monad, i.e., algebra object in endofunctors of $\Dhol(X)$. In the cases of practical interest however (in particular, for the equivariant Grassmannian) this reduces to an ordinary algebra, thanks to ``affineness" (or rather coaffineness). 

In general, if $R$ is any $k$-algebra object then we can monoidally identify continuous endofunctors of $R\module$ with $R$-bimodules, and thus a continuous monad $\uH$ acting on $\cR =R\module$ is the same thing as an algebra object $H$ in the monoidal category of $R$-bimodules. Unwinding the definitions, we observe that such an algebra object $H$ is nothing more than an $R$-ring, i.e. $H$ is itself a $k$-algebra object, together with a morphism of algebra objects $R\to A$. Moreover, the category of modules for $A$ in $R\module$ (thinking of $A$ as a monad acting on $R\module$) is the same thing as $A\module$, i.e. $A$-modules in $\Vect$ (see \cite{Bohm} Lemma 2.4). Moreover, if $R$ is a commutative ring, then a (cocommutative) bimonad structure on the monad $\uH$ is equivalent to the structure of a (cocommutative) $R$-bialgebroid on the $R$-ring $H$. In that case, we have an identification of left $R$-modules $H = \uH(R)$; in particular $H$ is a cocommutative $R$-coalgebra object.

Returning to the setting of an ind-proper groupoid $\cG$ acting on $X$, let us consider the case where the functor $p_{X\ast}:\Dhol(X) \to \Vect$ is monadic, so that $\Dhol(X) \simeq C^\ast(X)\module$ (this happens for example in the case when $X$ is the classifying stack of an algebraic group). 
In this case, the Hecke monad $\uH$ on $\Dhol(X)$ may be identified as a $C^\ast(X)$-ring which we denote $H$. By construction $H$ is given by global sections of the relative dualizing complex $\omega_{\cG/X} = \pi_1^! p^\ast(\C) \simeq \pi_1^! p^\ast(\C)$. 

Unwinding the definitions, we see that the $R$-ring structure on $H$ arises from convolution of (relative) chains on $\cG$. For example, the multiplication $H\otimes_R H \to H$ is given by direct image of chains along the ind-proper morphism
\[
\cG \times_{\pi_2,X,\pi_1} \cG \to \cG
\]
On the other hand, the $R$-coalgebra structure on $H$ arises from ``cup coproduct'' of chains. For example, the comultiplication $H \mapsto  H\otimes_R H$ on $H$ (where the commutative ring $R=R^{op}$ acts on both factors of $H$ by left multiplication) is given by pushforward associated to the diagonal map
\[
\cG \to  \cG \times_{\pi_1,X,\pi_1}  \cG
\]
Note that the fiber product involved in the cup coproduct is defined using $\pi_1$ on both factors, in contrast to the fiber product involved in convolution.

\begin{remark}
Note that as a dg vector space, $H$ may be unbounded in both directions. For example, in the case $\cG = \uGrv$, $X=pt/\LGpv$. Then $C^\ast(X)$ is the $G^\vee$ equivariant cohomology ring of a point (thus unbounded in positive cohomoglical degrees) and $C_{-\ast} (pt\times_X \cG) = C_{-\ast}(\cGr)$ is the (non-equivariant) homology of the affine Grassmannian (thus unboundeed in negative cohomological degrees). Equivariant formality gives an isomorphism of dg-vector spaces (in fact of $C^\ast(X)$-coalgebras) $$H \simeq C^\ast(X) \otimes C_{-\ast}(\cGr)$$
\end{remark}


%

\subsection{Modules for Hecke Categories}
We now consider a categorical analog of the above discussion.  

Consider the cosimplicial symmetric monoidal category $\Dhol(\cG_\bullet)$. We may pass to module categories, obtaining a cosimplicial symmetric monoidal category $\Dhol(\cG_\bullet)\modcat$. 

\begin{definition} The symmetric monoidal category $\Dhol(X)\modcat^{\cG}$ of $\cG$-equivariant module categories on $X$ is the totalization $Tot(\Dhol(\cG_\bullet)\modcat)$. 
\end{definition}

\begin{remark}[Algebra vs Monad, revisited]
As we noted in Remark~\ref{monad vs algebra}, we treat the Hecke algebra in general as a monad on $\Dhol(X)$, but in situations of interest this reduces to an algebra object in $C^*(X)$-bimodules. Here we chose to treat the Hecke category directly as an algebra in $\Dhol(X)$-bimodules. One could instead consider the monad on sheaves of categories on $X$ obtained by push-pull along $\cG$. Likewise the category $\Dhol(X)\modcat^{\cG}$ is an avatar for the category of $\cG$-equivariant sheaves of categories on $X$, with which it is connected by the localization-global sections adjunction, and which it would recover if we were in a 1-affine situation. Thus we can also consider it as an avatar of sheaves of categories on the quotient stack $Y=X/\cG$, which is the source of its symmetric monoidal structure.
\end{remark}

\begin{proposition} The cosimplicial category $\Dhol(\cG_\bullet)\modcat$ satisfies the monadic Beck-Chevalley conditions.
Moreover the associated monad on $\Dhol(X)\modcat$ is identified with the {\em Hecke category} $\cH=\Dhol(\cG)$ as an algebra in $\Dhol(X)$-bimodules via the diagonal map $\delta_*:\Dhol(X)\to \cH$. Thus we have an identification $\Dhol(X)\modcat^{\cG}\simeq \cH\modcat$.
\end{proposition}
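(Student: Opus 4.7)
The argument categorifies the preceding proposition: we apply Lurie's monadic descent theorem one level up, with $\Dhol$ replaced by the 2-functor $(-)\modcat\circ\Dhol$. Three things must be verified, in order.

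\emph{Adjoints and base change at the level of module categories.} By Proposition~\ref{IndHol adjunction} together with Remark~\ref{remark-bivariant}, all structure maps appearing in the simplicial object $\cG_\bullet$ are ind-proper, so the corresponding $!$-pullbacks in $\Dhol$ admit continuous left adjoints ($*$-pushforwards) satisfying base change. The assignment $\cR\mapsto \cR\modcat$ is a functor of $(\infty,2)$-categories from $\DGCat_k$ to $\Pr^L$ (as developed in~\cite{HA}), so it carries adjoint pairs to adjoint pairs and Beck--Chevalley squares to Beck--Chevalley squares. Applied termwise to $\Dhol(\cG_\bullet)$, this produces a cosimplicial $\infty$-category satisfying the hypotheses of Lurie's higher monadic descent theorem (\cite[Theorem 4.7.3.5]{HA}; see also~\cite[Appendix C]{1affine} for the cosimplicial formulation).

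\emph{Identification of the monad.} Let $d_0,d_1:\Dhol(X)\modcat \to \Dhol(\cG)\modcat$ denote the two coface maps; these are base change along $\pi_1^!$ and $\pi_2^!$ respectively. By the general recipe, the resulting monad is $T\simeq d_1^L\circ d_0$, which unwinds to tensoring with $\Dhol(\cG)$ viewed as a $\Dhol(X)$-bimodule via the pair $(\pi_1^!,\pi_{2,\ast})$. The convolution product on $\cH=\Dhol(\cG)$ (coming from the composition correspondence $\cG\times_X\cG\to \cG$ in the Segal square) is precisely the algebra data witnessing $T\circ T\Rightarrow T$---again by ind-proper base change in the fiber squares of Diagram~\ref{groupoid}---and the unit of $T$ corresponds to the diagonal morphism $\delta_\ast:\Dhol(X)\to \cH$. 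Hence $T$-module categories in $\Dhol(X)\modcat$ are exactly $\cH$-module categories, yielding $\Dhol(X)\modcat^\cG \simeq \cH\modcat$, and the enhancement to a symmetric monoidal equivalence comes from the groupoid structure on $\cG_\bullet$ (which endows $\cH$ with a cocommutative bialgebroid structure and equips the totalization with a canonical symmetric monoidal structure, exactly as in the uncategorified case).

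\emph{Expected main obstacle.} The conceptual content is formal, but the genuine technical point is the compatibility between passing to module categories and mate formation: one must check that the Beck--Chevalley squares produced in $\Dhol\modcat$ from those in $\Dhol$ agree with the ones dictated by the intrinsic 2-category structure of $\Pr^L$. This is precisely the $(\infty,2)$-functoriality of $\cR\mapsto \cR\modcat$, and once it is in hand the uncategorified argument of the preceding proposition transposes verbatim. The remaining bookkeeping---that the two ways of computing the monad (as $d_1^L d_0$ on the cosimplicial side, versus tensoring with $\cH$ on the algebra side) match along with their algebra structures---is a diagrammatic unwinding of the Segal condition for $\cG_\bullet$.
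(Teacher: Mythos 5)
Your argument is essentially the paper's: the proof there is a one-liner, "the Beck--Chevalley conditions for $\Dhol(\cG_\bullet)\modcat$ follow from those for $\Dhol(\cG_\bullet)$ upon applying the functor $\modcat$," and you have unpacked the same idea, identified the monad, and correctly located the point requiring care.

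One caveat worth recording. Your phrase "$\cR\mapsto\cR\modcat$ is a functor of $(\infty,2)$-categories from $\DGCat_k$ to $\Pr^L$, so it carries adjoint pairs to adjoint pairs" is not quite the right justification. The 2-functor $\modcat$ is defined on (commutative) algebra objects in $\DGCat_k$, where the morphisms are symmetric monoidal functors; the left adjoint $\pi_{2,*}$ of the coface $\pi_2^!$ is only \emph{oplax} symmetric monoidal, hence is not a morphism in that 2-category, and so the adjunction $(\pi_{2,*},\pi_2^!)$ is not literally in the domain of the 2-functor. What actually produces the required left adjoint of the induction coface map in $\Dhol(\cG_\bullet)\modcat$ is the self-duality of $\Dhol(\cG)$ as a $\Dhol(X)$-bimodule (Verdier duality for the ind-proper $\pi_i$): a dualizable bimodule gives an ambidextrous adjoint to tensoring, whence the left adjoint coincides with restriction along $\pi_2^!$ and your computation of $T=d_1^L d_0\simeq\cH\otimes_{\Dhol(X)}-$ goes through. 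The paper's proof is equally terse on this point, so this is not a deviation from the paper's approach but a place where both you and the paper are implicitly appealing to ind-proper duality rather than abstract 2-functoriality; it is worth making that dependence explicit.
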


\begin{proof}
The Beck-Chevalley conditions for $\Dhol(\cG_\bullet)\modcat$ follow from those for $\Dhol(\cG_\bullet)$ upon applying the functor $\modcat$.
\end{proof}

It follows that the category $\cH\modcat$ of $\cG$-equivariant $\Dhol(X)$-modules inherits a symmetric monoidal structure, such that the forgetful functor $\cH\modcat\to\cR\modcat$ is symmetric monoidal. The unit object is the $\cH$-module $\cR$ 
itself, which corresponds to the cosimplicial category $\Dhol(\cG_\bullet)$.

\subsection{Hecke algebras vs. Hecke categories}
We now compare descent for module categories with descent for sheaves. 
Given a $\cH$-module $\cM$, or equivalently $\cM^\bullet \in Tot(\Dhol(\cG_\bullet)\modcat)$, we define the {\em $\cG$-equivariant objects} $\cM^{\cG}$ to be 
$$\cM^{\cG}:=Hom_{\cH}(\Dhol(X), \cM).$$ Thus we have 
$$\cM^{\cG}\simeq Tot(Hom(\Dhol(\cG_\bullet), \cM^\bullet)).$$

\begin{proposition}\label{Hecke equivariance}
\begin{enumerate}
\item The $\cG$-equivariant objects in the $\cH$-module $\cR$ recover the category of $\cG$-equivariant sheaves on $X$, i.e.,
$$\cR^\cG\simeq \cK.$$
\item The resulting equivalence of $\cR^\cG$ with the endomorphisms of the unit $\cR$ of the symmetric monoidal
category $\cH\modcat$ lifts to a symmetric monoidal equivalence. 
\end{enumerate}
\end{proposition}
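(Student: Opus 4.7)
My strategy is to use the monadic descent identification $\cH\modcat \simeq Tot(\Dhol(\cG_\bullet)\modcat)$ already proven, compute $\cR^\cG$ levelwise in the cosimplicial picture, and then observe that the calculation is automatically compatible with symmetric monoidal structures.

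First, for part (1), I would identify the $\cH$-module $\cR$ under descent with the tautological cosimplicial object $\Dhol(\cG_\bullet)$: at simplicial level $n$ it is $\Dhol(\cG_n)$ regarded as the unit in $\Dhol(\cG_n)\modcat$ (i.e.\ as a module over itself), with the cosimplicial structure maps supplied by the appropriate $!$-pullbacks along the face maps. Once this identification is in place, applying the stated formula $\cM^{\cG}\simeq Tot(\Hom(\Dhol(\cG_\bullet),\cM^\bullet))$ to $\cM=\cR$ yields
$$
\cR^\cG \;\simeq\; Tot\bigl(\End_{\Dhol(\cG_n)\modcat}(\Dhol(\cG_n))\bigr) \;\simeq\; Tot(\Dhol(\cG_\bullet)) \;=\; \Dhol(X)^\cG \;=\; \cK,
$$
where the middle equivalence uses the standard fact (instance of Lurie's $(\Mod,\Omega)$-adjunction from \cite[Section 6.3.5]{HA}) that the endomorphisms of the unit in $\cA\modcat$ canonically recover $\cA$.

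For part (2), I would lift each step of the above computation to the symmetric monoidal level. The cosimplicial object $\Dhol(\cG_\bullet)$ lives in the $\infty$-category of symmetric monoidal presentable categories (the $!$-tensor product on each level, symmetric monoidal $!$-pullback for face maps), so $\Dhol(\cG_\bullet)\modcat$ is a cosimplicial object in symmetric monoidal $\infty$-categories as well. The equivalence $\End_{\cA\modcat}(\cA)\simeq \cA$ is then the $E_\infty$-analogue of Corollary~\ref{nonsense}, obtained by iterating Dunn additivity; in particular it is a symmetric monoidal equivalence natural in symmetric monoidal morphisms of $\cA$. Since $Tot$ is computed as a limit in symmetric monoidal $\infty$-categories and commutes with taking endomorphisms of units, the levelwise symmetric monoidal equivalences assemble into the desired symmetric monoidal identification $\cR^\cG\simeq\cK$.

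The main obstacle is the careful bookkeeping in the first step: verifying that the $\cH$-module $\cR$ really corresponds under descent to the tautological cosimplicial object $\Dhol(\cG_\bullet)$, and that this identification upgrades to an equivalence of cosimplicial objects in symmetric monoidal (rather than merely pointed) categories. This reduces to checking that Lurie's Barr--Beck descent identifying $\cH\modcat$ with the totalization is naturally enhanced to the symmetric monoidal setting, which in turn follows from the fact that all face and degeneracy maps of the simplicial groupoid $\cG_\bullet$ yield symmetric monoidal functors on $\Dhol$ (the face maps through $!$-pullback being symmetric monoidal, and the degeneracy maps being ind-proper so that $(p_*,p^!)$-adjunction lets the structure descend), together with the fact that the unit $\cR$ in $\cH\modcat$ is by construction the image of the unit of $\cR\modcat^\cG$ under the descent equivalence.
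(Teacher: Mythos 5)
Your argument is correct and follows essentially the same route as the paper: identify $\cR$ with the tautological cosimplicial object $\Dhol(\cG_\bullet)$ under the descent equivalence, compute $\cR^\cG$ as a totalization of levelwise endomorphism categories of units, and observe that all the identifications are naturally symmetric monoidal. You spell out the intermediate step $\cR^\cG\simeq Tot\bigl(\End_{\Dhol(\cG_n)\modcat}(\Dhol(\cG_n))\bigr)$ and the $E_\infty$ enhancement of the $\End$-of-unit equivalence a bit more explicitly than the paper's "tracing through the identifications," but the content is the same.
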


\begin{proof}
We apply the above definition in the case $\cR=\Dhol(X)$, which corresponds to $\cR^\bullet=\Dhol(\cG_\bullet)$: 
\begin{eqnarray*}
[\Dhol(X)]^{\cG}&:=&Hom_{\cH}(\Dhol(X),\Dhol(X))\\
&\simeq& Hom_{Tot(\Dhol(\cG_\bullet)\modcat)}(\Dhol(\cG_\bullet),\Dhol(\cG_\bullet))\\
&\simeq& Tot(\Dhol(\cG_\bullet))\\
&\simeq& \Dhol(X)^{\cG}.
\end{eqnarray*}

Tracing through the identifications above, we see that the symmetric monoidal structure on $Tot(\Dhol(\cG_\bullet))$ coming from tensor product of sheaves is identified with the symmetric monoidal structure on endomorphisms of the unit in $Tot(\Dhol(\cG_\bullet)\modcat)$, as claimed.
\end{proof}

Our main result asserts that $\cG$-equivariant sheaves give central objects in the groupoid category $\cH$. This central action can be thought of as expressing the linearity of convolution on $\cG=\XYX$ over sheaves on the (possibly ill-behaved) quotient $Y=X/\cG$.

\begin{theorem}\label{groupoid center}
Let $\cG$ denote an ind-proper groupoid acting on an ind-stack $X$, $\uH$ the corresponding monad on $\cR=\Dhol(X)$, $\cK=\uH\module$ the Kostant category and $\cH=\Dhol(\cG)$ the groupoid category. 
Then there is a canonical $E_2$-morphism $\fz$ with a monoidal left inverse $\fa$ ($\fa\circ \fz\simeq Id$), 
$$\xymatrix{\cK\ar[r]_-{\fz}& \ar@/_1pc/_-{\fa}[l]\cZ(\cH)}$$ 
lifting the diagonal map $\fd:\cR\to \cH$:

$$\xymatrix{\cK\ar[r]_{E_2}\ar[d]_-{E_\infty}& \ar@/_1pc/_-{E_1}[l] \cZ(\cH)\ar[d]^{E_1}\\
\cR \ar[r]_{E_1} & \cH  }$$
\end{theorem}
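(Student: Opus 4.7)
The plan is to deduce the theorem as a formal consequence of Corollary \ref{nonsense} applied to the symmetric monoidal $2$-category $\cH\modcat$, combined with the identifications supplied by Proposition \ref{Hecke equivariance}.

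First, recall that by Proposition \ref{Hecke equivariance} the unit of the symmetric monoidal $2$-category $\cH\modcat$ is the regular $\cH$-module $\cR$, and there is a symmetric monoidal equivalence $\cK \simeq \End_{\cH\modcat}(1_{\cH\modcat})$. Dually, the Drinfeld center is tautologically identified as an $E_2$-algebra with $\cZ(\cH) \simeq \End(\Id_{\cH\modcat})$, the endomorphisms of the identity endofunctor on $\cH\modcat$ viewed simply as a $2$-category of $\cH$-modules: a central structure on an object of $\cH$ is precisely a compatible system of endomorphisms of every $\cH$-module.

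Granting these identifications, I apply Corollary \ref{nonsense}(2) with $\cA = \cH\modcat$, viewed as an $E_1$-algebra object in presentable $2$-categories via its symmetric monoidal structure. Part (2) of the corollary supplies an $E_2$-morphism $\fz: \End_\cA(1_\cA) \to \End(\Id_\cA)$ together with an $E_1$-monoidal left inverse $\fa: \End(\Id_\cA) \to \End_\cA(1_\cA)$ given by evaluation at the unit. Under the identifications above, these are precisely the desired morphisms $\fz: \cK \to \cZ(\cH)$ and $\fa: \cZ(\cH) \to \cK$; concretely, $\fz(z)$ is the natural endomorphism of $\cH$-modules $\cM \mapsto z \otimes_{\cH\modcat} (-):\cM\to\cM$, while $\fa$ restricts a central structure to its action on the unit $\cR$. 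Commutativity of the final square follows by unwinding: the forgetful functor $\cZ(\cH)\to \cH$ evaluates a central structure at the regular $\cH$-module $\cH$ applied to $1_\cH$, and unitality of the symmetric monoidal structure on $\cH\modcat$ identifies this evaluation with the composition $\cK \to \cR \xrightarrow{\fd} \cH$.

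The main technical subtlety is that Corollary \ref{nonsense} is stated for $\cA$ an $E_1$-algebra in $Cat_\cP$ with $\cP$ a presentable symmetric monoidal $1$-category, whereas $\cH\modcat$ is naturally a $2$-category. However, the Dunn additivity and looping/delooping arguments of \cite{HA} underlying the corollary extend without change to an appropriate presentable $2$-categorical enhancement of $\cP$; verifying this $2$-categorical extension, and the compatibility of the abstract $\End(\Id_{\cH\modcat})$ with the concrete Drinfeld center $\cZ(\cH)$, is the main point where the proof requires care.
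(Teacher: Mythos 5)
Your proposal is correct and is essentially the same as the paper's proof: both apply Corollary \ref{nonsense} with $\cA = \cH\modcat$, identify $\End(1_\cA) \simeq \cK$ symmetric monoidally via Proposition \ref{Hecke equivariance}, identify $\End(\Id_\cA)$ with the Drinfeld center $\cZ(\cH)$, and verify commutativity of the square by unwinding the action on units. The technical subtlety you raise at the end is not a genuine gap: $Cat_\cP = \cP\modcat$ with $\cP = \DGCat_k^c$ already has as its objects presentable $\cP$-module $(\infty,1)$-categories, so $\cH\modcat$ lands in $Alg_{E_1}(Cat_\cP)$ by Lurie's theory of module categories over a presentable monoidal category with no further $2$-categorical enhancement needed, while the identification $\End(\Id_{\cH\modcat}) \simeq \cZ(\cH)$ is standard Morita theory which the paper likewise takes for granted.
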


\begin{proof}
We apply Corollary~\ref{nonsense} in the setting of the presentable symmetric monoidal category
$\cP=\DGCat_k^c$ of compactly generated dg categories with proper morphisms.
For the algebra object $\cA\in Alg_{E_1}(Cat_\cP)$ (which in our case happens to be a commutative algebra object) we 
take $$\cA=\cH\modcat\simeq \cR\modcat^{\cG}$$ to be the category of modules for the Hecke category, i.e., $\cG$-equivariant $\cR$-modules. The center $End(Id_\cA)$ of $\cA$ is identified with the center $\cZ(\cH)$ of the monoidal category $\cH$. We have identified $$End(1_\cA)\simeq\uH\module\simeq \Dhol(X)^{\cG}$$ as categories. We need to show that this identification can be upgraded to an $E_2$ identification, hence obtaining the desired $E_2$-morphism from $\cK=\uH\module$ to $End(Id_\cA)=\cZ(\cH)$. However we have seen in Proposition~\ref{Hecke equivariance} that the identification is in fact naturally $E_\infty$. Thus Corollary~\ref{nonsense} provides the desired morphisms $\fz$ and $\fa$. 

To conclude the theorem, we only need to establish that the morphism $\fz$ lifts the morphism $\fd$ (i.e., the commutativity of the above diagram). Note that the monoidal functor $\fd: \cR \to \cH$ (which defines the structure of $\cH$ as an $\cR$-module) induces a corresponding functor $\End(\cR) \to \End(\cH)$ which we still denote by $\fd$. By construction, the functor $\fz: \cK \to \cZ(\cH)$
takes an object of $\cK$, represented by a $\cH$-linear endomorphism $F:\cR \to \cR$ to $\fd(F)$, which has the structure of an $\cH\otimes \cH^{op}$-linear endomorphism of $\cH$, i.e. an object of $\cZ(\cH)$. In other words, we have a commutative diagram
\[
\xymatrix{
\End_{\cH}(\cR) \ar[r]^{\fz} \ar[d] & \End_{\cH \otimes \cH^{op}}(\cH) \ar[d] \\
\End(\cR) \ar[d]_{act_{1_\cR}} \ar[r]^{\fd} & \End(\cH) \ar[d]^{act_{1_\cH}} \\
\cR \ar[r]^\fd & \cH
}
\]
as required.

\end{proof}

\section{Sheaf theory: Filtered $D$-modules}\label{filtered D-mod section}
In the previous two sections, we considered categories of ind-holonomic $D$-modules in the setting of ind-proper groupoid stacks. The main example was the equivariant affine Grassmannian $\uGrv \rightrightarrows pt/\LGpv$ associated to the group $\Gv$. In this section we will discuss the relevant sheaf theory for the Langlands dual side, which involves finite dimensional geometry associated to the group $G$. In this setting we will be using the category of all (not-necessarily holonomic) $D$-modules (rather than its ind-holonomic variant)
\[
\cD(Y) = \QC^!(Y_{dR})
\]
and we will need to understand the degeneration of this category to $\QC(T^\ast Y)$ quasi-coherent sheaves
on the cotangent bundle.

\subsection{Categorical Representation Theory}\label{categorical representation theory}
Let $G$ be a fixed affine algebraic group. In this subsection, we give a brief overview of the theory of $G$-actions in the setting of dg or stable, presentable $\infty$-categories (see \cite{BD,frenkel dennis,1affine} as well as \cite[Section 3]{dario} and the references therein for further details).

Consider the category $\cD(G)$ of $\cD$-modules of $G$, equipped with the convolution monoidal structure. A strong $G$-category is, by definition, a module category for $\cD(G)$. Examples of such include $\cD(X)$ for a stack $X$ with a $G$-action, and $\fU(\fg)\module$. Given a strong $G$-category $\cC$, we have its (strong) invariants $\cC^G = \Hom_{\cD(G)}(\Vect,\cC)$. This is computed as the totalization of a cosimplicial category
\[
 \xymatrix{
\cdots& \ar[l]<.75ex> \ar[l]<.25ex> \ar[l]<-.25ex> \ar[l]<-.75ex> 
\cC \otimes \cD(G) \otimes \cD(G) & \ar[l]<.5ex> \ar[l] \ar[l]<-.5ex> 
\cC \otimes \cD(G) &\ar[l]<.25ex> \ar[l]<-.25ex>
\cC
}
\]

A weak $G$-category is defined to be a module category for the convolution category $\QC(G)$, and we denote the weak invariants of a weak $G$-category $\cC$ by $\cC^{G,w} := \Hom_{\QC(G)}(\Vect,\cC)$, which can be computed using a similar diagram. Given a weak $G$-category $\cC$ its weak invariants $\cC^{G,w}$ naturally carries an action of the rigid symmetric monoidal category $\Rep(G) = \QC(pt/G) = \Hom_{\QC(G)}(\Vect,\Vect)$. It is a result of Gaitsgory \cite{1affine} that $pt/G$ is a 1-affine stack: quasi-coherent sheaves of categories on $pt/G$ are identified with module categories $\QC(pt/G) = \Rep(G)$.  By descent, sheaves of categories on $pt/G$ are identified with module categories for $(\QC(G),\ast)$, leading to the following interpretation of 1-affineness:
\begin{theorem}[Gaitsgory's 1-affineness] \label{BG1affine}
The $\QC(G)$-$\Rep(G)$ bimodule $\Vect$ defines a Morita equivalence between the monoidal categories $(\QC(G), \ast)$ and $(\Rep(G), \otimes)$. 
\end{theorem}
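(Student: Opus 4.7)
The plan is to exhibit the Morita equivalence by producing the two functors defined by tensoring with the bimodule $\Vect$, and then to show each is the inverse of the other. Concretely, the bimodule $\Vect$ carries commuting actions: $(\QC(G),\ast)$ acts on $\Vect = \QC(pt)$ via pushforward along the action map $G \times pt \to pt$, while $(\Rep(G),\otimes) = (\QC(pt/G),\otimes)$ acts via pullback along $pt \to pt/G$. This produces two functors
\[
\Phi : \QC(G)\modcat \longrightarrow \Rep(G)\modcat, \qquad \cM \longmapsto \Hom_{\QC(G)}(\Vect,\cM) = \cM^{G,w},
\]
\[
\Psi : \Rep(G)\modcat \longrightarrow \QC(G)\modcat, \qquad \cC \longmapsto \cC \otimes_{\Rep(G)} \Vect,
\]
with $\Psi$ left adjoint to $\Phi$. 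The goal is to prove the unit and counit are equivalences.

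First I would verify the identification $\End_{\QC(G)}(\Vect) \simeq \Rep(G)$ of monoidal categories, which grounds the whole construction. This is a descent computation: $\Vect^{G,w}$ is the totalization of the cosimplicial category $\QC(G^{\bullet}) \otimes \Vect$ arising from the \v{C}ech nerve of $pt \to pt/G$, and by standard descent for $\QC$ along the smooth cover $pt \to pt/G$ this totalization is precisely $\QC(pt/G) = \Rep(G)$. The symmetric monoidal structure obtained from composition of endomorphisms matches the one on $\Rep(G)$ essentially tautologically.

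Next, to show the functors $\Phi$ and $\Psi$ are mutually inverse, I would invoke the Barr--Beck--Lurie theorem applied to the adjunction $(\Psi,\Phi)$. The counit $\Psi \circ \Phi \to \Id$ and unit $\Id \to \Phi \circ \Psi$ must be checked to be equivalences. The key computation is on the generating object $\Vect \in \QC(G)\modcat$: here $\Phi(\Vect) = \Rep(G)$ (as a module over itself), and $\Psi(\Rep(G)) = \Rep(G) \otimes_{\Rep(G)} \Vect = \Vect$, so the counit is an equivalence on $\Vect$. One then extends this to all of $\QC(G)\modcat$ by showing that $\Vect$ generates $\QC(G)\modcat$ under colimits and the action of $\QC(G)$; equivalently that every $\QC(G)$-module is reconstructable as a geometric realization of its weak invariants tensored back up. This is the Beck--Chevalley / descent step: the adjunction $(\pi^\ast,\pi_\ast)$ for $\pi : pt \to pt/G$ is comonadic on sheaves of categories, so one lifts this comonadicity to module categories.

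The main obstacle is verifying the hypotheses of Barr--Beck--Lurie in the 2-categorical setting, and this is where one uses the essential features of $G$ being an affine algebraic group in characteristic zero. Specifically, one needs $\Rep(G)$ to be compactly generated with compact objects dualizable (so that $\Rep(G)$ itself is dualizable and rigid as a symmetric monoidal category), so that relative tensor products over $\Rep(G)$ behave well and the comonad $\Phi \circ \Psi$ preserves the relevant colimits. Compact generation follows from classical representation theory (every $G$-representation is a union of finite-dimensional subrepresentations, which are dualizable). With rigidity in hand, the descent/monadicity argument proceeds as in Gaitsgory's treatment of 1-affineness for $BG$ in \cite{1affine}, and the Morita equivalence follows. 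Throughout, the diagonal/augmentation maps of the simplicial object $G^{\bullet} \rightrightarrows pt$ (its \v{C}ech nerve over $pt/G$) provide the combinatorial backbone matching the two sides.
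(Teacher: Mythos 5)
The paper does not prove this statement from scratch; it derives it in a single sentence from Gaitsgory's theorem that $pt/G$ is $1$-affine (so that $\mathrm{ShvCat}(pt/G)\simeq \Rep(G)\modcat$) together with descent for sheaves of categories along $pt\to pt/G$ (so that $\mathrm{ShvCat}(pt/G)\simeq \QC(G)\modcat$), and then composes the two identifications. Your proposal instead tries to establish the Morita equivalence directly, via a two-categorical Barr--Beck argument for the adjunction $\Psi \dashv \Phi$. This is essentially trying to reprove the underlying $1$-affineness rather than invoke it, so the two routes differ: the paper's is a two-line deduction from a black box, while yours is an outline of what the black box itself does. You do correctly identify the two inputs that make the direct argument go: (i) $\End_{\QC(G)}(\Vect)\simeq\Rep(G)$ by smooth descent for $\QC$ along $pt\to pt/G$, and (ii) rigidity of $\Rep(G)$ (compact generation by dualizable objects), which is exactly what Gaitsgory uses to control relative tensor products over $\Rep(G)$.

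There is, however, a genuine gap in the middle of your sketch, at the ``extend by generation'' step. You compute that the counit is an equivalence on the single object $\Vect\in\QC(G)\modcat$ and then assert that one extends to all objects because ``$\Vect$ generates $\QC(G)\modcat$ under colimits and the action of $\QC(G)$.'' Two problems: first, this notion of generation in a $2$-category of module categories is not a standard one, and without further structure it is not clear that checking an equivalence of functors on one object extends; second, and more seriously, the claim that every $\QC(G)$-module can be ``reconstructed as a geometric realization of its weak invariants tensored back up'' is precisely the statement that the counit $\Psi\circ\Phi\to\Id$ is an equivalence --- so as written this step is circular. The correct argument disentangles this by proving the two halves separately: $\Psi=(-)\otimes_{\Rep(G)}\Vect$ is fully faithful (this is where rigidity of $\Rep(G)$ is actually used, to control the relevant relative tensor products), and $\Phi=(-)^{G,w}$ is conservative (which makes the monadicity/comonadicity kick in). You gesture at this in the final paragraph by deferring to Gaitsgory's treatment, which is appropriate, but as stated the sketch slides past the step that actually carries the weight.
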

In other words a weak $G$-category can be recovered from its weak invariants as a $\Rep(G)$-module category.


If $\cC$ is a strong $G$-category, then in particular it is a weak $G$-category, and we have
\[
\cC^{G,w} = \Hom_{\QC(G)}(\Vect,\cC) \simeq \Hom_{\cD(G)}(\fU\fg\module,\cC)
\]
In the case $\cC=\cD(X)$ for a smooth stack $X$ with a $G$-action, we have identifications $\cD(X)^G \simeq \cD(X/G)$, and $\cD(X)^{G,w} \simeq \cD(\qw XG) = \QC(X_{dR}/G)$ (this is smooth descent). Note that
\[
\cD(X)^G = \Hom_{\cD(G)}(\Vect,\cD(X)) \simeq \Hom_{\HC}(\Rep(G),\cD(X)^{G,w})
\]
This is a derived rephrasing of familiar equivalence between strongly equivariant $\cD$-modules and weakly equivariant $\cD$-modules for which the quantum moment map is identified with the derivative of the $G$-action.

Consider the monoidal category of Harish-Chandra bimodules:
\[
\HC = \Hom_{\cD(G)}(\fU\fg\module,\fU\fg\module) = \left(\fU\fg\bimod \right)^G \simeq \cD(\wqw GGG)
\]
Objects of $\HC$ are given by $\fU\fg$-bimodules in $\Rep(G)$, together with an identification of the adjoint $\fU(\fg)$-action with the derivative of the $G$-action.\footnote{Note that, while the abelian category heart of $\HC$ is a full subcategory of $\fU\fg$-bimodules, in the derived setting strong equivariance is data not a condition (even for $G$ connected).} If $\cC$ is a strong $G$-category then $\cC^{G,w} = \Hom_{\cD(G)}(\fU\fg\module,\cC)$ is naturally a $\HC$-module category. Using the 1-affineness of $pt/G$, we have
\begin{theorem}[Beraldo  \cite{dario}]\label{thm-morita}
The $\cD(G)$-$\HC$-bimodule $\fU\fg\module$ defines a Morita equivalence between the monoidal categories $\cD(G)$ and $\HC$.
\end{theorem}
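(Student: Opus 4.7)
The plan is to realize this Morita equivalence as the de Rham (``strong $G$'') counterpart of Gaitsgory's 1-affineness for $BG$ (Theorem~\ref{BG1affine}), by a monadic descent argument. The $\cD(G)$-$\HC$-bimodule structure on $\fU\fg\module$ produces an adjoint pair
\[
L : \HC\modcat \rightleftarrows \cD(G)\modcat : F, \qquad L(\cM) = \cM\otimes_{\HC}\fU\fg\module,\quad F(\cC) = \Hom_{\cD(G)}(\fU\fg\module,\cC),
\]
and one observes that $F(\cC)$ is naturally identified with the weak invariants $\cC^{G,w}$ together with its canonical $\HC$-action. The task is then to show that the unit and counit of $(L,F)$ are equivalences.

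My strategy is to fit the adjunction $(L,F)$ in a commutative square over the already-established equivalence $L_{wk}:\Rep(G)\modcat \simeq \QC(G)\modcat$ from Theorem~\ref{BG1affine}, of the shape
\[
\xymatrix{
\HC\modcat \ar[r]^{L} \ar[d]_{U_{\HC}} & \cD(G)\modcat \ar[d]^{U_{\cD}} \\
\Rep(G)\modcat \ar[r]^{\sim}_{L_{wk}} & \QC(G)\modcat
}
\]
where the vertical arrows retain only the underlying weak action. I would verify (via the $\infty$-categorical Barr-Beck-Lurie theorem) that both vertical arrows are monadic, and then identify the two resulting monads. In both worlds, the monad should encode exactly the datum of promoting the infinitesimal $\fg$-action on a weak $G$-category to a strong $\fU\fg$-action compatible with the $G$-equivariance, and in each case it is presented by an algebra object manufactured from $\fU\fg$ and the $G$-equivariance data alone. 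Once one shows that Gaitsgory's equivalence $L_{wk}$ intertwines these two algebra objects, the induced equivalence on Eilenberg-Moore categories is the asserted Morita equivalence, and unwinding identifies the equivalence with $L$.

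The main technical obstacle lies in the identification of the two monads and the verification of monadicity. Conservativity of $U_{\cD}$ amounts to the statement that a $\cD(G)$-action on a presentable dg-category is determined by its underlying weak action together with its compatible $\fg$-action; this is a categorical upgrade of the PBW-type decomposition $\cD(G)\simeq \fU\fg\otimes \cO(G)$ and should be accessible by the filtered $\cD$-module machinery of Section~\ref{filtered D-mod section} (deforming to the classical side, where the analogous statement $T^\ast G \simeq \fg^\ast\times G$ is manifest). The identification of the monads under $L_{wk}$ then rests on the fact that, in both the weak and the strong setting, the passage to a strong action is controlled by the same ``universal'' object, namely $\fU\fg\module$ itself, viewed respectively as an algebra object in $\QC(G)$-bimodule categories and in $\Rep(G)$-bimodule categories. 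I expect carrying out this matching rigorously---keeping careful track of bimodule structures and the compatibility with Gaitsgory's equivalence---to be the most delicate part of the argument.
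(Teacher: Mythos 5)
The paper does not actually prove this statement; it is cited from Beraldo~\cite{dario}, with the remark ``using the 1-affineness of $pt/G$'' indicating the strategy. Your overall plan---fit $(L,F)$ into a square over Gaitsgory's Morita equivalence $\Rep(G)\modcat\simeq\QC(G)\modcat$, verify monadicity of both vertical forgetful functors via Barr--Beck--Lurie, and identify the monads---is indeed the correct route and is the one Beraldo takes in spirit.

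There are, however, some real confusions in the second paragraph. First, conservativity of $U_{\cD}$ does \emph{not} say that a strong $G$-action is ``determined by'' the weak action and the $\fg$-action (that would be full faithfulness, which is false). It merely says that a morphism of $\cD(G)$-modules which is an equivalence of underlying $\QC(G)$-modules is already an equivalence of $\cD(G)$-modules, and this is essentially automatic because both notions of equivalence detect equivalences of underlying categories; the PBW decomposition and the filtered machinery of Section~\ref{filtered D-mod section} are not what this step rests on. Second, and more seriously, the identification of the two monads is misstated: the relevant algebra objects are $\cD(G)$ viewed as an algebra in $\QC(G)$-bimodule categories and $\HC$ viewed as an algebra in $\Rep(G)$-bimodule categories; $\fU\fg\module$ is not an algebra object in either setting---it is the \emph{bimodule} implementing the equivalence. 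The genuine technical content of the argument is the bimodule identity
\[
\cD(G)\otimes_{\QC(G)}\Vect \;\simeq\; \fU\fg\module \;\simeq\; \Vect\otimes_{\Rep(G)}\HC,
\]
compatibly with the $\QC(G)$-action and $\Rep(G)$-coaction (and hence with the algebra structures on $\cD(G)$ and $\HC$). The left-hand equivalence is weak de-equivariantization of $\cD(G)$ (right translation), the right-hand one is de-equivariantization of $\HC$ recognized as $(\fU\fg\module)^{G,w}$, both applications of 1-affineness. Isolating this identity as the central lemma, and carefully tracking the algebra structures, would turn your sketch into a proof; as written, the formulation of the key step is not yet precise enough to carry the argument.
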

In other words, a strong $G$-category can be recovered from its weak invariants as a $\HC$-module category. 

\begin{corollary}\label{center of HC}
There are equivalences of $E_2$-categories
\[
\cZ(\cD(G)) \simeq \cD(G\adjquot G) \simeq \cZ(\HC)
\]
\end{corollary}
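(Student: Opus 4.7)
The statement decomposes into two independent equivalences, which I would establish by separate routes.

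The equivalence $\cZ(\cD(G)) \simeq \cZ(\HC)$ is immediate from Beraldo's Theorem \ref{thm-morita}: Morita equivalence of $E_1$-algebra objects in $\DGCat$ automatically preserves the Drinfeld center as a braided (i.e., $E_2$) category. Indeed, both centers compute the endomorphisms of the identity functor on the common category $\cD(G)\modcat \simeq \HC\modcat$ of module categories, and Beraldo's $\cD(G)$-$\HC$-bimodule $\fU\fg\module$ transports one description to the other.

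For the first equivalence $\cZ(\cD(G)) \simeq \cD(G \adjquot G)$, the plan is to invoke the general principle that the Drinfeld center of the convolution category of sheaves on a groupoid identifies with sheaves on the inertia stack of the quotient. Here $\cD(G)$ is the convolution category for the groupoid $G \rightrightarrows pt$, whose quotient is $BG$ and whose inertia stack is precisely $G \adjquot G$. At the pointwise level: an object $\cF \in \cD(G)$ lies in the center when equipped with a natural isomorphism $\cF \ast (-) \simeq (-) \ast \cF$ satisfying the hexagon coherences, and unwinding this along the convolution diagram $G \times G \to G$ exhibits such data as equivalent to a lift of $\cF$ to a strongly ad-equivariant $\cD$-module on $G$, i.e., an object of $\cD(G\adjquot G)$.

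To make this rigorous, I would express
\[
\cZ(\cD(G)) \;=\; \End_{\cD(G)\otimes\cD(G)^{op}}(\cD(G))
\]
as the totalization of the cosimplicial category $\cD(G^{\bullet+1})$ arising from the two-sided bar resolution of the regular bimodule, and match this cosimplicial diagram with the one computing $\cD$-modules on the simplicial presentation $G\adjquot G = |G^{\bullet+1} \rightrightarrows G^{\bullet}|$ of the inertia of $BG$. Beck--Chevalley base change for $\cD$-modules, part of the six-functor formalism of Theorem \ref{GR sheaf theory}, then identifies the two totalizations.

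The main obstacle is bookkeeping rather than conceptual: matching the simplicial diagrams on both sides, and checking that the resulting $E_1$-equivalence refines to an $E_2$-equivalence compatible with the braided structures (the source braiding coming from the Drinfeld center formalism via Dunn additivity, the target via symmetry of the diagonal embedding into the inertia). Note that Theorem \ref{groupoid center} applied to the trivial groupoid $G \rightrightarrows pt$ is \emph{insufficient}: in that case the Kostant category is $\Rep(G)$, capturing only the central objects supported at the unit, rather than all of $\cD(G\adjquot G)$; hence the necessity of the direct descent argument above.
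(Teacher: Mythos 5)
Your decomposition matches the paper's implicit reasoning: the paper states this as a corollary of Beraldo's Morita theorem without spelling out a proof, so you are essentially reconstructing what is left unsaid. The first half — that Morita equivalence preserves Drinfeld centers, so $\cZ(\cD(G)) \simeq \cZ(\HC)$ — is indeed immediate, and you are right that Theorem~\ref{groupoid center} alone gives only a \emph{map} from a smaller category into the center, not the equivalence, so a separate argument is needed for $\cZ(\cD(G)) \simeq \cD(G\adjquot G)$.

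There is however a real gap in your descent argument. The two-sided bar resolution of the regular bimodule yields a cosimplicial diagram whose $n$-th term is $\Fun_{\cD(G)\text{-bimod}}(\cD(G)^{\otimes(n+2)}, \cD(G)) \simeq \Fun_{\DGCat}(\cD(G)^{\otimes n}, \cD(G))$, and what you actually need is the identification of these \emph{functor} categories with $\cD(G^{n+1})$. That is the theorem on integral transforms / kernels, which for $\cD$-modules is a consequence of Gaitsgory's 1-affineness theorem (\cite{1affine}; see also \cite{dario} which is what the paper cites), not of Beck--Chevalley base change. Base change controls the cosimplicial structure maps once the terms are already identified with $\cD$-module categories on the simplicial presentation of $G\adjquot G$, but it cannot by itself convert mapping categories into sheaves on products. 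This is exactly the step in which BFN-type integral transforms (for $\QC$) or 1-affineness (for $\cD$) is the nontrivial input, and it should be invoked explicitly.

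Two smaller points. The Kostant category for the groupoid $G\rightrightarrows pt$ in the $\cD$-module setting is $\cD(BG) \simeq C_*(G)\module$, not $\Rep(G) = \QC(BG)$; these differ (the former being the completion at the unit of the latter). Your substantive point stands, since $\cD(BG)$ is still only the full subcategory of $\cD(G\adjquot G)$ of objects supported at the identity (cf.\ the paper's example ``$\cD$-modules on a reductive group''), hence Theorem~\ref{groupoid center} applied to $G\rightrightarrows pt$ falls short of the equivalence. Finally, the source of the braiding on $\cD(G\adjquot G)$ is the $E_2$-structure on the inertia (free loop) stack of $BG$, rather than a symmetry of the diagonal embedding per se; matching this with the Drinfeld-center braiding via the bar construction does require some care beyond what ``symmetry of the diagonal'' suggests, though that compatibility is also part of what 1-affineness-style Morita statements deliver.
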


\begin{remark}
The forgetful functor 
\[
\cD(G\adjquot G) \simeq \cZ(\HC) \to \HC \simeq (\fU\fg \otimes \fU\fg)^G
\]
takes a $G$-equivariant $D$-module on $G$ to its underlying $\fU\fg \otimes \fU\fg$-module via the algebra map $\fU\fg \otimes \fU\fg \to \fD_G$; the $G$-equivariant structure ensures that the adjoint action of $\fU\fg$ is integrable.
\end{remark}

As an example of a strong $G$-category, suppose $K$ is an algebraic subgroup of $G$. The homogeneous space $G/K$ carries a $G$-action, and thus $\cD(G/K)$ is a strong $G$-category. The corresponding $\HC$-module is the category of Harish-Chandra $(\fg,K)$-modules 
\[
(\fg,K)\module = \fU(\fg)\module^{K} \simeq \cD(\wq GG/K)
\]

In particular, the symmetries of $(\fg,K)\module$ as an $\HC$-module category can be identified as follows
\[
\End_{\HC}((\fg,K)\module) \simeq \End_{\cD(G)}(\cD(G/K)) \simeq \cD(G/K \times G/K)^G \simeq \cD(\quot KGK)
\]
where the right hand side is considered as a monoidal category with respect to convolution.

\subsection{Graded and filtered lifts of categories}
In the case $G=\G_m$, Gaitsgory's Theorem  \ref{BG1affine} says that for a given category $\cC$ in $\DGCat$, the following data are equivalent:
\begin{itemize}
\item A quasi-coherent sheaf of categories on $pt/\G_m$ whose pullback to $pt\to pt/\G_m$ is identified with $\cC$.
\item A weak $\G_m$ action on $\cC$.
\item A module category $\cC_{gr}$ for $\Rep(\G_m) = \Vect_{gr}$ with an identification $\cC_{gr} \otimes_{\Vect_{gr}} \Vect \simeq \cC$.
\end{itemize}
We will refer to the category $\cC_{gr}$ as a \emph{graded lift} of $\cC$, and the forgetful functor $\cC_{gr} \to \cC$ as a \emph{degrading functor}. For example, if $A$ is a graded algebra (i.e. algebra object in $\Vect_{gr}$), then the category $A\module_{gr}$ consisting of dg-modules for $A$ equipped with an external grading, is a graded lift of $A\module$.

Similarly, the 1-affineness of $\A^1/\G_m$\footnote{Recall that in this paper the action of $\G_m$ on a vector space (for example, $\A^1 = \Spec\C[t]$) has weight 2.} implies that the following data are equivalent:
\begin{itemize}
\item A quasi-coherent sheaf of categories on $\A^1/\G_m$ whose pullback to $pt \simeq \A^1-\{0\}/\G_m\to \A^1/\G_m$ is identified with $\cC$.
\item A module category $\cC_{t,gr}$ for $\QC(\A^1/\G_m) = \C[t]\module_{gr}$ with an identification 
\[
\cC_{t,gr}[t^{-1}] = \C[t,t^{-1}]\module_{gr} \otimes_{\C[t]\module_{gr}} \cC_{t,gr} \simeq \cC.
\]
\end{itemize}
We refer to $\cC_{t,gr}$ as a \emph{filtered lift} of the category $\cC$. To such a data, we have an associated graded category $\cC_{t=0,gr}$, and also an associated asymptotic category $\cC_t$ which is a degrading of $\cC_{t,gr}$. For example, if $A= \bigcup_{i\in \Z} A_{\leq i}$ is a filtered algebra, then the Rees algebra $A_t := \bigoplus_{i\in \Z} A_{\leq i}t^i$ is a graded $\C[t]$-algebra, and the category $A_t\module_{gr}$ is a filtered lift of $A\module$. The associated graded category is the category of graded modules for the associated graded algebra $A_{t=0}$. The associated asymptotic category  $A_t\module$ is given by (ungraded) modules for the Rees algebra.

\begin{example}
Suppose $X$ is an Artin stack; then the category $\cD(X)=\QC^!(X_{dR})$ has a filtered lift $\cD_{t,gr}(X)$ given by $\QC(X_{Hod})$, where $X_{Hod} \to \A^1/\G_m$ is the Hodge stack of $X$. The associated graded category is given by
\[
\cD_{t=0,gr}(X) \simeq \QC^!(T[1]X)_{gr} \simeq \QC(T^\ast X)_{gr}
\]
In the case when $X$ is a smooth affine algebraic variety, we have $\cD(X) = \fD_X\module$, and $\cD_{t,gr}(X)$ is equivalent to $\fD_{X,t}\module_{gr}$, where $\fD_{X,t}\module_{gr}$ is the Rees algebra of $\fD_{X}$, as explained above.
In particular, returning to the main setting of this section, we have monoidal categories $\HC_{t,gr}$, $\cD_{t,gr}(G)$ which define filtered lifts of $\HC$ and $\cD(G)$. The same proof as in \cite{dario} gives that $\HC_{t,gr}$ is Morita equivalent to $\cD_{t,gr}(G)$, and the center of $\HC_{t,gr}$ is $\cD_{t,gr}(G\adjquot G)$.
\end{example}

\subsection{Shearing}
In the examples relevant to this paper, the original filtered algebra $A$ will be supported in cohomological degree $0$ (i.e. it is an ordinary algebra, not a dg-algebra). In that case, the Rees algebra $A_t$ also sits in cohomological degree $0$, but carries a non-trivial external (weight) grading for which the Rees parameter $t$ has weight 2. We will be interested in another form of the Rees algebra $A_\hbar = \bigoplus A_{\leq i}\hbar^i$ where an element of homogeneous weight $i$ sits in cohomogical degree $i$; in particular, the Rees parameter $\hbar$ now sits in cohomological degree 2. Note that $A_\hbar$ is a dg-algebra in general, even when the original algebra $A$ lives in cohomological degree $0$. 

\begin{remark}
Throughout this paper, $\C[t]$ will always refer to a polynomial algebra in which the variable $t$ has cohomological degree $0$ and weight 2; on the other hand, $\C[\hbar]$ always refers to a polynomial algebra in which $\hbar$ has cohomological degree $2$ and weight $2$.
\end{remark}

The categories of graded $A_t$-modules and of graded $A_\hbar$-modules are related by the notion of \emph{shearing}. The fundamental result is: 
\begin{lemma}
There is a symmetric monoidal autoequivalence of $\Vect_{gr}$ called \emph{shearing}
defined by
\[
M = \bigoplus_i M_i \mapsto M^{\fatslash} := \bigoplus_{i \in \Z} M_i[-i]
\]
with inverse
\[
N = \bigoplus N_i \mapsto N^\fatbslash := \bigoplus_{i \in \Z} N_i[i]
\]
\end{lemma}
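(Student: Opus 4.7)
The plan is to construct shearing weight-by-weight via cohomological shift, then promote it to a symmetric monoidal equivalence by supplying the monoidal constraint and checking compatibility with the braiding; the main obstacle will be careful Koszul sign bookkeeping.

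First I will define the functors $(-)^\fatslash$ and $(-)^\fatbslash$ directly from the formulas. Identifying $\Vect_{gr} \simeq \prod_{i \in \Z} \Vect$ as $\infty$-categories, these functors act on the $i$-th factor via the mutually inverse autoequivalences $[-i]$ and $[i]$ of $\Vect$. Consequently the composites $(-)^\fatslash \circ (-)^\fatbslash$ and $(-)^\fatbslash \circ (-)^\fatslash$ are canonically identified with the identity, so shearing is at least an equivalence of underlying $\infty$-categories.

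Next I will equip $(-)^\fatslash$ with a monoidal structure. From the decomposition $(M \otimes N)_i = \bigoplus_{j+k=i} M_j \otimes N_k$ together with the standard shift-interchange isomorphism $V[-j] \otimes W[-k] \simeq (V \otimes W)[-j-k]$ in $\Vect$, I will assemble a natural isomorphism $(M \otimes N)^\fatslash \simeq M^\fatslash \otimes N^\fatslash$. Coherence of the associator and unitors reduces to the corresponding coherence for cohomological shifts in $(\Vect,\otimes)$, which is well known.

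The hard part is compatibility with the braiding $\sigma$. After shearing, an element of weight $j$ and cohomological degree $|v|$ lives in cohomological degree $|v|+j$, so the Koszul sign governing the braiding on $M^\fatslash \otimes N^\fatslash$ on the weight-$(j,k)$ component differs from that on $M \otimes N$ by $(-1)^{j|w| + k|v| + jk}$. The contributions $(-1)^{j|w|}$ and $(-1)^{k|v|}$ are exactly those produced by the standard Koszul shift-interchange isomorphism used in defining the monoidal constraint, while the residual factor $(-1)^{jk}$ is absorbed by renormalizing that constraint on the weight-$(j,k)$ component by the symmetric sign $(-1)^{jk}$ (this is the usual d\'ecalage correction, and it is independent of which of $M,N$ is placed first). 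With this sign convention fixed, the hexagon reduces to the analogous identity for the symmetric monoidal structure of $(\Vect,\otimes)$ in each bigraded summand, and $(-)^\fatbslash$ acquires the inverse symmetric monoidal structure by the analogous construction. This yields the desired symmetric monoidal autoequivalence.
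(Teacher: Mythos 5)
Your computation of the residual sign $(-1)^{jk}$ is correct, but the proposed fix is a non-starter, and the parenthetical remark offered as reassurance---that the renormalization ``is independent of which of $M,N$ is placed first''---is precisely the reason it fails. The symmetry axiom for the monoidal constraint $\mu^{j,k}\colon M_j[-j]\otimes N_k[-k]\to (M_j\otimes N_k)[-j-k]$ forces the relation $c_{k,j} = (-1)^{jk}\,c_{j,k}$ on whatever normalizing scalars $c_{j,k}\in\C^\times$ one attaches to the weight components. Because $(-1)^{jk}=(-1)^{kj}$ is symmetric, multiplying both $c_{j,k}$ and $c_{k,j}$ by it leaves the ratio $c_{k,j}/c_{j,k}$ unchanged; the renormalization appears identically on both sides of the hexagon and cancels. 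Worse, specializing the required relation to $j=k$ gives $c_{j,j}=(-1)^{j^2}c_{j,j}=(-1)^{j}c_{j,j}$, hence $(-1)^j=1$; for $j$ odd this is a contradiction for \emph{any} choice of $c_{j,j}\ne 0$. So no choice of monoidal constraint on the shearing functor can satisfy the symmetry axiom on all of $\Vect_{gr}$. Equivalently, the assignment $n\mapsto k[-n]$ does not extend to an $E_\infty$-map from $\Z$ into the Picard spectrum of $\Vect$, even though it does extend to an $E_1$-map.

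What makes the statement harmless in the paper is the standing convention announced there: the $\G_m$-action on a vector space has weight $2$, so $t$ and $\hbar$ both sit in weight $2$ and every graded object that arises is concentrated in even weights. On the full subcategory of evenly graded objects the residual $(-1)^{jk}$ is identically $1$, the unmodified shift-interchange constraint already satisfies the symmetry axiom, and your argument (minus the spurious ``absorption'' step) goes through. A correct proof should either restrict to even weights from the outset, or restate the lemma with that hypothesis; as written, the argument claims a symmetric monoidal structure over all of $\Z$-many weights, and that claim is false.
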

Note that $\fatslash$ has the property that it takes a an ordinary graded vector space (i.e. a graded dg-vector space concentrated in cohomological degree $0$) to a dg-vector space for which the weight on the cohomology agrees with the cohomological degree.

\begin{remark}[Formality]\label{formality remark}
The shearing autoequivalence is related to a well-known criterion for formality of a dg-algebra. 

Recall that taking cohomology objects defines a symmetric monoidal endofunctor $H^\ast$ of $\Vect$ which takes $A$ to $\bigoplus_{i\in \Z} H^i(A)[-i]$.\footnote{Note that the underlying functor of $H^\ast$ is equivalent to the identity functor, but it carries an interesting monoidal structure.}  A (co)algebra object $A$ in $\Vect$ is called formal if there is an equivalence of (co)algebra objects $A\simeq H^\ast(A)$. Now suppose $R$ is an (co)algebra object of $\Vect$ which carries an external grading such that the weight of $H^i(R)$ is $i$. Then $R^\fatbslash$ is concentrated in cohomological degree $0$, and in particular $R^\fatbslash$ is formal: $R^\fatbslash \simeq H^0(R^\fatbslash)$. It follows that the original (co)algebra is formal: $R \simeq \bigoplus H^i(R)[-i]$.
\end{remark}

Twisting by the shearing autoequivalence leads to the following result:
\begin{lemma}
There is an equivalence of graded, symmetric monoidal categories 
\[
\fatslash: \C[t]\module_{gr} \simeq \C[\hbar]\module_{gr}
\]
\end{lemma}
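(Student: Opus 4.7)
The plan is to leverage the previous lemma, which already establishes that $\fatslash$ is a symmetric monoidal autoequivalence of $\Vect_{gr}$. Any symmetric monoidal equivalence automatically carries commutative algebra objects to commutative algebra objects and, for each such algebra $A$, induces a symmetric monoidal equivalence of module categories $A\module_{gr}\simeq A^{\fatslash}\module_{gr}$ in a manner linear over $\Vect_{gr}$. I would therefore reduce the statement to identifying $\C[t]^{\fatslash}$ with $\C[\hbar]$ as commutative algebra objects in $\Vect_{gr}$.

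Next I would write $\C[t]\simeq\Sym_{\Vect_{gr}}(V)$, where $V$ is the one-dimensional object of $\Vect_{gr}$ concentrated in cohomological degree $0$ and weight $2$ (a free generator $t$ in that bidegree). Unwinding the definition of $\fatslash$, the shearing of $V$ is the one-dimensional object $V^{\fatslash}$ whose generator lies in cohomological degree $2$ and weight $2$; this matches exactly the convention fixed in the preceding remark for the polynomial variable $\hbar$. Because $\fatslash$ is a symmetric monoidal equivalence of presentable symmetric monoidal categories, it commutes with the left adjoint $\Sym$ to the forgetful functor from commutative algebras to $\Vect_{gr}$, giving a canonical isomorphism of commutative algebra objects
\[
\C[t]^{\fatslash}\;\simeq\;\Sym\bigl(V^{\fatslash}\bigr)\;\simeq\;\C[\hbar].
\]

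Combining these two steps, the induced equivalence
\[
\fatslash:\C[t]\module_{gr}\;\xrightarrow{\sim}\;\C[t]^{\fatslash}\module_{gr}\;\simeq\;\C[\hbar]\module_{gr}
\]
is symmetric monoidal because $\fatslash$ is, and is $\Vect_{gr}$-linear (``graded'') by construction. The only real content of the argument is keeping track of the conventions on cohomological degree versus weight, and verifying that the shearing of the generator $t$ lands in the bidegree assigned to $\hbar$; the main obstacle, if any, is notational rather than mathematical, and I would spell out the identification $V^{\fatslash}\simeq\C\cdot\hbar$ explicitly to avoid ambiguity.
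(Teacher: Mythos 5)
Your proof is correct and supplies exactly the argument the paper leaves implicit (the lemma is stated without proof, as a direct consequence of the shearing autoequivalence of $\Vect_{gr}$). Identifying $\C[t]\simeq\Sym(V)$ with $V$ one-dimensional in bidegree $(\mathrm{coh}=0,\mathrm{wt}=2)$, noting that a symmetric monoidal equivalence commutes with the free commutative algebra functor, and computing $V^{\fatslash}$ to land in bidegree $(2,2)$ (which is the paper's convention for $\hbar$) is precisely the intended verification, and the resulting equivalence of module categories inherits a symmetric monoidal structure because $\fatslash$ does.

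One small point worth tightening: your final claim that the induced equivalence is ``$\Vect_{gr}$-linear'' is slightly imprecise. The functor you construct satisfies $\Phi(v\otimes m)\simeq v^{\fatslash}\otimes\Phi(m)$, so it is \emph{equivariant} with respect to the shearing autoequivalence of $\Vect_{gr}$ rather than linear over the identity of $\Vect_{gr}$. The adjective ``graded'' in the lemma refers to the source and target being categories of graded modules (and the equivalence being compatible with the grading in the sheared sense above), not to strict $\Vect_{gr}$-linearity. This does not affect the correctness of your argument, but the linearity statement should be phrased as compatibility with the $\Vect_{gr}$-actions through $\fatslash$.
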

In particular, given a filtered lift of a category $\cC$, there is a corresponding $\C[\hbar]\module_{gr}$-module category $\cC_{\hbar,gr}$ with an equivalence $\cC_{t,gr} \simeq \cC_{\hbar,gr}$. Thus there is a ``sheared'' degrading functor to the associated dg-asymptotic category: 
\[
\xymatrix{
\cC & \ar[l] \cC_{t,gr} \ar[r] & \cC_\hbar.
}
\]
\begin{remark}
In the case $\cC = A\module$ for a filtered ordinary algebra $A$ (i.e. concentrated in cohomological degree $0$), we have that $\cC = A\module$, $\cC_{t,gr} = A_t\module_{gr}$, and $\cC_t$ carry a natural $t$-structure, and are each equivalent to the dg derived category of the corresponding abelian categories appearing as the heart. On the other hand, $\cC_\hbar = A_\hbar\module$ does not carry a $t$-structure which makes the degrading functor $\cC_{t,gr} \to \cC_\hbar$ $t$-exact in general.
\end{remark}

\subsection{Filtered categorical representation theory}
We have monoidal categories $\HC_{t,gr}$, $\cD_{t,gr}(G)$ which are filtered lifts of $\HC$ and $\cD(G)$. The same proof as in \cite{dario} gives the following:
\begin{theorem}\label{Morita equivalence filtered}
There is a Morita equivalence between the monoidal categories $\HC_{t,gr}$ and $\cD_{t,gr}(G)$. 
There is an $E_2$-monoidal equivalence of categories
\[
\cZ(\cD_{t,gr}(G)) \simeq \cD_{t,gr}(G\adjquot G) \simeq \cZ(\HC_{t,gr})
\]
\end{theorem}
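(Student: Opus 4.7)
\medskip

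The plan is to run Beraldo's argument from \cite{dario} (which yielded Theorem~\ref{thm-morita} and Corollary~\ref{center of HC}) in a relative setting over the base $\A^1/\G_m$, replacing every stack $X$ in sight by its Hodge stack $X_{Hod}\to \A^1/\G_m$. Concretely, recall that $\cD_{t,gr}(X)\simeq \QC(X_{Hod})$, so we are trying to promote the chain of equivalences
\[
\cD(G)\modcat \;\simeq\; \QC(G_{dR})\modcat \;\simeq\; \QC((pt/G)_{dR})\,\text{-sheaves} \;\simeq\; \Rep(G_{dR})\modcat \;\simeq\; \HC\modcat
\]
to its Hodge/filtered avatar, replacing $G_{dR}$ and $(pt/G)_{dR}$ by $G_{Hod}$ and $(pt/G)_{Hod}$, and replacing $\Rep(G_{dR})=\QC((pt/G)_{dR})$ by $\QC((pt/G)_{Hod})$.

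First I would identify the desired bimodule. The $\cD(G)$-$\HC$-bimodule realizing the unfiltered Morita equivalence is $\fU\fg\module\simeq \QC((pt/G)_{dR})$; its filtered lift is $\QC((pt/G)_{Hod})$, equivalently the category $\fU_{t,gr}\fg\modcat$ of graded modules for the Rees algebra of the PBW filtration. A direct computation (using the descriptions $\cD_{t,gr}(G)\simeq\QC(G_{Hod})$ and $\HC_{t,gr}\simeq(\fU_{t,gr}\fg\otimes\fU_{t,gr}\fg)^G$) shows that this category carries commuting left actions of $\cD_{t,gr}(G)$ and $\HC_{t,gr}$, specializing correctly at $t=1$.

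The second, and most substantive, step is to prove that $(pt/G)_{Hod}$ is 1-affine relative to $\A^1/\G_m$, i.e.\ that the global sections functor $\Gamma$ and localization functor $\mathbf{Loc}$ for sheaves of categories are mutually inverse equivalences between $\QC((pt/G)_{Hod})\modcat$ and sheaves of categories on $(pt/G)_{Hod}$. This is the filtered analog of Gaitsgory's Theorem~\ref{BG1affine}. Two strategies are available. The first is to deduce it from the unfiltered 1-affineness of $(pt/G)_{dR}$ \cite{1affine} by a deformation argument: descent along $\A^1/\G_m \leftarrow \G_m/\G_m = pt$ reduces the question to the $t=1$ fiber, which is Gaitsgory's theorem, together with the $t=0$ fiber $T^*(pt/G)[1]/\G_m$, where 1-affineness follows from the QCA+eventual-coconnectivity criteria of \cite{1affine} applied to a quasi-compact stack with affine diagonal. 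The second, cleaner strategy is to show that Beraldo's proof of Morita equivalence itself already runs in families: the key input he uses is the monadicity of $\Rep(G)\to\Vect$ together with 1-affineness of $BG$, both of which deform flatly over $\A^1/\G_m$ when we replace them by $\QC((pt/G)_{Hod})\to\QC(\A^1/\G_m)$ and 1-affineness of $(pt/G)_{Hod}/(\A^1/\G_m)$.

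Given the bimodule and 1-affineness, the Morita equivalence follows formally by the same manipulations as in \cite{dario}: one uses the bimodule $\QC((pt/G)_{Hod})$ to convert a $\cD_{t,gr}(G)$-module category into a $\HC_{t,gr}$-module category by taking (filtered) weak invariants, and the inverse sends a $\HC_{t,gr}$-module category to its tensor product with $\QC((pt/G)_{Hod})$ over $\Rep(G_{Hod})$. The $E_2$-monoidal equivalence of centers then follows purely formally from Morita equivalence (an equivalence of module 2-categories induces an equivalence of Drinfeld centers of the underlying monoidal categories), together with the computation of $\cZ(\cD_{t,gr}(G))$ as $\cD_{t,gr}(G\adjquot G)\simeq\QC((G/G)_{Hod})$ via descent from the cosimplicial diagram computing the strong adjoint invariants of $\cD_{t,gr}(G)$ acting on itself --- this last identification is again the filtered analog of a standard unfiltered computation and presents no new difficulty beyond bookkeeping with the Hodge parameter.
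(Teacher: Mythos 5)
Your overall strategy---run Beraldo's argument~\cite{dario} relatively over $\A^1/\G_m$, using the filtered lift of the bimodule and of the 1-affineness input---is exactly what the paper has in mind (the paper's own ``proof'' is just the assertion that the argument of~\cite{dario} carries over), so you have the right shape of the argument. There are, however, two linked slips in the geometric identifications that mislocate where the real content sits.

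First, $\fU\fg\module$ is \emph{not} $\QC((pt/G)_{dR})$. The de Rham prestack of $pt/G$ gives $\cD(pt/G)$, which by the paper's own Example~\ref{example-Dhol on classifying} is modules for the exterior coalgebra $C_\ast(G)$ (a completed $H^\ast(BG)$-module category)---a tiny category, nothing like all $\fU\fg$-modules. The correct geometric avatar of $\fU\fg\module$ is the weak invariants of the regular bimodule, $\fU\fg\module\simeq \cD(G)^{G,w}\simeq \QC^!(G_{dR}/G)\simeq \QC^!(B\widehat G)$, where $G$ acts by translation and $\widehat G$ is the formal group. Its filtered lift is $\QC^!(G_{Hod}/G)\simeq \fU_t\fg\module_{gr}$, \emph{not} $\QC((pt/G)_{Hod})$. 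You name the right algebraic category ($\fU_{t,gr}\fg\module$), so the operational content survives, but the two geometric objects $(pt/G)_{Hod}$ and $G_{Hod}/G$ are genuinely different and shouldn't be conflated.

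Second, and as a consequence, the 1-affineness that Beraldo's argument actually invokes is Gaitsgory's Theorem~\ref{BG1affine} for the ordinary stack $pt/G$ (used to pass between $\QC(G)$-modules and $\Rep(G)$-modules), not 1-affineness of any de Rham or Hodge stack. In the filtered version the needed input is 1-affineness of $pt/G\times\A^1/\G_m$, which follows immediately from~\cite{1affine}: $\A^1/\G_m$ is a quasi-compact algebraic stack with affine stabilizers, hence 1-affine, and a product of 1-affine stacks is 1-affine. This is not ``the most substantive step'' requiring a deformation argument over the Hodge base or a separate treatment of the $t=0$ fiber; it is essentially free. What does require checking is the bookkeeping that the filtered weak-invariants functor $\cD_{t,gr}(G)\modcat \to \fU_t\fg\module_{gr}\modcat$, built from the forgetful map $\QC(G_{Hod})\to\QC(G\times\A^1/\G_m)$ and descent along $pt/G\times\A^1/\G_m$, lands in $\HC_{t,gr}$-modules and is inverse (after $\otimes_{\Rep(G)\otimes\QC(\A^1/\G_m)}\QC(\A^1/\G_m)$) to the induction functor---this is genuinely the same proof as in~\cite{dario} with a spectator $\A^1/\G_m$ carried along, and the rest of your argument (including the identification of $\cZ(\cD_{t,gr}(G))\simeq \cD_{t,gr}(G\adjquot G)$ by the usual descent computation and the formal passage from Morita equivalence to equivalence of centers) is fine once these two identifications are corrected.
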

Applying the degrading functors, we get the corresponding statement for the $\hbar$-versions: $\cZ(\HC_{\hbar}) \simeq \cD_\hbar(G\adjquot G)$. 

\section{The Spherical Hecke Category and quantum Ng\^o action}\label{quantum section}
In this section, we translate the results of Section~\ref{Hecke section} through the Geometric Satake equivalence. Throughout this section $G$ will be a complex reductive group with a fixed Borel subgroup $B$, $N=[B,B]$, and $H=B/N$. The corresponding Lie algebras are denoted $\fg$, $\fb$, $\fn$, and $\fh$ respectively. The Langlands dual group will be denoted $\Gv$, with loop group $\LGv$ and arc group $\LGpv$. 

\subsection{The Characteristic Polynomial Map and Kostant Section}\label{char poly section}
Following \cite{Ngo}, Section 2, let us recall some constructions arising from the the diagram of stacks
\begin{equation}\label{Kostant section}
\xymatrix{
	\fg^\ast/G \ar[r]_{\chi} & \ar@/_1pc/[l]_{\kappa} \fc
}
\end{equation}
where $\chi \circ \kappa = id_{\fc}$. Here $\chi$ is the canonical map $\fg^\ast/G \to \fc := \Spec(\Sym(\fg)^G)$, which we call the characteristic polynomial map. The Kostant section, $\kappa$ can be constructed as follows. Let $\psi: \fn \to \C$ denote a character which is non-zero on every simple root space, and denote by $\mu: \fg^\ast \to \fn^\ast$ the projection map (which is also the moment map for the adjoint action of $N$ on $\fg^\ast$). Then Kostant \cite{Kostant Whittaker} showed that the action of $N$ on $\mu^{-1}(\psi)$ is free, and the composite 
\[
\xymatrix{
\fg^\ast \GIT_{\psi} N := \mu^{-1}(\psi)/N \ar[r]& \fg^\ast/G \ar[r]^\chi& \fc
}
\]
is an isomorphism, providing the desired section $\kappa$ of $\chi$.

The restriction of $i$ to the regular locus $\fg^\ast_\reg/G \to \fc$ is a gerbe for the abelian group scheme $J \to \fc$, trivialized by $\kappa$ (thus $J = \fc \times_{\kappa}\fc$). Alternatively, $J$ may be realized as $\kappa^\ast  I$ where $ I$ the inertia stack of $\fg^\ast/G$: informally
\[
 I = \left\{(g,x) \in G\times \fg^\ast \mid coAd_g(x)=x \right\}/G
\]

Now consider the multiplicative group $\G_m$ acting on $\fg^\ast$ by scaling with weight 2 (throughout this paper, the scaling action of $\G_m$ on a vector space will always have weight 2, or equivalently, polynomial rings will be considered as graded rings generated in degree 2). This action commutes with the coadjoint action and the characteristic polynomial map $\chi$ is equivariant for the $\G_m$ action, where $\G_m$ acts on $\fc$ by twice the exponents of the Lie algebra $\fg$. It is not immediately clear that the Kostant section is equivariant for this $\G_m$ action, as $\mu^{-1}(\psi)/N$ is not preserved under scaling. However, as explained in \cite[Section 2]{Ngo}, there is a diagram of stacks
\[
\xymatrix{
	\fg^\ast/G\times \G_m \ar[rr]_{\chi/\G_m} && \ar@/_1pc/[ll]_{\kappa/\G_m} \fc/\G_m
}
\]
where the equivariance data of $\kappa/\G_m$ is defined via the homomorphism $\G_m \to G\times \G_m$ given by $(2\rho,1)$, where $2\rho$ refers to the sum of the simple coroots.

In order to explain why the $2\rho$ appears above let us give another construction of the Kostant slice, which has the additional advantage of not requiring a choice of the character $\psi$. Let $\fn' = \fn/[\fn,\fn]$ denote the maximal abelian quotient, so $\ch = (\fn')^\ast$ is identified with the space of characters of $\fn$. The torus $T=B/N$ acts on $\ch$, which has a one dimensional weight space for each negative simple root. There is a unique open dense orbit $\ch^\circ$ on which $T$ acts simply transitively; the elements of $\ch^0$ correspond precisely to the possibly choices of $\psi$ above.

Any choice of $\psi \in \ch^\circ$ defines a slice to the $T$-action on $\mu^{-1}(\ch^\circ)/N$. Thus the composite
\[
\mu^{-1}(\psi)/N \to \mu^{-1}(\ch^\circ)/N \to \mu^{-1}(\ch^\circ)/B
\]
is an isomorphism. Note that $\G_m$ acts on the right hand side compatibly with the map to $\fg^\ast/G$. If we use the isomorphism above to translate the $\G_m$-action to $\mu^{-1}(\psi)/N$, we see that under the map $\mu^{-1}(\psi)/N \to \fg^\ast/G$ has a $\G_m$-equivariant structure using the homomorphism $\G_m \to \G_m \times G$ given by $(1,2\rho)$, recovering the description above.  

\subsection{The group scheme of regular centralizers}\label{J section}

Recall that the fiber product $J = \fc \times_{\fg^\ast/G,\kappa} \fc$, which is a priori a groupoid acting on $\fc$, is in fact a commutative group scheme over $\fc$. Its fiber over an element $a\in \fc$ is the centralizer of $\kappa(a) \in \fg$.
\begin{lemma}
	We have an isomorphism of groupoids over $\fc$
	\[
	J \simeq \GITquot{N_\psi}{T^\ast G}{\lsub{\psi}N}
	\]
\end{lemma}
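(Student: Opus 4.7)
The plan is to realize $T^\ast G$ as the symplectic groupoid over $\fg^\ast$ integrating the Lie--Poisson structure, with source and target the moment maps of the right and left $G$-actions respectively, and then to compute the Hamiltonian reduction by $N\times N$ at the character $(\psi,\psi)$ directly. Under the right-invariant trivialization $T^\ast G \simeq G\times \fg^\ast$, the source becomes $s(g,\xi)=\xi$ and the target $t(g,\xi)=\Ad^*_g(\xi)$, and the moment maps for $N_R$, $N_L$ are the restrictions of $s$, $t$ to $\fn$. The goal is to verify that the reduction is a commutative group scheme over $\fc\simeq \mu^{-1}(\psi)/N$ whose fiber over $a\in\fc$ is the centralizer $G_{\kappa(a)}=J_a$, with the correct group law.

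First I would reduce by $N_R$. Kostant's theorem (as recalled in Section~\ref{char poly section}) states that $\mu^{-1}(\psi)\to \fc$ is a principal $N$-bundle canonically trivialized by $\kappa$, so the partial reduction is identified with $G\times \fc$ and the residual target map is $(g,a)\mapsto \Ad^*_g(\kappa(a))$. Next I would impose the $N_L$-moment-map condition $\Ad^*_g(\kappa(a))\in\mu^{-1}(\psi)$ and quotient by $N_L$. Since $\chi$ is $G$-invariant, $\chi(\Ad^*_g(\kappa(a)))=a$, so the target necessarily lies in the fiber $N\cdot\kappa(a)$ of the torsor $\mu^{-1}(\psi)\to\fc$ over $a$. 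The free transitivity of the $N$-action then produces a \emph{unique} $n\in N$ with $\Ad^*_g(\kappa(a))=\Ad^*_n(\kappa(a))$, and hence a unique factorization $g = n\cdot h$ with $h := n^{-1}g\in G_{\kappa(a)}$. The residual $N_L$-action translates the $n$-factor, so the quotient fiber over $a$ is canonically $G_{\kappa(a)}$.

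To upgrade this fiberwise identification to an isomorphism of groupoids I would use that the composition in the reduction is inherited from that of $T^\ast G\rightrightarrows \fg^\ast$. After pinning down both source and target via the Kostant slice, the unique-factorization map $g\mapsto h$ is multiplicative, so the induced law on the fiber over $a$ coincides with the group law on $G_{\kappa(a)}$. Since source and target both descend to $a$, the reduction is a commutative group scheme over $\fc$, and the resulting map to $J=\kappa^\ast I$ is a fiberwise isomorphism of flat commutative group schemes of the same dimension $2\dim\fc$, hence an isomorphism.

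The hard part, and the one warranting care, is not the pointwise computation but verifying that the unique factorization $g=nh$ extends to a global algebraic decomposition and that the resulting quotient is smooth of the expected dimension. This reduces precisely to the freeness of the $N$-action on $\mu^{-1}(\psi)$ and the role of $\kappa$ as a global algebraic section of the torsor $\mu^{-1}(\psi)\to\fc$, both of which are furnished by Kostant's theorem. Once these inputs are secured the remainder of the argument is a direct manipulation of moment maps.
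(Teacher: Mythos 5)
Your proposal is correct, but it takes a genuinely different route from the paper's proof. The paper's argument is a one-line base-change computation: since $J=\fc\times_{\fg^\ast/G}\fc$ by definition (via the Kostant section) and $\fc\simeq\fg^\ast\GIT_\psi N$, and since Hamiltonian reduction at a character is a closed fiber followed by a group quotient (both of which commute with fiber products), one gets directly
\[
J \;=\; (\fg^\ast\GIT_\psi N)\times_{\fg^\ast/G}(\fg^\ast\GIT_\psi N)\;\simeq\; \GITquot{N_\psi}{\bigl(\fg^\ast\times_{\fg^\ast/G}\fg^\ast\bigr)}{\lsub\psi N},
\]
together with the identification $\fg^\ast\times_{\fg^\ast/G}\fg^\ast\simeq T^\ast G$. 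The whole proof never opens up the coadjoint action; it is an abstract exchange of limits and quotients. Your approach instead unpacks the reduction explicitly: first kill $N_R$ using Kostant's trivialization of the torsor $\mu^{-1}(\psi)\to\fc$, then impose the $N_L$-constraint and use free transitivity to extract a unique factorization $g=nh$ with $h\in G_{\kappa(a)}$, then quotient away the $n$-factor. This buys a concrete geometric picture of the isomorphism (the factorization map $g\mapsto h$) and makes the group law visibly the centralizer multiplication, at the cost of having to address scheme-theoretic smoothness of the quotient by hand — exactly the issue you flag at the end, which the categorical argument sidesteps entirely. The one small blemish is the closing dimension count: by that point you have already identified the reduction fiberwise with $\kappa^\ast I=J$ directly, so the ``fiberwise isomorphism of group schemes of the same dimension'' step is superfluous rather than wrong. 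Both proofs ultimately rest on the same input: Kostant's theorem that $N$ acts freely on $\mu^{-1}(\psi)$ with quotient $\fc$.
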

\begin{proof}
(See also~\cite[Theorem 6.3]{teleman}.)
Note that the operation of Hamiltonian reduction is a composite of taking a closed fiber and a quotient by a group action. As both these operations commute with fiber products, we have 
	\[
	J = (\fg^\ast \GIT_{\psi} N) \times_{\fg^\ast/G} (\fg^\ast \GIT_{\psi} N) \simeq \GITquot{N_\psi}{(\fg^\ast \times_{\fg^\ast/G} \fg^\ast)}{\lsub \psi N}
	\]
compatible with the projection maps to $\fc$, as required. 
\end{proof}

Note that $\QC(J)$ has a monoidal structure arising from the  convolution diagram
\[
\xymatrix{
J\times J & \ar[l] J\times_\fc J \ar[r]& J
}
\]
As the group structure on $J$ is commutative, this monoidal structure is naturally symmetric. As $J$ is affine, $\QC(J) = \C[J]\module$, where $\C[J]$ has the structure of a commutative and cocommutative Hopf algebra over $\C[\fc]$. 

As in the previous section, we can identify $\fg^\ast\GIT_{\psi} N$ with $\mu^{-1}(\ch^\circ)/B$. As the latter carries a $\G_m$-action, so does the fiber product
\[
J = \mu^{-1}(\ch^\circ)/B  \times_{\fg^\ast/G}  \mu^{-1}(\ch^\circ)/B
\]
In particular, the coordinate ring of $J$ is a graded ring (note that the grading is only in even degrees, but is generally unbounded in both positive and negative degrees). Thus we have a symmetric monoidal category $\QC(J)_{gr}$ of graded $\C[J]$-modules (with respect to convolution).

\subsection{Bi-invariant differential operators: the quantum characteristic polynomial map}
Recall that the ring of bi-invariant differential operators
\[
\fZ\fg = (\fD_{G})^{G\times G} = \fU\fg^G
\]
is a commutative ring, which is identified with the center of left invariant differential operators $\fU\fg = (\fD_G)^G$. We write $\cZ = \fZ\fg\module$ for the symmetric monoidal category of modules. The filtration on $\fD_G$ by order of differential operator defines PBW filtrations on $\fU\fg$ and $\fZ\fg$, and we write $\fU_t\fg$ and $\fZ_t\fg$ for the corresponding Rees algebras. The Duflo/Harish-Chandra isomorphisms define equivalences of filtered algebras 
\[
\fZ\fg \simeq \Sym(\fg)^G \simeq \Sym(\fh)^W
\]
Thus we have $\cZ_{t,gr} \simeq \QC(\fc\times \A^1)_{gr}$. 

There is a natural monoidal functor
\[
\xymatrix{
\Char_{t,gr}: \cZ_{t,gr} \ar[r] & \HC_{t,gr} 
}
\]
given by
\[
\xymatrix{
\fZ_t\fg\module_{gr} \ar[r]& \cD_{t,gr}(\wqw GGG) \ar[r] &(\fU_t\fg\module_{gr})^{G,wk} \\
\fM \ar@{|->}[r] & \fD_{G,t} \otimes_{\fZ_t\fg} \fM \ar[r] & \fU_t\fg \otimes_{\fZ_t\fg} \fM
}
\]
Setting $t=0$, we recover the symmetric monoidal functor
\[
\Char_{t=0,gr} = \chi_{gr}^\ast: \QC(\fc)_{gr} \to \QC(\fgx/G)_{gr}
\]
Thus $\Char_{t,gr}$ is thought of as a quantization of the characteristic polynomial map $\chi$.


\subsection{Whittaker modules: the quantum Kostant slice}
We consider a twisted variant of the category $(\fg,K)\module$ defined in Subsection \ref{categorical representation theory}.

Let $\psi:\fn =Lie(N) \to \C$ be a Lie algebra character. This gives rise to a monoidal functor $\cD(N) \to \Vect$ (a ``categorical character''); we denote the corresponding $\cD(N)$-module category $\Vect_\psi$. Given a strong $N$-category $\cC$, we define the $(N,\psi)$-semi-invariants $\cC^{N,\psi} \simeq \Hom_{\cD(N)}(\Vect_\psi,\cC)$. In particular, we have the category $\cD(X/_\psi N) \simeq \cD(X)^{N,\psi}$ of $(N,\psi)$-twisted equivariant $\cD$-modules on a $N$-space $X$. We also have the category $(\fg,N,\psi)\module = \fU\fg\module^{N,\psi}$ of $(N,\psi)$-Whittaker modules studied in~\cite{Kostant Whittaker}, consisting of $\fU(\fg)$-modules with a compatible action of $N$, together with an identification of the deriviative of the $N$-action with the $\fU(\fn)$-action twisted by $\psi$. 

Given an object of $(\fg,N,\psi)\module$, its space of (derived) 
$N$-invariants 
 (known as Whittaker vectors) carries an action of the center $\fZ\fg$ of $\fU\fg$, and we have the following extension of the results of~\cite{Kostant Whittaker}, known as the Skryabin equivalence  \cite{Premet} (see also \cite[Theorem 6.1]{Gan-Ginzburg}):

\begin{theorem}[Skryabin's equivalence,] 
Suppose $\psi$ is generic. Then the functor of taking Whittaker vectors is a $t$-exact equivalence of categories
\[
(\fg,N,\psi)\module \xrightarrow{\sim} \cZ
\]
\end{theorem}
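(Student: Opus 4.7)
The plan is to follow the standard Whittaker-model approach via the bimodule $Q_\psi := \fU\fg \otimes_{\fU\fn} \C_\psi$, where $\C_\psi$ is the one-dimensional $\fn$-module on which $x \in \fn$ acts by the scalar $\psi(x)$. By Kostant's original theorem cited above, there is a canonical isomorphism $\End_{\fU\fg}(Q_\psi)^{op} \simeq \fZ\fg$, making $Q_\psi$ naturally a $(\fU\fg, \fZ\fg)$-bimodule. First I would identify the functor of Whittaker vectors with $\Whit(M) = \Hom_{\fU\fg}(Q_\psi, M)$, which manifestly lands in $\cZ$, and construct its left adjoint $\Ind(V) = Q_\psi \otimes_{\fZ\fg} V$, which lies in $(\fg, N, \psi)\module$ because $Q_\psi$ is $(N,\psi)$-integrable by construction. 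It then remains to verify that the unit and counit of this adjunction are isomorphisms, and that $\Whit$ is $t$-exact.

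The crucial technical input is a filtration argument using the \emph{Kazhdan filtration} on $\fU\fg$, in which the generator $x - \psi(x)$ becomes homogeneous so that the associated graded of $Q_\psi$ is $\cO(\mu^{-1}(\psi))$. By Kostant's classical result recalled in Section \ref{char poly section}, $\mu^{-1}(\psi)$ is smooth, $N$ acts freely on it, and the quotient map $\mu^{-1}(\psi) \to \fc$ is a trivial $N$-torsor, trivialized by the Kostant section $\kappa$. It follows that $\cO(\mu^{-1}(\psi))$ is free over $\cO(\fc) = \gr\fZ\fg$ of rank $\cO(N)$, and by a standard PBW-type lifting argument $Q_\psi$ is free as a right $\fZ\fg$-module. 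Combined with Kostant's theorem $Q_\psi^{\fn,\psi} \simeq \fZ\fg$, this implies the unit $V \to (Q_\psi \otimes_{\fZ\fg} V)^{\fn,\psi}$ is an isomorphism for every $V \in \cZ$.

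For the counit $\Ind(\Whit(M)) \to M$, I would equip any $M \in (\fg, N, \psi)\module$ with an induced Kazhdan filtration generated by its Whittaker vectors; then $\gr M$ is a module over $\cO(\mu^{-1}(\psi))$ equivariant for the free $N$-action. The classical product decomposition $\mu^{-1}(\psi) \simeq N \times \fc$ immediately implies that $\gr M$ is freely generated as a module over $\cO(\mu^{-1}(\psi))$ by its $N$-invariants, i.e.\ by $\gr\Whit(M)$. Thus the counit becomes an isomorphism on associated graded, and exhaustiveness of the filtration promotes this to an isomorphism of $\fU\fg$-modules. The $t$-exactness of $\Whit$ drops out of the same geometric picture: on associated graded, $\Whit$ is the functor of $N$-invariants for a free $N$-action, which is exact.

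The main obstacle, in my view, is compatibility with the derived/$\infty$-categorical setting implicit throughout the paper. One must verify that the derived tensor product $Q_\psi \otimes^{\bL}_{\fZ\fg} V$ agrees with the underived one (which follows from the $\fZ\fg$-flatness of $Q_\psi$ established above) and that the derived Whittaker invariants $R\Hom_{\fU\fn}(\C_\psi, -)$ agree with the abelian functor $(-)^{\fn,\psi}$ on Whittaker modules. The latter point is where one exploits the defining condition of $(\fg, N, \psi)\module$ — namely that $\fn - \psi$ acts locally nilpotently and the $N$-action is already integrated — so that the higher Chevalley–Eilenberg cohomology vanishes. This is the technical heart of upgrading Premet's classical Skryabin equivalence to the derived equivalence asserted in the theorem.
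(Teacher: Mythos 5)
The paper does not actually prove this statement: it is quoted as a known result, with attribution to Skryabin (as recorded by Premet \cite{Premet}) and to Gan--Ginzburg \cite[Theorem 6.1]{Gan-Ginzburg}, and the only supporting material the paper offers is the remark immediately following, interpreting the theorem as the assertion that $Q_\psi=\fU\fg\otimes_{\fU\fn}\C_\psi$ is a projective generator of the abelian category of Whittaker modules with endomorphism ring $\fZ\fg$. Your proposal reconstructs exactly the Gan--Ginzburg proof via the Kazhdan filtration, which is the reference the paper itself points to, so you are on the same route the authors intend.

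Two small points to tighten. First, in your treatment of the counit you invoke ``exhaustiveness of the filtration'' on $M$ by the submodule generated by Whittaker vectors, but exhaustiveness is not automatic --- it is logically downstream of the exactness of $\Whit$ (or of a Lie--Kolchin/local-nilpotence argument showing the quotient $M/M'$ would otherwise contain nonzero Whittaker vectors). The standard ordering is: prove exactness of $\Whit$ first via the associated-graded/free-$N$-action picture, then deduce that a Whittaker module is generated by its Whittaker vectors. As written your sketch risks circularity; reorder it and the gap closes. Second, your appeal to vanishing of higher Chevalley--Eilenberg cohomology to identify derived with underived $(N,\psi)$-invariants should be made slightly more careful: what one really needs is that the $\infty$-categorical semi-invariants $\fU\fg\module^{N,\psi}$ is the derived category of its heart, which follows because $N$ is unipotent and $Q_\psi$ is a compact projective generator --- the compact-generator observation in the paper's remark is precisely the ingredient doing this work, so it is worth foregrounding it rather than invoking CE-cohomology vanishing in the abstract.
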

\begin{remark}
The object $\fU\fg \otimes_{\fU\fn} \C_\psi$ is a compact generator of the category of Whittaker modules, which represents the functor of taking Whittaker invariants. The theorem can be interpreted as saying that $\fU\fg \otimes_{\fU\fn} \C_\psi$ is a projective generator of the abelian category of Whittaker modules, and its endomorphism ring $\fU\fg \GIT_{\psi} N$ is isomorphic to $\fZ\fg$.
\end{remark}

Using the Skryabin equivalence, we have an action of the monoidal category $\HC$ on $\cZ \simeq (\fg,N,\psi)\module$, which can be considered as a quantum form of the Kostant slice.
In \cite{BezFink}, the authors define a filtered lift of this $\HC$-module category, i.e. an action of $\HC_{t,gr}$ on $\cZ_{t,gr}$, or equivalently, a monoidal functor
\[
\xymatrix{
\Whit_{t,gr}: \HC_{t,gr} \to \End_{\QC(\A^1_t/\G_m)}(\cZ_{t,gr}) \simeq \fZ_t\fg \otimes_{\C[t]} \fZ_t\fg\module_{gr}
}
\]
where the right hand side has a monoidal structure coming from identifying with $\fZ_t\fg$-bimodules in $\C[t]\module$. Specializing to $t=0$ we recover the functor of restriction under the graded Kostant slice: 
\[
\xymatrix{
\QC(\fg^\ast/G)_{gr} \ar[r]^{\kappa^\ast_{gr}}& \QC(\fc)_{gr}  \ar[r]^{\Delta_\ast} & \QC(\fc\times \fc)_{gr}
}
\]

\begin{remark}\label{remark Kazhdan}
Defining the grading on the quantum Kostant slice is not immediate as the Whittaker equation $n.m = \psi(n)m$ is not homogeneous  (for an element $m$ in a $\fU\fg$-module $M$, and $n \in \fn$). One approach is given by the \emph{Kazhdan filtration} on $\fU\fg$ (see \cite{Gan-Ginzburg}). The Rees algebra of the Kazhdan filtration is isomorphic to the usual (PBW) Rees algebra $\fU_t\fg$ as plain algebras, but the grading is defined by the homomorphism $ (id,2\rho^\vee):\G_m \to G\times \G_m$. In particular, the category $\HC_{t,gr}$, which consists of $G\times \G_m$-weakly equivariant $\fU_t\fg$-modules may thought of in terms of the Rees algebra of either filtration. With respect to the Kazhdan filtration, the Whittaker equation is homogeneous of degree $0$, and thus we can define a graded lift of the category of Whittaker modules as required. Alternatively, one can proceed as in the classical case in Section \ref{char poly section} and consider a certain localization of the category of $B$-integral $\fU\fg$-modules with a factorization of the action of $\fU_t\fn$ through the quotient $\fU_t\fn/[\fn,\fn] \simeq \C[\ch \times A^1_t]$. One can use this latter approach to define an (ungraded) dg-version of Whittaker modules, i.e. an action of $\HC_\hbar$ on $\cZ_\hbar$.
\end{remark}


\subsection{The Whittaker category}
The Whittaker category is a monoidal category which quantizes the group scheme $J \to \fc$. To motivate the definition below, note by \cite{BFN}
\[
\QC(J) = \QC(\fc\times_{\fg^\ast/G} \fc) \simeq \End_{\QC(\fg^\ast/G)}(\QC(\fc))
\]
\begin{definition}
The  \emph{Whittaker category} is the monoidal category $ \cWh = \End_{\HC}(\cZ)$. It has a filtered lift given by
$
\cWh_{t,gr} = \End_{\HC_{t,gr}}(\cZ_{t,gr}).
$
\end{definition}

Note that under the Morita equivalence of Theorem \ref{thm-morita}, the $\HC$-module category $(\fg,N,\psi)\module$ corresponds to the $\cD(G)$-module category $\cD(G/_{\psi} N)$. Thus we identify
\[
\cWh \simeq \End_{\cD(G)}(\cD(G/_{\psi} N)) \simeq \cD(\quot{N_\psi}{G}{_\psi N}).
\]
Similarly, there is a filtered version
\[
\cWh_{t,gr} \simeq \End_{\cD_{t,gr}(G)}(\cD_{t,gr}(G/_{\psi} N)) \simeq \cD_{t,gr}(\quot{N_\psi}{G}{_\psi N}).
\]
(one should use the grading on $\fD_{G,t}$ induced by the Kazhdan filtration to make sense of this).

In general, the Drinfeld center of a monoidal category acts by endomorphisms on any module. In particular, there is a monoidal functor
\[
\Whit_{t,gr}: \cZ(\HC_{t,gr}) \simeq \cD_{t,gr}(G\adjquot G) \to \cWh_{t,gr}
\]
Unwinding the definitions, we see that this functor is given by a composite
\[
\cD_{t,gr}(G\adjquot G) \to \cD_{t,gr}(G\adjquot N) \to \cD_{t,gr}(\quot{N_\psi}{G}{_\psi N}) \simeq \fWh_{t}\module_{gr}
\]
which is identified with the Whittaker functor appearing in \cite{ginzburg whittaker} (see the next section for the algebra $\fWh_t$).

\subsection{Bi-Whittaker differential operators}
 The category $\cWh \simeq \cD(\NGNpsi)$ contains a distinguished object $$\fD_{\quot{N_\psi}{G}{_\psi N}}  = \fD_G \otimes_{\fU\fn^L \otimes \fU\fn^R}( \C_{-\psi} \otimes \C_{\psi})$$
The Skyrabin equivalence implies that this object (which represents the functor of taking left and right Whittaker vectors) is a compact generator of $\cD(\quot{N_\psi}{G}{_\psi N})$, which moreover is a projective object in the heart of the $t$-structure. Consider its endomorphism ring, which is identified with the bi-Whittaker differential operators as studied in \cite{ginzburg whittaker}
\[
\fWh := \End_{\cD(\quot{N_\psi}{G}{_\psi N})}(\fD_{\quot{N_\psi}{G}{_\psi N}}) \simeq \left(\fD_{\quot{N_\psi}{G}{_\psi N}}\right)^{N\times N}
\]
Applying the same argument in the filtered setting, we get a graded algebra $\fWh_t$ which is the Rees algebra with respect to the Kazhdan filtration (see \cite{ginzburg whittaker}) on $\fWh$\footnote{Warning: the filtration on $\fWh$ (or equivalently, the grading on the Rees algebra $\fWh_t$) is unbounded in both directions in general.}. 

We record these results in the following proposition.
\begin{proposition}
There are equivalences of categories
\[
\cWh \simeq \cD(\quot{N_\psi}{G}{_\psi N}) \simeq \fWh\module
\]
with a corresponding filtered lift
\[
\cWh_{t,gr} \simeq \cD_{t,gr}(\NGNpsi) \simeq \fWh_t\module_{gr} 
\]
\end{proposition}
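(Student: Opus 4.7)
The plan is to derive both equivalences by assembling the two Morita-type results stated above (Theorems~\ref{thm-morita} and~\ref{Morita equivalence filtered}) together with Skryabin's equivalence for Whittaker modules, in parallel for the unfiltered and filtered/graded settings.

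For the first equivalence $\cWh \simeq \cD(\NGNpsi)$, I would start from the definition $\cWh = \End_{\HC}(\cZ)$ and transport it across the Morita equivalence between $\HC$ and $\cD(G)$ of Theorem~\ref{thm-morita}. The key point is that the $\HC$-module $\cZ \simeq (\fg,N,\psi)\module$ corresponds under this Morita equivalence to the $\cD(G)$-module $\cD(G/_\psi N)$; this is immediate once one identifies the weak $G$-invariants of $\cD(G/_\psi N)$, tensored against the bimodule $\fU\fg\module$, with $(\fg,N,\psi)\module$ as a module for $\HC$. Because Morita equivalences preserve endomorphism categories, this gives
\[
\cWh \simeq \End_{\cD(G)}(\cD(G/_\psi N)) \simeq \cD(\NGNpsi),
\]
where the last identification is the standard description of $\cD(G)$-linear endofunctors of $\cD(G/_\psi N)$ via $\cD$-modules on the fiber product $(G/_\psi N)\times_{pt/G}(G/_\psi N) = \NGNpsi$ with convolution.

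For the second equivalence $\cD(\NGNpsi) \simeq \fWh\module$, I would invoke Skryabin's theorem on the left and right $N$-actions. Skryabin's equivalence says that $\fU\fg \otimes_{\fU\fn}\C_\psi$ is a compact projective generator of $(\fg,N,\psi)\module$ with endomorphism algebra $\fZ\fg$; an entirely parallel bi-Whittaker argument identifies $\fD_{\NGNpsi}$ as a compact projective generator of the heart of $\cD(\NGNpsi)$, with endomorphism algebra $\fWh$ as already recorded in~\cite{ginzburg whittaker}. Because the generator is compact and projective in the heart of a standard $t$-structure, the equivalence promotes automatically to the dg level, yielding $\cD(\NGNpsi) \simeq \fWh\module$.

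The filtered lifts follow by running the same two steps in the $\QC(\A^1_t/\G_m)$-linear setting, using Theorem~\ref{Morita equivalence filtered} in place of Theorem~\ref{thm-morita} and the Kazhdan filtration (Remark~\ref{remark Kazhdan}) to make the Whittaker equation homogeneous so that $\cD_{t,gr}(G/_\psi N)$ and $\fD_{\NGNpsi,t}$ make sense. The graded endomorphism algebra of the resulting compact generator is, by construction, the Rees algebra $\fWh_t$ of $\fWh$ under the Kazhdan filtration. The main technical obstacle is the compatibility of Skryabin's equivalence with the filtered/graded deformation: one must know that the Kazhdan Rees algebra is flat over $\C[t]$ and that Whittaker reduction commutes with this specialization, so that the compact projective generator and its endomorphism algebra deform faithfully across the $\A^1_t$-family. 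This flatness is exactly what Remark~\ref{remark Kazhdan} provides, so no further input is needed.
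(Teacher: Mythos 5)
Your proposal takes essentially the same route as the paper: the first equivalence via the Morita equivalence of Theorem~\ref{thm-morita} (resp.\ Theorem~\ref{Morita equivalence filtered}) transporting $\cZ$ to $\cD(G/_\psi N)$, and the second by exhibiting $\fD_{\NGNpsi}$ as a compact projective generator with endomorphism algebra $\fWh$ via Skryabin, with the Kazhdan filtration of Remark~\ref{remark Kazhdan} giving the graded lift. No gaps; the argument matches the paper's.
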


The monoidal structure on $\cWh_{t,gr}$ can be recovered from a $\fZ_t\fg$-bialgebroid structure on the ring $\fWh_t$. First note that there is a map of rings $\fZ_{t}\fg \to \fWh_t$, and the corresponding forgetful functor on modules coincides with the manifestly monoidal functor 
\[
\End_{\HC_{t,gr}}(\cZ_{t,gr}) \to \End_{\cZ_{t,gr}}(\cZ_{t,gr}) \simeq \cZ_{t,gr}
\]
where we use the quantum characteristic polynomial map $\cZ_{t,gr} \to \HC_{t,gr}$. Thus the corresponding monad acting on $\cZ_{t,gr}$ is just given by the graded $\fZ_t\fg$-ring $\fWh_t$ (see \ref{monad vs algebra} for details on how a monad acting on a module category can be regarded as a ring). The monoidal structure on the forgetful functor endows the monad itself with an oplax monoidal structure, which, according to \cite{Bohm}, precisely corresponds to a (graded) $\fZ_t\fg$-bialgebroid structure on the (graded) $\fZ_t\fg$-ring $\fWh_t$.  This bialgebroid in fact is a Hopf algebroid (though we will not need this fact) which specializes to the commutative and cocommutative graded Hopf algebra $\C[J]$ after setting $t=0$.
One can recover the monoidal structure on $\fWh_t\module_{gr}$ naturally from the bialgebroid structure using the comultiplication in the usual way. 

\begin{remark}\label{abelian vs derived}
We were not able to locate a reference for bialgebroids in the dg/homotopical setting. However, our present situation may be expressed purely in terms of the usual theory in discrete abelian categories as follows. The monoidal (dg)-category $\cWh_{t,gr}$ can be recovered as the dg derived category of the heart its $t$-structure, which is a right-exact Grothendieck abelian monoidal category. The forgetful functor defines an exact, monadic, monoidal functor to the abelian category of graded $\fZ_t\fg$-modules, so the results in \cite{Bohm} apply verbatim to recover the monoidal structure on the abelian category of graded $\fWh_t$-modules (and thus on the dg-category $\cWh_t$) in terms of the $\fZ_t\fg$-bialgebroid structure.
\end{remark}

As a consequence of Remark \ref{abelian vs derived}, we obtain the following:
\begin{proposition}\label{cocommutative}
If the (discrete) graded bialgebroid $\fWh_t$ is cocommutative, then $\cWh_{t,gr}$ (and thus $\cWh$) carries a symmetric monoidal structure.
\end{proposition}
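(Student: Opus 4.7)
The plan is to reduce the statement, via the dg-enhancement result recorded in Remark~\ref{abelian vs derived}, to a purely abelian-categorical question about modules for a cocommutative bialgebroid, where the existence of the symmetric braiding is classical. By Remark~\ref{abelian vs derived}, the monoidal dg category $\cWh_{t,gr}$ is the dg derived category of its Grothendieck abelian heart $\fWh_t\module_{gr}^{\heartsuit}$, whose right-exact monoidal structure corresponds, via the B\"ohm-style correspondence, to the assumed bialgebroid structure on $\fWh_t$ over $\fZ_t\fg$. It therefore suffices to produce a symmetric monoidal refinement of this abelian monoidal structure; the braiding will then extend to the derived tensor product, and the hexagon and symmetry axioms will follow for the dg structure because they can be checked after passing to the heart.

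Next I would construct the braiding on the heart in the familiar way. Since $\fZ_t\fg$ is commutative and sits as the source/target ring of the bialgebroid $\fWh_t$, the ordinary swap
\[
c_{M,N}\colon M \otimes_{\fZ_t\fg} N \longrightarrow N \otimes_{\fZ_t\fg} M, \qquad m \otimes n \longmapsto n \otimes m,
\]
is a well-defined isomorphism of graded $\fZ_t\fg$-modules. Writing the comultiplication as $\Delta(h) = \sum h_{(1)} \otimes h_{(2)}$, the $\fWh_t$-module structure on the target agrees, under $c_{M,N}$, with the one on the source precisely when $\sum h_{(2)} \otimes h_{(1)} = \sum h_{(1)} \otimes h_{(2)}$, which is exactly the cocommutativity hypothesis. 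The symmetry $c_{N,M} \circ c_{M,N} = \id$ and the hexagon axioms then hold tautologically, as they are inherited from the symmetric monoidal structure on graded $\fZ_t\fg$-modules before imposing the $\fWh_t$-action.

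Finally I would propagate the symmetric monoidal structure to the other avatars. Taking derived tensor products produces a symmetric monoidal structure on the dg derived category $\cWh_{t,gr}$ compatible with the original monoidal structure (the braiding on a resolution is given termwise by the abelian braiding). The asymptotic category $\cWh$ is obtained from $\cWh_{t,gr}$ by the base change
\[
\cWh \simeq \C[t,t^{-1}]\module_{gr} \otimes_{\C[t]\module_{gr}} \cWh_{t,gr},
\]
and base change of a symmetric monoidal presentable category along a symmetric monoidal functor of commutative algebras in $\Pr^L$ is again symmetric monoidal; hence $\cWh$ inherits a symmetric monoidal structure, as claimed.

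The one point that requires care is the precise meaning of ``cocommutative bialgebroid'' over the graded base $\fZ_t\fg$: bialgebroids in general involve two (possibly distinct) base embeddings, and the naive flip only lands in the correct relative tensor product when the source and target maps coincide, as happens here because $\fZ_t\fg$ is central in $\fWh_t$. The main obstacle is therefore not the derived enhancement, which is handled abstractly by Remark~\ref{abelian vs derived}, but rather matching the hypothesis of the proposition with the precise form of cocommutativity used in the abelian construction of \cite{Bohm}; once this matching is made explicit, the argument above goes through.
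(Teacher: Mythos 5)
Your proof is correct and takes essentially the same route as the paper's: both reduce to the classical fact that modules for a cocommutative bialgebroid over a commutative base form a symmetric monoidal abelian category, then carry this to the derived/dg level (the paper's proof is a terse two sentences that you have simply spelled out, including the explicit swap map and the base-change argument to $\cWh$).
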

\begin{proof}
The abelian category of modules for a cocommutative bialgebroid over a commutative ring is naturally sysmmtric monoidal. This structure carries through to the derived category, as required. 
\end{proof}
\begin{remark}
In the next section we will see that the $\fZ_t\fg$-bialgebroid $\fWh_t$ is indeed cocommutative, and thus the Whittaker category $\cWh = \fWh\module$ is symmetric monoidal.
\end{remark}

\subsection{Derived geometric Satake and the Kostant/Whittaker category}\label{subsection-derived satake}
In this subsection we will apply the results of Section \ref{Hecke section} in the setting of the equivariant Grassmannian for $G^\vee$ to derive results about the Whittaker category via the derived geometric Satake theorem of Bezrukavnikov and Finkelberg \cite{BezFink}.

We take $X=pt/\LGpv\rtimes \Gm$, $\uGrv=\LGpv\backslash \LGv/\LGpv\rtimes \G_m$. Note that $X$ is an ind-stack and $\uGrv$ an ind-proper groupoid acting on $X$. Let $\cH_\hbar = \Dhol(\uGrv)$ denote the spherical Hecke category, and $\cR_\hbar = \Dhol(X)$. Note that there is an isomorphism $R_\hbar=H^\ast(X) \simeq \fZ_\hbar\fg$, thus we may identify $\cR_\hbar = H^\ast(X)\module$ with $\cZ_\hbar = \fZ_\hbar\fg\module$.


\begin{theorem}~\cite{BezFink} \label{Bez-Fink}
	There is an equivalence of monoidal categories $\cH_\hbar \simeq \HC_\hbar$ giving rise to a commutative diagram
	\[
	\xymatrix{
		\cZ_{t,gr} \ar[d]_{\Char_{t,gr}} \ar[r] &\cZ_\hbar \ar[r]^\sim \ar[d] & \ar[l] \cR_{\hbar} \ar[d]^{\fd}\\
		\HC_{t,gr} \ar[r] \ar[d]_{\Kost_{t,gr}} &\HC_\hbar \ar[r]^\sim \ar[d] & \ar[l] \cH_{\hbar} \ar[d]^{\fa} \\
		\End(\cZ_{t,gr}) \ar[r]&\End(\cZ_\hbar) \ar[r]^\sim & \ar[l] \End(\cR_{\hbar})	
}\]
where the left column is a graded lift of the right two.
\end{theorem}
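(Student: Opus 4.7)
The plan is to derive the statement as a consequence of the renormalized derived Satake theorem of~\cite{BezFink} together with Skryabin's equivalence, by tracking what happens to unit modules and the canonical functors $\fd, \fa$ of Theorem~\ref{groupoid center} under these equivalences.

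First, I would invoke the main theorem of~\cite{BezFink}, which gives the underlying monoidal equivalence $\cH_\hbar \simeq \HC_\hbar$. Recall that this is obtained by combining geometric Satake (on the level of abelian categories) with Ginzburg/Bezrukavnikov's derived upgrade and a Koszul duality producing the appearance of $\fU_\hbar\fg$ and its PBW Rees form; the loop-rotation $\G_m$-equivariance provides the deformation parameter $\hbar$ in cohomological degree two. The renormalized (ind-holonomic) form of this equivalence is the one used in the present paper, and ensures that both sides are compactly generated by the expected classes of objects.

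Next, I would identify the chosen module objects $\cR_\hbar$ and $\cZ_\hbar$ under this equivalence. On the Satake side, $\cR_\hbar = \Dhol(pt/\LGpv\rtimes\G_m)$, which by equivariant formality (see Remark~\ref{formality remark}) is identified with $H^\ast_{\LGpv\rtimes\G_m}(pt)\module \simeq \fZ_\hbar\fg\module$; this computation uses the classical identification $H^\ast_{\Gv}(pt)\simeq \Sym(\fh)^W$ and the Harish-Chandra isomorphism. On the Harish-Chandra side, $\cZ_\hbar = \fZ_\hbar\fg\module$ is realized as the category of Whittaker modules via Skryabin's equivalence. The nontrivial compatibility---that these are identified not merely abstractly but as $\HC_\hbar$-module categories under the monoidal equivalence $\cH_\hbar\simeq\HC_\hbar$---is precisely the content of the loop-rotation equivariant Whittaker description in~\cite{BezFink}, where the Whittaker functor $\HC_\hbar \to \fZ_\hbar\fg\module$ appears as global sections (or convolution against the unit) on the Satake side.

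With those identifications established, commutativity of the diagram reduces to tracking the diagonal embedding $\fd$ and the acting-on-the-unit functor $\fa$ through the equivalences. The diagonal $\fd:\cR_\hbar \to \cH_\hbar$ comes from the unit map $X\to \uGrv$, and under Satake corresponds to the canonical inclusion $\fZ_\hbar\fg \hookrightarrow \fU_\hbar\fg$ regarded as a bimodule, i.e.\ to $\Char_\hbar:\cZ_\hbar\to\HC_\hbar$. Dually, the functor $\fa$, which acts on the unit module, corresponds to the Kostant/Whittaker functor $\Kost_\hbar$ defined above. Both identifications can be checked on compact generators, where they reduce to matching algebra maps.

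The main obstacle---and the only step that genuinely needs care---is constructing the graded lift, i.e.\ producing the leftmost column of the diagram and verifying it degrades to the $\hbar$-form. On the Satake side the grading is tautological: one works with $\Gv\rtimes\G_m$-equivariant objects and keeps track of the internal weight grading produced by the $\G_m$-action on $\uGrv$. On the Harish-Chandra side the grading arises from the Rees construction with respect to the PBW filtration on $\fU\fg$ (for $\HC_{t,gr}$) and the Kazhdan filtration on the bi-Whittaker differential operators (for $\cZ_{t,gr}$, cf.\ Remark~\ref{remark Kazhdan}). The match between these two gradings under Satake is precisely what is encoded by the mixed/weight structure in~\cite{BezFink} combined with the shearing autoequivalence of Section~\ref{filtered D-mod section}; equivariant formality of $\uGrv$ for $\LGpv\rtimes\G_m$ guarantees that the shearing equivalence $\fatslash$ identifies the $t$-form with the $\hbar$-form, producing the claimed graded lift. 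This last formality step is the key input and is where I would expect to spend the most effort.
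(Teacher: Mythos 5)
This statement is attributed to~\cite{BezFink} and is not proved in the paper; it is used as an input. So there is no internal proof to compare against, and the task is really to assess whether your sketch accurately reflects what the cited theorem provides and how the present paper packages it.

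Your account of the middle and right columns is correct: the monoidal equivalence $\cH_\hbar \simeq \HC_\hbar$ is the renormalized loop-rotation-equivariant derived Satake theorem, the identification $\cR_\hbar \simeq \cZ_\hbar$ follows from $H^\ast_{\LGpv\rtimes\G_m}(pt) \simeq \fZ_\hbar\fg$ (via $H^\ast_{\Gv}(pt) \simeq \Sym(\fh^\vee)^W$ and Harish-Chandra, with $\hbar \in H^2(B\G_m)$), and the matching of $\fd$ with $\Char_\hbar$ and $\fa$ with $\Kost_\hbar$ is the assertion (made precise in the remark following the theorem) that the equivalence respects the augmented-ring structure over $\cR_\hbar \simeq \cZ_\hbar$. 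Invoking Skryabin's equivalence to realize $\cZ_\hbar$ as Whittaker modules is also in line with the paper's setup.

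The one place where you go astray is in the account of the left column. You write that the grading on the Satake side is ``tautological'' because one keeps track of ``the internal weight grading produced by the $\G_m$-action on $\uGrv$.'' That conflates two distinct pieces of structure: loop rotation $\G_m$-equivariance is what supplies the cohomological deformation parameter $\hbar$ (placing $\hbar$ in degree two), whereas the \emph{external} weight grading underlying the $t$-form comes from a mixed (Tate/Frobenius) structure on equivariant sheaves on $\uGrv$, in the sense of Beilinson--Ginzburg--Soergel. This is exactly the point of Remark~\ref{mixed remark}: the graded lift $\HC_{t,gr}$ corresponds to a \emph{mixed} Satake category, constructed from the pure objects $IC_\mu$ rather than from $\G_m$-weights. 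The paper itself does not invoke a mixed Satake category directly when it needs the graded lift of the Ng\^o functor; instead Section~\ref{ss graded lift} proves an algebraic formality statement (Lemma~\ref{lemma formal}, that $\fB_\hbar$ carries a pure external grading as a bimodule) and uses that to shear $\cC_\hbar$-structures back to $t$-graded ones. So your last step either needs to be reformulated to say that the weight structure comes from mixedness/pure $IC$ sheaves (and then shearing identifies $t$- and $\hbar$-forms), or recast as a purely algebraic formality argument on the Harish-Chandra side as the paper does. As written, attributing the external grading to loop rotation $\G_m$-equivariance is not correct and would not produce the claimed $t$-form.
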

\begin{remark}
Given algebra objects $S$ and $A$ (in some closed symmetric monoidal category $\cC$, say) we say that $A$ is an augmented $S$-ring if there is an algebra homomorphism $S\to A$, and $S$ carries the structure of an $A$-module, such that the composite 
\[
S \to A \to \End(S)
\]
is the structure map for $S$ as a module over itself. The Geometric Satake Theroem may be interpreted as saying that $\HC_\hbar$ and $\cH_\hbar$ are equivalent as augmented $(\cZ_\hbar \simeq \cR_\hbar)$-rings.
\end{remark}

Recall the Kostant category $\cK_\hbar$ is the symmetric monoidal category $\Dhol(X)^{\uGrv}$ which is monoidally identified with $\End_{\cH_\hbar}(\cR_{\hbar})$. On the other hand, the (dg-asymptotic) Whittaker category is the monoidal category given by $\cWh_{\hbar} = \End_{\HC_{\hbar}}(\cZ_{\hbar})$. In particular, Theorem \ref{Bez-Fink} implies that there is an equivalence between $\cWh_{\hbar}$ and $\cK_{\hbar}$; thus, $\cWh_{t,gr}$ defines a graded lift of $\cK_{\hbar}$. Thus we obtain the following:

\begin{corollary}\label{corollarykostantwhittaker}
There is a commutative diagram of monoidal categories
\[
\xymatrix{
 \cWh_{t,gr} \ar[d] \ar[r] & \cWh_\hbar \ar[r]^\sim \ar[d] & \ar[l] \cK_{\hbar} \ar[d]  \\
 \cZ_{t,gr} \ar[r] & \cZ_\hbar \ar[r]^\sim & \cR_{\hbar} 
}
\]
where the horizontal arrows are monoidal degrading functors. In particular, the dg-Whittaker category $\cWh_\hbar$ is symmetric monoidal.
\end{corollary}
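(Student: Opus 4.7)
The right two columns of the diagram are immediate from Theorem \ref{Bez-Fink}: the derived geometric Satake equivalence identifies $\HC_\hbar$ with $\cH_\hbar$ as augmented $(\cZ_\hbar \simeq \cR_\hbar)$-rings, and applying $\End$ of a pointed module category is a monoidal 2-functor, so taking endomorphisms of $\cZ_\hbar$ on the Harish-Chandra side and $\cR_\hbar$ on the Hecke side produces a monoidal equivalence $\cWh_\hbar \simeq \cK_\hbar$ compatible with the forgetful functors to $\cZ_\hbar \simeq \cR_\hbar$ (the latter being the ``diagonal'' maps $\fd$ obtained by remembering only the module structure over the $E_\infty$-subalgebra).

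To obtain the symmetric monoidal structure on $\cWh_\hbar$, the plan is to transport it from $\cK_\hbar$. For $\cK_\hbar$, I would apply Theorem \ref{groupoid center} to the ind-proper groupoid $\uGrv \rightrightarrows X = pt/(\LGpv \rtimes \G_m)$. This directly produces a symmetric monoidal structure on $\cK_\hbar = \Dhol(X)^{\uGrv} \simeq \End_{\cH_\hbar}(\cR_\hbar)$ for which the forgetful functor $\cK_\hbar \to \cR_\hbar$ is symmetric monoidal. Transporting along the monoidal equivalence $\cWh_\hbar \simeq \cK_\hbar$ endows $\cWh_\hbar$ with the desired symmetric monoidal structure, with symmetric monoidal forgetful functor to $\cZ_\hbar$.

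For the left column and the commutativity of the left square, the filtered Morita equivalence of Theorem \ref{Morita equivalence filtered} gives an action of $\HC_{t,gr}$ on $\cZ_{t,gr}$, and $\cWh_{t,gr} := \End_{\HC_{t,gr}}(\cZ_{t,gr})$ is by definition a graded lift of $\cWh_\hbar$: the degrading (i.e.\ shearing followed by the base-change along $\C[\hbar]\module_{gr} \to \Vect$) of the $\HC_{t,gr}$-module $\cZ_{t,gr}$ is precisely $\cZ_\hbar$ as a $\HC_\hbar$-module, and degrading commutes with the formation of endomorphism categories for dualizable objects. This produces the horizontal monoidal functor $\cWh_{t,gr} \to \cWh_\hbar$, and the compatibility with the forgetful functor to $\cZ_{(-)}$ is automatic from the naturality of the characteristic polynomial map under degrading.

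The main subtlety I anticipate is verifying that the symmetric (not merely $E_2$) monoidal structure produced by Theorem \ref{groupoid center} in the generality of ind-holonomic $\cD$-modules on ind-nearly-finite-type stacks survives transport through Satake; but this is formal, because the symmetric monoidal structure on $\cK_\hbar$ is constructed intrinsically from the cosimplicial symmetric monoidal category $\Dhol(\uGrv_\bullet)$ via $\Dhol(X)^{\uGrv} \simeq \End_{\cH_\hbar \modcat}(\cR_\hbar)$, and the Satake equivalence is an equivalence of such $E_1$-algebra-plus-pointed-module data. Everything downstream, in particular the transferred symmetric monoidal structure and its compatibility with the graded lift, is then functorial.
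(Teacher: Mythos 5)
Your proposal is correct and takes essentially the same approach as the paper, which treats the corollary as an immediate unpacking of Theorem \ref{Bez-Fink} together with the general groupoid formalism of Section \ref{Hecke section}. One small imprecision: the symmetric monoidal structure on $\cK_\hbar$ is not produced by Theorem \ref{groupoid center} (which is the central-action statement and presupposes this structure); rather it comes directly from the identification $\cK_\hbar \simeq \Dhol(X)^{\uGrv}$ as the totalization of the cosimplicial symmetric monoidal category $\Dhol(\uGrv_\bullet)$, i.e.\ Proposition \ref{Hecke equivariance}, and that is what you should cite for this step.
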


Recall that the Drinfeld center of a monoidal category acts by endomorphisms on any module category. Identifying these actions on either side of Theorem \ref{Bez-Fink} we obtain:

\begin{corollary}\label{corollary action Kostant}
There a commutative diagram:
\[
\xymatrix{
\cD_{t,gr}(G\adjquot G) \ar[r] \ar[d]^{\Whit_{t,gr}} & \cD_\hbar(G\adjquot G) \ar[r] \ar[d]^{\Whit_\hbar} & \ar[l] \cZ(\cH_\hbar) \ar[d]^\fa \\
\cWh_{t,gr} \ar[r] & \cWh_{\hbar} \ar[r] & \ar[l] \cK_\hbar
}
\]
\end{corollary}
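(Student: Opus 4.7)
The strategy is to view both squares as instances of the naturality of the universal \emph{act-on-the-unit} construction of Corollary \ref{nonsense} applied to a monoidal category together with its tautological self-module structure, transported along the equivalences already supplied by Theorem \ref{Bez-Fink} and Corollary \ref{corollarykostantwhittaker}.

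First I would handle the right-hand square. The bottom two rows of Theorem \ref{Bez-Fink} identify $\HC_\hbar \simeq \cH_\hbar$ as monoidal categories, and the top row identifies the distinguished modules $\cZ_\hbar$ and $\cR_\hbar$ compatibly. Applying $\cZ(-)$ (which is a functor of monoidal categories) gives an equivalence $\cZ(\HC_\hbar) \simeq \cZ(\cH_\hbar)$, and composing with the identification $\cD_\hbar(G\adjquot G) \simeq \cZ(\HC_\hbar)$ from the $\hbar$-degrading of Theorem \ref{Morita equivalence filtered} (i.e., Corollary \ref{center of HC} in its $\hbar$ form) produces the top horizontal arrow of the right square. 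Now, by construction both $\Whit_\hbar$ and $\fa$ are the $E_1$-morphism $\mathrm{act}_{1_\cA}: \cZ(\cA)\to \End(1_\cA)$ of Corollary \ref{nonsense}: $\fa$ takes a central object of $\cH_\hbar$ to its action on the unit module $\cR_\hbar$, landing in $\cK_\hbar = \End_{\cH_\hbar}(\cR_\hbar)$, while $\Whit_\hbar$ does exactly the same on the $\HC_\hbar$-side with unit module $\cZ_\hbar$, landing in $\cWh_\hbar = \End_{\HC_\hbar}(\cZ_\hbar)$. Since the Satake equivalence carries the pair $(\HC_\hbar, \cZ_\hbar)$ to $(\cH_\hbar, \cR_\hbar)$, it necessarily intertwines these two universal functors, yielding commutativity of the right square.

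For the left square the same argument applies in the filtered/graded setting. The construction of Corollary \ref{nonsense} is performed in the presentable symmetric monoidal $\infty$-category $\cP$ of (compactly generated, proper) dg categories; it commutes with base change along symmetric monoidal functors. The filtered $\HC_{t,gr}$-action on $\cZ_{t,gr}$ and the central functor $\Whit_{t,gr}$ are defined by applying this construction in $\cP = \DGCat^{c}_{\C[t]\module_{gr}}$, while $\Whit_\hbar$ is obtained by the same construction in $\cP = \DGCat^c_k$; since the horizontal arrows are monoidal degrading functors compatible with the central action formalism, commutativity is automatic. Theorem \ref{Bez-Fink} bundles together the required compatibility between the filtered, $\hbar$, and Satake incarnations.

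The main point to verify carefully is that the two \textit{a priori} different definitions of $\Whit_\hbar$ --- on one hand the pullback-pushforward realization $\cD_\hbar(G\adjquot G) \to \cD_\hbar(G\adjquot N) \to \cD_\hbar(\NGNpsi) \simeq \cWh_\hbar$ used to obtain concrete formulas, and on the other hand the abstract ``center acts on the unit" description used above --- actually coincide. This is essentially the content of the Morita equivalence of Theorem \ref{Morita equivalence filtered} identifying $\cZ(\HC_\hbar) \simeq \cD_\hbar(G\adjquot G)$ together with the Skryabin equivalence identifying $\cZ_\hbar$ as a distinguished module, and is the one spot where some bookkeeping in the filtered/graded formalism is required; once this identification is made, both squares are formal consequences of the naturality of the construction of Corollary \ref{nonsense}.
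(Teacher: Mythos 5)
Your proposal is correct and follows essentially the same route as the paper: the paper's proof is simply the remark that the Drinfeld center acts on any module category, and that Theorem \ref{Bez-Fink} identifies the pair $(\HC_\hbar,\cZ_\hbar)$ with $(\cH_\hbar,\cR_\hbar)$ (with its graded lift), so the two ``act on the distinguished module'' functors $\Whit$ and $\fa$ are matched up automatically. You spell out more explicitly that both $\Whit_\hbar$ and $\fa$ are instances of $\mathrm{act}_{1_\cA}$ from Corollary \ref{nonsense}, and you flag the compatibility of the concrete pullback-pushforward description of $\Whit$ with the abstract one, but this was already addressed in the paper just after the definition of $\Whit_{t,gr}$, so there is no new content needed.
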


Using Corollary \ref{corollarykostantwhittaker}, we can recover a theorem of Bezrukavnikov-Finkelberg-Mirkovic \cite{BFM}, identifying the homology convolution algebra of the affine Grassmannian. 
\begin{corollary}\label{corollaryBFimpliesBFM}\cite{BFM}
There is an equivalence of graded Hopf algebroids 
\[
\fWh_t \simeq H^\ast(H_\hbar) \simeq H^{\LGpv \rtimes \G_m}_{-\ast}(\cGr)
\]
In particular, setting $t=0$ we have an equivalence of Hopf algebras.
\[
\C[J] \simeq H_{-\ast}^{\LGpv}(\cGr)
\]
\end{corollary}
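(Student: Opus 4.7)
The strategy is to pass from the monoidal equivalences in Corollary~\ref{corollarykostantwhittaker} to an equivalence of the underlying graded bialgebroids by extracting the corresponding monads, following the monad--algebroid dictionary recalled in Section~\ref{monad vs algebra}. The point is that both $\fWh_t$ and $H_\hbar$ are naturally realized as monads on augmented module categories over equivalent base rings, and the bialgebroid structure is encoded in the oplax monoidal enhancement of these monads.

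On the Whittaker side, $\fWh_t$ arises as the monad on $\cZ_{t,gr} \simeq \fZ_t\fg\module_{gr}$ corresponding to the monoidal forgetful functor $\cWh_{t,gr} \to \cZ_{t,gr}$ induced by the quantum characteristic polynomial map $\Char_{t,gr}$. Concretely it is the endomorphism algebra of the compact projective generator of $\cWh_{t,gr}$ representing this forgetful functor, and (as in the paragraph preceding Proposition~\ref{cocommutative}) its $\fZ_t\fg$-bialgebroid structure is equivalent data to the oplax monoidal structure of the functor. On the Langlands dual side, $H_\hbar$ is by construction the $\cR_\hbar \simeq H^\ast_{\LGpv\rtimes\G_m}(pt)$-bialgebroid attached to the monad on $\cR_\hbar$ coming from the ind-proper groupoid $\uGrv\rightrightarrows pt/\LGpv\rtimes \G_m$, with multiplication given by convolution and comultiplication by cup coproduct along the unit-side projection, exactly as described in Section~\ref{monad vs algebra}.

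To conclude, I would invoke Corollary~\ref{corollarykostantwhittaker}, which produces a commutative diagram of monoidal functors identifying $\cWh_{t,gr}$ with a graded lift of $\cK_\hbar$ compatibly with the forgetful functors to $\cZ_{t,gr}$ and $\cR_\hbar$ respectively. Under the derived geometric Satake identification $\cZ_\hbar \simeq \cR_\hbar$ of Theorem~\ref{Bez-Fink}, this is an equivalence of augmented $\cR_\hbar$-rings; hence the associated monads, together with their oplax monoidal structures, are canonically identified. Passing to bialgebroids yields the desired isomorphism $\fWh_t \simeq H^\ast(H_\hbar) = H^{\LGpv\rtimes \G_m}_{-\ast}(\cGr)$ of graded bialgebroids (in fact Hopf algebroids, since both sides carry antipodes coming from $\cG$-inversion and from the involution on bi-Whittaker differential operators). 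Specializing at $t=0$ collapses the loop-rotation equivariance on the right-hand side and identifies $\fWh_{t=0}$ with $\C[J]$ via the classical Kostant description of the associated graded of $\fWh$, recovering the original Bezrukavnikov--Finkelberg--Mirkovic isomorphism.

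The main obstacle I anticipate is verifying that the Satake equivalence of Theorem~\ref{Bez-Fink} really intertwines the \emph{augmentations} and the \emph{oplax monoidal} structures on the forgetful functors, rather than only the bare monoidal structures on the source categories; without this, only the underlying ring (not the coring) structure of $\fWh_t$ would transfer. This compatibility is however built into the diagrammatic formulation of Theorem~\ref{Bez-Fink}, where the two right-hand columns are explicitly displayed as augmented-ring structures; the oplax monoidal structure on each forgetful functor is induced by the unit augmentation, so it is automatically matched by any equivalence of augmented ring data.
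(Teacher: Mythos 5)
Your overall strategy — identify the monads attached to the forgetful functors on either side of the Satake equivalence, transport the oplax monoidal (bialgebroid) structures along that identification using the diagram of augmented ring structures in Theorem~\ref{Bez-Fink} — is exactly the paper's approach, and the first two paragraphs of your argument are in good shape.

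The gap is in the transition to the final isomorphism. The monad identification you obtain is $\fWh_\hbar \simeq H_\hbar$ as (dg) $(\fZ_\hbar\fg\simeq R_\hbar)$-bialgebroids, but the statement you need is $\fWh_t \simeq H^\ast(H_\hbar)$ as \emph{graded} bialgebroids in cohomological degree zero. You write ``Passing to bialgebroids yields the desired isomorphism'' but never bridge the two. The missing ingredient is a formality argument: the existence of the graded lift $\cWh_{t,gr}\to\cWh_\hbar$ (which you do invoke, but only to set up the monad identification) is precisely what shows that $\fWh_\hbar$ is the shearing $\fWh_t^{\fatslash}$ of an \emph{ordinary} graded bialgebroid, and hence that $\fWh_\hbar$ (and therefore $H_\hbar$, via the equivalence you established) is formal as a bialgebroid in the sense of Remark~\ref{formality remark}. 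It is only after this observation that $H^\ast(H_\hbar)\simeq H^\ast(\fWh_\hbar) \simeq \fWh_t$ becomes an isomorphism of graded bialgebroids rather than merely an isomorphism of graded $\fZ\fg$-modules. Without it, one does not know that the coalgebroid structure on $H^\ast(H_\hbar) = H_{-\ast}^{\LGpv\rtimes\G_m}(\cGr)$ induced by the cup coproduct agrees with the bialgebroid structure on $\fWh_t$, which is the substantive content of the BFM theorem you are deriving.
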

\begin{proof}
We have $\cWh_\hbar = \fWh_\hbar\module$ and $\cK_\hbar = H_\hbar\module$, so Corollary \ref{corollarykostantwhittaker} gives rise to an equivalence $\fWh_\hbar \simeq H_\hbar$ of $(\fZ_\hbar\fg\simeq R_\hbar)$-rings (or equivalently monads acting on $\cZ_\hbar \simeq \cR_\hbar$). Moreover, as the forgetful functors carry a monoidal structure, the ring objects $\fWh_\hbar$ and $H_\hbar$ admit $(\fZ_\hbar\fg\simeq R_\hbar)$-bialgebroid structures, and the equivalence $\fWh_\hbar \simeq H_\hbar$ respects this structure.

The existence of the graded lift $\cWh_{t,gr} \to \cWh_\hbar$ means that the dg-bialgebra $\fWh_\hbar$ arises from the graded bialgebra $\fWh_t$ (which we consider to be in cohomological degree 0) by shearing so that the $\G_m$-weight is equal to the cohomological degree. In particular, $\fWh_\hbar$ and thus $H_\hbar$ is formal as a bialgebroid by Remark \ref{remark formality}
. In other words, the homology of $\fWh_\hbar$ (and thus of $H_\hbar$) is isomorphic to $\fWh_t$ as graded bialgebroids, as claimed.
\end{proof}

\begin{corollary}\label{corollarysymmetricmonoidal}
The monoidal category $\cWh_{t,gr}$ upgrades to a symmetric monoidal category. In particular, $\cWh = \cD(\quot{N_\psi}{G}{_\psi N})$ is symmetric monoidal.
\end{corollary}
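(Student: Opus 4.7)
The plan is to apply Proposition~\ref{cocommutative}, which reduces the task to verifying that the graded $\fZ_t\fg$-bialgebroid $\fWh_t$ is cocommutative. Corollary~\ref{corollaryBFimpliesBFM} provides an isomorphism of graded Hopf algebroids
\[
\fWh_t \simeq H^\ast(H_\hbar) \simeq H^{\LGpv \rtimes \G_m}_{-\ast}(\cGr),
\]
so the problem becomes one of verifying cocommutativity of the convolution Hopf algebroid of the equivariant affine Grassmannian for $\Gv$.

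Next, I would invoke the general mechanism reviewed in Section~\ref{monad vs algebra}. For any ind-proper groupoid $\cG \rightrightarrows X$ with $p_{X\ast}$ monadic, the convolution algebra $H = \omega(\cG)$ naturally carries a cocommutative $C^\ast(X)$-bialgebroid structure, with comultiplication given by the ``cup coproduct'' of chains along the diagonal
\[
\cG \longrightarrow \cG \times_{\pi_1, X, \pi_1} \cG.
\]
Cocommutativity is automatic because this fiber product is manifestly symmetric under swap of the two factors while the diagonal is swap-invariant. Applied to $\cG = \uGrv$ acting on $X = pt/\LGpv \rtimes \G_m$ (for which $p_{X\ast}$ is monadic since $X$ is the classifying stack of a pro-algebraic group), this yields cocommutativity of $H_\hbar$, and hence of its cohomology $\fWh_t$.

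With cocommutativity of $\fWh_t$ in hand, Proposition~\ref{cocommutative} directly upgrades the monoidal structure on $\cWh_{t,gr} \simeq \fWh_t\module_{gr}$ to a symmetric monoidal one, and the sheared and unsheared degrading functors of Corollary~\ref{corollarykostantwhittaker} transport this structure to $\cWh_\hbar$ and to the discrete $\cWh \simeq \fWh\module$.

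The main obstacle I anticipate is ensuring that the identification in Corollary~\ref{corollaryBFimpliesBFM} is genuinely one of Hopf algebroids, i.e.\ that it respects the algebra and coalgebra structures \emph{simultaneously} rather than each separately. The algebra structures on both sides correspond to the monoidal structures on $\cWh_{t,gr}$ and $\cK_\hbar$, which are intertwined by the Geometric Satake equivalence of Theorem~\ref{Bez-Fink}. The coalgebra structures correspond to the forgetful functors to $\cZ_{t,gr}$ and $\cR_\hbar$; these are also intertwined, since Theorem~\ref{Bez-Fink} is phrased as an equivalence of augmented $(\cZ_\hbar \simeq \cR_\hbar)$-rings. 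Together these data pin down the bialgebroid identification and complete the argument.
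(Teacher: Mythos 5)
Your proposal matches the paper's proof exactly: both reduce to cocommutativity of $\fWh_t$ via Proposition~\ref{cocommutative}, transport the question to the convolution Hopf algebroid $H_{-\ast}(\uGrv)$ via Corollary~\ref{corollaryBFimpliesBFM}, and observe that the cup coproduct (pushforward along the diagonal $\cG \to \cG \times_{\pi_1,X,\pi_1}\cG$) is manifestly cocommutative. Your closing remark about checking that the identification respects the bialgebroid structure on both sides is precisely the content of the proof of Corollary~\ref{corollaryBFimpliesBFM} and is handled there, so nothing is missing.
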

\begin{proof}
By Corollary \ref{corollaryBFimpliesBFM} the $\fZ_t\fg$-coalgebra structure on $\Wh_t \simeq H_{-\ast}(\uGrv)$ is cocommutative (it is the ``cup coproduct'' arising from pushforward under diagonal maps). Thus the result follows from Proposition \ref{cocommutative}. 

\end{proof}

\subsection{The Quantum Ng\^o map}
Applying Theorem \ref{groupoid center} to the Ind-proper groupoid $\uGrv \rightrightarrows X$, and interpreting the results using Theorem \ref{Bez-Fink}, Corollary \ref{corollarykostantwhittaker}, and Corollary \ref{corollary action Kostant} we obtain the following result.
\begin{theorem}\label{thm-quantum-ngo-hbar}
There is a canonical $E_2$-morphism $\Ngo_\hbar:\cWh_\hbar\to \cZ(\HC_\hbar)$ which fits in to a diagram:
\[
\xymatrix{
\cWh_\hbar \ar[r]_-{\Ngo_\hbar}\ar[d]_-{}& \ar@/_1pc/_-{\Whit_\hbar}[l] \cD(G\adjquot G) \ar[d]^{}\\
\cZ_\hbar \ar[r]_-{\Char_\hbar} & \HC_\hbar  }
\]
\end{theorem}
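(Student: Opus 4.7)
The plan is to apply the general central action mechanism of Theorem~\ref{groupoid center} to the ind-proper groupoid $\cG=\uGrv$ acting on $X=pt/\LGpv\rtimes\G_m$, and then transport the result through the derived geometric Satake equivalence. Concretely, Theorem~\ref{groupoid center} produces, with no further work, a canonical $E_2$-morphism
\[
\fz_\hbar:\cK_\hbar\longrightarrow \cZ(\cH_\hbar)
\]
equipped with a monoidal left inverse $\fa_\hbar:\cZ(\cH_\hbar)\to\cK_\hbar$, and a commutative square asserting that $\fz_\hbar$ lifts the diagonal symmetric monoidal functor $\fd:\cR_\hbar\to\cH_\hbar$. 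All functors in the statement will be obtained by relabelling these under the Satake dictionary.

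Next I would invoke the identifications already assembled in Section~\ref{quantum section}. Theorem~\ref{Bez-Fink} gives an equivalence $\cH_\hbar\simeq\HC_\hbar$ as augmented $(\cR_\hbar\simeq\cZ_\hbar)$-rings, and in particular under this equivalence the diagonal $\fd:\cR_\hbar\to\cH_\hbar$ is identified with the quantum characteristic polynomial map $\Char_\hbar:\cZ_\hbar\to\HC_\hbar$. Passing to Drinfeld centers and applying the filtered Morita equivalence of Theorem~\ref{Morita equivalence filtered} (in its degraded $\hbar$-form) yields
\[
\cZ(\cH_\hbar)\simeq \cZ(\HC_\hbar)\simeq \cD_\hbar(G\adjquot G).
\]
Corollary~\ref{corollarykostantwhittaker} identifies $\cK_\hbar\simeq\cWh_\hbar$ as symmetric monoidal categories over $\cZ_\hbar$, while Corollary~\ref{corollary action Kostant} identifies the canonical central action $\fa_\hbar$ with the functor $\Whit_\hbar:\cD_\hbar(G\adjquot G)\to \cWh_\hbar$ built from the quantum Kostant slice. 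Substituting these equivalences into the output of Theorem~\ref{groupoid center} turns $\fz_\hbar$ into the desired $E_2$-functor
\[
\Ngo_\hbar:\cWh_\hbar\longrightarrow \cD_\hbar(G\adjquot G),
\]
which comes equipped with monoidal left inverse $\Whit_\hbar$.

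To produce the commutative square in the statement, I would combine two facts: first, Theorem~\ref{groupoid center} builds in the compatibility $\fa_\hbar\circ\fz_\hbar\simeq\mathrm{id}$ and the commutation of $\fz_\hbar$ with the diagonal $\fd$; second, the diagram appearing in Theorem~\ref{Bez-Fink} records that the forgetful map $\cZ(\HC_\hbar)\to \HC_\hbar$ corresponds, via Satake, to the forgetful map $\cZ(\cH_\hbar)\to \cH_\hbar$, and that the two diagonal maps $\cZ_\hbar\to\HC_\hbar$ and $\cR_\hbar\to\cH_\hbar$ match under the identifications above. Combining these gives the required commutative square with horizontal arrows $\Ngo_\hbar$ and $\Char_\hbar$ and vertical arrows the forgetful functors.

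The main obstacle I anticipate is bookkeeping rather than substantive mathematics: one must verify that the chain of equivalences respects the higher monoidal structures at play ($E_2$ for the Drinfeld center, $E_\infty$ for the Whittaker/Kostant side, and monoidality of the left inverse), and that the abstract central functor from Section~\ref{Hecke section} is indeed natural with respect to these equivalences at the level of the $(\infty,2)$-category of module categories. Once this naturality is pinned down (as it is in Theorem~\ref{Bez-Fink} and the Morita equivalence of Theorem~\ref{Morita equivalence filtered}), the theorem follows formally.
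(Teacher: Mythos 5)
Your proposal follows essentially the same route as the paper: the paper derives Theorem~\ref{thm-quantum-ngo-hbar} by applying Theorem~\ref{groupoid center} to the ind-proper groupoid $\uGrv \rightrightarrows pt/\LGpv\rtimes\G_m$ and then translating via the derived Satake equivalence (Theorem~\ref{Bez-Fink}), Corollary~\ref{corollarykostantwhittaker}, and Corollary~\ref{corollary action Kostant}, exactly as you do. Your extra appeal to the filtered Morita equivalence of Theorem~\ref{Morita equivalence filtered} to identify the center $\cZ(\HC_\hbar)\simeq\cD_\hbar(G\adjquot G)$ is a detail the paper handles in Section~\ref{filtered D-mod section} but is consistent with, and does not change, the argument.
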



Let us try to understand the functor $\Ngo_\hbar$ more explicitly. 
Composing $\Ngo_\hbar$ with the monoidal forgetful functor $\cD_\hbar(G\adjquot G) \simeq \cZ(\cD_\hbar(G)) \to \cD_\hbar(G)$, we obtain a functor
\[
\tNgo_\hbar: \cWh_\hbar = \fWh_{\hbar}\module \to \fD_\hbar\module
\]
All our constructions have taken place in the category $\DGCat_k$, and all functors appearing are continuous. It follows that the functor $F_\hbar$ above is represented by a $\fD_{G,\hbar}-\fWh_\hbar$-bimodule $\fB_\hbar$. Applying the forgetful functors given by the vertical arrows in Theorem \ref{thm-quantum-ngo-hbar}, we see that there is an equivalence of underlying $\fU_\hbar\fg\otimes \fU_\hbar\fg - \fZ_\hbar\fg$-bimodules:
\[
\fB_\hbar \simeq F_\hbar(\fWh_\hbar) \simeq \Char_\hbar(\fWh_\hbar) \simeq \fU\fg_{\hbar}\otimes_{\fZ_\hbar\fg}\fWh_\hbar
\]
In other words, there is a left $\fD_{\hbar,G}$-module structure on  $\fU\fg_{\hbar}\otimes_{\fZ_\hbar}\fWh_\hbar$ commuting with the right $\fWh_\hbar$ action. In particular, there is a map of $\fU_t\fg$-bimodules
\[
\fD_{G,\hbar} \to  \fU\fg_{\hbar}\otimes_{\fZ_\hbar\fg}\fWh_\hbar
\]
given by acting on the distinguished element $1\otimes 1$. This map of can be thought of as the quantiziation of the Ng\^o morphism
\[
\chi^\ast(J) \to T^\ast G
\]
(this will be explained more precisely in Remark \ref{remark classical Ngo}).

\subsection{Graded lift of the quantum Ng\^o map}\label{ss graded lift}
The goal of this subsection is to sketch a proof of the following result, which claims a graded lift of the functor $\Ngo_\hbar$ given by Theorem \ref{thm-quantum-ngo-hbar}.

\begin{theorem}\label{thm-quantum-ngo-t}
There is a $t$-exact $E_2$-monoidal functor
\[
\Ngo_{t,gr}: \cWh_{t,gr} \to \cD_{t,gr}(G\adjquot G)
\]
lifting the monoidal functor $\Char_{t,gr}: \cZ_{t,gr} \to \HC_{t,gr}$.
\end{theorem}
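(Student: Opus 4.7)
The plan is to construct a graded lift of the entire setup of Section~\ref{Hecke section} applied to the equivariant affine Grassmannian, then transport the resulting central action to the Harish-Chandra side via a graded version of the Bezrukavnikov-Finkelberg equivalence. First I would construct a mixed version $\cH_{t,gr}$ of the spherical Hecke category $\cH_\hbar = \Dhol(\uGrv)$, together with compatible graded lifts $\cR_{t,gr}$ and $\cK_{t,gr}$ of the base and Kostant categories. The appropriate framework is the mixed Satake category in the sense of Beilinson-Ginzburg-Soergel~\cite{BGS} (see also~\cite{riche}): this is a $\C[t]\module_{gr}$-linear symmetric monoidal category whose underlying dg category, after the shearing equivalence $\C[t]\module_{gr}\simeq \C[\hbar]\module_{gr}$ followed by degrading, recovers $\cH_\hbar$. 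Crucially, the groupoid structure on $\uGrv$ and the functoriality of $\Dhol$ (base change and ind-proper adjunction) carry over to the mixed setting, so $\cH_{t,gr}$ retains the structure of an algebra object in $\cR_{t,gr}$-bimodules, and $\cK_{t,gr}$ arises as the category of comodules for the associated graded comonad on $\cR_{t,gr}$.

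The next step is to apply Theorem~\ref{groupoid center} verbatim in this mixed context. The proof of that theorem only used the $(\infty,2)$-categorical correspondence formalism together with the monadic Beck-Chevalley property for ind-proper morphisms, both of which are preserved by the mixed refinement. This yields an $E_2$-monoidal functor $\fz_{t,gr}:\cK_{t,gr}\to \cZ(\cH_{t,gr})$ together with a monoidal left inverse $\fa_{t,gr}$, lifting the diagonal $\cR_{t,gr}\to \cH_{t,gr}$. One then invokes a graded/mixed refinement of the derived Satake equivalence (Theorem~\ref{Bez-Fink}), producing monoidal equivalences $\cH_{t,gr}\simeq \HC_{t,gr}$, $\cR_{t,gr}\simeq \cZ_{t,gr}$, and $\cK_{t,gr}\simeq \cWh_{t,gr}$, compatible with the (quantum) characteristic polynomial and Whittaker functors. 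Combining with the identification $\cZ(\HC_{t,gr})\simeq \cD_{t,gr}(G\adjquot G)$ from Theorem~\ref{Morita equivalence filtered}, the functor $\fz_{t,gr}$ is rechristened $\Ngo_{t,gr}$ and fits into the required commutative diagram.

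The main obstacle is the mixed derived Satake identification. While the monoidal identification $\cH_\hbar\simeq \HC_\hbar$ is due to~\cite{BezFink}, one needs a $\C[t]\module_{gr}$-linear refinement that respects the identifications of centers; this follows in spirit from Riche's work~\cite{riche} together with the renormalization conventions of~\cite{AG,BezFink}, but requires care since both sides a priori live in different ``cohomological shearings''. The compatibility with the ``cup coproduct'' on $H_{-\ast}(\uGrv)$ used in Corollaries~\ref{corollaryBFimpliesBFM} and~\ref{corollarysymmetricmonoidal} ensures that the graded $\fZ_t\fg$-bialgebroid structure on $\fWh_t\simeq H_{-\ast}^{\LGpv\rtimes\G_m}(\cGr)$ is the correct one, so that $\cWh_{t,gr}$ inherits its natural $t$-structure from $\fWh_t\module_{gr}$.

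Finally, for $t$-exactness, I would argue as follows. The compact generator $\fD_{\quot{N_\psi}{G}{_\psi N},t}$ of the heart of $\cWh_{t,gr}$ corresponds under Satake to the graded $\fWh_t$ acting on itself, and unwinding the central action of Theorem~\ref{groupoid center} its image under $\Ngo_{t,gr}$ is represented (after forgetting to $\HC_{t,gr}$) by the discrete bimodule $\fU_t\fg\otimes_{\fZ_t\fg}\fWh_t$, which lies in the heart of $\cD_{t,gr}(G\adjquot G)$ because $\fWh_t$ is free over its central copy of $\fZ_t\fg$ on each weight piece (equivalently: the map $\chi^\ast J\to I$ is flat over the regular locus and the relevant pushforward is $t$-exact after codimension-two extension via Hartogs). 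Since $\Ngo_{t,gr}$ is continuous and monoidal, $t$-exactness on the generator propagates to the whole category, completing the proof.
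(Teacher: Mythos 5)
Your proposal takes a genuinely different route from the paper, and it has a real gap at the crucial step. The paper's proof does \emph{not} construct a mixed/graded version of $\Dhol(\uGrv)$ and re-run Theorem~\ref{groupoid center} there; it explicitly avoids this. Instead, the argument runs entirely on the algebraic side: one first shows that the $\fD_{G,\hbar}$-$\fWh_\hbar$-bimodule $\fB_\hbar$ representing the composite $\cWh_\hbar \to \cD_\hbar(G\adjquot G) \to \cD_\hbar(G)$ is \emph{formal} (Lemma~\ref{lemma formal}), using that $\fU_\hbar\fg$, $\fZ_\hbar\fg$, $\fWh_\hbar$ all carry pure external gradings and Kostant freeness of $\fU_\hbar\fg$ over $\fZ_\hbar\fg$. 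This formality produces the lift $\tNgo_{t,gr}$ at the level of the forgetful functor. The $E_2$ structure and the factorization through the center are then recovered by restricting $\Ngo_\hbar$ to the subcategories $\cWh_\hbar^{fr}$, $\HC_\hbar^{fr}$, $\cZ(\HC_\hbar^{fr})$ of ``pure'' (free/projective, weight-structure-heart) objects, passing to homotopy categories (Lemma~\ref{lemma homotopy category}), and then taking complexes $K(-)$ to rebuild the full dg-categories. This is the standard BGS/Riche-style construction of a mixed category from the homotopy category of pure objects, as the paper notes in Remark~\ref{mixed remark}.

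The gap in your approach is the unsubstantiated assertion that ``the groupoid structure on $\uGrv$ and the functoriality of $\Dhol$ (base change and ind-proper adjunction) carry over to the mixed setting.'' This is precisely what does \emph{not} come for free. The mixed Satake category is not a priori a category of sheaves on a groupoid equipped with a sheaf theory of the sort needed for Theorem~\ref{groupoid center}; it is constructed ex post facto as complexes over an additive category of pure objects. The authors flag exactly this obstruction in Remark~\ref{dg vs graded remark}: ``Unfortunately, this mixed set-up does not seem to fall so neatly in to the set-up of Theorem~\ref{main intro}.'' Absent a mixed six-functor formalism on $\uGrv$ with $(p_*,p^!)$ ind-proper adjunction and base change (which you would need to cite or construct), the direct re-run of Theorem~\ref{groupoid center} is not available. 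Your fallback ``in spirit from Riche's work'' does not supply this; Riche's mixed derived Satake is built precisely by the complexes-of-pure-objects route, not by a mixed sheaf theory with the requisite bivariant functoriality. Your $t$-exactness argument also differs and is somewhat off: the paper deduces $t$-exactness simply from the fact that $\tNgo_{t,gr}$ lifts $\Char_{t,gr}$, which is $t$-exact by flatness of $\fU_t\fg$ over $\fZ_t\fg$. The invocation of Hartogs and codimension-two extension is a classical-geometry argument that does not appear in, and is not needed for, this quantum/filtered statement.
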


The idea is to deduce Theorem \ref{thm-quantum-ngo-t} from Theorem \ref{thm-quantum-ngo-hbar} using certain formality properties.

Let us first construct the composite 
\[
\tNgo_{t,gr}: \cWh_{t,gr} \too \cD_{t,gr}(G\adjquot G) \too \cD_{t,gr}(G)
\]
Such a functor will be represented by a certain graded $\fD_{G,t}-\fWh_t$-bimodule, $\fB_t$, whose underlying  $\fU_t\fg\otimes \fU_t\fg - \fZ_t\fg$-bimodule is isomorphic to $\fU_t\fg \otimes_{\fZ_t\fg} \fWh_t$. Recall from the comments following Theorem \ref{thm-quantum-ngo-hbar} that we have a corresponding dg-bimodule $\fB_\hbar$. Let us note the following:

\begin{lemma}\label{lemma formal}
The object $\fB_\hbar$ is formal as a $\fD_{G,\hbar}-\fWh_\hbar$-bimodule.
\end{lemma}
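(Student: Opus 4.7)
The plan is to exhibit $\fB_\hbar$ as the shearing of a graded dg-bimodule $\fB_t$ concentrated in cohomological degree zero; formality then follows directly from the weight-equals-degree criterion of Remark \ref{formality remark}.

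The candidate $\fB_t$ must have underlying graded $\fU_t\fg \otimes \fU_t\fg - \fZ_t\fg$-bimodule $\fU_t\fg \otimes_{\fZ_t\fg} \fWh_t$, which is concentrated in cohomological degree zero since all three Rees algebras are discrete. The remaining data is a left $\fD_{G,t}$-action extending the two-sided $\fU_t\fg$-action and commuting with the right $\fWh_t$-action. I would produce this action by running the construction of Theorem \ref{groupoid center} in the graded setting applied to $\uGrv \rightrightarrows X$: the $(p_*,p^!)$ adjunction, base change, and the symmetric monoidal $!$-pullback on which that construction depends all persist in a mixed/graded refinement of the renormalized Satake category (in the spirit of \cite{BGS, riche}). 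This yields a graded $E_2$-functor $\cWh_{t,gr} \to \cZ(\cH_{t,gr}) \simeq \cD_{t,gr}(G\adjquot G)$, and composing with the forgetful functor to $\cD_{t,gr}(G)$ produces the required graded $\fD_{G,t} - \fWh_t$-bimodule $\fB_t$. Its underlying $\fU_t\fg \otimes \fU_t\fg - \fZ_t\fg$-bimodule is forced to be $\fU_t\fg\otimes_{\fZ_t\fg}\fWh_t$ by the graded lift of the commutative diagram in Theorem \ref{thm-quantum-ngo-hbar}, hence discrete.

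The shearing/degrading functor sends $\fB_t$ to $\fB_\hbar$ by naturality: both represent the image of the monoidal unit under the respective quantum Ng\^o maps, and these are identified under degrading by the same argument as in Corollary \ref{corollarykostantwhittaker}. Since $\fB_t$ sits in cohomological degree zero with internal grading in even weights, the sheared bimodule $\fB_t^\fatslash = \fB_\hbar$ has the property that $H^i(\fB_\hbar)$ is pure of weight $i$. This is precisely the hypothesis of Remark \ref{formality remark}, which then yields a quasi-isomorphism
\[
\fB_\hbar \simeq \bigoplus_{i\in\Z} H^i(\fB_\hbar)[-i]
\]
of $\fD_{G,\hbar} - \fWh_\hbar$-bimodules, as desired. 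Note that the formality of $\fWh_\hbar$ as a $\fZ_\hbar\fg$-bialgebroid (Corollary \ref{corollaryBFimpliesBFM}) is a special case of this argument applied to the unit bimodule.

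The main obstacle is the graded refinement of Theorem \ref{groupoid center} in the equivariant affine Grassmannian setting, i.e.\ producing a mixed version of the ind-holonomic sheaf theory of Section \ref{sheaf theory} on $\uGrv$ for which the Gaitsgory--Rozenblyum formalism continues to supply ind-proper $(p_\ast,p^!)$-adjunction and base change. Once in place, transporting the central action to the graded setting — and hence the formality of $\fB_\hbar$ — is essentially formal. An alternative, more hands-on route would be to bypass the graded lift of the central action entirely by constructing the $\fD_{G,t}$-module structure on $\fU_t\fg \otimes_{\fZ_t\fg} \fWh_t$ directly from the classical ($t=0$) Ng\^o morphism $\chi^\ast J \to T^\ast G$ together with a Hochschild-cohomology obstruction argument, but the argument above seems cleaner.
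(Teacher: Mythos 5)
Your proposal reverses the logical direction of the paper's argument and, in doing so, introduces a dependency the paper is specifically designed to avoid. You propose to first construct a graded $\fD_{G,t}$-$\fWh_t$-bimodule $\fB_t$ concentrated in cohomological degree zero by running Theorem~\ref{groupoid center} in a ``mixed'' refinement of the ind-holonomic sheaf theory on $\uGrv$, and then deduce formality of $\fB_\hbar$ as the shearing of $\fB_t$. But you yourself flag that ``the main obstacle is the graded refinement of Theorem~\ref{groupoid center}'' --- and this is precisely the content that Lemma~\ref{lemma formal} is meant to supply, not presuppose. The whole point of Section~\ref{ss graded lift} is to produce the graded lift $\Ngo_{t,gr}$ \emph{out of} the formality statement, following a pure-objects-to-mixed-category paradigm (see Remark~\ref{mixed remark}). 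Your argument relocates the difficulty into an unconstructed piece of machinery rather than proving the claim; as Remark~\ref{dg vs graded remark} observes, the mixed lift of the sheaf-theoretic setup does not ``fall neatly'' into the paper's framework, so the step you treat as the main obstacle is a genuine gap, not an assumed input.

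The paper's actual argument works entirely at the level of $\hbar$-algebras and needs no graded sheaf theory. It exploits three elementary observations: (i) $\fU_\hbar\fg$, $\fZ_\hbar\fg$, and $\fWh_\hbar$ all carry compatible pure external gradings by construction (these come for free from the equivariant-homology/Satake description, not from a hypothetical mixed category); (ii) $\fU_\hbar\fg$ is \emph{free} over $\fZ_\hbar\fg$ by Kostant's theorem, so the relative tensor product $\fU_\hbar\fg\otimes_{\fZ_\hbar\fg}\fWh_\hbar$ coincides with the derived tensor product and is therefore still pure; and (iii) $\fD_{G,\hbar}\simeq \cO(G)\rtimes\fU_\hbar\fg$ with $\cO(G)$ in pure degree zero, so a $\fU_\hbar\fg$-linear quasi-isomorphism from $\fB_\hbar$ to its cohomology is automatically $\fD_{G,\hbar}$-linear. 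Kostant's freeness theorem is the crucial non-formal input, and it appears nowhere in your proposal; without it, the purity of the tensor product does not follow from purity of the factors, since a derived tensor product of pure objects over a pure base need not be pure. Your alternative ``more hands-on route'' via the classical Ng\^o morphism and a Hochschild-cohomology obstruction argument is sketched too thinly to assess, but it too would still need the purity-of-the-tensor-product step, which is where Kostant's freeness enters.
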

\begin{proof}
By construction, $\fU_\hbar\fg$, $\fZ_\hbar\fg$, and $\fWh_\hbar$ all carry compatible pure external gradings (i.e. the weight of the external grading on the $i$th cohomology object is equal to $i$). Note also that $\fU_\hbar\fg$ is free as a $\fZ_\hbar\fg$-module, by a theorem of Kostant. Thus the $\fU_\hbar\fg\otimes \fU_\hbar\fg - \fZ_\hbar\fg$-bimodule
\[
\fU\fg_{\hbar}\otimes_{\fZ_\hbar\fg}\fWh_\hbar
\]
carries a pure grading, so in particular is formal as a $\fU_\hbar\fg\otimes \fU_\hbar\fg - \fZ_\hbar\fg$-bimodule (Kostant's theorem implies that the tensor product as graded algebras is the same as the tensor product in the dg-derived category). On the other hand, we have an equivalence $\fD_{G,\hbar} \simeq \cO(G) \rtimes \fU_\hbar\fg$, where $\cO(G)$ is in pure degree $0$. It follows that the $\fU_\hbar\fg$-module isomorphism from $\fB_\hbar$ to its homology is automatically a $\fD_{G,\hbar}$-module isomorphism, as required.
\end{proof}

Lemma \ref{lemma formal} is equivalent to the statement that $\fB_{\hbar}$ carries a pure external grading as a (dg) $\fD_{G,\hbar}-\fWh_\hbar$-bimodule. In particular, $\tNgo_\hbar$ lifts to a functor
\[
\tNgo_{\hbar,gr}: \cWh_{\hbar,gr} \to \cD_{\hbar,gr}(G)
\]
or equivalently, after shearing,
\[
\tNgo_{t,gr}: \cWh_{t,gr} \to \cD_{t,gr}(G)
\]
Note that $\tNgo_{t,gr}$ is $t$-exact as it is a lift of $\Char_{t,gr}$, which is $t$-exact due to the flatness of $\fU_t\fg$ over $\fZ_t\fg$. 

\begin{remark}
The functor $\tNgo_{t,gr}$ is represented by a graded $\fD_{G,t}-\fWh_t$-bimodule $\fB_t$ (sitting in cohomological degree $0$); this is just the formal (dg)-bimodule $\fB_\hbar$ where the cohomological grading is reinterpreted as an external grading.
\end{remark}

To deduce Theorem \ref{thm-quantum-ngo-t} boils down to equipping the bimodule $\fB_{t}$ with extra structure, corresponding to the fact that the functor $\tNgo_{t,gr}$ factors through the center, and the factorization $\Ngo_{t,gr}$ carries an $E_2$-monoidal structure. To simplify matters, let us consider the restriction of the (for now, still hypothetical) functor $\Ngo_{t,gr}$ to the subcategory $\cWh_{t,gr}^{proj,\heartsuit}$ of $\cWh_{t,gr}$ consisting of graded, projective $\fWh_t$-modules in the heart of the $t$-structure. Such objects are, in particular, projective modules (and thus free by the Quillen-Suslin theorem) over $\fZ_t\fg$; we will denote the category of such modules by $\cZ_{t,gr}^{fr,\heartsuit}$. Let us also consider the category $\HC_{t,gr}^{fr,\heartsuit}$ of graded Harish-Chanrdra bimodules (in the heart of the $t$-structure) which are free as left (or equivalently, right) modules over $\fU_t\fg$ (such objects are necessarily of the form $\fU_t\fg \otimes V(k)$ where $V$ is a representation of $G$, and $(k)$ indicates grading shift). Note that $\HC_{t,gr}^{fr,\heartsuit}$ is a discrete, exact category which sits fully faithfully in the dg-category $\HC_{t,gr}$, similarly for $\cZ_{t,gr}^{fr,\heartsuit}$ and $\cWh_{t,gr}^{proj}$; these categories form the heart of a weight structure on the corresponding dg-categories, in the sense of \cite{Bondarko}.

The graded Ng\^o functor (assuming it exists) must restrict to a braided monoidal functor
\[
\Ngo_{t,gr}^{fr,\heartsuit}: \cWh_{t,gr}^{proj,\heartsuit} \to \cZ(\HC_{t,gr}^{fr})
\]
On the other hand, the dg-category $\cWh_{t,gr}$ can be recovered as the category of complexes in the additive category $\cWh_{t,gr}^{proj}$:
\[
\cWh_{t,gr} \simeq K(\cWh_{t,gr}^{fr})
\]
Assuming certain properties of the functor $K$ which takes an additive category to its category of complexes, one may recover the functor $\Ngo_{t,gr}$ from its restriction to $\cWh_{t,gr}^{proj,\heartsuit}$. 

Now let us explain how to construct $\Ngo_{t,gr}^{fr,\heartsuit}$ from $\Ngo_\hbar$ (which was constructed in Theorem \ref{thm-quantum-ngo-hbar}). Consider the subcategory
$
\HC_\hbar^{fr}
$
 consisting of direct sums and cohomological shifts of objects of the form $\fU_\hbar\fg \otimes V$, where $V$ is a finite dimensional representation of $G$. Note that $\HC_{\hbar}^{fr}$ is a non-stable, additive, $\C$-linear $\infty$-category, and its homotopy category $H^0\HC_{\hbar}^{fr}$ is a discrete additive category. Similarly, we define $\cZ_\hbar^{fr}$ to be the subcategory consisting of direct sums and shifts of $\fZ_\hbar\fg$, and $\cWh_{\hbar}^{fr}$ to be the full subcategory consisting of direct sums and shifts of $\fWh_\hbar$. Finally, let $\cD_\hbar(G\adjquot G)^{fr}$ be the full subcatgory of $\cD_\hbar(G\adjquot G)$ such that the essential image of the forgetful functor to $\HC_\hbar$ is contained in $\HC_\hbar^{fr}$.

\begin{lemma}\label{lemma homotopy category}
There is a monoidal equivalence of categories
\[
H^0(\HC_\hbar^{fr}) \simeq \HC_{t,gr}^{fr,\heartsuit}
\]
Analogous results hold for $\cZ^{fr}$, $\cWh^{fr}$, and $\cZ(\HC^{fr})$ (as braided monoidal categories).
\end{lemma}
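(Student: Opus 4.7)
The plan is to exploit formality of the generating objects of the subcategories $\HC_\hbar^{fr}, \cZ_\hbar^{fr}, \cWh_\hbar^{fr}$. The key observation is that $\fU_\hbar\fg$, $\fZ_\hbar\fg$ and $\fWh_\hbar$ all carry a \emph{pure} external grading, in the sense that cohomological degree equals external weight; see Remark~\ref{formality remark} and the proof of Corollary~\ref{corollaryBFimpliesBFM}. Consequently the shearing equivalence $\fatslash$ identifies each of these dg algebras with a corresponding graded algebra ($\fU_t\fg$, $\fZ_t\fg$, $\fWh_t$) sitting in cohomological degree $0$, together with all relevant modules over them.

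First I would compute the derived morphism complexes between generators of $\HC_\hbar^{fr}$. Since $G$ is reductive, taking $G$-invariants is exact on $G$-integrable representations, so for finite dimensional representations $V, W$ of $G$ one has
\[
R\Hom_{\HC_\hbar}(\fU_\hbar\fg \otimes V,\; \fU_\hbar\fg \otimes W) \simeq (V^\ast \otimes W \otimes \fU_\hbar\fg)^G.
\]
The PBW isomorphism $\fU_\hbar\fg \simeq \Sym(\fg[2])$ as $G\times \G_m$-modules exhibits the right hand side as a sum of $G$-isotypic pieces, each in a single external weight equal to its cohomological degree. This is the purity statement that is the technical heart of the argument.

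Second, purity together with shearing computes the homotopy category: for any shifts $n,m \in \Z$,
\[
H^0 R\Hom_{\HC_\hbar}\bigl(\fU_\hbar\fg \otimes V[n],\; \fU_\hbar\fg \otimes W[m]\bigr) \;\simeq\; \bigl((V^\ast \otimes W \otimes \fU_t\fg)^G\bigr)_{m-n},
\]
which matches $\Hom_{\HC_{t,gr}^{fr,\heartsuit}}(\fU_t\fg \otimes V(n), \fU_t\fg \otimes W(m))$. Composition is controlled by the graded algebra structure on $\fU_t\fg$, so the identification is additive and functorial. Monoidal compatibility then follows because $\otimes_{\fU_\hbar\fg}$ agrees with $\otimes_{\fU_t\fg}$ under shearing: $\fU_\hbar\fg$ is free over $\fZ_\hbar\fg$ by Kostant, so tensor products of generators preserve purity, and no higher Tor terms obstruct the computation of the tensor product in $H^0$. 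The cases of $\cZ^{fr}$ and $\cWh^{fr}$ are strictly easier, using purity of $\fZ_\hbar\fg$ and $\fWh_\hbar$ directly.

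The main obstacle is the braided monoidal upgrade for $\cZ(\HC^{fr})$. Here one must check that objects of $\cZ(\HC_\hbar^{fr})$ ---which by definition are objects of $\cD_\hbar(G\adjquot G)$ whose underlying Harish-Chandra bimodule is free--- admit half-braiding data whose components likewise sit in pure weight equal to degree. This should follow by reducing the half-braiding to morphisms in $\HC_\hbar^{fr}$ and invoking purity component-wise, but it requires tracking the coherence data for the center through the shearing. Granting this, passing to $H^0$ yields a braided monoidal equivalence $H^0\cZ(\HC_\hbar^{fr}) \simeq \cZ(\HC_{t,gr}^{fr,\heartsuit})$, completing the lemma.
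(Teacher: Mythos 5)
Your proposal takes essentially the same approach as the paper's own (very brief) proof sketch: identify the skeleta of both categories via the generators $\fU_\hbar\fg\otimes V[k] \leftrightarrow \fU_t\fg\otimes V(k)$, then compare morphism sets using purity of $\fU_\hbar\fg$ (weight $=$ cohomological degree) and shearing; you simply flesh out the $R\Hom$ computation the paper leaves as ``we observe that the morphism sets agree.'' Two small notes: the appeal to Kostant's freeness of $\fU_\hbar\fg$ over $\fZ_\hbar\fg$ is not what controls $\otimes_{\fU_\hbar\fg}$ of free objects (that tensor product has no higher Tor simply because one factor is free over $\fU_\hbar\fg$), and the coherence issue you honestly flag for the Drinfeld center is also left implicit in the paper's sketch, so it is not a gap relative to the source.
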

\begin{proof}[Proof (Sketch)]
	The objects $\fU_\hbar\fg \otimes V[k]$ form a skeleton of $H^0\HC_\hbar^{fr}$, where $V$ ranges over a skeleton of $\Rep(G)^\heartsuit$, and $k$ ranges over the integers. These objects correspond to $\fU_t\fg\otimes V (k)$ in $\HC_{t,gr}^{fr}$. We observe that the morphism sets agree, as required.
	\end{proof}
\begin{remark}
	Note that the cohomological shift functor $[1]$ is taken to the grading shift $(1)$ under the equivalences of Lemma \ref{lemma homotopy category}.
\end{remark}
	
Note that $\Char_\hbar$ takes objects of $\cZ_\hbar^{fr}$ to $\HC_\hbar^{fr}$; as $\fWh_\hbar$ is itself free over $\fZ_\hbar\fg$, we see that $\Ngo_\hbar$ (which lifts $\Char_\hbar$) takes objects of $\cWh_\hbar^{fr}$ to $\cZ(\HC_\hbar^{fr})$. In fact, we claim that $\Ngo_\hbar$ restricts to a $E_2$-monoidal functor
\[
\Ngo_\hbar^{fr}: \cWh_{\hbar}^{fr} \to \cZ(\HC_\hbar^{fr})
\]

Taking the homotopy categories and using Lemma \ref{lemma homotopy category}, we obtain a braided monoidal functor 
\[
\Ngo_{t,gr}^{fr,\heartsuit}: \cWh_{t,gr}^{fr,\heartsuit} \to \cZ(\HC_{t,gr}^{fr,\heartsuit})
\]
The functor $\Ngo_{t,gr}$ is then obtained by taking the functor $K$.

\begin{remark}\label{mixed remark}
	Under geometric Satake, the subcategory $\HC_\hbar^{fr}$ corresponds to the full subcategory $\cH^{pure}_\hbar$ of $\cH_\hbar = \Dhol(\uGr)$ consisting of direct sums of intersection cohomology complexes $IC_\mu$ on orbits $\uGr_\mu$ (in fact the equivalence is proved by first identifying these subcategories). The graded lift $\HC_{t,gr}$ of $\HC_\hbar$ corresponds to a \emph{mixed} Satake category. The word ``mixed'' is used in the sense of Beilinson-Ginzburg-Soergel \cite{BGS} (see also \cite{riche} for a mixed version of the derived geometric Satake eqivalence in the modular setting); the weight of the grading corresponds to weight as in Deligne's theory of weights or in mixed Hodge theory. The reconstruction of the functor $\Ngo_{t,gr}$ from $\Ngo_\hbar$ mirrors the construction of a mixed category by taking the dg-category of complexes of the additive category of a suitable subcategory of pure objects--see e.g. \cite{Rider, Achar-Riche}.

\end{remark}

\begin{remark}
Note that the quantum Hamiltonian reduction of $\fB$ (considered as an equivariant left $\fD_G$-module) is equivalent to $\fWh$ as a right $\fWh$-module. It follows that the left action of $(\fD_{G\adjquot G})^G \simeq (\fD_T)^W$ is given by a ring homomorphism $(\fD_T)^W \to \fWh$. Thus the Ng\^o functor, at the level of quantum Hamiltonian reduction, is given by the forgetful functor from $\fWh$ to $(\fD_T)^W$. This is the basis for Conjecture \ref{quantum ngo induction}.
\end{remark}

Lemma \ref{lemma formal} is equivalent to the statement that $\fB_{\hbar}$ carries a pure external grading as a (dg) $\fD_{G,\hbar}-\fWh_\hbar$-bimodule. In particular, $\tNgo_\hbar$ lifts to a functor
\[
\tNgo_{\hbar,gr}: \cWh_{\hbar,gr} \to \cD_{\hbar,gr}(G)
\]
or equivalently, after shearing,
\[
\tNgo_{t,gr}: \cWh_{t,gr} \to \cD_{t,gr}(G)
\]
Note that $\tNgo_{t,gr}$ is $t$-exact as it is a lift of $\Char_{t,gr}$, which is $t$-exact due to the flatness of $\fU_t\fg$ over $\fZ_t\fg$.

\begin{remark}\label{remark classical Ngo}
The bimodule structure gives a morphism
\[
\fD_{G,t} \to \fB_t = \fU_t\fg \otimes_{\fZ_t\fg} \fWh_t
\]
If we set $t=0$, then the monoidal category $\HC_{t=0,gr}\simeq \QC(\fg^\ast/G)_{gr}$ upgrades to a symmetric monoidal category, and the action of $\HC_{t=0,gr}$ on $\cZ_{t=0,gr}$ upgrades to the symmetric monoidal functor $\kappa^\ast:\QC(\fg^\ast/G) \to \QC(\fc)$. It follows that the monad $\fWh_{t=0} = \cO(J)$ is in fact a commutative (and cocommutative) Hopf algebra object in $\QC(\fc)_{gr}$, and thus $\fB_{t=0} = \QC(\chi^\ast J)$ is a cocommutative Hopf algebra object in $\QC(\fg^\ast/G)_{gr}$. It follows formally that the structure map $\fD_{G,t} \to \fB_{t}$ arising from the Ng\^o map is in fact a morphism of Hopf algebroids over $\cO(\fg^\ast)$
\[
\cO(T^\ast G) \to \cO(\chi^\ast(J))
\]
Thus there is a morphism of groupoids over $\fg^\ast$:
\[
\chi^\ast(J) \to T^\ast G
\]
which factors thorugh the centralizer subgroup $\cI \rightarrow T^\ast G$. To see that this agrees with the Ng\^o homomorphism, as constructed in \cite{Ngo} (using Hartog's lemma), it suffices to check that they agree on the regular semisimple locus. In terms of sheaves on the Grassmannian, this corresponds to a certain localization of $\cH_{\hbar=0}$ over $R_{\hbar=0}$, after which $\cH_{\hbar=0}$ and $\cW_{\hbar=0}$ become Morita equivalent (this corresponds to the fact that $\QC(\fg^\ast/G)$ becomes equivalent to $\Rep(J) = \QC(\fc/J)$ after localizing, and this latter category is Morita equivalent to $\QC(J)$ under convolution, by 1-affineness). Thus, after this localization, the Ng\^o map is just the natural braided monoidal functor from a symmetric monoidal category to its Drinfeld center. The corresponding functor arising from Ng\^o's construction can also be characterized in this way, so the two constructions must agree.
\end{remark}

\subsection{Example: the abelian case}
	Suppose $G=T$ is an algebraic torus, and $\Gv = \Tv$ the dual torus. In this case, everything can be made very explicit. First, note that the conclusions of Theorem \ref{thm-quantum-ngo-t} are clear: the $\cW$-category is just $\cD_\hbar(T)$ under convolution, which is   symmetric monoidal as $T$ is commutative; the Ng\^o map $\cD_\hbar(T) \to \cD_\hbar(T\adjquot T)$ is just the natural map from a symmetric monoidal category in to its own Drinfeld center. Explicitly, this situation is controlled by the cocommutative Hopf algebroid $\cD_{T,\hbar}$: the $\cW$-category $\cD_\hbar(T)$ is its category of modules (which is symmetric monoidal), the category of Harish-Chandra bimodules is given by $\cD_{T,\hbar}$-comodules in $\fU_\hbar(\ft)\module$, and $\cD_\hbar(T\adjquot T)$ can be thought of as Yetter-Drinfeld modules for $\cD_{T,\hbar}$, which identifies as the center of $\cD_\hbar(T)$ and of $\HC_{T,\hbar}$ (in fact, the two monoidal categories are Morita equivalent).
	
	Let $\Lambda = \Hom(T,\G_m) \subset \ft^\ast$ denote the character lattce of $T$, and consider the action groupoid $\Gamma_{\Lambda,\hbar}$ of $\Lambda$ acting on $\ft^\ast \times \A^1_\hbar$ by $n \cdot (\lambda,\hbar) = (\lambda + \hbar n,\hbar)$. The corresponding convolution algebra $H_{\Lambda,\hbar} = \Sym(\ft[-2] \oplus \C.\hbar) \rtimes \C[\Lambda]$ is a cocommutative Hopf algebroid over $R_\hbar \simeq \Sym(\ft \oplus \C.\hbar)$; the corresponding convolution category $\cH_{\Lambda,\hbar}$ of sheaves on $\Gamma_{\Lambda,\hbar}$ is identified with $H_{\Lambda,\hbar}\comod$, and $\cK_{\Lambda,\hbar}$ with $H_{\Lambda_\hbar}\module$. 
	
	It is easy to check that $\fD_{\hbar,T}$ coincides with $H_{\Lambda,\hbar}$ as Hopf algebroids over $\fU_\hbar(\ft) =R_\hbar$. Thus $\HC_{T,\hbar}$ identifies with $\cH_{\Lambda,\hbar}$, and $\cD_{\hbar}(T)$ with $\cWh_{\Lambda,\hbar}$.
	
	On the other hand, the affine Grassmannian for $T^\vee$ is equal to $\Lambda$ (we only care about the reduced scheme structure here). The spherical Hecke category $\Dhol(\uGrv_{T^\vee})$ is identified with $\cH_{\Lambda,\hbar}$, and the convolution bialgebra of chains $C_\ast (\uGrv)$ with $H_{\Lambda,\hbar}$. These identifications explicitly establish the renormalized Satake equivalence of Theorem \ref{Bez-Fink} in this setting. Note that the inclusion of local systems in to the spherical Hecke category is an equivalence in this case, corresponding to the fact that the Kostant section (which is just the map $\ft^\ast \to \ft^\ast /T = \ft^\ast \times BT$) is surjective.

\end{document}